\theoremstyle{plain}
\newtheorem{Thm}{Theorem}[section]
\newtheorem{Lem}[Thm]{Lemma}
\newtheorem{Cor}[Thm]{Corollary}
\newtheorem{Prop}[Thm]{Proposition}
\theoremstyle{definition}
\newtheorem{Def}[Thm]{Definition}
\newtheorem{Def-Lem}[Thm]{Definition-Lemma}
\newtheorem{Cond}[Thm]{Condition}
\newtheorem{Rem}[Thm]{Remark}
\newtheorem*{Ack}{Acknowledgments}
\newtheorem{Ex}[Thm]{Example}
\newtheorem{Assumption}[Thm]{Assumption}
\newcommand{\prt}{\partial}
\newcommand{\rank}{\operatorname{rank}}
\newcommand{\Pic}{\operatorname{Pic}}
\newcommand{\Bs}{\operatorname{Bs}}
\newcommand{\Exc}{\operatorname{Exc}}
\newcommand{\mult}{\operatorname{mult}}
\newcommand{\lcm}{\operatorname{lcm}}
\newcommand{\wt}{\operatorname{wt}}
\newcommand{\bsum}{\sum\nolimits}
\newcommand{\mbA}{\mathbb{A}}
\newcommand{\mbC}{\mathbb{C}}
\newcommand{\mbN}{\mathbb{N}}
\newcommand{\mbP}{\mathbb{P}}
\newcommand{\mbQ}{\mathbb{Q}}
\newcommand{\mbZ}{\mathbb{Z}}
\newcommand{\mcG}{\mathcal{G}}
\newcommand{\mcH}{\mathcal{H}}
\newcommand{\mcI}{\mathcal{I}}
\newcommand{\mcL}{\mathcal{L}}
\newcommand{\mcM}{\mathcal{M}}
\newcommand{\mcO}{\mathcal{O}}
\newcommand{\msp}{\mathsf{p}}
\newcommand{\msq}{\mathsf{q}}
\newcommand{\inj}{\hookrightarrow}
\newcommand{\ratmap}{\dashrightarrow}
\title[$\mathbb{Q}$-Fano $3$-fold weighted complete intersections, II]{Birational Mori fiber structures of $\mathbb{Q}$-Fano $3$-fold weighted complete intersections, II}
\author[Takuzo Okada]{Takuzo Okada}
\address{Department of Mathematics, Faculty of Science and Engineering\endgraf
Saga University, Saga 845-8502 Japan}
\email{okada@cc.saga-u.ac.jp}
\subjclass[2000]{14J45 \and 14E08 \and 14J30}
\date{}
\begin{document}

\begin{abstract}
In \cite{Okada}, we proved that, among $85$ families of $\mathbb{Q}$-Fano threefold weighted complete intersections of codimension $2$, $19$ families consist of birationally rigid varieties and the remaining families consists of birationally non-rigid varieties.
The aim of this paper is to study systematically the remaining families and prove that every quasismooth member of $14$ families is birational to another $\mathbb{Q}$-Fano threefold but not birational to any other Mori fiber space.
\end{abstract}

\maketitle

\section{Introduction} \label{sec:intro}

In the paper \cite{Okada}, we studied birational geometry of anticanonically embedded $\mbQ$-Fano $3$-fold weighted complete intersections of codimension $2$ ($\mbQ$-Fano WCIs of codimension $2$, for short).
As a consequence, among $85$ families of such $3$-folds, general members of $19$ families are proved to be birationally rigid and the remaining $66$ families consist of birationally non-rigid varieties.
Recently, Ahmadinezhad-Zucconi \cite{AZ14} strengthened the birational rigidity result of $18$ families among the above mentioned $19$ families for every quasismooth member.
The aim of this paper is to continue the work and determine the birational Mori fiber structures of members of suitable families. 

Let $X$ be a $\mbQ$-Fano variety with only terminal singularities and with Picard number one.
A Mori fiber space which is birational to $X$ is called a {\it birational Mori fiber structure} of $X$.
We say that $X$ is {\it birationally rigid} (resp.\ {\it birationally birigid}) if the birational Mori fiber structures of $X$ consists of exactly one Mori fiber space which is $X$ itself (resp.\ two Mori fiber spaces including $X$ itself).

We explain known results for $\mbQ$-Fano $3$-folds embedded in weighted projective spaces in this direction.
There are $95$ families of anticanonically embedded $\mbQ$-Fano $3$-fold weighted hypersurfaces, and Corti-Pukhlikov-Reid \cite{CPR} proved birational rigidity of a general member of each one of $95$ families.
Recently, Cheltsov-Park \cite{CP} strengthened the result and proved birational rigidity of every quasismooth member. 
Corti-Mella \cite{CM} studied special quartic $3$-folds and exhibited an example of birationally birigid $\mbQ$-Fano $3$-folds. 
Brown-Zucconi \cite{BZ10} and Ahmadinezhad-Zucconi \cite{AZ13} studied $\mbQ$-Fano $3$-folds embedded in weighted projective $6$-spaces and in particular proved that many of them are not birationally rigid.
In this codimension 3 case, the situation becomes quite complicated compared to lower codimensional cases and the complete determination of birational Mori fiber structures seems to be quite difficult.

Before stating the main theorem of this paper, we recall the  previous result in \cite{Okada} and explain the main objects of this paper.
The $85$ families of $\mbQ$-Fano WCIs of codimension $2$ are given in \cite{IF00} and we denote by $\mcG_i$ the family No.~$i$ for $i \in I:= \{1,2,\dots,85\}$ consisting of the quasismooth members.

\begin{Thm}[{\cite[Theorems 1.2 and 1.3]{Okada}}] \label{thm:prev}
Let $X$ be a general member of $\mcG_i$, $i \in I^* := I \setminus \{1,2,3\}$.
Then maximal singularities on $X$ are classified and there is a disjoint decomposition $I^* = I^*_{br} \cup I^*_F \cup I^*_{dP}$ with the following properties.
\begin{enumerate}
\item $X$ is birationally rigid for $i \in I^*_{br}$.
\item Assume that $i \in I^*_F$.
There is a Sarkisov link to a $\mbQ$-Fano $3$-fold $X'$ and each maximal singularity on $X$ is untwisted by a Sarkisov link which is either a birational involution or a link to $X'$.
The target $X'$ is not isomorphic to $X$ and it is an anticanonically embedded $\mbQ$-Fano weighted hypersurface with a unique non-quotient teminal singular point.
\item Assume that $i \in I^*_{dP}$.
There is a Sarkisov link to a del Pezzo fiber space $X'/\mbP^1$ over $\mbP^1$ and each maximal singularity on $X$ is untwisted by a Sarkisov link which is either a birational involution or a link to $X'$.
\end{enumerate}
\end{Thm}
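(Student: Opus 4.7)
The plan is to follow the standard Noether--Fano--Iskovskikh strategy adapted to $\mbQ$-Fano WCIs of codimension $2$. Given any birational map $\varphi\colon X\dashrightarrow Y/T$ to a Mori fiber space, one pulls back a very mobile linear system from $Y$ to obtain a mobile linear system $\mcH\subset|{-n K_X}|$ on $X$ whose pair $(X,\tfrac{1}{n}\mcH)$ is not canonical. By the Noether--Fano inequality there exists a geometric valuation $E$ over $X$, a \emph{maximal singularity}, with $a_E(K_X)<\mult_E(\mcH)/n$. The proof then splits into (i) classifying all possible centers of such $E$ on $X$, and (ii) for each surviving center, either ruling it out or producing a Sarkisov link that untwists it.

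The first step is exclusion. For each family $\mcG_i$ with $i\in I^*$ one has explicit generators of $-K_X$ and of the $\mcO_X(m)$'s, so one can bound $(-K_X)^3$ and run the standard test-class inequalities \`a la Corti--Pukhlikov--Reid to exclude: smooth points (using $(-K_X)^3$ together with a suitable multiplicity inequality), curves (by proving $\mcH^2\cdot C$ is too small unless $C$ is a specific low-degree curve, which is then handled by hand), and non-cyclic-quotient terminal points (usually absent on quasismooth members). What remains is a list of terminal cyclic quotient singularities $\frac{1}{r}(1,a,r-a)$ on $X$, and the task becomes to test each of them.

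For each such point $\msp\in X$ one performs the Kawamata blow-up $\sigma\colon\widetilde X\to X$ with discrepancy $1/r$ and runs the two-ray game on $\widetilde X$: the Mori cone is two-dimensional, and contracts on the opposite $K$-negative extremal ray $R$ determine, after a sequence of (weighted) flops/flips, either an anticanonical morphism to a new $\mbQ$-Fano $X'$, or a morphism to $\mbP^1$ whose general fibre is a del Pezzo surface. Concretely one constructs the ambient toric $2$-ray game on the weighted projective space (or its unprojection) containing $X$, and then restricts to $X$ using the defining equations; the resulting link is identified with one of the three outcomes in the theorem. This explicit geometric analysis, done family by family, produces the partition $I^*=I^*_{br}\cup I^*_F\cup I^*_{dP}$ and shows that for $i\in I^*_F\cup I^*_{dP}$ every remaining maximal singularity is untwisted either by a birational involution coming from a double cover structure or by the link to the other Mori fibre space.

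The main obstacle is step (ii): checking that the two-ray game actually terminates with a Mori fibre space, and identifying the target. One must verify quasismoothness/terminality of $\widetilde X$ and of all intermediate models, compute the intersection theory of the extremal contractions, and, for $i\in I^*_F$, identify the target $X'$ as an anticanonical hypersurface by exhibiting explicit coordinates and equations. A secondary delicate point is proving that the constructed involutions genuinely untwist the corresponding maximal centers (as opposed to merely swapping two distinct centers), which requires computing how $\mcH$ transforms under the link and re-applying the Noether--Fano inequality after each untwisting step until $(X,\tfrac{1}{n}\mcH)$ becomes canonical.
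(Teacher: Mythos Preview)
The theorem you are attempting to prove is not proved in this paper at all: it is stated with the attribution \cite[Theorems 1.2 and 1.3]{Okada} and is simply quoted as prior work. There is no ``paper's own proof'' to compare against here; the present paper takes this result as input and proves the analogous statement for the birational counterparts $X'\in\mcG'_i$.

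That said, your outline does describe, at a high level, the strategy actually used in \cite{Okada} and in the present paper for $X'$: the Noether--Fano inequality to produce maximal singularities, exclusion of smooth points and curves via isolating classes and intersection bounds, Kawamata blowups at the surviving quotient singular points, and explicit two-ray games to produce either birational involutions or links to another Mori fibre space. Where your sketch is too vague to count as a proof is precisely where the real work lies: the exclusion arguments are not uniform but require case-by-case constructions (test surfaces $S,T$ with $(T\cdot S\cap T)\le 0$, families of curves with $(-K_Y\cdot C_\lambda)\le 0$ and $(E\cdot C_\lambda)>0$, negative-definite intersection matrices on a surface, etc.\ --- see Lemmas~\ref{excltestsurf}, \ref{crispnegdef}, \ref{crispinfc} of this paper for the versions used here), and the Sarkisov links are built not by a generic toric two-ray game but by explicit unprojection/projection between the codimension-$2$ WCI and a hypersurface, together with identifying the anticanonical model as a double cover to manufacture the involutions. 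Your proposal names none of these mechanisms concretely, so it is a correct table of contents rather than a proof.
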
 

Theorem \ref{thm:prev} is not enough to determine the Mori fiber structures of $X \in \mcG_i$ for $i \in I^*_F \cup I^*_{dP}$ because it says nothing about maximal singularities on the Mori fiber space $X'$ in (2) or (3) and $X'$ may admit a Sarkisov link to a Mori fiber space other than $X$ and $X'$.
The aim of this paper is to prove a result similar to Theorem \ref{thm:prev} for $X'$ and conclude birational birigidity of $X \in \mcG_i$ for $i$ belonging to a suitable subset $I_{F,cAx/2} \cup I_{F,cAx/4}$ of $I^*_F$. 

We explain main objects of this paper.
It is well known that $3$-dimensional terminal singularities which are not quotient singularities are compound du Val singularities and they are divided into several types such as $cA$, $cD$, $cE$, $cA/n$, etc.
Among them we especially consider $cAx/2$ and $cAx/4$ singular points.
We fix $i \in I^*_F$.
For $X \in \mcG_i$, the $\mbQ$-Fano $3$-fold $X'$ in Theorem \ref{thm:prev} is an anticanonically embedded $\mbQ$-Fano weighted hypersurface admitting a unique non-quotient terminal singular points together with some terminal quotient singular points.
The non-quotient singular points of general members of the family $\mcG'_i := \{X' \mid X \in \mcG_i\}$ share the same type. 
We consider subsets $I_{F,cAx/2}$ and $I_{F,cAx/4}$ of $I$ which are defined by the following rule: we have $i \in I_{F,cAx/2}$ (resp.\ $i \in I_{F,cAx/4}$) if and only if a general member of $\mcG'_i$ has a singular point of type $cAx/2$ (resp.\ $cAx/4$).
Specifically, those subsets are given as
\[
\begin{split}
I_{F,cAx/2} &= \{17,19,29,30,41,42,55,69,77\}, \\
I_{F,cAx/4} &= \{23,49,50,74,82\}.
\end{split}
\]
We state the main theorem of this paper and its direct consequence.
We emphasize that $\mcG_i$ consists of the quasismooth members of the family No.~$i$ and we do not impose any additional condition beyond quasismoothness.

\begin{Thm} \label{thm:mainbi} 
Every member $X$ of the family $\mcG_i$ is birationally birigid for $i \in I_{F,cAx/2} \cup I_{F,cAx/4}$.
More precisely, $X$ is birational to a member $X'$ of the family $\mcG'_i$ and is not birational to any other Mori fiber space.
\end{Thm}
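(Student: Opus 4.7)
The plan is to apply the Sarkisov program to the $\mbQ$-Fano threefold $X' \in \mcG'_i$ produced from $X \in \mcG_i$ by Theorem \ref{thm:prev}. Since every maximal singularity on $X$ is already untwisted by that theorem, birational birigidity of $X$ will follow once I establish the analogous statement for $X'$: that the maximal singularities on $X'$ are classified, and that each is untwisted either by a birational involution of $X'$ or by the inverse Sarkisov link $X' \ratmap X$. The Noether-Fano-Iskovskikh criterion then closes off any Sarkisov chain starting from $X$ or $X'$, leaving only $X$ and $X'$ as birational Mori fiber structures of $X$.

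I would treat the candidate maximal centers on $X'$ in three groups. Smooth points and curves are ruled out by the Pukhlikov-Corti-Reid multiplicity bounds adapted to weighted anticanonical hypersurfaces, along the lines of \cite{CPR,CP,Okada}, with minor modifications needed to absorb the extra local contribution at the unique non-quotient point. For each cyclic quotient singular point $\msp$ of $X'$, I would perform the Kawamata blowup and test it against explicit linear subsystems of $|-nK_{X'}|$ built from weighted monomials on the ambient weighted projective space. The uniqueness of the $2$-ray game above a Kawamata blowup of a terminal quotient point then forces the conclusion that $\msp$ is either excluded as a maximal center or untwisted precisely by the inverse link $X' \ratmap X$ obtained from Theorem \ref{thm:prev}.

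The heart of the proof is the analysis of the single non-quotient singular point $\msq$ of type $cAx/2$ or $cAx/4$. Using Kawakita's classification of divisorial extractions over such points, I would first produce the unique divisorial extraction $\varphi \colon Y \to X'$ centered at $\msq$, and then run the $2$-ray game on $Y$. The goal for each of the $14$ families is to show that the second extremal ray of $\bNE(Y)$ leads, after a flop or anti-flip, to a divisorial contraction $\psi \colon Y' \to X''$ with $X'' \cong X'$, thereby producing the desired birational involution of $X'$ fixing $\msq$. A Noether-Fano computation of the canonical threshold associated to $\varphi$ would then confirm that $\msq$ is untwisted by this involution.

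The main obstacle is precisely this last step: for each family one must pin down the equation of $X'$ in well-chosen weighted coordinates, compute the weights of $\varphi$, follow the $2$-ray game model by model, and finally exhibit an explicit weighted Cremona involution of the ambient weighted projective space that realizes the involution on $X'$. The $cAx/4$ case, with its higher-index weights and correspondingly richer cone of curves on $Y$, is likely to require the most careful bookkeeping, since one must exclude the possibility that the second extremal ray produces a genuinely new Mori fiber space distinct from $X$ and $X'$. The $cAx/2$ case is closer in spirit to the birational involutions already constructed in \cite{CPR,Okada} and should go through with comparatively less effort.
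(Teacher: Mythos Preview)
Your overall strategy of treating $X$ via the previous paper and then carrying out a parallel analysis on $X'$ is correct, and matches the paper's structure. However, you have the roles of the two kinds of singular centers on $X'$ exactly reversed, and this is not a cosmetic mislabeling but a structural error.

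In the paper, the $cAx/2$ or $cAx/4$ point $\msp'$ is precisely the center of the Sarkisov link back to $X$, not of a birational involution. This is forced by how $X'$ arises: the link $X \ratmap X'$ of Theorem~\ref{thm:prev} is produced by a Kawamata blowup of a quotient point on $X$, and the resulting divisorial contraction on the other side lands on $X'$ and creates the $cAx/2$ or $cAx/4$ point. The inverse link therefore starts from the extraction centered at $\msp'$. Section~\ref{sec:SLWHWCI} makes this explicit, and the tables in Section~\ref{sec:table2} record ``Link to $X_{d_1,d_2} \in \mcG_i$'' in the row for $\msp_4 = cAx/2$ or $cAx/4$ in every family. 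Your proposed $2$-ray game over $\msp'$ would therefore not close up to an involution $X' \ratmap X'$; it terminates at $X$. Conversely, the terminal quotient singular points of $X'$ are either excluded outright or untwisted by \emph{birational involutions} of $X'$ (quadratic, elliptic, or the ``invisible'' involutions of Section~\ref{sec:spinv}); none of them is the center of a link to $X$. Your claim that a Kawamata blowup at a quotient point $\msp$ is ``untwisted precisely by the inverse link $X' \ratmap X$'' cannot work, because that link is centered at $\msp'$, not at $\msp$, and a Sarkisov link starting from an extraction over $\msp$ cannot untwist a maximal singularity whose center is a different point.

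Two smaller points. First, the divisorial extraction over a $cAx/2$ or $cAx/4$ point is not always unique: by Hayakawa's classification (Theorems~\ref{thm:divext/2} and~\ref{thm:divext/4}) there are two when the lowest-weight part is a square, and the paper constructs \emph{two} links $\sigma_{\pm} \colon X' \ratmap X$ in that case. Second, Theorem~\ref{thm:prev} as stated in \cite{Okada} requires a generality condition on family No.~$19$ beyond quasismoothness; the present paper removes that condition in Section~\ref{sec:spfam19}, so your appeal to the earlier theorem needs this supplement.
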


\begin{Cor}
Every member of the family $\mcG_i$ is not rational for $i \in I_{F,cAx/2} \cup I_{F,cAx/4}$.
\end{Cor}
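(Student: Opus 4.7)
The plan is to derive the Corollary as an immediate formal consequence of Theorem~\ref{thm:mainbi}. I will argue by contradiction: assuming $X \in \mcG_i$ is rational, $X$ is birational to $\mbP^3$; since $\mbP^3 \to \Spec \mbC$ is itself a Mori fiber space, it must appear among the birational Mori fiber structures of $X$. Birational birigidity, as supplied by Theorem~\ref{thm:mainbi}, then forces either $\mbP^3 \cong X$ or $\mbP^3 \cong X'$, where $X' \in \mcG'_i$ is the target of the Sarkisov link from $X$. The whole proof therefore reduces to excluding these two isomorphisms.

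To exclude them, I will use that $\mbP^3$ is smooth while both $X$ and $X'$ are singular. For $X$, every member of $\mcG_i$ with $i \in I_{F,cAx/2} \cup I_{F,cAx/4}$ is a quasismooth codimension-two weighted complete intersection whose ambient weighted projective $5$-space carries weights strictly greater than $1$, forcing at least one nontrivial terminal cyclic quotient singularity on $X$ (with the baskets recorded explicitly in \cite{IF00}). For $X'$, the very definition of the indexing sets $I_{F,cAx/2}$ and $I_{F,cAx/4}$, combined with Theorem~\ref{thm:prev}(2), guarantees that $X'$ carries a non-quotient terminal singular point of type $cAx/2$ or $cAx/4$. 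In either case the variety is singular, contradicting the possibility of being isomorphic to $\mbP^3$.

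I do not anticipate any substantive obstacle here: the argument is purely formal once Theorem~\ref{thm:mainbi} is in hand, and the only verification involved is that the singularity baskets attached to the $14$ families indexed by $I_{F,cAx/2} \cup I_{F,cAx/4}$ are nonempty, which is immediate from the weight data in \cite{IF00} and from the description of $X'$ recorded in Theorem~\ref{thm:prev}(2). The substantive work has already been absorbed into Theorem~\ref{thm:mainbi}.
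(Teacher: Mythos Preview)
Your proposal is correct and matches the paper's approach: the paper states the Corollary immediately after Theorem~\ref{thm:mainbi} as a ``direct consequence'' without supplying an explicit proof, and your argument spells out precisely the standard deduction the paper leaves implicit. The observation that $X$ and $X'$ are both singular (hence not isomorphic to $\mbP^3$) is exactly what is needed to close the argument.
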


\begin{Ack}
The author would like to thank the referees for careful reading of the manuscript, pointing out errors and useful suggestions.
The author is partially supported by JSPS KAKENHI Grant Number 24840034 and 26800019.
\end{Ack}

\section{Preliminaries and the structure of proof} \label{sec:prelim}

\subsection{Notation and convention}

Throughout the paper we work over the field $\mbC$ of complex numbers.
A normal projective variety $X$ is said to be a $\mbQ$-{\it Fano variety} if $-K_X$ is ample, it is $\mbQ$-factorial, has only terminal singularities and its Picard number is one.
We say that an algebraic fiber space $X \to S$ is a {\it Mori fiber space} if $X$ is a normal projective $\mbQ$-factorial variety with at most terminal singularities, $\dim S < \dim X$, the anticanonical divisor of $X$ is relatively ample over $S$ and the relative Picard number is one.

Let $X$ be a normal projective $\mbQ$-factorial variety, $\mcH$ a linear system on $X$, $D \subset X$ a Weil divisor and $C \subset X$ a curve.
We say that $\mcH$ is $\mbQ$-{\it linearly equivalent} to $D$, denoted by $\mcH \sim_{\mbQ} D$, if a member of $\mcH$ is $\mbQ$-linearly equivalent to $D$.
We define $(\mcH \cdot C) := (H \cdot C)$ for $H \in \mcH$.
Assume that $X$ has only terminal singularities.
In this paper, by an {\it extremal divisorial extraction} $\varphi \colon Y \to X$ centered along $\Gamma \subset X$, we mean a contraction of a $K_Y$-negative extremal ray which contracts a divisor onto $\Gamma$ from a normal projective $\mbQ$-factorial variety $Y$ with only terminal singularities.

A closed subscheme $Z$ in a weighted projective space $\mbP (a_0,\dots,a_n)$ is {\it quasismooth} (resp.\ {\it quasismooth outside} $\msp$, where $\msp$ is a vertex) if the affine cone $C_Z \subset \mbA^{n+1}$ is smooth outside the origin (resp. outside the closure of the inverse image of $\msp$ via the morphism $\mbA^{n+1} \setminus \{o\} \to \mbP (a_0,\dots,a_n)$).
For $i = 0,1,\dots,n$, we denote by $\msp_i$ the vertex $(0 \!:\! \cdots \!:\! 1 \!:\! \cdots \!:\! 0)$ of $\mbP (a_0,\dots,a_n)$, where the $1$ is in the $(i+1)$-th position.  

Let $X'$ be a weighted hypersurface in $\mbP := \mbP (a_0,\dots,a_4)$ which is a member of $\mcG'_i$ for $i \in I_{F,cAx/2} \cup I_{F,cAx/4}$.
We write $x_0,\dots,x_3,w$ (resp.\ $x,y,z\dots,w$) for the homogeneous coordinates when we treat several families at a time (resp.\ a specific family).
For example, we write $x_0,x_1,y,z,w$ (resp.\ $x,y,z,t,w$) for the homogeneous coordinates of $\mbP (1,1,2,3,2)$ (resp.\ $\mbP (1,2,3,5,4)$).
The coordinate $w$ is distinguished so that the vertex at which only the coordinate $w$ is non-zero is the unique $cAx/2$ or $cAx/4$ point of $X'$.
For homogeneous polynomials $f_1,\dots,f_m$ in the variables $x_0,\dots,x_3,w$ or $x,y,z,\dots,w$, we denote by $(f_1 = \cdots = f_m = 0)$ the closed subscheme of $\mbP$ defined by the homogeneous ideal $(f_1,\dots,f_m)$ and denote by $(f_1 = \cdots = f_m = 0)_{X'}$ the scheme-theoretic intersection of $(f_1 = \cdots = f_m = 0)$ and $X'$.
For a polynomial $f = f (x_0,\dots,x_3,w)$ and a monomial $x_0^{x_0} \cdots x_3^{c_3} w^{d}$, we write $x_0^{c_0} \cdots x_3^{c_3} w^c \in f$ (resp.\ $x_0^{c_0} \cdots x_3^{c_3} w^c \notin f$) if the coefficient of the monomial in $f$ is non-zero (resp. zero).

A {\it weighted complete intersection curve} ({\it WCI curve}, for short) of type $(c_1,c_2,c_3)$ (resp.\ of type $(c_1,c_2,c_3,c_4)$) in $\mbP (a_0,\dots,a_4)$ (resp.\ $\mbP (a_0,\dots,a_5)$) is an irreducible and reduced curve defined by three (resp.\ four) homogeneous polynomials of degree $c_1,c_2$ and $c_3$ (resp.\ $c_1,c_2,c_3$ and $c_4$).

\subsection{Divisorial extraction centered at a singular point}

In this subsection we recall the classification of divisorial extractions centered at a terminal quotient singular point, a $cAx/2$ point and a $cAx/4$ point.

\begin{Thm}[{\cite{Kaw96}}] \label{thm:cyclicquot}
Let $(X, \msp)$ be a germ of a terminal quotient singular point of type $\frac{1}{r} (1,a,r-a)$, where $r$ is coprime to $a$ and $0 < a < r$.
If $\varphi \colon Y \to X$ is a divisorial contraction centered along $\Gamma$ such that $\msp \in \Gamma$, then $\Gamma = \msp$ and $\varphi$ is the weighted blowup with weights $\frac{1}{r} (1,a,r-a)$.
\end{Thm}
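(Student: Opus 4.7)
The plan is to lift the contraction $\varphi$ to the index-one cover $\pi\colon (\tilde X, \tilde\msp) \to (X, \msp)$, where $\tilde X \cong (\mbC^3, o)$ carries a free-in-codimension-one action of $\mu_r$ with weights $(1, a, r-a)$ and $X = \tilde X/\mu_r$. Normalizing the fibre product $\tilde X \times_X Y$ produces a $\mu_r$-equivariant extremal divisorial contraction $\tilde\varphi\colon \tilde Y \to \tilde X$ with $\tilde Y$ still having (at worst) terminal singularities, and whose exceptional locus lies over $\tilde\msp = o$. Since $\pi$ is étale in codimension one, $K_{\tilde X} = \pi^* K_X$, and so discrepancies can be compared between the cover and the base after accounting for the ramification of $\tilde Y \to Y$ along $\Exc(\tilde\varphi)$.

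First I would rule out the possibility that $\Gamma$ is one-dimensional. In that case $\pi^{-1}(\Gamma)$ is a $\mu_r$-invariant union of curves through $o$ in $\mbC^3$, and $\tilde\varphi$ is an extremal divisorial contraction of a smooth germ with one-dimensional center. By the classification of such contractions on a smooth threefold, $\tilde\varphi$ must be the blowup of a smooth irreducible curve through $o$; $\mu_r$-invariance then forces this curve to be contained in one of the coordinate axes $(\tilde x_j = \tilde x_k = 0)$, and the resulting quotient $\tilde Y/\mu_r = Y$ acquires a singularity along the image curve whose type, dictated by the induced $\mu_r$-action on the normal bundle, cannot be terminal given $\gcd(a, r) = 1$. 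This produces the desired contradiction and forces $\Gamma = \msp$.

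Once $\Gamma = \msp$, the exceptional divisor $\tilde E = \Exc(\tilde\varphi)$ is irreducible and sits above $o$. I would then apply the classification of divisorial extractions from a smooth threefold point: up to an analytic change of coordinates of $(\mbC^3, o)$, the map $\tilde\varphi$ is a weighted blowup with weights $(w_0, w_1, w_2)$, $\gcd(w_0, w_1, w_2) = 1$. The $\mu_r$-equivariance of $\tilde\varphi$ forces these weights to be assigned to the three eigen-coordinates $\tilde x_0, \tilde x_1, \tilde x_2$ of weights $1, a, r - a$. Matching the induced $\mu_r$-action on $\tilde E \cong \mbP(w_0, w_1, w_2)$ with the requirement that the quotient $E = \tilde E/\mu_r$ generate $N^1(Y/X)$ and that $Y$ be terminal uniquely determines $(w_0, w_1, w_2) = (1, a, r - a)$ in the order matching the weights of $\mu_r$, yielding exactly the economic weighted blowup.

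The hard part is the second step together with the weight-matching at the end of the third: one has to rule out every other $\mu_r$-equivariant weighted blowup of $(\mbC^3, o)$ as a candidate for $\tilde\varphi$. Concretely, any alternative weights $(w_0, w_1, w_2)$ produce residual singularities of $\tilde Y/\mu_r$ on the exceptional weighted $\mbP^2$ that either violate terminality of $Y$ or lead to a discrepancy $a(E, X) \in \frac{1}{r}\mbZ_{>0}$ inconsistent with $\tilde\varphi$ being an extremal extraction; verifying this exclusion case-by-case constitutes the technical core of Kawamata's argument in \cite{Kaw96}.
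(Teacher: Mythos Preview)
The paper does not prove this theorem; it is quoted without proof from Kawamata \cite{Kaw96}. So there is nothing in the paper to compare against, and your sketch must stand on its own.

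The lifting strategy is natural, but two steps are genuine gaps.

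First, the passage from $Y$ to $\tilde Y$ is not as clean as you assert. You claim that $\tilde Y$ has at worst terminal singularities and that $\tilde\varphi$ is again an extremal divisorial contraction, but neither is automatic. The cover $\tilde Y \to Y$ need not be \'etale in codimension one: the group $\mu_r$ can act on the exceptional divisor $\tilde E$ with nontrivial kernel (for instance, already for the Kawamata blowup with $a=1$ and $r$ even, the element $-1 \in \mu_r$ acts trivially on $\tilde E \cong \mbP^2$), so there is genuine ramification along $\tilde E$ and the discrepancy bookkeeping you allude to does real work that is not spelled out. In that example $\tilde Y$ happens to be smooth, but for an \emph{unknown} $\varphi$ you cannot yet use any explicit description, so terminality of $\tilde Y$ remains unjustified. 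Similarly, the exceptional locus of $\tilde\varphi$ could a priori split into several $\mu_r$-conjugate components, so $\rho(\tilde Y/\tilde X)=1$ is not immediate either.

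Second, and more seriously, the ``classification of divisorial extractions from a smooth threefold point'' that you invoke does not exist in the form you state. There is no elementary result asserting that every terminal extremal extraction from a smooth $3$-fold point is a weighted blowup in suitable coordinates with some coprime weights $(w_0,w_1,w_2)$. The correct statement, due to Kawakita (2001), is that the \emph{only} such extraction is the ordinary blowup, i.e.\ weights $(1,1,1)$. Granting Kawakita's theorem together with the issues above, your argument would indeed finish---the $\mu_r$-quotient of the ordinary blowup of $(\mbC^3,o)$ is precisely the Kawamata blowup---but Kawakita's theorem postdates \cite{Kaw96} and is substantially harder, so this inverts the logical order rather than giving an independent proof.

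Kawamata's own argument in \cite{Kaw96} is quite different and does not pass to the cover. It works directly on $X$: the weighted blowup with weights $\frac{1}{r}(1,a,r-a)$ extracts the unique exceptional divisor over $\msp$ of minimal discrepancy $1/r$, and one shows that terminality of $Y$ forces the divisor extracted by any extremal $\varphi$ to achieve this minimum, hence to coincide with it.
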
  

The weighted blowup in the above theorem is called the {\it Kawamata blowup} at $\msp$.
It is the unique extremal divisorial extraction centered at a terminal quotient singular point $\msp$. 
This theorem in particular implies that there is no divisorial extraction centered along a curve through a terminal quotient singular point.
In the following, $\mbZ_m$ denotes the cyclic group of order $m$, $(x,y,z,u)/\mbZ_m (a,b,c,d)$ is the quotient of the affine $4$-space with affine coordinates $x,y,z,u$ under the $\mbZ_m$-action given by $(x,y,z,u) \mapsto (\zeta_m^a x, \zeta_m^b y, \zeta_m^c z,\zeta_m^d u)$, where $\zeta_m$ is a primitive $m$-th root of unity,  and $(g (x,y,z,u) = 0)/\mbZ_m (a,b,c,d)$ is the quotient space of the hypersurface $g = 0$ in $\mbA^4$ for a $\mbZ_m$-semi-invariant polynomial $g$.

\begin{Def} \label{def:cAx}
Let $X$ be a germ of a $3$-dimensional terminal singularity.
We say that the singularity is {\it of type} $cAx/2$ (resp.\ $cAx/4$) if there is an embedding $X \inj (x,y,z,u)/\mbZ_2 (0,1,1,1)$ (resp.\ $X \inj (x,y,z,u)/\mbZ_4 (1,3,1,2)$) such that
\[
\begin{split}
& X \cong (x^2 + y^2 + f(z,u) = 0) / \mbZ_2 (0,1,1,1), \\
(\text{resp.\ } & X \cong (x^2 + y^2 + f(z,u) =0) / \mbZ_4 (1,3,1,2)),
\end{split}
\]
where $f (z,u) \in (z,u)^4 \mbC \{z,u\}$ is a $\mbZ_2$-invariant (resp.\ $f (z,u) \in \mbC \{z,u\}$ is a $\mbZ_4$-semi-invariant and $u \notin f (z,u)$).
\end{Def}

By the main results of \cite{Ka05}, every divisorial extraction centered at $cAx/2$ (resp.\ $cAx/4$) point has discrepancy $1/2$ (resp.\ $1/4$), hence the following result gives the complete classification of divisorial extractions centered at $cAx/2$ or $cAx/4$ point.

\begin{Thm}[{\cite[Theorems 8.4 and 8.9]{Ha99}}] \label{thm:divext/2}
Let $X$ be a germ of a $cAx/2$ point with the embedding
\[
X \cong (x^2 + y^2 + f(z,u) = 0) /\mbZ_2 (0,1,1,1) \subset (x,y,z,u)/\mbZ_2 (0,1,1,1).
\]
Let us consider the weight given by $\wt (z) = \wt (u) = 1/2$ and let $k = \wt (f (z,u))$ be the weight of $f(z,u)$.
We denote by $f_{\wt = k} (z,u)$ the weight $= k$ part of $f (z,u)$. 
\begin{enumerate}
\item If $f_{\wt = k} (z,u)$ is not a square, then there is a unique divisorial extraction $\varphi \colon Y \to X$ of $X$ with discrepancy $1/2$, which is the weighted blowup with 
\[
\wt (x,y,z,u) =
\begin{cases}
\frac{1}{2} (k,k+1,1,1), & \text{if } k \text{ is even}, \\
\frac{1}{2} (k+1,k,1,1), & \text{if } k \text{ is odd}.
\end{cases}
\]
\item If $f_{\wt = k} (z,u)$ is a square, then there are exactly two divisorial extractions $\varphi_{\pm} \colon Y_{\pm} \to X$ of $X$ with discrepancy $1/2$ $($see Remark \ref{rem:divext} below for the description of $\varphi_{\pm}$$)$.
\end{enumerate}
\end{Thm}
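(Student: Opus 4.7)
The plan is to follow the general strategy for classifying divisorial extractions with minimal discrepancy centered at a $cDV$-type terminal singularity, namely realize any such extraction as a weighted blowup in the given cyclic-quotient embedding and then rule out all but the listed weight systems by combining the discrepancy equation with the terminality of the extracted variety.

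First I would establish that the extraction is a weighted blowup. Given an extremal divisorial extraction $\varphi \colon Y \to X$ with $a(E, X) = 1/2$, where $E$ is the exceptional divisor, work on the cyclic double cover $\tilde{X} := (x^2 + y^2 + f(z,u) = 0) \subset \mbA^4$. The valuation $\nu_E$ lifts to a $\mbZ_2$-invariant valuation $\tilde{\nu}$ on the cover, and the center on $\tilde{X}$ is the origin. Assign weights $\alpha = \tilde{\nu}(x), \beta = \tilde{\nu}(y), \gamma = \tilde{\nu}(z), \delta = \tilde{\nu}(u)$; these lie in $\frac{1}{2}\mbZ_{>0}$ because $\nu_E$ takes integer values on $X$. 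A Newton-polytope comparison then shows that $\tilde{\nu}$ coincides with the monomial valuation of weight $(\alpha,\beta,\gamma,\delta)$ on $\mbA^4$, so $\varphi$ is (a component of) the weighted blowup with these weights.

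Next I would use the discrepancy equation and the equation of $\tilde{X}$ to pin down the weights. The discrepancy formula on the ambient cover gives
\[
\alpha + \beta + \gamma + \delta - \min\bigl(2\alpha,\, 2\beta,\, \tilde{\nu}(f(z,u))\bigr) = 1 + \tfrac{1}{2} \cdot 2 = 2,
\]
and the hypersurface part of the Newton polytope must be supported. This forces $\min(2\alpha, 2\beta, \tilde{\nu}(f)) = 2\alpha = 2\beta$ (or swapped), reducing us to solving $\alpha+\gamma+\delta = 2 - (\beta - \alpha)/\cdots$, and ultimately $\gamma = \delta = 1/2$ after using the $\mbZ_2$-character constraints (the coordinates $y,z,u$ transform nontrivially). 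Writing $k = \tilde{\nu}(f)$ and solving in the two parities of $k$ yields exactly the weight vectors listed in (1).

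Finally I would handle the square/non-square dichotomy and terminality of $Y$. After the weighted blowup, one checks that $Y$ has only terminal cyclic quotient singularities by examining each affine chart: the proper transform of the hypersurface equation in the chart $x \neq 0$ is of the form $1 + y'^2/x^{?} + f_{\wt=k}(z',u')/x^{k} + (\text{higher})$, and its singularities are controlled by the leading form $f_{\wt=k}$. If $f_{\wt=k}(z,u)$ is not a square, the two choices of weight (swap $\alpha,\beta$) give isomorphic extractions by exchanging $x \leftrightarrow y$, which proves uniqueness. If $f_{\wt=k} = g(z,u)^2$, then one may perform the coordinate change $y \mapsto y \pm i g(z,u)$ (with $i^2 = -1$) before blowing up; each choice of sign produces a genuinely different valuation with the same numerical weights, giving the two extractions $\varphi_\pm$. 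That these are the only possibilities follows again from the Newton-polytope argument together with the requirement that $Y$ be terminal.

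The main obstacle is the terminality verification on the extracted variety $Y$ in the various coordinate charts and the demonstration that no extraction outside this list produces a normal $\mbQ$-factorial terminal threefold. In particular, the delicate point is ruling out weight systems in which $f_{\wt=k}$ splits partially, and verifying in the square case that the two candidates $\varphi_\pm$ really give non-isomorphic extractions rather than being identified by an automorphism of the germ; this is where the detailed local analysis of Hayakawa is indispensable.
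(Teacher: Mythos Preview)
The paper does not prove this statement; it is quoted verbatim as a result of Hayakawa (\cite[Theorems 8.4 and 8.9]{Ha99}) and is used as a black box. So there is no ``paper's own proof'' to compare against, only the cited reference and the accompanying Remark~\ref{rem:divext} that records the explicit description of $\varphi_\pm$.

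That said, your sketch of the square case does not match Hayakawa's construction as recorded in Remark~\ref{rem:divext}. You propose the coordinate change $y \mapsto y \pm i g(z,u)$ and then a weighted blowup with the \emph{same} weight vector as in the non-square case. In fact one writes $f_{\wt=k} = -g^2$ (note the sign), so that $x^2 + f_{\wt=k} = (x-g)(x+g)$, and the change is $x_1 = x \pm g$ (or the analogous change in $y$ when $k$ is odd), with no $\sqrt{-1}$ needed. More importantly, after this change the weighted blowup uses the \emph{higher} weight system $\frac{1}{2}(k+2,k+1,1,1)$ rather than $\frac{1}{2}(k,k+1,1,1)$; the point is that the coordinate change raises the weight of the defining equation, and it is this higher weight that gives terminal $Y_\pm$. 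Your version with unchanged weights would simply reproduce the non-square extraction and would not yield two distinct contractions. The rest of your outline (reduce to a monomial valuation, use the discrepancy identity, check terminality chart by chart) is in the right spirit, but the discrepancy equation you wrote is not quite correct for a hypersurface in a cyclic quotient, and the assertion that ``a Newton-polytope comparison'' forces the valuation to be monomial hides the genuinely hard part of Hayakawa's argument.
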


\begin{Thm}[{\cite[Theorems 7.4 and 7.9]{Ha99}}] \label{thm:divext/4}
Let $X$ be a germ of a $cAx/4$ point with the embedding
\[
X \cong (x^2 + y^2 + f(z,u) = 0) / \mbZ_4 (1,3,1,2) \subset (x,y,z,u) / \mbZ_4 (1,3,1,2).
\]
Let us consider the weight given by $\wt (z) = 1/4$ and $\wt (u) = 1/2$, and let $k$ be the nonnegative integer such that $\wt (f (z,u)) = (2 k +1)/2$.
We denote by $f_{\wt = (2k+1)/2}$ the weight $= (2 k +1)/2$ part of $f (z,u)$.
\begin{enumerate}
\item If $f_{\wt = (2k+1)/2} (z,u)$ is not a square, then there is a unique divisorial extraction $\varphi \colon Y \to X$ of $X$ with discrepancy $1/4$, which is the weighted blowup with
\[
\wt (x,y,z,u) =
\begin{cases}
\frac{1}{4} (2k+1,2k+3,1,2), & \text{if } k \text{ is even}, \\
\frac{1}{4} (2k+3,2k+1,1,2), & \text{if } k \text{ is odd}.
\end{cases}
\]
\item If $f_{\wt = (2k+1)/2}$ is a square, then there are exactly two divisorial extractions $\varphi_{\pm} \colon Y_{\pm} \to X$ of $X$ with discrepancy $1/4$ $($see Remark \ref{rem:divext} below for the description$)$.
\end{enumerate}
\end{Thm}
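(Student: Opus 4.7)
The plan is to translate the problem into a question about divisorial valuations and then exploit the rigidity that comes simultaneously from the discrepancy constraint and the hypersurface equation.

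First I would pass from the extraction $\varphi\colon Y\to X$ to the associated divisorial valuation $v:=\operatorname{ord}_E$ on the function field of the germ; since $v$ determines $\varphi$ up to isomorphism, classifying the extractions reduces to classifying the admissible valuations $v$. After lifting to the $\mbZ_4$-cover, the discrepancy condition $a(E,X)=1/4$ becomes a single linear constraint on $(v(x),v(y),v(z),v(u))$ coming from the standard formula $v(\omega_X)=1/4$, evaluated on the explicit generator of $\omega_X$ obtained as the Poincar\'e residue of $dx\wedge dy\wedge dz\wedge du/(x^2+y^2+f)$ and corrected by the character of the cyclic quotient.

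Second, I would exploit the equation $x^2+y^2+f(z,u)=0$, which forces $2v(x)=2v(y)=v(f)$ once one demands that the initial form of this equation cut out the exceptional divisor of $\varphi$. Together with the $\mbZ_4$-grading, which forces $v(z)\in \tfrac{1}{4}+\tfrac{1}{2}\mbZ$ and $v(u)\in \tfrac{1}{2}\mbZ$, the standard economy argument of Kawakita--Hayakawa squeezes the admissible values down to $v(z)=1/4$ and $v(u)=1/2$. Substituting back into the discrepancy constraint pins down $\{v(x),v(y)\}=\{(2k+1)/4,(2k+3)/4\}$, and the parity of $k$ dictates which of $x,y$ receives which value so that the initial form of $x^2+y^2+f$ is $v$-homogeneous and compatible with the $\mbZ_4$-characters $(1,3)$ assigned to $x,y$.

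Third, I would resolve the dichotomy. When $f_{\wt=(2k+1)/2}$ is not a square, the residual $O(2)$-symmetry acting on $(x,y)$ is transitive on the possible choices of initial form, producing a single isomorphism class of extraction and hence the unique weighted blowup asserted in part~(1). When $f_{\wt=(2k+1)/2}=g(z,u)^2$, the two factorizations $(y+ig)$ and $(y-ig)$, after being absorbed into the $x\pm iy$ factors, give two inequivalent normal forms of the lowest-weight equation; each is then a weighted blowup, producing the two extractions $\varphi_\pm$ of part~(2). Exhaustiveness follows because any admissible $v$ is, by the analysis above, conjugate under the symmetries just described to one of the enumerated valuations.

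The principal obstacle will be ruling out exotic, non-toric divisorial valuations: it is not a priori evident that every divisorial extraction of minimal discrepancy $1/4$ arises as a weighted blowup in some system of embedded coordinates. Overcoming this requires the full economy argument applied to the exceptional divisor $E$, proving that $E$ is irreducible of Picard number one with at worst terminal cyclic quotient singularities, and that the induced filtration on the ambient germ is generated, after taking the $\mbZ_4$-quotient, by $x,y,z,u$ themselves. This is precisely the deep structural input that one would import wholesale from \cite{Ha99} rather than redo from scratch.
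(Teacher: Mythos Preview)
The paper does not prove this theorem at all: it is quoted verbatim from Hayakawa \cite[Theorems~7.4 and 7.9]{Ha99} and used as a black box, with the only elaboration being Remark~\ref{rem:divext}, which merely records the explicit coordinate changes $\chi_\pm$ defining $\varphi_\pm$ in the square-type case. There is therefore no ``paper's own proof'' to compare your proposal against.

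Your sketch is a plausible high-level outline of how the Hayakawa argument runs, and you correctly identify the hard part---ruling out non-toric divisorial valuations via the economy/structural analysis of the exceptional divisor---as the content one imports from \cite{Ha99} rather than reproves. One point to flag: your heuristic ``$2v(x)=2v(y)=v(f)$'' is not quite right as stated, since in the actual weighted blowups $v(x)\neq v(y)$ (they differ by $1/2$), so only one of $x^2,y^2$ matches the weight of the lowest part of $f$ while the other sits strictly higher; the correct constraint is that the initial form of $x^2+y^2+f$ be reduced and of the right shape, which forces $\min\{2v(x),2v(y)\}=\wt(f)$ and then the $\mbZ_4$-equivariance pins down which of $x,y$ gets the smaller value. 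This is a minor slip in an otherwise reasonable plan, but since the paper simply cites the result, the honest answer here is that no proof is expected.
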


\begin{Rem} \label{rem:divext}
We explain the description of $\varphi_{\pm}$ in Theorems \ref{thm:divext/2} and \ref{thm:divext/4}.
We refer the reader to \cite[Sections 7 and 8]{Ha99} for details.
Let $X$ be the germ of a $cAx/2$ (resp.\ $cAx/4$) point with the embedding as in Theorem \ref{thm:divext/2} (resp.\ \ref{thm:divext/4}).
Assume that $f_{\wt = k} (z,u) = - g (z,u)^2$ (resp.\ $f_{\wt = (2k+1)/2} = -g (z,u)^2$) for some $g (z,u)$.

We first consider the case where $k$ is even. 
Let 
\[
\chi_{\pm} \colon (x,y,z,u)/\mbZ_2 \cong (x_1,y_1,z_1,u_1)/\mbZ_2 
\]
\[
(\text{resp.\ } \chi_{\pm} \colon (x,y,z,u)/\mbZ_4 \cong (x_1,y_1,z_1,u_1)/\mbZ_4)
\] be the isomorphism defined by
\[
\chi_{\pm}^* (x_1) = x \pm g (z,u),\ \chi_{\pm}^* (y_1) = y,\ \chi_{\pm}^* (z_1) = z,\ \chi_{\pm}^* (u_1) = u.
\]
Composing the above isomorphism, $X$ is embedded into $(x_1,y_1,z_1,u_1)/\mbZ_2$ (resp.\ $(x_1,y_1,z_1,u_1)/\mbZ_4$) with the defining equation
\[
x_1^2 \mp 2 x_1 g (z_1,u_1) + y_1^2 + h(z_1,u_1) = 0,
\]
where $h = f - f_{\wt = k}$ (resp.\ $h = f - f_{\wt = (2k+1)/2}$).
Then $\varphi_{\pm}$ is the weighted blowup of $X$ with $\wt (x_1,y_1,z_1,u_1) = \frac{1}{2} (k+2,k+1,1,1)$ (resp.\ $\frac{1}{4} (2k+5,2k+3,1,2)$).

We consider the case where $k$ is odd.
Replacing $x$ and $y$ in the above argument, we have the embedding $X \inj (x_1,y_1,z_1,u_1)/\mbZ_2$ (resp.\ $(x_1,y_1,z_1,u_1)/\mbZ_4$) with the equation
\[
x_1^2 + y_1^2 \mp 2 y_1 g (z_1,u_1) + h (z_1,u_1) = 0,
\]
where $h = f - f_{\wt = k}$ (resp., $h = f - f_{\wt = (2k+1)/2}$).
Then $\varphi_{\pm}$ is the weighted blowup with $\wt (x_1,y_1,z_1,u_1) = \frac{1}{2} (k+1,k+2,1,1)$ (resp.\ $\frac{1}{4} (2k+3,2k+5,1,2)$).
Note that $\varphi_+$ and $\varphi_-$ are distinct divisorial extractions of $X$.
\end{Rem}

\begin{Def}
Let $X$ be a germ of a $cAx/2$ (resp.\ $cAx/4$) point with the embedding as in Theorem \ref{thm:divext/2} (resp.\ \ref{thm:divext/4}) and we keep notation there.
We say that it is {\it of square type} if the lowest weight term $f_{\wt = k} (z,u)$ (resp.\ $f_{\wt = (2k+1)/2}$) of $f(z,u)$ is a square.
We say that it is {\it of non-square type} if it is not a square type. 
\end{Def}

\subsection{Definition of maximal singularities}

Let $X$ be a $\mbQ$-Fano variety and $\mcH$ a movable linear system on $X$, that is, $\mcH$ is a linear system without base divisor.
Let $n$ be a positive rational number such that $\mcH \sim_{\mbQ} - n K_X$.

\begin{Def}
We define
\[
c (X, \mcH) := \max \{ \lambda \mid K_X + \lambda \mcH \text{ is canonical}\}
\]
and call it the {\it canonical threshold} of the pair $(X, \mcH)$.
\end{Def}

\begin{Def}
A {\it maximal singularity of} $\mcH$ is an extremal divisorial extraction $Y \to X$ having exceptional divisor $E$ with 
\[
\frac{1}{n} > c (X,\mcH) = \frac{a_E (K_X)}{m_E (\mcH)},
\]
where $n$ is the positive rational number such that $\mcH \sim_{\mbQ} - n K_X$, $m_E (\mcH)$ is the multiplicity of $\mcH$ along $E$ and $a_E (K_X)$ is the discrepancy of $K_X$ along $E$.
We say that an extremal divisorial extraction is a {\it maximal singularity} if there is a movable linear system $\mcH$ on $X$ such that the extraction is a maximal singularity of $\mcH$. 
A subvariety $\Gamma \subset X$ is called a {\it maximal center} if there is an maximal singularity $Y \to X$ whose center is $\Gamma$. 
\end{Def}

A maximal singularity in this paper is called a strong maximal singularity in \cite{CPR}.
Note that this definition of maximal singularity is different from the original one given in \cite{IM71}.
A maximal singularity in the original form is a divisor $E$ over $X$ such that the inequality
\[
m_E(\mcH) > n a_E (\mcH)
\]
holds, so the corresponding maximal center is the center of non-canonical singularities of the pair $(X, \frac{1}{n} \mcH)$.
The paper \cite{CP} employs this definition.
The point is that we only consider divisors over $X$ which appear as exceptional divisors of extremal divisorial extractions.
We have the following implications: if $\varphi \colon Y \to X$ is a maximal singularity (resp.\ $\Gamma \subset X$ is a maximal center), then the exceptional divisor $E$ of $\varphi$ is a maximal singularity (resp.\ $\Gamma$ is a maximal center) in the original sense.

\subsection{Excluding methods}

In this subsection, we explain several methods to exclude maximal singularities in a general setting.
Although most of the methods in this subsection have already appeared in the literature at least for particular classes of $\mbQ$-Fano $3$-folds, we re-state them in a general setting and give a proof for some of them.
We believe that this also serves as a possible future reference.

In this subsection, let $X$ be a $\mbQ$-Fano $3$-fold and we set $A := -K_X$.
If we are given a birational morphism $\varphi \colon Y \to X$, then we denote by $B$ the anticanonical divisor $-K_Y$ of $Y$.
Recall that a $\mbQ$-Fano variety in this paper is $\mbQ$-factorial with Picard number one and has only terminal singularities.

We explain methods which are mainly used to exclude curves.

\begin{Lem} \label{exclcurvelowdeg}
Let $\Gamma \subset X$ be an irreducible and reduced curve.
If $(A \cdot \Gamma) \ge (A^3)$, then $\Gamma$ is not a maximal center.
\end{Lem}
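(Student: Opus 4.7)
The plan is to derive the standard Noether–Fano-type numerical inequality and then pair a two-fold intersection cycle with the ample class $A$. First, I would reduce the definition of maximal center to the usable inequality $\mult_\Gamma \mcH > n$ for some movable linear system $\mcH \sim_\mbQ nA$. Since $X$ has only finitely many (isolated) singular points, the curve $\Gamma$ meets the smooth locus of $X$; at the generic point of $\Gamma$ the extremal divisorial extraction along $\Gamma$ coincides with the ordinary smooth blowup, whose exceptional divisor $F$ satisfies $a_F(K_X) = 1$ and $m_F(\mcH) = \mult_\Gamma \mcH$. The implication recorded at the end of the previous subsection, that a maximal center in the paper's sense is also a maximal center in the Iskovskikh–Manin sense, then applied to $F$, yields $\mult_\Gamma \mcH > n$.

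Next, I would take two general members $H_1, H_2 \in \mcH$ and analyse the effective 1-cycle $H_1 \cdot H_2$, which has numerical class $n^2 A^2$. At a smooth point of $X$ on $\Gamma$, slicing by a general surface transverse to $\Gamma$ reduces the local estimate to the classical fact that two plane curve germs of multiplicity $> n$ intersect with length $> n^2$. Thus $H_1 \cdot H_2 = \mu \Gamma + Z$ as 1-cycles, with $Z$ effective, not containing $\Gamma$, and $\mu > n^2$. Intersecting with the ample class $A$,
\[
n^2 (A^3) = (H_1 \cdot H_2 \cdot A) = \mu (A \cdot \Gamma) + (A \cdot Z) \ge \mu (A \cdot \Gamma) > n^2 (A \cdot \Gamma),
\]
which forces $(A \cdot \Gamma) < (A^3)$. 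The contrapositive is the desired statement.

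The argument is standard and short, so there is no substantive obstacle. The only point that warrants mild attention is the translation from the paper's extremal-extraction definition of maximal center to the multiplicity inequality $\mult_\Gamma \mcH > n$; this is covered by the implications already stated in the definitional subsection, together with the observation that $\Gamma \not\subset \Sing X$ for dimension reasons, so that $\Gamma$ genuinely meets the smooth locus where the discrepancy computation is elementary.
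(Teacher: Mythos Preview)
Your proposal is correct and is essentially the same argument as the paper's proof, which likewise obtains $\mult_\Gamma \mcH > n$ from the fact that the extremal extraction is the ordinary blowup at the generic point of $\Gamma$, and then compares $(H_1 \cdot H_2 \cdot sA) = sn^2(A^3)$ against the contribution $\ge s m^2 (A\cdot\Gamma)$ from $\Gamma$. The only cosmetic differences are that the paper intersects with a very ample $S \in |sA|$ rather than with $A$ directly, and phrases the conclusion as the contradiction $n \ge m$ rather than as the contrapositive $(A\cdot\Gamma) < (A^3)$.
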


\begin{proof}
The proof is taken from STEP 1 of the proof of \cite[Theorem 5.1.1]{CPR} (see Remark \ref{rem:maxcurve} below).
Assume that $\Gamma$ is a maximal center of a movable linear system $\mcH \sim_{\mbQ} n A$.
Then there is an divisorial extraction $\varphi \colon Y \to X$ centered along $\Gamma$ with exceptional divisor $E$, which is the blowup of $X$ along $\Gamma$ around the generic point of $\Gamma$.
We have $m := \mult_E (\mcH) > n$.
Let $s > 0$ be a sufficiently divisible integer such that $s A$ is very ample.
Let $H_1, H_2 \in \mcH$ and $S \in |s A|$ be general members.
Then we have
\[
s n^2 (A^3) = (S \cdot H_1 \cdot H_2) \ge s m^2 (A \cdot \Gamma) \ge s m^2 (A^3).
\]
This shows that $n \ge m$.
This is a contradiction and $\Gamma$ is not a maximal center.
\end{proof}

\begin{Rem} \label{rem:maxcurve}
In STEP 1 of the proof of \cite[Theorem 5.1.1]{CPR}, $X$ is a quasismooth $\mbQ$-Fano weighted hypersurface, hence it has only terminal quotient singularities, and two claims are proved: for an irreducible curve $\Gamma \subset X$ which is a maximal center, (i) the inequality $(A^3) < (A \cdot \Gamma)$ holds and (ii) $\Gamma$ is contained in the nonsingular locus of $X$ (so that $(A \cdot \Gamma) \in \mbN$).
In our general setting, the proof of the claim (i), which is the one given above, does hold without any change but we cannot prove (ii).
\end{Rem}

\begin{Lem}
Let $\Gamma \subset X$ be an irreducible and reduced curve.
Assume that there is an effective divisor $S$ on $X$ containing $\Gamma$ and a movable linear system $\mcM$ on $X$ whose base locus contains $\Gamma$ with the following properties.
\begin{enumerate} 
\item $S \sim_{\mbQ} m A$ for some rational number $m \ge 1$.
\item For a general member $T \in \mcM$, $T$ is a normal surface, the intersection $S \cap T$ is contained in the base locus of $\mcM$ set-theoretically and $S \cap T$ is reduced along $\Gamma$.
\item Let $T \in \mcM$ be a general member and let $\Gamma$, $\Gamma_1, \dots, \Gamma_l$ the irreducible and reduced curves contained in the base locus of $\mcM$.
For each $i = 1,\dots,l$, there is an effective $1$-cycle $\Delta_i$ on $T$ such that $(\Gamma \cdot \Delta_i) \ge (A \cdot \Delta_i) > 0$ and $(\Gamma_j \cdot \Delta_i) \ge 0$ for $j \ne i$. 
\end{enumerate}
Then $\Gamma$ is not a maximal center.
\end{Lem}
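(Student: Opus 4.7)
I argue by contradiction. Suppose that $\Gamma$ is a maximal center of some movable linear system $\mcH \sim_{\mbQ} n A$, so there is an extremal divisorial extraction $\varphi \colon Y \to X$ centered along $\Gamma$ with exceptional divisor $E_{\varphi}$ satisfying $m_{E_{\varphi}}(\mcH) > n\, a_{E_{\varphi}}(K_X)$. By Theorem~\ref{thm:cyclicquot} no divisorial extraction of $X$ is centered along a curve through a terminal quotient singular point, so the generic point of $\Gamma$ lies in the smooth locus of $X$; consequently $a_{E_{\varphi}}(K_X) = 1$, and writing $\mu := \mult_{\Gamma} \mcH$ for the multiplicity of a general $H \in \mcH$ at the generic point of $\Gamma$, the maximality inequality becomes $\mu > n$.

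Next, I pick a general member $T \in \mcM$. By hypothesis $T$ is normal, and by Serre's $R_1$ property $T$ is smooth at the generic point of $\Gamma$, so $\Gamma$ appears as a prime Weil divisor with multiplicity one on $T$. For general $H \in \mcH$, write
\[
H|_T = \mu\, \Gamma + E,
\]
with $E$ an effective $1$-cycle on $T$ not containing $\Gamma$ in its support. Condition (2) forces $S|_T$ to be supported on $\Gamma \cup \Gamma_1 \cup \cdots \cup \Gamma_l$ with multiplicity one along $\Gamma$, so $S|_T = \Gamma + \sum_{i=1}^l \nu_i \Gamma_i$ with $\nu_i \ge 0$. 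Since $T$ is $\mbQ$-Cartier in $X$, intersection numbers of $\mbQ$-Cartier divisors on $X$ with $1$-cycles on $T$ agree whether computed on $X$ or on $T$. Intersecting with $\Delta_i$ in two different ways gives
\[
n(A \cdot \Delta_i) = (H|_T \cdot \Delta_i)_T = \mu(\Gamma \cdot \Delta_i) + (E \cdot \Delta_i),
\]
and using $\mu > n$ together with $(\Gamma \cdot \Delta_i) \ge (A \cdot \Delta_i) > 0$ from (3),
\[
(E \cdot \Delta_i) = n(A \cdot \Delta_i) - \mu(\Gamma \cdot \Delta_i) \le (n - \mu)(A \cdot \Delta_i) < 0 \qquad \text{for every } i = 1, \dots, l.
\]

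The main obstacle will be to convert these negative intersections into a contradiction using only the effectivity of $E$ and the sign conditions in (3). Since $E$ is effective on the normal surface $T$, each inequality $(E \cdot \Delta_i) < 0$ forces $E$ to share an irreducible component with $\Delta_i$ whose pairing with $\Delta_i$ is negative. Decomposing $E = \sum_{j=1}^l c_j \Gamma_j + E''$ with $c_j \ge 0$ and $E''$ having no component among $\{\Gamma_1, \dots, \Gamma_l\}$, in the intended applications the support of $\Delta_i$ is contained in the configuration $\{\Gamma, \Gamma_1, \dots, \Gamma_l\}$, which yields $(E'' \cdot \Delta_i) \ge 0$; combined with $(\Gamma_j \cdot \Delta_i) \ge 0$ for $j \ne i$, this forces $c_i > 0$ and $(\Gamma_i \cdot \Delta_i) < 0$ for each $i$. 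Finally, combining the relation $m H|_T \sim_{\mbQ} n S|_T$, which rewrites as
\[
(m\mu - n)\Gamma + m E - n \sum_{j=1}^l \nu_j \Gamma_j \sim_{\mbQ} 0
\]
with $m\mu - n > 0$ because $\mu > n$ and $m \ge 1$, and pairing successively against the $\Delta_i$, yields a linear system in the $c_j$ whose required positivity is incompatible with the strictly positive contribution of the $\Gamma$-term. Verifying this last linear-algebra step—ensuring that the $\Delta_i$ are sufficiently "dual" to the configuration $\{\Gamma_i\}$ for the $\Gamma$-contribution not to be absorbed by the $c_i(\Gamma_i \cdot \Delta_i)$ terms—is the delicate heart of the argument.
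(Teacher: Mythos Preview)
Your argument has a genuine gap, and in fact two. First, the case $l=0$ (or more generally $S|_T=\Gamma$, i.e.\ all $\nu_i=0$) is not handled at all: there are no $\Delta_i$ to pair against, so your key inequality $(E\cdot\Delta_i)<0$ is vacuous and you never reach a contradiction. Second, and more seriously, the ``linear-algebra step'' you defer to the end does not follow from the stated hypotheses. You assume that the support of each $\Delta_i$ lies in $\{\Gamma,\Gamma_1,\dots,\Gamma_l\}$, but condition~(3) says nothing of the sort; it only gives the sign conditions $(\Gamma\cdot\Delta_i)\ge(A\cdot\Delta_i)>0$ and $(\Gamma_j\cdot\Delta_i)\ge 0$ for $j\ne i$. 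Without knowing the components of $\Delta_i$, the inequality $(E\cdot\Delta_i)<0$ tells you only that $E$ and $\Delta_i$ share \emph{some} component with negative self-pairing, not that this component is one of the $\Gamma_j$. Even granting your extra assumption, the claim $(E''\cdot\Delta_i)\ge 0$ is unjustified, since $E''$ may contain $\Gamma$ and $\Delta_i$ may as well.

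The paper's proof avoids all of this by decomposing $\frac{1}{n}\mcH|_T=\frac{1}{n}\mcL+\gamma\Gamma+\sum_i\gamma_i\Gamma_i$ with $\mcL$ the \emph{movable} part, so that $\mcL$ is nef on $T$; this nefness replaces your attempted positivity of $(E''\cdot\Delta_i)$. The argument then proceeds in two cases. If every $c_i=0$ (where $S|_T=\Gamma+\sum c_i\Gamma_i$), one intersects $D:=(A-\gamma S)|_T\sim_{\mbQ}\frac{1}{n}\mcL+\sum\gamma_i\Gamma_i$ with $\Gamma$ itself to get $(1-m\gamma)(A\cdot\Gamma)\ge 0$, contradicting $\gamma>1$. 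Otherwise, after reordering so that $\gamma_1/c_1$ is minimal among the nonzero $c_i$, one either repeats the previous step (if $\gamma\le\gamma_1/c_1$) or intersects the single combination $D_1:=(c_1A-\gamma_1S)|_T$ with $\Delta_1$; the ordering guarantees the $\Gamma_i$-coefficients in $D_1$ are nonnegative for $i\ge 2$, and the sign conditions in~(3) then give $c_1-m\gamma_1\ge c_1\gamma-\gamma_1$, i.e.\ $\gamma\le 1$. Only one $\Delta_i$, chosen by this ratio, is ever used.
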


\begin{proof}
Assume that $\Gamma$ is a maximal center of a movable linear system $\mcH \sim_{\mbQ} n A$.
Then there is an exceptional divisor $E$ over $X$ with center $\Gamma$ such that $\mult_E \mcH > n$.
Let $T \in \mcM$ be a general member.
Since the base curves of $\mcM$ are $\Gamma$, $\Gamma_1,\dots,\Gamma_l$ and $S \cap T \subset \Bs \mcM$ is reduced along $\Gamma$, we can write
\[
\begin{split}
A|_T &\sim_{\mbQ} \frac{1}{n} \mcH |_T = \frac{1}{n} \mcL + \gamma \Gamma + \bsum_{i = 1}^l \gamma_i \Gamma_i, \\
m A|_T &\sim_{\mbQ} S|_T = \Gamma + \bsum_{i=1}^l c_i \Gamma_i,
\end{split}
\]
where $\gamma, \gamma_i \ge 0$, $c_i \ge 0$ and $\mcL$ is a movable linear system on $T$.
We have
\[
\gamma \ge \frac{1}{n} \mult_E \mcH > 1.
\]
We set
\[
D := (A- \gamma S)|_T \sim_{\mbQ} \frac{1}{n} \mcL + \bsum_{i=1}^l (\gamma_i - c_i \gamma) \Gamma_i.
\]

If $c_1 = \cdots = c_l = 0$ (this includes the case where $l = 0$), then
\[
(1- m \gamma) (A \cdot \Gamma) = (D \cdot \Gamma) = \frac{1}{n} (\mcL \cdot \Gamma) + \bsum_{i=1}^l \gamma_i (\Gamma_i \cdot \Gamma) \ge 0
\]
since $\mcL$ is nef and $\Gamma \ne \Gamma_i$.
This implies $\gamma \le 1/m \le 1$, a contradiction.
Thus, possibly re-ordering $\Gamma_i$'s, we may assume that $c_1,\dots,c_k$ are non-zero, $c_{k+1} = \cdots = c_l = 0$ and $\gamma_k / c_k \ge \cdots \ge \gamma_1/c_1$ for some $1 \le k \le l$.

Assume that $\gamma \le \gamma_1/c_1$.
Then we have
\[
(1-m \gamma) (A \cdot \Gamma) = (D \cdot \Gamma) = \frac{1}{n} (\mcL \cdot \Gamma) + \bsum_{i=1}^l (\gamma_i - c_i \gamma) (\Gamma_i \cdot \Gamma) \ge 0.
\]
It follows that $\gamma \le 1/m \le 1$.
This is a contradiction and we have $\gamma > \gamma_1/c_1$.
We set
\[
D_1 := (c_1 A - \gamma_1 S)|_T \sim_{\mbQ} \frac{c_1}{n} \mcL + (c_1 \gamma - \gamma_1) \Gamma + \bsum_{i = 2}^l (c_1 \gamma_i - c_i \gamma_1) \Gamma_i.
\]
Let $\Delta_1$ be the effective $1$-cycle on $T$ as in (3).
We have
\[
(c_1 - m \gamma_1) (A \cdot \Delta_1) = (D_1 \cdot \Delta_1) \ge (c_1 \gamma - \gamma_1) (\Gamma \cdot \Delta_1) \ge (c_1 \gamma - \gamma_1) (A \cdot \Delta_1)
\]
since $\mcL$ is nef, $(\Gamma_i \cdot \Delta_1) \ge 0$ for $i \ge 2$ and $(\Gamma \cdot \Delta_1) \ge (A \cdot \Delta_1)$.
This implies $c_1 - m \gamma_1 \ge c_1 \gamma - \gamma_1$ and thus $\gamma \le ((1-m)\gamma_1 + c_1)/c_1 \le 1$.
This is a contradiction and $\Gamma$ is not a maximal center.
\end{proof}

We explain methods which are mainly used to exclude nonsingular points.

\begin{Thm}[{\cite[Theorem 5.3.2]{CPR}, \cite[Corollary 3.4]{Co00}}]  \label{multineq1}
Let $\msp \in X$ be a germ of a nonsingular $3$-fold and $\mcH$ a movable linear system on $X$.
Assume that $K_X + \frac{1}{n} \mcH$ is not canonical at $\msp$.
\begin{enumerate}
\item If $\msp \in S \subset X$ is a surface and $\mcL = \mcH |_S$, then $K_S + \frac{1}{n} \mcL$ is not log canonical.
\item If $H_1 \cap H_2$ is the intersection of two general members $H_1, H_2 \in \mcH$, then $\mult_{\msp} (H_1 \cap H_2) > 4 n^2$.
\end{enumerate}
\end{Thm}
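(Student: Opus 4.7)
The theorem splits into two parts: (1) is a local inversion-of-adjunction argument that descends non-canonicity to a smooth surface through $\msp$, while (2) combines (1) with the classical two-dimensional multiplicity inequality for mobile linear systems on smooth surfaces. My plan is thus to establish (1) first and then use it to deduce (2) by a straightforward calculation with intersection numbers.

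For (1), I would pick a prime divisor $E$ over $X$ whose center contains $\msp$ and which witnesses the non-canonicity, that is, $a(E, X, \tfrac{1}{n}\mcH) < 0$. The crucial idea is to add a general smooth surface $S \subset X$ through $\msp$ to the boundary. Since the center of $E$ contains the point $\msp \in S$, any local generator of $S$ lies in the ideal of that center, and hence $\mult_E(S) \ge 1$. This gives
\[
a(E, X, S + \tfrac{1}{n}\mcH) \;=\; a(E, X, \tfrac{1}{n}\mcH) - \mult_E(S) \;<\; -1,
\]
so $(X, S + \tfrac{1}{n}\mcH)$ is not log canonical at $\msp$. Since $S$ is smooth and locally Cartier at $\msp$, Koll\'ar--Shokurov inversion of adjunction then yields that $(S, \tfrac{1}{n}\mcL)$ is not log canonical at $\msp$, where $\mcL = \mcH|_S$ is a mobile linear system because $\mcH$ is mobile and $S$ was chosen generically among surfaces through $\msp$.

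For (2), I would apply (1) to a general smooth surface $S \ni \msp$ and invoke the classical two-dimensional Corti inequality: if $(S, \tfrac{1}{n}\mcL)$ is not log canonical at a smooth surface point $\msp$ and $\mcL$ is mobile, then general $L_1, L_2 \in \mcL$ satisfy $(L_1 \cdot L_2)_\msp > 4 n^2$, proved in \cite{CPR} and \cite{Co00} by successive blowups of $\msp$ and its infinitely near points. Setting $L_i = H_i \cap S$ for general $H_i \in \mcH$, each component $C$ of the $1$-cycle $H_1 \cap H_2$ meets the general surface $S$ at $\msp$ with local multiplicity $\mult_\msp C$, whence
\[
(L_1 \cdot L_2)_\msp \;=\; (S \cdot H_1 \cdot H_2)_\msp \;=\; \mult_\msp (H_1 \cap H_2),
\]
giving the desired bound. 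The main obstacle is the gap between the hypothesis $a(E, X, \tfrac{1}{n}\mcH) < 0$ (non-canonicity) and the hypothesis $a(E, \cdot) \le -1$ needed to invoke inversion of adjunction; this gap is exactly bridged by adding the general surface $S$, since $\mult_E(S) \ge 1$ for every $E$ whose center passes through $\msp$. Once this bookkeeping is arranged, the reduction in (1) and the surface intersection computation in (2) are both standard.
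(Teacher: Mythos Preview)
The paper does not supply its own proof of this theorem; it is quoted verbatim from \cite[Theorem 5.3.2]{CPR} and \cite[Corollary 3.4]{Co00} and used as a black box. Your sketch is the standard argument from those references: for (1), add $S$ to the boundary so that $a(E,X,S+\tfrac{1}{n}\mcH)<-1$ and invoke inversion of adjunction; for (2), restrict to a general smooth $S$ and apply the two-dimensional Corti inequality (stated in the paper as Theorem~\ref{multineq2} with trivial curve boundary). This is correct and matches the cited sources, so there is nothing to compare against within the present paper.
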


\begin{Thm}[{\cite[Theorem 3.1]{Co00}}] \label{multineq2}
Let $\msp \in \Delta_1 + \Delta_2 \subset S$ be an analytic germ of a normal crossing curve on a nonsingular surface.
Let $\mcL$ be a movable linear system on $S$, and write $(\mcL^2)_{\msp}$ for the local intersection multiplicity $(L_1 \cdot L_2)_{\msp}$ of two general members $L_1, L_2 \in \mcL$.
Fix rational numbers $a_1,a_2 \ge 0$, and assume that
\[
K_S + (1-a_1) \Delta_1 + (1-a_2) \Delta_2 + \frac{1}{n} \mcL
\]
is not log canonical at $\msp$.
Then the following assertions hold.
\begin{enumerate}
\item If either $a_1 \le 1$ or $a_2 \le 1$, then $(\mcL^2)_{\msp} > 4 a_1 a_2 n^2$.
\item If both $a_1 > 1$ and $a_ 2 > 1$, then $(\mcL^2)_{\msp} > 4 (a_1 + a_2 - 1) n^2$.
\end{enumerate}
\end{Thm}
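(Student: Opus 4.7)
The plan is to blow up $\msp$ once and either conclude by a Bezout-style estimate or to feed the non log canonical condition back into a simpler instance of the same statement. Let $\sigma\colon \tilde S\to S$ be the blow up at $\msp$ with exceptional divisor $E$, set $m=\mult_\msp\mcL$, and write $\sigma^{*}\Delta_i=\tilde\Delta_i+E$ and $\sigma^{*}\mcL=\tilde\mcL+mE$. Then the pulled-back boundary is
\[
K_{\tilde S}+(1-a_1)\tilde\Delta_1+(1-a_2)\tilde\Delta_2+\bigl(1-a_1-a_2+\tfrac{m}{n}\bigr)E+\tfrac{1}{n}\tilde\mcL,
\]
which inherits non log canonicity somewhere over $\msp$. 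I distinguish the case in which the coefficient of $E$ already exceeds $1$, i.e.\ $m>(a_1+a_2)n$, from the case in which this coefficient is at most $1$ and the non log canonical point lies on $E$.

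In the first case Bezout gives $(\mcL^2)_\msp\ge m^2>(a_1+a_2)^2 n^2$, and the target bounds follow from $(a_1+a_2)^2\ge 4a_1 a_2$ (AM--GM) for (1), and from the strict inequality $(a_1+a_2-2)^2>0$ under the hypothesis $a_1,a_2>1$ for (2). In the second case I apply the identity $(\mcL^2)_\msp = m^2+(\tilde\mcL^2)_{\tilde\msp}$ together with the induction hypothesis at $\tilde\msp$ on the transformed pair, where one of $\Delta_1,\Delta_2$ is effectively replaced by $E$ with new coefficient $a'=a_1+a_2-m/n\in[0,1]$. For example, when $\tilde\msp=E\cap\tilde\Delta_1$ part (1) of the induction applies (since $a'\le 1$) and yields
\[
(\tilde\mcL^2)_{\tilde\msp}>4a_1\bigl(a_1+a_2-\tfrac{m}{n}\bigr)n^2;
\]
adding $m^2$ and rearranging via $(m-2a_1 n)^2\ge 0$ then recovers $(\mcL^2)_\msp>4a_1 a_2 n^2$. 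Case (2) comes out of the analogous square completion with the stronger inductive bound.

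The genuinely delicate point is the remaining sub-case in which the non log canonical point $\tilde\msp$ lies on $E$ but on neither $\tilde\Delta_i$, forcing the induction to continue by further blow ups inside $E$. The main obstacle is to organise the bookkeeping of the coefficients generated on successive exceptional divisors so that the quadratic inequalities underlying (1) and (2) remain usable, and in particular to handle the transition between the two regimes at $a_1+a_2=2$, where $(a_1+a_2)^2-4(a_1+a_2-1)=(a_1+a_2-2)^2$ degenerates. One controls this by invoking the Koll\'ar--Shokurov connectedness principle for the locus of non log canonicity above $\msp$, which keeps this locus connected and of strictly dropping dimension along the blow-up tower and thus guarantees termination of the induction with a non-negative coefficient $a'$.
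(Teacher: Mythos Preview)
The paper does not prove this theorem; it is quoted verbatim from \cite[Theorem~3.1]{Co00} and no argument is supplied. So there is no ``paper's own proof'' to compare against, only Corti's original.

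Your sketch follows the same inductive blow-up strategy as Corti's proof: blow up $\msp$, compare $m$ to $(a_1+a_2)n$, and in the non-terminal branch recurse on the pair $(\tilde\Delta_i,E)$ with the new coefficient $a'=a_1+a_2-m/n\le 1$. The algebraic manipulations you indicate (AM--GM, completing the square $(m-2a_1 n)^2\ge 0$) are the right ones and match Corti's computations.

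The one genuine gap is your handling of termination. You appeal to Koll\'ar--Shokurov connectedness to force the non log canonical locus to have ``strictly dropping dimension along the blow-up tower'', but connectedness does not give this: the non-klt locus over a point is already zero-dimensional after the first blow-up and stays that way, so nothing drops. What actually terminates the induction is that the sequence of blow-ups is a log resolution of the base ideal of $\mcL$; after finitely many steps the proper transform $\tilde\mcL$ is base-point free at the relevant point, and then the non log canonical hypothesis forces some exceptional coefficient to exceed $1$, landing you in your first case. Without this (or an equivalent Noetherian argument on the multiplicity sequence), the recursion you set up is not a valid induction. Corti's original proof is explicit on this point; you should either cite it or supply the missing termination argument.
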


\begin{Def}[{\cite[Definition 5.2.4]{CPR}}] \label{def:isol}
Let $L$ be a Weil divisor class on $X$ and $\Gamma \subset X$ an irreducible subvariety of codimension $\ge 2$.
For an integer $s > 0$, consider the linear system
\[
\mcL^s_{\Gamma} := |\mcI^s_{\Gamma} (s L)|,
\]
where $\mcI_{\Gamma}$ is the ideal sheaf of $\Gamma$.
We say that the class $L$ {\it isolates} $\Gamma$ if $\Gamma \subset \Bs \mcL^s_{\Gamma}$ is an isolated component for some positive integer $s$ and, in the case $\Gamma$ is a curve, the generic point of $\Gamma$ appears in $\Bs \mcL^s_{\Gamma}$ with multiplicity one.
\end{Def} 

\begin{Lem}[{\cite{CPR}}] \label{exclmostnspt}
Let $\msp \in X$ be a nonsingular point.
If $l A$ isolates $\msp$ for some $0 < l \le 4/(A^3)$, then $\msp$ is not a maximal center.
\end{Lem}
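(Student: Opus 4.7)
The plan is to argue by contradiction, following the standard template used for excluding nonsingular points via isolating classes. Suppose that $\msp$ is a maximal center of some movable linear system $\mcH \sim_{\mbQ} n A$. Then the pair $(X, \frac{1}{n} \mcH)$ fails to be canonical at $\msp$, so by Theorem \ref{multineq1}(2) the 1-cycle $Z := H_1 \cdot H_2$ obtained from two general members $H_1,H_2 \in \mcH$ satisfies
\[
\mult_{\msp}(Z) > 4 n^2.
\]

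Next I would exploit the isolating hypothesis. By assumption there exists a positive integer $s$ such that $\msp$ is an isolated component of $\Bs \mcL^s_{\msp}$, where $\mcL^s_{\msp} = |\mcI^s_{\msp}(s l A)|$. Let $\Gamma_1,\dots,\Gamma_r$ be the irreducible components of $Z$ passing through $\msp$. Since each $\Gamma_j$ is a curve through $\msp$ but $\Bs \mcL^s_{\msp}$ coincides with $\{\msp\}$ in a Zariski neighborhood of $\msp$, no $\Gamma_j$ is contained in $\Bs \mcL^s_{\msp}$, and therefore a general member $D \in \mcL^s_{\msp}$ contains none of the $\Gamma_j$ as a component. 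In particular the intersection $D \cdot Z$ is a well-defined effective 0-cycle and the local contribution at $\msp$ is non-negative, giving
\[
(D \cdot Z) \;\ge\; (D \cdot Z)_{\msp} \;\ge\; \mult_{\msp}(D) \cdot \mult_{\msp}(Z) \;\ge\; s \cdot \mult_{\msp}(Z) \;>\; 4 s n^2,
\]
where the multiplicative inequality at the nonsingular point $\msp$ is standard and $\mult_{\msp}(D) \ge s$ holds because $D \in |\mcI^s_{\msp}(sl A)|$.

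On the other hand, $D \sim_{\mbQ} s l A$ and $Z \sim_{\mbQ} n^2 A^2$, so numerically
\[
(D \cdot Z) = s l n^2 (A^3).
\]
Comparing the two estimates yields $s l n^2 (A^3) > 4 s n^2$, i.e.\ $l > 4/(A^3)$, contradicting the hypothesis $l \le 4/(A^3)$. The only subtle point in the plan is verifying that a general $D \in \mcL^s_{\msp}$ does not absorb any component of $Z$ through $\msp$, but this follows immediately from the definition of an isolated component of the base locus together with $X$ being $\mbQ$-factorial. No further ingredients beyond Theorem \ref{multineq1}(2) and routine intersection theory on $X$ are needed.
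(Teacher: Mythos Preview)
Your argument is correct and is precisely the argument from \cite[Proof of (A)]{CPR} that the paper cites; the paper itself gives no independent proof. One small tightening: you conclude that $D \cdot Z$ is a well-defined effective $0$-cycle from the fact that $D$ contains none of the components $\Gamma_1,\dots,\Gamma_r$ of $Z$ through $\msp$, but $Z$ could have further components away from $\msp$ which lie in $\Bs \mcL^s_{\msp}$ and hence in $D$. The clean fix is to write $Z = Z_0 + Z_1$ with $Z_0$ supported on the $\Gamma_j$ and $Z_1$ supported away from $\msp$; then $\mult_{\msp} Z = \mult_{\msp} Z_0$, the cycle $D \cdot Z_0$ is well-defined and effective, and
\[
s l n^2 (A^3) = (s l A \cdot Z) \ge (s l A \cdot Z_0) = (D \cdot Z_0) \ge s \, \mult_{\msp} Z_0 > 4 s n^2,
\]
using ampleness of $A$ for the first inequality. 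With this adjustment your proof is complete and identical in substance to the cited one.
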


\begin{proof}
See \cite[Proof of (A)]{CPR} for a proof.
\end{proof}

We explain methods which are mainly used to exclude singular points.

\begin{Lem} \label{lem:intNE}
Let $\Gamma \subset X$ be a maximal center and $\varphi \colon (E \subset Y) \to (\Gamma \subset X)$ the maximal extraction.
Then the $1$-cycle $(-K_Y)^2$ is in the interior of the cone $\overline{\operatorname{NE}} (Y)$ of effective curves on $Y$.
\end{Lem}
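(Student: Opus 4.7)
The plan is to use the maximal-singularity hypothesis to realize $-K_Y$ as a strictly positive $\mbQ$-combination of a movable divisor and the exceptional divisor $E$, and then to exploit the Picard rank two of $Y$ to reduce the problem to two explicit intersection-number inequalities.

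Fix a movable linear system $\mcH \sim_{\mbQ} -nK_X$ witnessing that $\Gamma$ is a maximal center, so that $m := m_E(\mcH) > n\, a_E(K_X) =: na$. Writing $\mu := m - na > 0$, the strict transform $\tilde{\mcH}$ on $Y$ is movable, has no fixed divisor, and satisfies
\[
-K_Y \sim_{\mbQ} \tfrac{1}{n}\tilde{\mcH} + \tfrac{\mu}{n}E.
\]
Since $X$ has Picard number one and $\varphi$ is an extremal divisorial extraction, $Y$ has Picard number two. Consequently $\bNE(Y) \subset N_1(Y)_{\mbR} \cong \mbR^2$ is a two-dimensional cone with extremal rays $R_\varphi$ (contracted by $\varphi$) and $R'$, and $(-K_Y)^2 \in \Int\, \bNE(Y)$ is equivalent to strict positivity against the two nef classes generating the extremal rays of $\overline{\Nef}(Y)$. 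These are, up to scale, $\varphi^*(-K_X)$ (dual to $R_\varphi$) and some other nef class $D'$ (dual to $R'$).

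For the first test class, expanding via $-K_Y = \varphi^*(-K_X) - aE$ and using the projection formula with $\varphi_*E = 0$ yields
\[
\varphi^*(-K_X)\cdot (-K_Y)^2 \;=\; (-K_X)^3 + a^2\,\varphi^*(-K_X)\cdot E^2.
\]
When $\Gamma$ is a point, $\varphi^*(-K_X)|_E = 0$ and the second summand vanishes. When $\Gamma$ is a curve, an adjunction computation on $E$ rewrites the second term as $-a^2 \epsilon\,(-K_X \cdot \Gamma)$ for a positive invariant $\epsilon$ of the extraction, and positivity of the whole expression follows from the universal bound $-K_X \cdot \Gamma < (-K_X)^3$ enjoyed by any maximal-center curve (Lemma~\ref{exclcurvelowdeg}). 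For the second test class, I would instead exploit the movable decomposition: expanding
\[
D'\cdot (-K_Y)^2 \;=\; \tfrac{1}{n^2}\bigl(D'\cdot \tilde{\mcH}^2 + 2\mu\, D' \cdot \tilde{\mcH} \cdot E + \mu^2\, D'\cdot E^2\bigr),
\]
the first two summands are non-negative (nef paired with the intersection of a movable class and either itself or the effective class $E$), and the strictly positive $\mu > 0$ from the maximal-singularity inequality, together with the bigness of $\tilde{\mcH}$ (which forces $D' \cdot \tilde{\mcH}^2 > 0$), provides the slack needed to overcome any non-positive contribution coming from $E^2$.

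The principal obstacle is the curve-center case of the first inequality: the cross term $\varphi^*(-K_X)\cdot E^2$ carries the opposite sign, and its domination by $(-K_X)^3$ hinges on the numerical bound available only for maximal centers. This is precisely the point at which the maximality of the singularity, rather than the mere extremality of the extraction, is genuinely indispensable.
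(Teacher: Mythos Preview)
Your framework is correct: use $-K_Y \sim_{\mbQ} \tfrac{1}{n}\tilde{\mcH} + \tfrac{\mu}{n}E$ with $\mu > 0$ and test $(-K_Y)^2$ against the two extremal nef classes. But the argument for the second nef class $D'$ has a genuine gap. You claim that bigness of $\tilde{\mcH}$ forces $D' \cdot \tilde{\mcH}^2 > 0$; however, $\tilde{\mcH}$ is only known to be mobile, not big, and even for a big divisor $D$ on a threefold there is no general principle ensuring $M \cdot D^2 > 0$ for every nonzero nef $M$. Since $D' \cdot E^2 = D'|_E \cdot E|_E < 0$ (here $D'|_E$ is nonzero nef because $D' \cdot R_\varphi > 0$, and $-E|_E$ is $\varphi|_E$-ample), your three-term expansion carries a genuinely negative contribution and your appeal to ``slack'' does not explain what dominates it.

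The argument in \cite[Lemma~5.2.1]{CPR}, which the paper simply cites, avoids the full expansion by writing
\[
n(-K_Y)^2 \;=\; (-K_Y)\cdot \tilde{\mcH} \;+\; \mu\,(-K_Y)\cdot E.
\]
The second summand $(-K_Y)|_E$ is a nonzero effective $1$-cycle supported on $E$ (by $\varphi$-ampleness of $-K_Y$), so for a point center it lies in $R_\varphi$; the first summand is effective because $n(-K_Y)\cdot\tilde{\mcH} = \tilde{\mcH}^2 + \mu\,\tilde{\mcH}\cdot E$ is a sum of effective cycles. Hence $D' \cdot (-K_Y)^2 \ge \tfrac{\mu}{n}\,D' \cdot\bigl((-K_Y)\cdot E\bigr) > 0$ directly from $D' \cdot R_\varphi > 0$, with no bigness hypothesis needed. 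Your expansion can in fact be repaired for point centers by computing the cross term exactly (one has $\tilde{\mcH}|_E \equiv -m\,E|_E$, so the three terms regroup to $D' \cdot \tilde{\mcH}^2 + \mu(\mu - 2m)\,D' \cdot E^2$, which is positive since $0 < \mu < m$ and $D' \cdot E^2 < 0$), but the mixed form above is shorter. As for the curve-center case you flag as the ``principal obstacle'': Lemma~\ref{exclcurvelowdeg} gives only $(A \cdot \Gamma) < A^3$, not $a^2\epsilon\,(A \cdot \Gamma) < A^3$, so that step is also incomplete as written; in this paper, however, the lemma is in practice applied only to point centers.
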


\begin{proof}
The proof of \cite[Lemma 5.2.1]{CPR} applies without any change.
\end{proof}

The following is a direct consequence of Lemma \ref{lem:intNE}.

\begin{Cor} \label{tsmethod}
Let $\varphi \colon Y \to X$ be an extremal divisorial extraction with exceptional divisor $E$ centered along an irreducible and reduced subvariety $\Gamma \subset X$.
If $(M \cdot (-K_Y)^2) \le 0$ for some nef divisor $M$ on $Y$, then $\varphi$ is not a maximal singularity.
\end{Cor}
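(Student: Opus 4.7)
The plan is to prove the contrapositive directly from Lemma \ref{lem:intNE}. Suppose, for contradiction, that $\varphi \colon Y \to X$ is a maximal singularity centered along $\Gamma$. Then Lemma \ref{lem:intNE} tells us that the $1$-cycle class $(-K_Y)^2 \in N_1(Y)_{\mbR}$ lies in the interior of the cone $\bNE(Y)$ of effective curves on $Y$. The goal is then to read off from this that any nef divisor must pair strictly positively with $(-K_Y)^2$.

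The key duality fact I would invoke is standard: a class $\alpha \in N_1(Y)_{\mbR}$ lies in the interior of $\bNE(Y)$ if and only if $(M \cdot \alpha) > 0$ for every nonzero class $M$ in the closure of the dual cone. Since the cone of nef divisors is precisely the dual of $\bNE(Y)$, the interior condition on $(-K_Y)^2$ gives $(M \cdot (-K_Y)^2) > 0$ for every nonzero nef $M$. This directly contradicts the hypothesis $(M \cdot (-K_Y)^2) \le 0$ (in the only nontrivial case $M \ne 0$; when $M=0$ the hypothesis is vacuous and one has nothing to prove). Hence $\varphi$ cannot be a maximal singularity.

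Since all the substance is packed into Lemma \ref{lem:intNE}, I would not expect any obstacle here; the proof is essentially a one-line application. If I wanted to make the argument completely self-contained without invoking the duality in abstract form, I could alternatively perturb: take an ample divisor $A'$ on $Y$ and note that $M + \varepsilon A'$ is ample for $\varepsilon > 0$, so $((M + \varepsilon A') \cdot (-K_Y)^2) > 0$ for all small $\varepsilon > 0$ by interiority of $(-K_Y)^2$; letting $\varepsilon \to 0$ gives $(M \cdot (-K_Y)^2) \ge 0$, and the strict inequality (when $M$ is nonzero nef) follows because the pairing is an open condition on the interior of the Mori cone. Either presentation suffices, and I would opt for the shorter duality statement in the final writeup.
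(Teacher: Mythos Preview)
Your proof is correct and is exactly what the paper intends: the paper gives no separate proof and simply declares the corollary a direct consequence of Lemma~\ref{lem:intNE}, and your duality argument is the natural one-line unpacking of that. One small wording issue: your parenthetical about $M=0$ is backwards---if $M=0$ were allowed, the hypothesis $(M\cdot(-K_Y)^2)\le 0$ would be \emph{satisfied} rather than vacuous, forcing the (false) conclusion that no $\varphi$ is ever a maximal singularity; the statement is to be read with $M$ nonzero, as in every application in the paper.
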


\begin{Lem} \label{excltestsurf}
Let $\varphi \colon Y \to X$ be an extremal divisorial extraction centered at a point $\msp \in X$ with exceptional divisor $E$.
Assume that there are surfaces $S$ and $T$ on $Y$ with the following properties.
\begin{enumerate}
\item $S \sim_{\mbQ} a B + d E$ and $T \sim_{\mbQ} b B + e E$ for some integers $a, b, d, e$ such that $a, b > 0$, $0 \le e < a_E (K_X) b$ and $a e - b d \ge 0$.
\item The intersection $\Gamma := S \cap T$ is a $1$-cycle whose support consists of irreducible and reducible curves which are numerically proportional to each other.
\item $(T \cdot \Gamma) \le 0$.
\end{enumerate}
Then, $\varphi$ is not a maximal singularity.
\end{Lem}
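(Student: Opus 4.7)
The plan is to argue by contradiction: assume $\varphi$ is a maximal singularity for some movable linear system $\mcH \sim_{\mbQ} n(-K_X)$, and derive a pair of mutually contradictory bounds on $a(-K_X)^3$. Setting $\delta := a_E(K_X)$, the Noether--Fano inequality gives $\mult_E \mcH > n\delta$, so the proper transform $\tilde{\mcH}$ is $\mbQ$-linearly equivalent to $nB - \epsilon E$ for some $\epsilon > 0$. Writing $B = \varphi^*(-K_X) - \delta E$ and using that $E$ is contracted to the point $\msp$ (so $\varphi^*(-K_X)^i \cdot E^{3-i} = 0$ for $0 < i < 3$ by the projection formula), I will record the basic intersections
\[
B^3 = (-K_X)^3 - \delta^3 E^3, \quad B^2 \cdot E = \delta^2 E^3, \quad B \cdot E^2 = -\delta E^3.
\]
The hypotheses $ae - bd \ge 0$ and $0 \le e < \delta b$ together give $bd \le ae < ab\delta$, hence $a\delta - d > 0$, which is crucial for the signs below.

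Next I would compute the two numerical intersections $S \cdot \tilde{\mcH}^2$ and $T \cdot \Gamma = S \cdot T^2$ in closed form, obtaining respectively
\[
S \cdot \tilde{\mcH}^2 = n^2 a (-K_X)^3 - (n\delta + \epsilon)^2 (a\delta - d) E^3
\]
and
\[
T \cdot \Gamma = a b^2 (-K_X)^3 - (b\delta - e)^2 (a\delta - d) E^3.
\]
From the movability of $\tilde{\mcH}$ and the effectivity of $S$, I expect $S \cdot \tilde{\mcH}^2 \ge 0$, which combined with $\epsilon > 0$ yields the strict lower bound $a(-K_X)^3 > \delta^2 (a\delta - d) E^3$. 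On the other hand, hypothesis (3) gives the upper bound $ab^2 (-K_X)^3 \le (b\delta - e)^2 (a\delta - d) E^3$, and the elementary inequality $(b\delta - e)^2 \le b^2 \delta^2$ (from $0 \le e < \delta b$) converts this into $a(-K_X)^3 \le \delta^2 (a\delta - d) E^3$, contradicting the strict lower bound.

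The hard part will be to justify the movability inequality $S \cdot \tilde{\mcH}^2 \ge 0$ rigorously: although $\tilde{\mcH}$ has no fixed divisorial component, one must also ensure that no component of the effective $1$-cycle $\tilde{H}_1 \cdot \tilde{H}_2$ (for general $\tilde H_1, \tilde H_2 \in \tilde{\mcH}$) is contained in $S$. This is where hypothesis (2) on the structure of $\Gamma$ and the auxiliary linear equivalence
\[
(a\delta - d) T - (b\delta - e) S = (ae - bd)\, \varphi^*(-K_X),
\]
a direct consequence of (1), come into play: the latter exhibits $T$ as a non-negative $\mbQ$-combination of the effective divisor $S$ and the nef class $\varphi^*(-K_X)$, and together with the numerical proportionality of the components of $\Gamma$ this gives enough control on the base locus of $\tilde{\mcH}$ near $S$ to validate the inequality.
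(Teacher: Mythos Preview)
Your numerical computations are correct, and the contradiction you set up would indeed follow \emph{if} the inequality $S \cdot \tilde{\mcH}^2 \ge 0$ held. But this inequality is precisely where your argument breaks down, and your final paragraph does not close the gap. Movability of $\tilde{\mcH}$ only guarantees that for general $\tilde H_1,\tilde H_2\in\tilde{\mcH}$ the $1$-cycle $\tilde H_1\cdot\tilde H_2$ is effective; it says nothing about its intersection with an arbitrary effective divisor $S$. Any base curve $C$ of $\tilde{\mcH}$ lying on $S$ may well satisfy $(S\cdot C)<0$, and you have no control over such curves. Your linear relation $(a\delta-d)T\sim_{\mbQ}(b\delta-e)S+(ae-bd)\varphi^*A$ shows $T$ is a non-negative combination of $S$ and a nef class, but this goes the wrong way: it would help if you needed $T\cdot\tilde{\mcH}^2\ge 0$ from $S\cdot\tilde{\mcH}^2\ge 0$, not the reverse. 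The invocation of hypothesis (2) is hand-waving; you never say what ``enough control on the base locus'' means or how numerical proportionality of the components of $\Gamma$ bears on base curves of $\tilde{\mcH}$.

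The paper's proof takes a structurally different route that avoids triple intersections altogether. It first uses hypotheses (1)--(3) to show that $\Gamma$ spans the second extremal ray $Q$ of $\overline{\operatorname{NE}}(Y)$, by constructing the nef divisor $M=T+\alpha E$ (with $\alpha$ chosen so that $M\cdot\Gamma=0$) and checking that $M$ is non-negative on every curve; this is where (2) is genuinely used, to ensure every component of $\Gamma$ lies on the same ray. Once $\overline{\operatorname{NE}}(Y)=R+Q$ is established, one observes that the effective $1$-cycle $S\cdot\tilde H$ (for general $\tilde H\in\tilde{\mcH}$) must lie in this cone, yet numerically $S\cdot\tilde H\equiv \tfrac{n}{b}\Gamma-(nc+\tfrac{ne}{b})(S\cdot E)$ with a strictly negative $R$-coefficient, which is impossible. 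The point is that effectivity of the $1$-cycle $S\cdot\tilde H$ is immediate from movability, whereas positivity of the number $S\cdot\tilde{\mcH}^2$ is not. To repair your argument you would essentially need to prove the Mori-cone statement first, at which point the paper's shorter endgame is available.
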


\begin{proof}
Let $R \subset \overline{\operatorname{NE}} (Y)$ be the extremal ray generated by a curve contracted by $\varphi$ and $Q$ the other extremal ray so that $\overline{\operatorname{NE}} (Y) = R + Q$.
We write $\Gamma = \sum \gamma_i \Gamma_i$, where $\gamma_i > 0$ and $\Gamma_i$'s are irreducible and reduced curves.
By (2), each component of $\Gamma$ is in the ray spanned by $\Gamma$.

We claim that $Q$ is generated by $\Gamma$.
We have
\[
(T \cdot R) = b (\varphi^*A \cdot R) + (e - a_E (K_X) b) (E \cdot R) > 0
\]
since $(E \cdot R) < 0$ and $e - a_E (K_X) b < 0$.
By (3), $\Gamma$ does not belong to $R$ and thus $(\varphi^*A \cdot \Gamma) > 0$ and 
\[
(E \cdot \Gamma) = \frac{1}{e-a_E (K_X) b} \left( (T \cdot \Gamma) - b (\varphi^*A \cdot \Gamma) \right) > 0.
\]
Let $\alpha \ge 0$ be the rational number such that $(T + \alpha E) \cdot \Gamma = 0$.
It is enough to show that $M := T + \alpha E$ is nef.
We set $\beta = (a_E (K_X) b - e) - \alpha$.
Note that $\beta > 0$ because otherwise $\Gamma$ intersects $M \sim_{\mbQ} b \varphi^* A - \beta E$ positively.
Assume that $M$ is not nef.
Then there is an irreducible and reduced curve $C \subset Y$ such that $(M \cdot C) < 0$.
This implies that 
\[
(E \cdot C) = \frac{1}{\beta} \left( b (\varphi^*A \cdot C) - (M \cdot C) \right) > 0.
\]
Thus we have 
\[
(T \cdot C) = (M \cdot C) - \alpha (E \cdot C) < 0,
\] 
and 
\[
(S \cdot C) = (T \cdot C) - \frac{a e - b d}{b} (E \cdot C) < 0.
\]
These show that $C$ is contained in both $S$ and $T$.
It follows that $C = \Gamma_i$ for some $i$.
But this is a contradiction since $(M \cdot \Gamma_i) = 0$.
Therefore, $M$ is nef and thus $Q$ is generated by $\Gamma$.

Assume that $\varphi$ is a maximal singularity.
Then there is a movable linear system $\mcH \sim_{\mbQ} n A$ on $X$ such that the rational number $c$ defined by
\[
K_Y + \frac{1}{n} \mcH_Y = \varphi^* (K_X + \frac{1}{n} \mcH) - c E
\]
is positive.
Thus $\mcH_Y \sim_{\mbQ} n B - n c E$.
It follows that  
\[
S \cdot \mcH_Y = S \cdot (n B - n c E) = S \cdot \left(\frac{n}{b} T - \left(n c + \frac{n e}{b} \right) E\right)
\] 
is an effective $1$-cycle on $Y$ and thus it is contained in $\overline{\operatorname{NE}} (Y) = R + Q$.
This is impossible since $S \cdot E$ generates $R$ and $S \cdot T$ generates $Q$.
Therefore, $\varphi$ is not a maximal singularity.
\end{proof}

The following results are due to \cite{CP}.

\begin{Lem} \label{crispnegdef}
Let $\varphi \colon Y \to X$ be an extremal divisorial extraction with exceptional divisor $E$.
Suppose that there is an effective divisor $S \sim_{\mbQ} b B + e E$ with $b > 0$ and $e \ge 0$ on $Y$ and a normal surface $T \ne E$ on $Y$ such that the support of the one-cycle $S|_T$ consists of curves on $T$ whose intersection form is negative-definite.
Then $\varphi$ is not a maximal singularity.
\end{Lem}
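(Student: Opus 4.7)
The plan is to assume for contradiction that $\varphi$ is a maximal singularity, so there is a movable linear system $\mcH \sim_{\mbQ} n A$ on $X$ and a positive rational number $c' = \mult_E \mcH - n \, a_E(K_X)$ such that the strict transform satisfies $\mcH_Y \sim_{\mbQ} n B - c' E$. My first step is to eliminate $B$ from the relations $S \sim_{\mbQ} b B + e E$ and $\mcH_Y \sim_{\mbQ} n B - c' E$ to obtain $n S - b \mcH_Y \sim_{\mbQ} (n e + b c') E$ on $Y$, and then restrict to $T$. Since $T \ne E$ and $\mcH_Y$ is movable (so $T$ is not a fixed component of $\mcH_Y$), this yields a $\mbQ$-linear equivalence of effective $\mbQ$-divisors on $T$,
\[
b \, \mcH_Y|_T + (n e + b c') \, E|_T \sim_{\mbQ} n \, S|_T.
\]

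The second step uses the negative-definiteness hypothesis. Write $S|_T = \sum_i c_i \Gamma_i$ with $c_i > 0$. The negative-definite Gram matrix $(\Gamma_i \cdot \Gamma_j)_T$ is nondegenerate, so the classes $[\Gamma_i]$ are linearly independent in $N^1(T)_{\mbQ}$; combined with the displayed equivalence, which expresses a multiple of $S|_T$ as a sum of two effective $\mbQ$-divisors, this forces both $\mcH_Y|_T$ and $E|_T$ to be supported on $\bigcup_i \Gamma_i$. Decomposing $\mcH_Y|_T = \mcL + F$ into its movable and fixed parts on $T$, the movable part $\mcL = \sum_i \ell_i \Gamma_i$ with $\ell_i \ge 0$ satisfies $\mcL^2 \ge 0$ by movability (two general members of $|\mcL|$ share no curve component), but the sum expansion together with negative-definiteness gives $\mcL^2 \le 0$ with equality if and only if each $\ell_i = 0$. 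Hence $\mcL \equiv 0$ on $T$.

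The final step derives a contradiction from the vanishing of the movable part of $\mcH_Y|_T$. The rational map $\varphi|_T \colon T \dashrightarrow T_X := \varphi(T)$ is birational, since $\varphi$ contracts only $E$ and $T \ne E$, and identifies $\mcH_Y|_T$ with the pullback of $\mcH|_{T_X}$. Because $\mcH$ is a positive-dimensional movable linear system on $X$ (which has Picard number one), the divisor $T_X$ cannot lie in its base locus, so $\mcH|_{T_X}$ is a nontrivial movable linear system on $T_X$; its pullback to $T$ must therefore have a nonzero movable part, contradicting $\mcL \equiv 0$.

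I expect this final step to be the main obstacle, since one must exclude the degenerate possibility that every member of $\mcH$ restricts to the same divisor on $T_X$. Ruling this out cleanly may require extra bookkeeping on the restriction map on global sections or a geometric argument using the specific structure of the anticanonical embedding of $X$; the linear algebra of the first two steps is routine by comparison.
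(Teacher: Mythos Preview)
Your overall plan coincides with the paper's proof: both assume $\varphi$ is a maximal singularity, obtain $\mcH_Y\sim_{\mbQ} nB-c'E$ with $c'>0$, and restrict to $T$ to get
\[
b\,\mcH_Y|_T+(ne+bc')\,E|_T\ \sim_{\mbQ}\ n\,S|_T=\textstyle\sum n c_i\Gamma_i.
\]
At this point the paper simply asserts that this contradicts negative-definiteness and defers to \cite[Lemma~3.2.7]{CP}; what you are attempting in steps~2--3 is exactly to supply that omitted argument.

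There is a genuine gap in your step~2 justification. Linear independence of the $[\Gamma_i]$ in $N^1(T)_{\mbQ}$ does \emph{not} force an effective divisor numerically equivalent to $\sum c_i\Gamma_i$ to be supported on the $\Gamma_i$; nondegeneracy says nothing about which effective divisors lie in a given numerical class. The correct mechanism (and the content of the \cite{CP} argument) is a Zariski-type lemma using negative-definiteness in full: if $D\ge 0$ on the projective surface $T$ and $D\equiv\sum c_i\Gamma_i$, write $D=\sum d_i\Gamma_i+D''$ with $D''\ge 0$ having no $\Gamma_i$ as a component; splitting $\sum(c_i-d_i)\Gamma_i=v_+-v_-$ into effective parts with disjoint support and using that effective divisors with no common component meet non-negatively, one gets $(v_+)^2\ge 0$, hence $v_+=0$ by negative-definiteness, then $D''+v_-\equiv 0$ forces $D''=v_-=0$ via an ample class. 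Thus $D=\sum c_i\Gamma_i$ \emph{as a divisor}. Applied to $D=bH|_T+(ne+bc')E|_T$ for each general $H\in\mcH_Y$, this pins down $H|_T$ uniquely, so the movable part of $\mcH_Y|_T$ is zero. Your conclusion in step~2 is right, but the reason is this Zariski argument, not linear independence.

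Your concern about step~3 is well founded, and the specific argument you propose does fail. A movable linear system on a $\mbQ$-Fano $X$ can restrict to a zero-dimensional system on a fixed prime divisor: for instance, on $X=\mbP^3$ the movable system of quadrics through a fixed smooth conic $C_0$ restricts to the single divisor $C_0$ on the plane spanned by $C_0$. So ``$\mcH|_{T_X}$ has nonzero movable part'' is not automatic from movability of $\mcH$ and $T_X\not\subset\Bs\mcH$. The paper does not resolve this either; it hands the whole endgame to \cite{CP}. If you want to close the gap yourself, the line that works is not to prove $\dim\mcH_Y|_T\ge 1$ directly, but to use the \emph{exact} equality $bH|_T+(ne+bc')E|_T=nS|_T$ coming from the Zariski argument together with the fact that $\mcH_Y$ is movable of positive dimension on $Y$, and then argue as in \cite{CP} by peeling off $T$ from the codimension-one subsystem of members containing $T$ and iterating; this is where the extra bookkeeping you anticipated is actually spent.
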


\begin{proof}
This is a version of \cite[Lemma 3.2.7]{CP} and we follow the proof there.
Set $S|_T = \sum c_i C_i$, where $c_i > 0$ and $C_i$ are distinct irreducible and reduced curves on $T$.
Assume that $\varphi$ is a maximal singularity.
Then there is a movable linear system $\mcH \sim_{\mbQ} - n K_X$ for some rational number $n > 0$ such that the rational number $c$ defined by 
\[
K_Y + \frac{1}{n} \mcH_Y = \varphi^* (K_X + \frac{1}{n} \mcH) - c E \sim_{\mbQ} - c E,
\]
where $\mcH_Y$ is the birational transform of $\mcH$  by $\varphi$, is positive.
From this, we have $\mcH_Y \sim_{\mbQ} n B - n c E$ and thus
\[
(b \mcH_Y + (b c + n e) E)|_T \sim_{\mbQ}  n S|_T = \sum n c_i C_i.
\]
This contradicts the assumption that the intersection form of $C_i$'s is negative-definite, whose proof is completely parallel to that of \cite[Lemma 3.2.7]{CP} and we omit the proof.
Therefore $\varphi$ is not a maximal singularity.
\end{proof}

\begin{Lem} \label{crispinfc}
Let $\varphi \colon Y \to X$ be an extremal divisorial extraction with exceptional divisor $E$.
Suppose that there are infinitely many irreducible and reduced curve $C_{\lambda}$ on $Y$ such that $(-K_Y \cdot C_{\lambda}) \le 0$ and $(E \cdot C_{\lambda}) > 0$.
Then $\varphi$ is not a maximal singularity.
\end{Lem}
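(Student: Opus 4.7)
The plan is to follow the template of Lemmas \ref{crispnegdef} and \ref{excltestsurf}: pull back a hypothetical maximal movable system to $Y$ and compute its intersection with each of the curves $C_\lambda$. Suppose for contradiction that $\varphi$ is a maximal singularity of some movable linear system $\mcH \sim_{\mbQ} -n K_X$ on $X$. Exactly as in the proof of Lemma \ref{crispnegdef}, there is a positive rational number $c$ such that the birational transform $\mcH_Y$ of $\mcH$ satisfies
\[
\mcH_Y \sim_{\mbQ} n B - n c E.
\]
The key structural point is that $\mcH_Y$ is itself movable on $Y$: since $\mcH$ is movable on $X$, the only divisor that can appear as a fixed component of $\varphi^{*}\mcH$ is $E$, and passing to the birational transform removes exactly this component.

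Next, for each $\lambda$ I would compute
\[
(\mcH_Y \cdot C_\lambda) = n\,(-K_Y \cdot C_\lambda) - n c\,(E \cdot C_\lambda).
\]
By hypothesis the first summand is non-positive and the second is strictly negative, so $(\mcH_Y \cdot C_\lambda) < 0$. A movable linear system cannot intersect an irreducible curve negatively unless that curve is contained in every member; hence $C_\lambda$ lies in the base locus $\Bs \mcH_Y$ for every $\lambda$.

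The contradiction now follows because $\mcH_Y$ is movable, so $\Bs \mcH_Y$ has codimension at least two in the threefold $Y$. Such a base locus is a one-dimensional closed subscheme and therefore has only finitely many irreducible components; it can contain at most finitely many of the distinct irreducible curves $C_\lambda$, contradicting the hypothesis that infinitely many such curves exist. I do not foresee any real obstacle: the argument amounts to converting the numerical hypothesis into the statement that infinitely many distinct irreducible curves lie in the base locus of a movable system on a threefold, which is absurd.
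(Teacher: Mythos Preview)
Your proof is correct and follows essentially the same approach as the paper: assume $\varphi$ is a maximal singularity, deduce $\mcH_Y \sim_{\mbQ} n(-K_Y) - ncE$ with $c>0$, compute $(\mcH_Y \cdot C_\lambda) < 0$ to force each $C_\lambda$ into $\Bs \mcH_Y$, and contradict movability. Your additional remarks on why $\mcH_Y$ is movable and why the base locus has only finitely many components make the argument slightly more explicit, but do not differ from the paper's reasoning.
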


\begin{proof}
This is a re-statement of \cite[Lemma 3.2.8]{CP} in a general setting and we follow the proof there.
Assume that $\varphi$ is a maximal singularity.
Then there is a movable linear system on $\mcH \sim_{\mbQ} - n K_X$ for some rational number $n > 0$ such that the rational number $c$ defined by
\[
K_Y + \frac{1}{n} \mcH_Y = \varphi^* (K_X + \frac{1}{n} \mcH) - c E,
\]
where $\mcH_Y$ is the birational transform of  $\mcH$ by $\varphi$, is positive.
We have
\[
K_Y + \frac{1}{n} \mcH_Y \sim_{\mbQ} - c E.
\]
It follows that 
\[
(\mcH_Y \cdot C_{\lambda}) = n (-K_Y \cdot C_{\lambda}) - n c (E \cdot C_{\lambda}) < 0
\]
by the assumption on the intersection numbers and the positivity of $c$.
This shows that $C_{\lambda}$ is contained in the base locus $\mcH_Y$.
This is a contradiction since there are infinitely many such $C_{\lambda}$'s and $\mcH_Y$ is movable.
Therefore $\varphi$ is not a maximal singularity.
\end{proof}

\subsection{Untwisting birational maps}

Let $X$ be a $\mbQ$-Fano $3$-fold.
The following definition is due to Corti \cite{Co95}.

\begin{Def}
Let $\varphi \colon Y \to X$ be a maximal singularity.
We say that a birational map $\iota \colon X \ratmap X'$ to a $\mbQ$-Fano $3$-fold $X'$ {\it untwists} the maximal singularity $\varphi$, or is an {\it untwisting birational map} for $\varphi$, if for any movable linear system $\mcH \sim_{\mbQ} n (-K_X)$ such that $\varphi$ is a maximal singularity of $\mcH$, the rational number $n'$ defined by $\mcH' \sim_{\mbQ} n' (-K_{X'})$, where $\mcH'$ is the birational transform of $\mcH$ by $\iota$, satisfies $n' < n$.
\end{Def}

We say that a birational map between normal projective varieties is {\it small} if it is an isomorphism in codimension one.

\begin{Lem} \label{untwistbirmap}
Let $\varphi \colon Y \to X$ be a maximal singularity.
Let $\iota \colon X \ratmap X'$ be a birational map to a $\mbQ$-Fano $3$-fold $X'$ which is not biregular.
If there is an extremal divisorial extraction $\varphi' \colon Y' \to X'$ such that the induced birational map $\tau := \varphi' \circ \iota \circ \varphi^{-1} \colon Y \ratmap Y'$ is small, then $\iota$ untwists the maximal singularity $\varphi$.
\end{Lem}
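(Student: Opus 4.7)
The plan is to compute explicitly the $\mbQ$-linear equivalence class of the birational transform $\mcH'$ of $\mcH$ on $X'$ and show that the coefficient of $A' := -K_{X'}$ strictly drops. Fixing a movable linear system $\mcH \sim_\mbQ nA$ on $X$ with $A := -K_X$ for which $\varphi$ is a maximal singularity, I set $a := a_E(K_X)$, $m := m_E(\mcH)$, and $c := m/n - a$; by the definition of maximal singularity, $c > 0$. Combining $\varphi^*\mcH = \mcH_Y + mE$ with $\varphi^*A = B + aE$, where $B := -K_Y$ and $\mcH_Y$ denotes the birational transform of $\mcH$ by $\varphi$, gives the key identity $\mcH_Y \sim_\mbQ nB - ncE$.

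Since $\tau$ is a small birational map between normal $\mbQ$-factorial varieties, $\tau_*$ yields a bijection on prime divisors, induces an isomorphism of $\mbQ$-divisor class groups, and preserves the canonical class; in particular $\tau_*B = B' := -K_{Y'}$. Setting $\mcH_{Y'} := \tau_*\mcH_Y$ gives $\mcH_{Y'} \sim_\mbQ nB' - nc\,\tau_*E$, and pushing down by $\varphi'$ (with $\varphi'_*B' = A'$ and $\varphi'_*E' = 0$) yields
\[
\mcH' \;\sim_\mbQ\; nA' - nc\,\varphi'_*(\tau_*E).
\]
If $\tau_*E \neq E'$, then $\bar D := \varphi'_*(\tau_*E)$ is a nonzero prime Weil divisor on $X'$; since $X'$ has Picard number one, $\bar D \sim_\mbQ \delta A'$ for some $\delta > 0$, whence $n' = n(1 - c\delta) < n$, as required.

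The heart of the argument, and the main obstacle, is to exclude the possibility $\tau_*E = E'$; my plan is to derive from this equality that $\iota$ must be biregular, contradicting the hypothesis. If $\tau_*E = E'$, then the prime-divisor bijection induced by $\tau$ pairs $E$ with $E'$ and hence sends each strict transform of a prime divisor on $X$ to the strict transform of a prime divisor on $X'$; tracing through the factorization $\iota = \varphi'\circ\tau\circ\varphi^{-1}$, one sees that neither $\iota$ nor $\iota^{-1}$ contracts any divisor, so $\iota$ is a small birational map between $\mbQ$-Fano threefolds of Picard number one. For such a map, restriction to a common big open subset identifies the anticanonical rings $\bigoplus_m H^0(X, -mK_X) \cong \bigoplus_m H^0(X', -mK_{X'})$ (using that $-K_X$ and $-K_{X'}$ correspond there, since small birational maps between normal varieties preserve the canonical class). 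Because $-K_X$ and $-K_{X'}$ are ample, taking $\Proj$ of the anticanonical rings produces a biregular isomorphism $X \cong X'$ extending $\iota$, contradicting the assumption that $\iota$ is not biregular; therefore $\tau_*E \neq E'$, and the first paragraph's computation yields the desired strict inequality $n' < n$.
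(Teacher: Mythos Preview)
Your proof is correct and follows essentially the same approach as the paper's. Both compute $\mcH_Y \sim_{\mbQ} nB - (m-na)E$, push forward by $\tau_*$ and $\varphi'_*$, and reduce to showing that $\tau_*E \ne E'$ by arguing that otherwise $\iota$ would be a small birational map between $\mbQ$-Fano threefolds of Picard number one, hence biregular; your coefficient $\delta$ is the paper's $\alpha$ in the expression $\tau_*E \sim_{\mbQ} \alpha B' + \beta E'$, and your anticanonical-ring argument is just a spelled-out version of the paper's one-line remark that $\iota_*(-K_X) = -K_{X'}$ forces $\iota$ to be an isomorphism.
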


\begin{proof}
Let $\mcH \sim_{\mbQ} n (-K_X)$ be a movable linear system such that $\varphi$ is a maximal singularity of $\mcH$, and $\mcH' \sim_{\mbQ} n' (-K_{X'})$ the birational transform of $\mcH$ on $X'$.
Let $E$ and $E'$ be exceptional divisors of $\varphi$ and $\varphi'$, respectively.
Set $B = -K_Y$ and $B' = - K_{Y'}$.
We have $\tau_* B = B'$ since $\tau$ is small.
We write $\tau_* E \sim_{\mbQ} \alpha B' + \beta E'$ for some rational numbers $\alpha$ and $\beta$.

We claim that $\alpha > 0$.
Since $\tau_* E$ is an effective divisor, we have $\alpha \ge 0$.
Suppose that $\alpha = 0$.
Then, since $\tau_*E$ is a prime divisor which is $\mbQ$-linearly equivalent to $\beta E'$, we have $\beta =1$ and $\tau_*E = E'$.
This implies that $\iota$ is small, hence an isomorphism since $\iota _*(-K_X) = -K_{X'}$.
This is a contradiction and we have $\alpha > 0$.

We write $\varphi^*\mcH = \mcH_Y + m E$ so that 
\[
\mcH_Y \sim_{\mbQ} n \varphi^* (-K_X) - m E
\sim_{\mbQ} n B - (m - n a) E,
\]
where $a = a_E (K_X)$.
We have $m > n a$ since $\varphi$ is a maximal singularity of $\mcH$. 
We have
\[
\tau_* (\mcH_Y) \sim_{\mbQ} n \tau_* (B) - (m-n a) \tau_* E
= (\alpha n a + n - \alpha m) B' - \beta (m - n a) E'.
\]
It follows that 
\[
n' = \alpha n a + n - \alpha m  < n
\]
since $\alpha > 0$ and $m > n a$.
This completes the proof.
\end{proof}

We recall the definition of Sarkisov links between $\mbQ$-Fano $3$-folds.

\begin{Def}
A {\it Sarkisov link} between two $\mbQ$-Fano $3$-folds is a birational map $\sigma \colon X \ratmap X'$ which factorizes as
\[
\xymatrix{
Y \ar@{-->}[r] \ar[d] & Y' \ar[d] \\
X \ar@{-->}[r]^{\sigma} & X'}
\]
where $Y \to X$ and $Y' \to X'$ are extremal divisorial extractions, and $Y \ratmap Y'$ is a composite of inverse flips, flops and flips (in that order).
We sometimes call $\sigma$ the {\it Sarkisov link starting with} the divisorial extraction $Y \to X$.
The center on $X$ of the contraction $Y \to X$ is called the {\it center} of a Sarkisov link.
\end{Def}

The following is a direct consequence of Lemma \ref{untwistbirmap}.

\begin{Cor} \label{cor:slinkuntwist}
If $\varphi \colon Y \to X$ is a maximal singularity, then the Sarkisov link $\iota \colon X \ratmap X'$ starting with $\varphi$ $($if it exists$)$ untwists the maximal singularity $\varphi$.
\end{Cor}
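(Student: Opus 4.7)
The plan is to reduce the statement to Lemma \ref{untwistbirmap} by unpacking the definition of a Sarkisov link between $\mbQ$-Fano $3$-folds. Given that $\iota \colon X \ratmap X'$ is the Sarkisov link starting with $\varphi$, the defining factorization provides an extremal divisorial extraction $\varphi' \colon Y' \to X'$ together with a birational map $\tau \colon Y \ratmap Y'$ that fits into the commutative square appearing in the definition, and satisfies $\tau = \varphi'^{-1} \circ \iota \circ \varphi$ (equivalently $\varphi' \circ \tau = \iota \circ \varphi$). Thus the induced map in the hypothesis of Lemma \ref{untwistbirmap} is exactly this $\tau$, and so the goal reduces to verifying the two hypotheses of that lemma: that $\iota$ is not biregular, and that $\tau$ is small.

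For smallness, the key observation is that by the very definition of a Sarkisov link, $\tau$ is a composite of inverse flips, flops, and flips. Each of these operations is by definition an isomorphism in codimension one and contracts no divisor, hence the same holds for their composition. Therefore $\tau$ is small, as required.

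For the non-biregularity, a Sarkisov link between $\mbQ$-Fano $3$-folds is by convention understood as a non-trivial birational map; the statement that the link ``exists'' in the Corollary is implicitly the assertion that it exists as a non-trivial Sarkisov link. Alternatively, one can argue directly: if $\iota$ were biregular then (since both $X$ and $X'$ have Picard number one) it would be an isomorphism of $\mbQ$-Fano varieties, so $n' = n$ would hold automatically for every movable linear system, in which case the untwisting property holds vacuously in the weak sense that $n' \le n$; if instead we insist on the strict inequality $n' < n$, then the assumption built into the definition of ``Sarkisov link'' that one actually performs at least one flip, flop, or inverse flip forces $\iota$ to be non-biregular.

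With both hypotheses checked, Lemma \ref{untwistbirmap} applies to the configuration $(\varphi, \iota, \varphi')$ and yields the strict inequality $n' < n$ for every movable linear system $\mcH \sim_{\mbQ} n(-K_X)$ for which $\varphi$ is a maximal singularity of $\mcH$. This is precisely the statement that $\iota$ untwists $\varphi$. The argument is essentially a one-line application of the preceding lemma, and no new ingredient is needed; the only mild point to watch is the convention on non-triviality of the Sarkisov link, which I would state explicitly at the start of the proof to avoid ambiguity.
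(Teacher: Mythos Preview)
Your proposal is correct and takes the same approach as the paper, which simply records the corollary as ``a direct consequence of Lemma \ref{untwistbirmap}'' without further comment. You have in fact been more careful than the paper in flagging the non-biregularity hypothesis of Lemma \ref{untwistbirmap}; your handling of that point is a bit hedged, but the paper glosses over it entirely, and in practice the Sarkisov links arising from the $2$-ray game at a genuine maximal singularity are non-trivial.
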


The following lemma will be frequently used and we include its proof for the reader's convenience.

\begin{Lem} \label{lem:isomrk2}
Let $V$ be a normal projective $\mbQ$-factorial variety with Picard number $2$ and $\tau \colon V \ratmap V$ a small birational map.
If there is a divisor $D$ on $V$ such that $\tau_*D \sim_{\mbQ} D$, and  $D$ and $K_V$ are linearly independent in $\Pic (V) \otimes \mbQ$, then $\tau$ is an isomorphism.
\end{Lem}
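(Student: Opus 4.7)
My plan is to first establish that $\tau_*$ acts as the identity on $\Pic(V)\otimes \mbQ$, and then deduce from this alone that $\tau$ itself must be an isomorphism. Since $\tau$ is small, strict transform gives a well-defined $\mbQ$-linear automorphism $\tau_* \colon \Pic(V)\otimes \mbQ \to \Pic(V)\otimes \mbQ$; moreover $\tau_* K_V \sim_\mbQ K_V$ automatically, since the canonical class is determined by the structure sheaf on the smooth locus, whose complement has codimension $\geq 2$. Combined with the hypothesis $\tau_* D \sim_\mbQ D$ and the fact that $\{D, K_V\}$ is a basis of the $2$-dimensional space $\Pic(V)\otimes \mbQ$, this forces $\tau_* = \mathrm{id}$.

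Next I would pick an ample divisor $H$ on $V$, so that $\tau_*H \sim_\mbQ H$, and resolve the indeterminacy of $\tau$ by a smooth projective variety $W$ with birational morphisms $p, q \colon W \to V$ satisfying $q = \tau\circ p$ as rational maps. Smallness of $\tau$ guarantees that a prime divisor on $W$ is $p$-exceptional if and only if it is $q$-exceptional (otherwise $\tau$ or $\tau^{-1}$ would contract a divisor), so the class $p^*H - q^*H$ is represented by a $\mbQ$-divisor supported on these common exceptional divisors. A direct intersection computation with contracted curves shows that $-(p^*H - q^*H)$ is $p$-nef while $p^*H - q^*H$ is $q$-nef; together with $p_*(p^*H - q^*H)\sim_\mbQ 0$ and $q_*(p^*H - q^*H)\sim_\mbQ 0$, applying the negativity lemma in both directions forces $p^*H \sim_\mbQ q^*H$ in $N^1(W)_\mbQ$.

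To conclude, for $m$ sufficiently divisible, $mH$ defines a closed embedding $\phi \colon V \hookrightarrow \mbP^N$. The linear systems $|mp^*H|$ and $|mq^*H|$ coincide on $W$, so the morphisms $W \to \mbP^N$ they determine are $\phi\circ p$ and $\phi\circ q$ and these two morphisms agree; injectivity of $\phi$ then forces $p = q$ as morphisms, whence $\tau = \mathrm{id}_V$ is trivially an isomorphism.

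The hardest point will be the two-sided application of the negativity lemma: one must carefully verify that $p^*H - q^*H$ really is represented by a $\mbQ$-divisor supported on the \emph{common} exceptional locus of $p$ and $q$, which is precisely where the smallness of $\tau$ enters the argument in an essential way. Everything else is bookkeeping around standard machinery, and the linear independence hypothesis on $D$ and $K_V$ is used exactly once, to pin down that $\tau_*$ fixes the full $2$-dimensional Néron–Severi space rather than merely a line.
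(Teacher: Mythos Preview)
Your overall strategy coincides with the paper's: establish that $\tau_*$ fixes an ample class and then deduce that $\tau$ is biregular. The paper's proof is one short paragraph that, after writing an ample divisor as $A = aD + bK_V$ and noting $\tau_*A \sim_{\mbQ} A$, simply asserts ``therefore $\tau$ is an isomorphism'' and stops, treating this implication as standard. You choose instead to unpack this last implication via a common resolution and the negativity lemma; that part of your argument is correct, and you legitimately obtain $p^*H \sim_{\mbQ} q^*H$.

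The final paragraph, however, contains a real error. From the equality of complete linear systems $|mp^*H| = |mq^*H|$ you cannot conclude that the morphisms $\phi\circ p$ and $\phi\circ q$ to $\mbP^N$ coincide: a complete linear system determines a morphism to projective space only up to a choice of basis, i.e.\ up to $\mathrm{PGL}_{N+1}$. The morphism $\phi\circ p$ is built from the basis $(p^*s_i)$ while $\phi\circ q$ uses $(q^*s_i)$, and these differ whenever $\tau$ is not the identity. Indeed your conclusion $\tau = \mathrm{id}_V$ is false in general: for $V = \mbP^1\times\mbP^1$, $\tau = \mathrm{id}\times\sigma$ with $\sigma\in\Aut(\mbP^1)$ nontrivial, and $D$ a fibre of the first projection, all hypotheses hold and $\tau_*$ is the identity on $\Pic(V)$, yet $\tau\ne\mathrm{id}_V$. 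The lemma only claims that $\tau$ is an \emph{isomorphism}, and the fix is immediate: one has $\phi\circ p = g\circ\phi\circ q$ for some $g\in\mathrm{PGL}_{N+1}$ necessarily preserving the image $\phi(V)$, whence $\tau^{-1} = \phi^{-1}\circ g\circ\phi$ is biregular. Alternatively one can bypass the resolution altogether and argue that, since $\tau$ is an isomorphism in codimension one and $\tau_*A\sim_{\mbQ}A$, pullback by $\tau$ gives a graded automorphism of the section ring $\bigoplus_{m\ge 0} H^0(V, mA)$ and hence an automorphism of its $\Proj$, which is $V$.
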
 

\begin{proof}
We see that $\tau_*K_V = K_V$ since $\tau$ is small.
By the assumption, we have $\tau_*D \sim_{\mbQ} D$.
Moreover, $D$ and $K_V$ generate $\Pic (Y) \otimes \mbQ$.
If we pick an ample divisor $A = a D + b K_V$ for some $a,b \in \mbQ$, then $\tau_* A \sim_{\mbQ} A$.
Therefore, $\tau$ is an isomorphism.
\end{proof}

\begin{Lem} \label{lem:untwistslink}
Assume that $\varphi \colon Y \to X$ is a maximal singularity.
Let $\iota \colon X \ratmap X$ be a birational automorphism which is not biregular.
If the birational automorphism $\tau \colon Y \ratmap Y$ induced by $\iota$ is small, then $\iota$ is a Sarkisov link.
\end{Lem}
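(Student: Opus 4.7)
The plan is to realize $\iota$ as the output of the Sarkisov $2$-ray game initiated at the divisorial extraction $\varphi \colon Y \to X$. Since $\rho(X) = 1$ and $\varphi$ is an extremal divisorial extraction, $Y$ is $\mbQ$-factorial with $\rho(Y) = 2$, so $\bNE(Y)$ has exactly two extremal rays: the ray $R_{\varphi}$ contracted by $\varphi$ and a complementary ray $R$. First I would run the $2$-ray game on $Y$ in the direction of $R$: applying the $3$-dimensional MMP with scaling (moving away from $\varphi^{*}(-K_X)$ across the wall of $R$) produces a sequence of small birational modifications
\[
Y = Y_0 \ratmap Y_1 \ratmap \cdots \ratmap Y_n
\]
composed of inverse flips, then flops, then flips (in this order), followed by an extremal contraction $\varphi' \colon Y_n \to X'$ which is either divisorial onto a $\mbQ$-Fano $X'$ or of fiber type. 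By the definition given just above, this produces a Sarkisov link $\sigma \colon X \ratmap X'$ starting with $\varphi$.

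Next I would identify $\sigma$ with the given map $\iota$. By Lemma~\ref{lem:isomrk2}, the hypothesis that $\tau$ is small while $\iota$ is not biregular forces $\tau_{*}$ to act nontrivially on $\Pic(Y) \otimes \mbQ$; since $\tau_{*}K_Y = K_Y$, the linear map $\tau_{*}$ must interchange the two extremal rays of $\bNE(Y)$ and in particular swap the exceptional divisor $E$ of $\varphi$ with the distinguished prime divisor covered by the $R$-side. Combined with the compatibility $\varphi \circ \tau = \iota \circ \varphi$, this lets me match $\tau$ with the small part $Y \ratmap Y_n$ of the $2$-ray game: under the resulting identification $Y_n \cong Y$ via $\tau$, the final contraction $\varphi'$ becomes $\varphi$, so $X' = X$ and $\sigma = \iota$, exhibiting $\iota$ as a Sarkisov link.

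The main obstacle will be this identification step. A priori, the $2$-ray game on $Y$ could terminate at a Mori fibration or at an extremal divisorial extraction to some $X' \ne X$, neither of which yields the self-map $\iota$. What rules this out is precisely the existence of the given small self-map $\tau$: since the chain of flips/flops/anti-flips produced by the $2$-ray game is essentially unique on a Picard-rank-two $\mbQ$-factorial terminal $3$-fold, $\tau$ must coincide with its small part, which forces the game to close up with output $Y_n \cong Y$ and $\varphi' = \varphi$.
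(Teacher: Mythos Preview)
Your overall strategy---run the $2$-ray game starting with $\varphi$ to produce a link $\sigma \colon X \ratmap X'$, then show $\sigma=\iota$---is exactly the paper's. The gap is entirely in the identification step, which you flag as the main obstacle but do not actually resolve.

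Two concrete problems. First, your assertion that ``$\tau_*$ must interchange the two extremal rays of $\bNE(Y)$'' is not well-posed: $\tau$ is only birational, so pushforward acts on divisor classes $N^1(Y)$, not on curve classes $N_1(Y)$. What you can say is that $\tau_*$ on $\Pic(Y)\otimes\mbQ$ fixes $K_Y$ and is nontrivial (by Lemma~\ref{lem:isomrk2}), hence is a reflection; but translating this into the statement ``$\tau$ equals the small part $Y\ratmap Y_n$ of the $2$-ray game'' requires an argument you have not supplied. Your appeal to ``the chain \ldots\ is essentially unique'' is circular: uniqueness of the $2$-ray game tells you that \emph{if} the small self-map $\tau$ admits a factorization into inverse flips, flops and flips, then it is the one produced by the game---but it does not by itself produce such a factorization of $\tau$.

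The paper's resolution is the missing idea: transport $\tau$ along the game to $\tau_i \colon Y_i \ratmap Y_i$ and locate the first index $j$ at which $-K_{Y_j}$ is nef and big (i.e.\ the first flop or flip). If $-K_{Y_j}$ is ample then $\tau_j$ is biregular, so $\tau=\theta_j^{-1}\circ\tau_j\circ\theta_j$ is visibly a composite of inverse flips and flips. If $-K_{Y_j}$ is nef and big but not ample, one argues (using Lemma~\ref{lem:isomrk2} again and uniqueness of the flop) that $\tau_j$ coincides with the flop $Y_j\ratmap Y_{j+1}$, and the same conclusion follows. One must also check that such a $j$ exists; this uses that $\tau_k$ cannot be biregular when $Y_k$ carries the final contraction. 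In short: the anticanonical model inside the $2$-ray game is the place where the abstract small map $\tau$ can be pinned down, and that is the step your sketch is missing.
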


\begin{proof}
Since $\varphi$ is a maximal singularity, the $2$-ray game starting with $\varphi$ ends with a Mori fiber space $X'$ and gives a Sarkisov link $\eta \colon X \ratmap X'$ (see \cite{Co95}).
More precisely, we have a sequence
\[
Y = Y_0 \ratmap Y_1 \ratmap \cdots \ratmap Y_k
\]
of inverse flips, flops and flips (in that order), where each $Y_i$ has only terminal singularities, and a $K_{Y_k}$-negative extremal contraction $Y_k \to Z$.
The contraction $Y_k \to Z$ is either divisorial or of fiber type.
In the former case the output $X' := Z$ is a $\mbQ$-Fano $3$-fold and in the latter case the output $X' := Y_k$ is a Mori fiber space.
We shall show that $X' = X$ and $\eta = \iota$.

For each $0 \le i \le k$, let $\theta_i \colon Y = Y_0 \ratmap Y_i$ be the composite of $Y_0 \ratmap Y_1 \ratmap \cdots \ratmap Y_{i-1} \ratmap Y_i$ and $\tau_i$ the birational automorphism of $Y_i$ induced by $\tau$.
We set $B_i := -K_{Y_i}$ which is fixed by $(\tau_i)_*$ since $\tau_i$ small.
Note that, by Lemma \ref{lem:isomrk2}, for any divisor $D$ on $Y_i$ which is not $\mbQ$-linearly equivalent to a multiple of $B_i$, $(\tau_i)_* D$ cannot be $\mbQ$-linearly equivalent to $D$.

We claim that there is $0 \le i \le k-1$ such that $Y_i \ratmap Y_{i+1}$ is either a flop or a flip.
Indeed, if this is not the case, then $Y_0 \ratmap \cdots \ratmap Y_k$ is a composite of inverse flips, hence $B_k$ is ample since $Y_k \ratmap Y_{k-1}$ is a flip and $Y_k \ratmap Z$ is $B_k$-positive.
It follows that $\tau_k$ is biregular since $\tau_k$ preserves the ample divisor $B_k$.
Let $D$ be the exceptional divisor of $Y_k \to Z$ (resp.\ the pullback of a divisor on $Z$) if $Y_k \to Z$ is divisorial (resp.\ of fiber type).
Clearly $D \not\sim_{\mbQ} m B_k$ for any $m$.
By the definition of $D$, we have $(\tau_k)_* D \sim D$ and this implies that $\tau$ is biregular.
This is a contradiction and the claim is proved.
Now we can define
\[
j = \min \{ i \mid Y_i \ratmap Y_{i+1} \text{ is either a flop or a flip}\}.
\]
Note that $0 \le j \le k-1$ and we see that $Y_i \ratmap Y_{i+1}$ is an inverse flip (resp.\ flip) for $i < j$ (resp.\ $i > j$) and $Y_j \ratmap Y_{j+1}$ is either a flop or a flip.
In particular, $B_j = -K_{Y_j}$ is nef and big, and more precisely, it is ample if and only if $Y_j \ratmap Y_{j+1}$ is a flip.

Assume $-K_{Y_j}$ is ample.
Then $\tau_j$ is an isomorphism.
In this case, the composite 
\[
\tau = \theta_j^{-1} \circ \tau_j \circ \theta_j \colon Y \ratmap Y_j \cong Y_j \ratmap Y
\]
is a composite of inverse flips and flips.
This shows that $\iota$ is a Sarkisov link.
By the uniqueness of $2$-ray game, we have $X' = X$ and $\eta = \tau$.

Assume that $-K_{Y_j}$ is not ample but is nef and big.  
Then $Y_j \ratmap Y_{j-1}$ is a flip and $Y_j \ratmap Y_{j+1}$ is a flop.
Let $Y_j \to Z_{j-1}$ be the corresponding flipping contraction and $D$ the pullback of a divisor on $Z_{j-1}$.
Note that $B_j$ and $D$ generate the Picard group of $Y_j$.
We claim that $\tau_j$ is not biregular.
Indeed if it is biregular, then $(\tau_j)_* D \sim D$ since $(\tau_j)_* (D)$ must be the divisor class defining the unique flipping contraction $Y_j \to Z_{j-1}$.
But then $\tau$ is biregular since $\tau_* D_Y \sim D_Y$ for $D_Y := (\theta_j)_*^{-1} D$.
This is a contradiction and $\tau_j$ is not biregular.
Let $Y_j \to Z_{j+1}$ be the flopping contraction.
Then the diagram
\[
\xymatrix{
Y_j \ar[rd] \ar@{-->}[rr]^{\tau_j} & & Y_j \ar[ld] \\
& Z_{j+1} &}
\]
commutes because $Y_j \to Z_{j+1}$ is the anticanonical morphism and $(\tau_j)_* (-K_{Y_j}) = -K_{Y_j}$.
By the uniqueness of flop, we see that $Y_{j+1} \cong Y_j$ and $Y_j \ratmap Y_{j+1}$ coincides with $\tau_j \colon Y_j \ratmap Y_j$.
Thus the composite
\[
\tau = \theta_j^{-1} \circ \tau_j \circ \theta_j \colon Y \ratmap Y_j \ratmap Y_j \ratmap Y
\]
is a composite of inverse flips, flops and flips, and $\iota$ is a Sarkisov link.
By the uniqueness of $2$-ray game, we have $X' = X$ and $\eta = \iota$.
This completes the proof.
\end{proof}

\subsection{Definition of families and standard defining equations} \label{sec:stdefeq}

In this subsection we define several families of $\mbQ$-Fano weighted complete intersections.

There are $95$ (resp.\ $85$) families of anticanonically embedded $\mbQ$-Fano weighted hypersurfaces (resp.\ complete intersections of codimension $2$) of dimension $3$.  

\begin{Def}
We define subsets of $I = \{1,2,\dots,85\}$ as follows:
\begin{align*}
I'_2 &:= \{19,30,42\} & I''_2 &:= \{17,29,41,55,69,77\} \\
I'_4 &:= \{23,50\} & I''_4 &:= \{49,74,82\}
\end{align*}
and we set $I_{F,cAx/2} = I'_2 \cup I''_2$, $I_{F,cAx/4} = I'_4 \cup I''_4$, $I' = I'_2 \cup I'_4$ and $I'' = I''_2 \cup I''_4$.
\end{Def}

\begin{Def}
For each $i \in I_{F,cAx/2} \cup I_{F,cAx/4}$, we define $\mcG_i$ to be the family No.~$i$ of anticanonically embedded quasismooth weighted complete intersections of codimension $2$ (cf.\ Section \ref{sec:table1}).
\end{Def}

We note that the numbering of the family $\mcG_i$ of $\mbQ$-Fano $3$-fold weighted complete intersections of codimension $2$ coincides with the one given in the Fletcher's list.

Let $X$ be a member of $\mcG_i$ with $i \in I_{F,cAx/2} \cup I_{F,cAx/4} = I' \cup I''$.
Then there is a Sarkisov link $X \ratmap X'$ to a $\mbQ$-Fano $3$-fold weighted hypersurface $X'$.
Although there may be several such links, the target $X'$ is uniquely determined by $X$ (see \cite[Section 4.2]{Okada} and also Section \ref{sec:SLWHWCI} for details).

\begin{Def}
We call $X'$ the {\it birational counterpart} of $X$.
We define $\mcG'_i$ to be the family of birational counterparts of members of $\mcG_i$ (cf.\ Section \ref{sec:table2}).
\end{Def}

In order to make explicit the family $\mcG'_i$, we introduce standard defining equations. 
Let $X$ be a member of $\mcG_i$, which is a complete intersection $X = X_{d_1,d_2} \subset \mbP (a_0,\dots,a_5)$ of weighted hypersurfaces of degree $d_1$ and $d_2$.
In this subsection we assume that $d_1 < d_2$, $a_5 \ge a_i$ for any $i$ and $a_2 \le a_3$.
Note that we do not assume $a_0 \le \cdots \le a_5$.
Let $x_0,x_1,x_2,x_3,u$ and $v$ be the homogeneous coordinates of $\mbP (a_0,\dots,a_5)$ with $\deg x_i = a_i$ for $0 \le i \le 3$, $\deg u = a_4$ and $\deg v = a_5$.

Assume that $i \in I'$.
In this case, after re-ordering $a_i$'s, we may assume that $a_5 = a_4$, $d_1 = a_0 + a_5 = 2 a_1$ and $d_2 = a_4 + a_5 = 2 a_4$.
By a suitable choice of coordinates, defining equations of $X$ can be written as
\begin{equation} \label{eq:normformI}
\begin{cases}
v x_0 + u (x_0 + f) + g = 0, \\
v u - h = 0,
\end{cases}
\end{equation}
where $f = f (x_2,x_3)$ and $g,h \in \mbC [x_0,x_1,x_2,x_3]$.
Quasismoothness of $X$ in particular implies that $x_1^2 \in g$ and, after re-scaling $x_1$, we assume that the coefficient of $x_1^2$ in $g$ is $1$.

Assume that $i \in I''$.
In this case, after re-ordering $a_i$'s, we may assume that $a_5 > a_i$ for $i \ne 5$, $d_1 = a_0 + a_5 = 2 a_4$ and $d_2 = a_4 + a_5 = 2 a_1$.
By a suitable choice of coordinates, defining equations of $X$ can be written as
\begin{equation} \label{eq:normformII}
\begin{cases}
v x_0 + u^2 + u f + g = 0,  \\
v u - h = 0,
\end{cases}
\end{equation}
where $f, g, h \in \mbC [x_0,x_1,x_2,x_3]$.
Quasismoothness of $X$ implies $x_1^2 \in h$ and, after re-scaling $x_1$, we assume that the coefficient of $x_1^2$ in $h$ is $1$.
We call \eqref{eq:normformI} or \eqref{eq:normformII} {\it standard} defining equations of a member $X$ of $\mcG_i$.

Let $X$ be a member of $\mcG_i$, $i \in I' \cup I''$, defined by standard defining equations as above.
Then the birational counterpart of $X$ is the weighted hypersurface 
\begin{equation} \label{eq:stdX'}
X' =
\begin{cases}
(w^2 x_0 (x_0 + f) + w g + h = 0) \subset \mbP (a_0,\dots,a_3,b), & \text{if $i \in I'$}, \\
(w^3 x_0^2 + w^2 x_0 f + w g + h = 0) \subset \mbP (a_0,\dots,a_3,b), & \text{if $i \in I''$},
\end{cases}
\end{equation}
where $b := a_4 - a_0$.
We call the above equation \eqref{eq:stdX'} a {\it standard equation} for a member $X'$ of $\mcG'_i$.

\begin{Lem} \label{lem:singularity}
Let $X'$ be a member of $\mcG'_i$, where $i \in I' \cup I''$, with a standard defining equation and $\msp' = (0 \!:\! 0 \!:\! 0 \!:\! 0 \!:\! 1)$ the point of $X'$.
\begin{enumerate}
\item $X'$ is quasismooth outside $\msp'$ and it has only terminal quotient singular points except at $\msp'$.
\item If $i \in I_2$ $($resp.\ $I_4$$)$, then $X'$ has a $cAx/2$ $($resp.\ $cAx/4$$)$ point at $\msp'$.
\item If $i \in I'$ $($resp.\ $i \in I''$$)$, then the singularity of $X'$ at $\msp'$ is of non-square type if and only if $f = 0$ $($resp.\ $(\prt g/\prt x_1) (0,0,x_2,x_3) = 0$$)$ as a polynomial.
\end{enumerate}
\end{Lem}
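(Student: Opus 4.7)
The plan is to verify the three claims directly from the standard equation \eqref{eq:stdX'}. Part (1) is proved by relating the Jacobian of $F$ in \eqref{eq:stdX'} to that of the pair \eqref{eq:normformI} (or \eqref{eq:normformII}) defining $X$. Concretely, the substitution $v = h/u$ (from the second equation $vu - h = 0$ of $X$), together with an appropriate weighted rescaling that turns $u/x_0$ into the $w$ of $X'$, gives an explicit isomorphism between the open sets $(u \ne 0) \cap X$ and $(w \ne 0) \cap X'$; through this correspondence the quasismoothness of $X$ away from the $u = 0$ locus transfers to that of $X'$ away from $\msp'$. Quasismoothness across the coordinate divisors $(x_i = 0)$ is then handled by standard weighted Jacobian computations, and at each vertex $\msp_0, \ldots, \msp_3$ a direct weight inspection via Fletcher's table shows that the singularities are terminal cyclic quotients of the expected $\frac{1}{a_i}$-type.

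For (2) and (3) we work in the affine chart $w = 1$, which realises a neighbourhood of $\msp'$ as $(F_0 = 0)/\mbZ_b$ where
\[
F_0 := x_0^2 + x_0 f + g + h
\]
(the same shape for $i \in I'$ and $i \in I''$, the cubic-in-$w$ case collapsing upon setting $w = 1$), and the $\mbZ_b$-action carries weights $(a_0, a_1, a_2, a_3) \bmod b$. The strategy is to perform two successive weighted square-completions: first in $x_0$ via $x_0 \mapsto x_0 - f/2$, and then in $x_1$ using the coefficient-one term $x_1^2 \in g$ (for $i \in I'$) or $x_1^2 \in h$ (for $i \in I''$) via a shift $x_1 \mapsto x_1 - g_1$ with $\deg g_1 = a_1$. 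Both substitutions are admissible weighted coordinate changes on the chart. The resulting equation takes the form $\tilde x_0^2 + \tilde x_1^2 + \Phi(\tilde x_0, \tilde x_1, x_2, x_3) = 0$ with $\Phi$ lying in $(x_2, x_3)^2$ after absorbing cross terms, which matches Definition \ref{def:cAx} provided the residues $(a_0, a_1, a_2, a_3) \bmod b$ agree with $(0, 1, 1, 1)$ for $b = 2$ and with a permutation of $(1, 3, 1, 2)$ for $b = 4$. This weight compatibility is verified from Fletcher's table family-by-family, establishing (2). For (3), we then track the lowest-$\wt$ part of $\Phi(0, 0, x_2, x_3)$. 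For $i \in I'$, the critical contribution at the lowest weight is $-f(x_2, x_3)^2 / 4$, coming from the $x_0$-completion; the other contributions are either of strictly higher weight or are already perfect squares by a separate weight argument, so non-square type is equivalent to $f \equiv 0$. For $i \in I''$ the function $f$ has too high a weight to contribute at the critical level; the decisive term then comes from the $x_1$-completion, and the lowest-weight part is a square if and only if $(\partial g / \partial x_1)(0, 0, x_2, x_3) \equiv 0$.

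The main obstacle is the bookkeeping needed to exclude unexpected low-weight contributions to $\Phi$ after the square-completions, particularly those hidden in the $x_1$-dependent pieces of $h$ (for $i \in I'$) or in the cross terms produced by shifting $x_1$ inside $h$ (for $i \in I''$). Establishing this amounts to comparing weighted degrees of the surviving terms from $h$ and from the $-g_1^2$ correction of the $x_1$-completion against the threshold $\wt$-degree $k$ (or $(2k+1)/2$) of Theorems \ref{thm:divext/2} and \ref{thm:divext/4}. One performs this verification uniformly once the weights $a_i$ and $b$ are tabulated from Fletcher's list, at which point parts (2) and (3) both follow by direct comparison with Definition \ref{def:cAx}.
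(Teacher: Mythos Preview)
Your approach is the same as the paper's in outline: work on the chart $w=1$, complete the square first in $x_0$ and then in $x_1$, and read off the lowest-weight part of the residual function of $x_2,x_3$. The paper cites \cite[Section 4.2]{Okada} for (1) and then, for (2) and (3), carries out exactly this two-step reduction and identifies the lowest-weight part as $-f^2/4$ (resp.\ $-g'^2/4$).

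There is one genuine gap in your write-up. Two \emph{polynomial} square-completions do not bring the equation into the form required by Definition~\ref{def:cAx}. After $x_0\mapsto x_0-f/2$ and a polynomial shift in $x_1$, the remainder $\Phi$ still depends on $\tilde x_0,\tilde x_1$ (through the $x_0,x_1$-dependence of $g$ and $h$), and generically it is \emph{not} true that $\Phi\in (x_2,x_3)^2$: for instance the $x_1^2$-coefficient coming from $h$ is a nonconstant function of the other variables, so a unit factor in front of $x_1^2$ survives, and cross terms such as $x_0 x_1$ reappear from $g,h$ after the first substitution. To kill these one needs an \emph{analytic} coordinate change, and this is exactly where the paper invokes the Weierstrass preparation theorem---once for $x_0^2$ and once for $x_1^2$---to obtain the germ $x_0^2+x_1^2+\varphi(x_2,x_3)$ with $\varphi$ a genuine function of $x_2,x_3$ only. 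Your sentence ``which matches Definition~\ref{def:cAx}'' is therefore not justified as written; you should insert the Weierstrass step (or, equivalently, the $\mbZ_b$-equivariant Morse lemma) before claiming the normal form.

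For part~(3) your weight-tracking is correct in spirit: the analytic changes produced by Weierstrass preparation only alter terms of strictly higher weight at $x_0=x_1=0$, so the lowest-weight part of $\varphi(x_2,x_3)$ can indeed be read off from the polynomial completions alone, and your identification of $-f^2/4$ (resp.\ $-g'^2/4$) as that part is right. But the phrase ``or are already perfect squares by a separate weight argument'' is confusing and unnecessary---once you know the lowest-weight part equals $-f^2/4$, square type is simply equivalent to $f\neq 0$.
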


\begin{proof}
(1) is proved in \cite[Section 4.2]{Okada}.
After a suitable analytic coordinate change, the germ $(X',\msp')$ is analytically isomorphic to
\[
(x_0^2 + x_1^2 + \varphi (x_2,x_3) = 0)/\mbZ_b \subset (x_0,x_1,x_2,x_3)/\mbZ_b,
\]
where $b = 2$ or $4$ depending on whether $i \in I_2$ or $i \in I_4$.
This proves (2).
Suppose that $i \in I'$ (resp.\ $i \in I''$).
Then the lowest weight part of $\varphi$ coincides with $- f^2/4$ (resp.\ $- {g'}^2/4$),
where $g' = (\prt g/\prt x_1)(0,0,x_2,x_3)$ and the weight is defined as in Definition \ref{def:cAx}.
This proves (3).
We explain the above arguments when $i \in I'$.
By setting $w = 1$, the germ $(X',\msp')$ is the quotient of the hypersurface $x_0 (x_0 + f) + g + h = 0$ in $\mbA^4$.
We apply Weierstrass preparation theorem with respect to $x_0^2$ and we have that $(X',\msp')$ is analytically equivalent to the quotient of $x_0^2 + x_0 p + q = 0$, where $p,q$ are holomorphic functions in variables $x_1,x_2,x_3$.
We can keep track of the lowest weight part of $p (0,x_2,x_3)$ and it is $f = f (x_2,x_3)$ and $x_1^2 \in q$.
Then, replacing $x_0 \mapsto x_0 - p/2$, the equation is transformed into $x_0^2 + q$, where $q$ is a holomorphic functions in variables $x_1,x_2,x_3$, $x_1^2 \in q$ and the lowest weight part of $q (0,x_2,x_3)$ is $- f^2/4$.
By applying Weirestrass preparation theorem to $q$ with respect to $x_1^2$ and then replacing $x_1$, we can eliminate the variable $x_1$ in $q$ except for $x_1^2$.
Therefore, $(X',\msp')$ is the quotient of $x_0^2 + x_1^2 + q$. where $q$ is a holomorphic function in variables $x_2, x_3$ whose lowest weight part is $-f^2/4$.
This completes the proof.
\end{proof}

\begin{Rem}
Let $X'$ be a member of $\mcG'_i$ with a standard defining equation.
If $i \in I''$, then the assertion $x_1^2 \in h$ follows from (1) of Lemma \ref{lem:singularity}.
But if $i \in I'$, then the assertion $x_1^2 \in g$ does not necessarily follow from (1) of Lemma \ref{lem:singularity}.
\end{Rem}

\subsection{Structure of the proof}

We explain the structure of the proof of the main theorem.
In \cite{Okada}, we classified maximal singularities of $X \in\mcG_i$ and showed that to each maximal singularity there is associated a Sarkisov link which is either a birational involution or a link to $X' \in \mcG'_i$.

In \cite{Okada}, the family $\mcG_i$ consists of quasismooth members of the family No.~$i$ satisfying additional condition \cite[Condition 3.1]{Okada}.
The additional condition is vacuous for $i \in I_{F,cAx/2} \cup I_{F,cAx/4} \setminus \{19\}$ but not for $i = 19 \in I_{F,cAx/2}$.
In this paper we strengthen the previous result \cite[Theorem 1.4]{Okada} for the family No.~$19$.
Thus we re-state the main result of the previous paper reflecting the new ingredient for the family No.~$19$.
Recall that, in this paper, $\mcG_i$ is the quasismooth member of the family No.~$i$ (without any other additional assumption).

\begin{Thm}
Let $X$ be a member of the family $\mcG_i$ with $i \in I_{F,cAx/2} \cup I_{F,cAx/4}$.
Then there is a Sarkisov link to the birational counterpart $X' \in \mcG'_i$.
Moreover, no curve and no nonsingular point on $X$ is a maximal center, and for each singular point $\msp$ of $X$, one of the following holds.
\begin{enumerate}
\item $\msp$ is not a maximal center.
\item There is a birational involution $\iota_{\msp} \colon X \ratmap X$ which is a Sarkisov link centered at $\msp$.
\item There is a Sarkisov link $\sigma_{\msp} \colon X \ratmap X'$ centered at $\msp$.
\end{enumerate}
\end{Thm}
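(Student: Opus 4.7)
The plan is to deduce the theorem from \cite[Theorem 1.4]{Okada}, the main result of the previous paper. That theorem establishes the very same classification of maximal centers for every $X \in \mcG_i$, $i \in I_{F,cAx/2} \cup I_{F,cAx/4}$, under the additional \cite[Condition 3.1]{Okada}. Since that condition is automatic for every $i \in (I_{F,cAx/2} \cup I_{F,cAx/4}) \setminus \{19\}$, the entire new content of the present statement is concentrated in the family $\mcG_{19}$, and I must upgrade the analysis there from generic quasismooth members to arbitrary quasismooth members.

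For $i = 19$, I would first write down standard defining equations of $X \in \mcG_{19}$ as in \eqref{eq:normformI}, pin down the singular locus explicitly, and enumerate the candidate maximal centers (irreducible curves, nonsingular points, and singular points). The exclusion of curves would use the degree bound of Lemma \ref{exclcurvelowdeg} together with the refined lemma that follows it; nonsingular points would be excluded by constructing suitable isolating classes in the sense of Definition \ref{def:isol} and invoking Lemma \ref{exclmostnspt}. Most of this paraphrases \cite{Okada}, but I would carefully identify exactly which steps in \cite{Okada} invoke Condition 3.1, since only those steps need to be rewritten.

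Next, I would treat each singular point of $X$ case by case. At those points where Condition 3.1 is not used in \cite{Okada}, the original argument carries over verbatim. The remaining points split into two kinds: those excluded in \cite{Okada} only under Condition 3.1, and those at which a Sarkisov link or birational involution was constructed only under Condition 3.1. For the first kind, I would produce a new exclusion valid for every quasismooth member, using the toolkit of Corollary \ref{tsmethod} and Lemmas \ref{excltestsurf}, \ref{crispnegdef}, \ref{crispinfc}. For the second kind, I would construct the unique extremal divisorial extraction at the quotient singularity via Theorem \ref{thm:cyclicquot}, run the $2$-ray game on the resulting model, and verify that the output is either $X$ itself (yielding an involution) or the birational counterpart $X' \in \mcG'_{19}$; Corollary \ref{cor:slinkuntwist} and Lemma \ref{lem:untwistslink} then supply the untwisting property and the Sarkisov-link structure.

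The hard part will be the analysis at those singular points of $X \in \mcG_{19}$ whose treatment in \cite{Okada} genuinely required Condition 3.1: without that hypothesis a generic-member argument is no longer available, so I must establish the exclusions (or carry out the $2$-ray game explicitly) for equations in special position. This is where I expect the combination of the negative-definiteness criterion of Lemma \ref{crispnegdef} with the infinite-family criterion of Lemma \ref{crispinfc} to play the decisive role, supplementing the test-surface argument of Lemma \ref{excltestsurf} when a single extraction admits two distinct surface classes with controlled intersection behavior on the exceptional divisor.
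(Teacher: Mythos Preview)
Your reduction to the family $\mcG_{19}$ is exactly right, and the paper's proof says precisely this: everything follows from \cite[Theorem 1.4]{Okada} except for members of $\mcG_{19}$ not satisfying \cite[Condition 3.1]{Okada}, with the latter deferred to Section~\ref{sec:spfam19}. Moreover, as the paper notes there, Condition 3.1 was used in \cite{Okada} \emph{only} to exclude the $\frac{1}{2}(1,1,1)$ singular points, so your ``second kind'' (points at which a link was constructed only under Condition 3.1) is in fact empty and only those points need fresh treatment.

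The gap is in your plan for those points. You propose to ``produce a new exclusion valid for every quasismooth member'', with the fallback of running the $2$-ray game explicitly. But on the special members of $\mcG_{19}$ (those for which $(x_0=x_1=0)_X$ is reducible), a $\frac{1}{2}(1,1,1)$ point can genuinely be a maximal center, so exclusion is impossible; and the $2$-ray game starting from the Kawamata blowup $Y\to X$ is not tractable by direct computation. The paper's resolution is to construct an \emph{invisible birational involution} in the sense of Cheltsov--Park \cite{CP}: one passes from $Y$ to an auxiliary model $\overline{W}$ via several further Kawamata blowups over a second point $\msq$, performs a log flip to reach an elliptic fibration $U\to\Sigma$, and takes the reflection of the general fiber with respect to a distinguished section. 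Verifying that this descends to a non-biregular small birational automorphism of $Y$ (hence a Sarkisov link, by Lemma~\ref{lem:untwistslink}) requires the package of Condition~\ref{condinvbir} and Theorem~\ref{thm:constinvbir}, together with the case analysis of Lemmas~\ref{lem:G19defeq}--\ref{lem:G19intnondeg} and Proposition~\ref{prop:G19excl}. The tools you list (Lemmas~\ref{excltestsurf}, \ref{crispnegdef}, \ref{crispinfc}, or a direct $2$-ray game on $Y$) do not by themselves produce this link; Lemma~\ref{crispinfc} enters only in the auxiliary step (Proposition~\ref{prop:G19excl}) that disposes of the sub-case where infinitely many of the curves $C_{\lambda,\mu}$ are reducible.
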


\begin{proof}
This follows from \cite[Theorem 1.4]{Okada} except for members of the family $\mcG_{19}$ that do not satisfy \cite[Condition 3.1]{Okada}.
The proof for the exceptional members of $\mcG_{19}$ is given in Section \ref{sec:spfam19}.
\end{proof}

In this paper, we do the same thing for $X' \in \mcG'_i$ and prove the following.

\begin{Thm}
Let $X'$ be a member of $\mcG'_i$ for $i \in I_{F,cAx/2} \cup I_{F,cAx/4}$.
Then the maximal singularities on $X'$ are classified and the following assertions hold.
\begin{enumerate}
\item No curve and no nonsingular point is a maximal center.
\item If a terminal quotient singular point is a maximal center, then there is birational involution of $X'$ which is a Sarkisov link centered at $\msp$.
\item If $i \in I'$ $($resp.\ $i \in I''$$)$, then there exist a unique Sarkisov link $($resp.\ two Sarkisov links$)$ centered at the $cAx/2$ or $cAx/4$ point, and they are all Sarkisov links to the birational counterpart $X \in \mcG_i$ of $X'$.
\end{enumerate}
\end{Thm}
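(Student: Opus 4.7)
The proof plan is to carry out, for each family $\mcG'_i$ with $i \in I_{F,cAx/2} \cup I_{F,cAx/4}$, a case-by-case exclusion analysis mirroring the treatment of $X \in \mcG_i$ in \cite{Okada}, then construct the Sarkisov links explicitly from the $cAx/2$ or $cAx/4$ point. First I would fix a standard defining equation \eqref{eq:stdX'} of $X'$ in $\mbP(a_0,a_1,a_2,a_3,b)$, record the list of terminal quotient singular points and the unique non-quotient point $\msp'$, and tabulate all the weighted complete intersection curves that are candidate maximal centers.

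The excluding phase proceeds in three layers. For curves, I would first apply Lemma \ref{exclcurvelowdeg} to dispose of every irreducible curve $\Gamma \subset X'$ with $(A \cdot \Gamma) \ge (A^3)$; the remaining low-degree curves are typically WCI curves in coordinate subspaces, and these I would treat with the test-pencil argument (the unnamed lemma after \ref{exclcurvelowdeg}) by choosing an effective divisor $S \sim_{\mbQ} m A$ containing $\Gamma$ and a movable pencil $\mcM$ whose base locus decomposes into $\Gamma$ and finitely many other curves, then verifying the intersection inequalities. For smooth points I would use Lemma \ref{exclmostnspt}: one produces an $l A$ isolating each smooth point with $l \le 4/(A^3)$; for the finite list of exceptional smooth points where the naive isolation fails (typically points where too many coordinates vanish), I would fall back on Theorems \ref{multineq1} and \ref{multineq2} applied to a suitable surface section. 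For terminal quotient singularities, Theorem \ref{thm:cyclicquot} makes the Kawamata blowup the only candidate, and each such blowup must be tested: either it is excluded by Corollary \ref{tsmethod}, Lemma \ref{excltestsurf}, Lemma \ref{crispnegdef}, or Lemma \ref{crispinfc} using an explicit test surface on $Y$, or else it is a maximal singularity for which I must construct a biregular/birational involution by running the $2$-ray game on $Y$.

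For a quotient point $\msp$ that turns out to be a maximal center, I would show that the $2$-ray game starting with the Kawamata blowup $Y \to X'$ terminates with a second divisorial contraction $Y \to X''$ to a $\mbQ$-Fano $3$-fold $X''$ which is isomorphic to $X'$; the resulting map $X' \ratmap X'$ is then a non-biregular birational self-map. By Lemma \ref{lem:untwistslink} (its analogue for $X'$) this self-map is a Sarkisov link, hence a birational involution, which proves (2). For the $cAx/2$ or $cAx/4$ point $\msp'$, Theorems \ref{thm:divext/2} and \ref{thm:divext/4} give one or two divisorial extractions $Y \to X'$ according as $X'$ is of non-square or square type, i.e.\ according as $i \in I'$ or $i \in I''$; I would then explicitly compute the $2$-ray game on each such $Y$, identifying the anti-flips, flops and flips, and showing that the other Mori fiber end is the original member $X \in \mcG_i$. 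Concretely, the morphism $Y \to X'$ together with the birational map $Y \ratmap X$ induced by the inverse of the link $X \ratmap X'$ of \cite{Okada} furnishes a small modification $Y \ratmap Y'$ over a common target, and I would verify smallness and the uniqueness of the decomposition using the Picard-rank-$2$ argument of Lemma \ref{lem:isomrk2} so that Lemma \ref{untwistbirmap} applies; in the square-type case both extractions $\varphi_\pm$ produce such links, matching the count ``two''.

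The main obstacle will be step (3)/the $2$-ray game analysis: while the blowup weights are prescribed by Theorems \ref{thm:divext/2} and \ref{thm:divext/4}, the intermediate small modifications depend on the geometry of the standard equation \eqref{eq:stdX'} (in particular on $f$, $g$, $h$), and one must verify family-by-family that the second extremal ray on $Y$ is of fiber type or divisorial in exactly the way that lands on $X$. The secondary difficulty is curve exclusion for the handful of families (e.g.\ No.~$19$, $50$, $82$) where the classical degree bound of Lemma \ref{exclcurvelowdeg} is tight and the test-pencil method of the subsequent lemma requires a delicate choice of $\mcM$; I expect these to need custom constructions, and they will be handled family-by-family in the body of the paper.
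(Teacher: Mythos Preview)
Your overall architecture is right and matches the paper: exclude curves via the degree bound plus ad hoc surface arguments, exclude smooth points via isolation, test each Kawamata blowup at a quotient point by the listed lemmas, and build the Sarkisov links from $\msp'$ by reversing the links of \cite{Okada}. But there is one genuine error and two methodological gaps.

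The error is in your treatment of part (3). You write that Theorems \ref{thm:divext/2} and \ref{thm:divext/4} give ``one or two divisorial extractions $Y \to X'$ according as $X'$ is of non-square or square type, i.e.\ according as $i \in I'$ or $i \in I''$''. These are not the same dichotomy. Square versus non-square type is a property of the individual member $X'$, governed by whether $f=0$ (for $i\in I'$) or $(\partial g/\partial x_1)(0,0,x_2,x_3)=0$ (for $i\in I''$); see Lemma \ref{lem:singularity}(3). Both square and non-square type occur in every family. What the paper actually does in Section \ref{sec:SLWHWCI} is, for each $i$, write down \emph{two} candidate extractions $\varphi'_\pm$ corresponding to explicit coordinate changes, show each leads to a Sarkisov link $\sigma_\pm$ to the same $X$, and then observe that in the non-square case the two constructions coincide. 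The dichotomy $I'$ versus $I''$ enters only through the shape of the standard equation and hence the form of the coordinate change defining $\varphi'_-$; it does not by itself determine the number of extractions.

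The first methodological gap is your construction of the birational involutions in (2). Running the full $2$-ray game and identifying the terminal model $X''$ with $X'$ is hard to carry out uniformly. The paper instead (Lemma \ref{lem:birinvSlink}, Theorem \ref{birinv}) builds the anticanonical model $Z$ of $Y'$ directly, checks that $Z$ is a double cover of a weighted projective $3$-space, and uses the covering involution; smallness of $Y'\to Z$ is verified by a factoriality argument. This bypasses the flip/flop analysis entirely. The second gap is that for certain quotient points on special members of $\mcG'_{19}$ (and $\mcG_{19}$) this anticanonical construction fails, and one needs the substantially more intricate ``invisible'' involutions of Section \ref{sec:spinv}, built via auxiliary blowups, a log flip, and an elliptic fibration; your plan does not anticipate this.
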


Finally, we explain why these results imply Theorem \ref{thm:mainbi} in a general setting.
The following follows from the Corti's proof \cite{Co95} of the more general statement, the Sarkisov program.

\begin{Lem}
Let $X = X_1,X_2, \dots,X_m$ be $\mbQ$-Fano $3$-folds which are birational to each other.
Assume that, for $i = 1,\dots,m$ and for each maximal singularity on $X_i$, there exists an untwisting birational map to $X_j$ for some $1 \le j \le m$ starting with the maximal singularity.
Then the birational Mori fiber structures of $X$ are precisely $X_1,\dots,X_m$.
\end{Lem}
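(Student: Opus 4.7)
The plan is to mimic Corti's argument for the Sarkisov program, iteratively feeding in the untwisting maps supplied by the hypothesis. Let $Y/S$ be an arbitrary Mori fiber space birational to $X$; we want to show $Y \cong X_i$ for some $i$. Fix a birational map $\chi_0 : X = X_{i_0} \ratmap Y$, choose $\mu \gg 0$ and an ample divisor $A$ on $S$, and set $\mcH_Y := |{-}\mu K_Y + \pi^* A|$ (or simply $|{-}\mu K_Y|$ when $\dim S = 0$); pull it back to $\mcH_0 := (\chi_0^{-1})_* \mcH_Y \sim_{\mbQ} n_0 (-K_{X_{i_0}})$ on $X_{i_0}$, with $n_0 > 0$.

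By Corti's Noether--Fano inequality \cite{Co95}, either $\chi_0$ is an isomorphism of Mori fiber spaces (in which case $Y \cong X_{i_0}$ and we stop), or the pair $(X_{i_0}, \tfrac{1}{n_0} \mcH_0)$ is not canonical. In the latter case, the classification of maximal singularities on $X_{i_0}$ produced in this paper and in \cite{Okada} guarantees that the relevant divisorial valuation arises from an extremal divisorial extraction $\varphi_0$, which is therefore a maximal singularity of $\mcH_0$ in our sense. By the standing hypothesis, there is an untwisting birational map $\sigma_0 : X_{i_0} \ratmap X_{i_1}$ starting with $\varphi_0$. Setting $\chi_1 := \chi_0 \circ \sigma_0^{-1}$, the new pullback $(\chi_1^{-1})_* \mcH_Y = (\sigma_0)_* \mcH_0 \sim_{\mbQ} n_1 (-K_{X_{i_1}})$ satisfies $n_1 < n_0$ directly from the definition of untwisting. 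Iterating this construction yields a sequence of birational maps $\chi_k : X_{i_k} \ratmap Y$ with strictly decreasing positive rationals $n_0 > n_1 > n_2 > \cdots$.

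The key step, and expected main obstacle, is termination of this sequence: a strictly decreasing chain of positive rationals need not terminate. Following Corti \cite{Co95}, the remedy is to refine $n$ to the full Sarkisov degree, a lexicographically ordered triple $(n, c, e)$ where $c = c(X_{i_k}, \mcH_k)$ is the canonical threshold and $e$ counts the crepant divisors over $X_{i_k}$ realizing it; this triple lies in a well-ordered set and strictly decreases at each untwisting step. Consequently the procedure halts at some $\chi_K : X_{i_K} \ratmap Y$ admitting no maximal singularity on $X_{i_K}$, which by Noether--Fano forces $\chi_K$ to be an isomorphism of Mori fiber spaces; thus $Y \cong X_{i_K}$. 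Conversely, each $X_i$ is tautologically a birational Mori fiber structure of $X$, being a $\mbQ$-Fano variety (hence a Mori fiber space over a point) birational to $X$. Therefore the birational Mori fiber structures of $X$ are exactly $X_1, \dots, X_m$.
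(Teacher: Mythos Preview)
Your overall structure matches the paper's proof: set up $\mcH_Y$, pull back, apply Noether--Fano, find a maximal singularity, untwist, iterate. The difference lies entirely in the termination step, and there your fix does not work as written.

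You correctly observe that a strictly decreasing sequence of positive rationals need not terminate, but refining $n$ to the Sarkisov triple $(n,c,e)$ does not resolve this. The refinement is useful in Corti's general program precisely because some links leave $n$ unchanged and one then tracks the drop in $c$ or $e$; here, by hypothesis, every untwisting step already strictly decreases $n$, so the extra components contribute nothing, and you are back to needing termination of the $n$-sequence itself. Moreover, your assertion that the triples lie in a well-ordered set is not correct in general (the first coordinate ranges over positive rationals), and you cannot simply invoke Corti's termination theorem either, since the untwisting maps in the hypothesis are arbitrary birational maps, not necessarily Sarkisov links.

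The paper's argument is both simpler and fills exactly this gap: since only finitely many $\mbQ$-Fano $3$-folds $X_1,\dots,X_m$ are involved, and on each $X_i$ the class group has rank one, the possible values of $n_j$ lie in $\tfrac{1}{N}\mbZ$ for a fixed $N$ depending only on the $X_i$. A strictly decreasing sequence of positive rationals with bounded denominator terminates, and one is done. This is the point you should substitute for the Sarkisov-degree paragraph.

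One minor remark: the existence of a maximal singularity (in the sense of an extremal divisorial extraction) once $(X_{i_0},\tfrac{1}{n_0}\mcH_0)$ is not canonical is a general fact from \cite{Co95}, not a consequence of the classification carried out in this paper; the lemma is stated and proved for arbitrary $\mbQ$-Fano $3$-folds, so your appeal to the specific classification is out of place there.
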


\begin{proof}
Suppose that we are given a birational map $\psi \colon X=X_1 \ratmap V$ to a Mori fiber space $V/T$.
The aim is to show that $V$ is isomorphic to $X_j$ for some $j \in \{1,2,\dots,m\}$.
We fix a complete linear system $\mcH_V$ of a sufficiently ample divisor on $V$ such that $\mcH_V \sim_{\mbQ} - n_V K_V + D$, where $n_V > 0$ and $D$ is the pullback of an ample divisor on $T$.
Let $\mcH \sim_{\mbQ} n (-K_X)$ be the birational transform of $\mcH_V$ on $X$.
By the Noether-Fano-Iskovskikh inequality (cf.\ \cite[(4.2) Theorem]{Co95}) , we have $n \ge n_V$ and equality holding if and only if $\psi \colon X \ratmap V$ is an isomorphism.
We may assume that $\psi$ is not an isomorphism.
Then the pair $(X,\frac{1}{n} \mcH)$ is not canonical and there is a maximal singularity of $\mcH$ (cf.\ \cite[(2.10) Proposition-definition and (4.2) Theorem]{Co95}).
By the assumption, there is an untwisting birational map $\sigma_1 \colon X \ratmap X_{i_1}$ for some $i_1 \in \{1,2,\dots,m\}$ starting with the maximal singularity.
Set $\psi_1 = \psi \circ \sigma_1^{-1} \colon X_{i_1} \ratmap V$.
If $\psi_1$ is an isomorphism, then $V = X_{i_1}$.
Otherwise, the birational transform $\mcH_1 \sim_{\mbQ} n_1 (-K_{X_1})$ of $\mcH_V$ on $X_{i_1}$ has a maximal singularity and there is an untwisting birational map $\sigma_2 \colon X_{i_1} \ratmap X_{i_2}$ for some $i_2 \in \{1,2,\dots,m\}$.
Note that $n_1 < n$.
Repeating this procedure, we have a sequence $X \ratmap X_{i_1} \ratmap X_{i_2} \ratmap \cdots$ of untwisting birational maps.
Note that we have a sequence $n > n_1 > n_2 > \cdots$ of strictly decreasing rational numbers, where $n_j$ is such that $\mcH_j \sim_{\mbQ} n_j (-K_{X_j})$.
Since there are only finitely many $\mbQ$-Fano $3$-folds involved in this argument, the $n_j$'s have bounded denominators.
Thus, the above procedure ends in finitely many steps and this shows that $V \cong X_{i_k}$ for some $i_k \in \{1,2,\dots,m\}$.
This completes the proof.
\end{proof}

We explain the content of the paper.
In Section \ref{sec:Slinks}, we construct Sarkisov links centered at the $cAx/2$ or $cAx/4$ point $\msp'$ of $X'$ to the birational counterpart $X \in \mcG_i$ of $X'$ and show that those links exhaust Sarkisov links centered at $\msp'$.
Moreover, we construct birational involutions which are Sarkisov links centered at particular terminal quotient singular points of $X'$.
The remaining sections will be devoted to exclude the remaining centers as maximal singularity.
In Section \ref{sec:curve} and \ref{sec:nspts}, we exclude curves and nonsingular points on $X'$ as maximal center, respectively.
In Section \ref{sec:singpts}, we exclude terminal quotient singular points as maximal center, except for the points centered at which there is a birational involution, which completes the proof of the main theorem for most of the families.
We treat the remaining special centers on the families $\mcG'_{19}$ and $\mcG_{19}$ in Section \ref{sec:spinv}.
The main reason why we treat those special cases separately is that there is a birational involution centered at such a special center which is called an ``invisible birational involution" in \cite{CP}, whose construction is first given there and it is extremely complicated.

\section{Sarkisov links} \label{sec:Slinks}

We construct various Sarkisov links from members of $\mcG'_i$.

\subsection{Sarkisov links to $\mcG_i$} 
\label{sec:SLWHWCI}

Let $X' = X'_+$ be a member of the family $\mcG'_i$ with a standard defining equation, that is,
\[
X' = X'_+ =
\begin{cases}
(w^2 x_0 (x_0 + f) + w g + h = 0), & \text{if $i \in I'$}, \\
(w^3 x_0^2 + w^2 x_0 f + w g + h = 0), & \text{if $i \in I''$}
\end{cases}
\]
in $\mbP (a_0,\dots,a_3,b)$.
Note that $b = 2$ or $4$ depending on whether $i \in I_{F,cAx/2}$ or $i \in I_{F,cAx/4}$.
Let $X = X_+$ be the birational counterpart of $X'$, that is, it is a weighted complete intersection in $\mbP (a_0,\dots,a_4,a_5)$ with homogeneous coordinates $x_0,\dots,x_3,u$ and $v$ defined as
\[
X = X_+ := 
\begin{cases}
(v x_0 + u (x_0 + f) + g = v u - h = 0), & \text{if } i \in I', \\
(v x_0 + u^2 + u f + g = v u - h = 0), & \text{if } i \in I'',
\end{cases}
\]
where $a_4 = a_0 + b$, $a_5 = d-(a_0+b)$.
In \cite[Section 4.2]{Okada}, Sarkisov links between $X$ and $X'$ are constructed and we recall them from now on.
We define the weighted hypersurface in $\mbP (a_0,\dots,a_4)$ with homogeneous coordinates $x_0,\dots,x_3$ and $u$ as follows
\[
Z_+ := 
\begin{cases}
(u (u (x_0 + f) + g) + x_0 h = 0), & \text{if } i \in I', \\
(u (u^2 + u f + g) + x_0 h = 0), & \text{if } i \in I''.
\end{cases}
\]
We have the following Sarkisov link
\[
\xymatrix{
Y'_+ \ar[d]_{\varphi'_+} \ar[rd]^{\psi'_+} \ar@{-->}[rr]^{\tau_+} &  & Y_+ \ar[d]^{\varphi_+} \ar[ld]_{\psi_+} \\
X'_+ & Z_+ & X,}
\]
where $\varphi'_+$ is the weighted blowup of $X'$ with $\wt (x_0,x_1,x_2,x_3) = \frac{1}{b} (a_4,a_1,a_2,a_3)$ at the $cAx/2$ or $cAx/4$ point, $\varphi_+$ is the Kawamata blowup of $X_+$ at $\msp_5 \in X_+$, $\psi'_+$ and $\psi_+$ are flopping contractions, and $\tau_+$ is the flop. 
Let $\sigma_+ \colon X' = X'_+ \ratmap X$ be the induced birational map.

In the case where the singularity of $X'$ at the $cAx/2$ or $cAx/4$ point is of square type, there is another extremal divisorial contraction $\varphi'_- $ centered at the point.
We shall explain that $\varphi'_-$ leads to a Sarkisov link to the same variety $X$.

We first consider the case $i \in I'$.
We define
\[
X'_- := (w^2 (x_0 - f) x_0 + w g_- + h_- = 0) \subset \mbP (a_0,a_1,a_2,a_3,b),
\]
where 
\[
\begin{split}
g_- (x_0,x_1,x_2,x_3) &= g (x_0 - f,x_1,x_2,x_3), \\
h_- (x_0,x_1,x_2,x_3) &= h (x_0 - f,x_1,x_2,x_3).
\end{split}
\]
We see that $X'_-$ is a member of $\mcG'_i$ with a standard defining equation and there is an isomorphism $j' \colon X' \to X'_-$ defined by ${j'}^* x_0 = x_0 + f$, ${j'}^*x_i = x_i$ for $i = 1,2,3$ and ${j'}^*w = w$.
After identifying $X'$ and $X'_-$ via $j'$, there is a Sarkisov link
\[
\xymatrix{
Y'_- \ar@{-->}[rr]^{\tau_-} \ar[d]_{\varphi'_-} \ar[rd]^{\psi'_-} &  & Y_- \ar[d]^{\varphi_-} \ar[ld]_{\psi_-} \\
X'_- & Z_-& X_-,}
\]
whose construction is the same as that of $X'_+$.
We see that
\[
X_- = (v (x_0 - f) + u x_0 + g_- = v u - h_- = 0) \subset \mbP (a_0,\dots,a_5).
\] 
The induced birational map $\iota := \sigma_-^{-1} \circ \sigma_+ \colon X_+ \ratmap X_-$ is an isomorphism such that $\iota^* x_0 = x_0 + f$, $\iota^*x_i = x_i$ for $i = 1,2,3$, $\iota^*u = v$ and $\iota^*v = u$.
We identify $X = X_+$ and $X_-$ via the isomorphism $\iota$.
We see that divisorial extractions $\varphi'_+$ and $\varphi'_-$ lead to Sarkisov links to $X$.
Note that in the case where the $cAx/2$ or $cAx/4$ of $X'$ is of non-square type, which is equivalent to saying $f = 0$ as a polynomial, we have $g = g_-$ and $h = h_-$, hence two links $\sigma_{\pm}$ define the same link.
In this case, we set $\sigma := \sigma_+ = \sigma_- \colon X' \ratmap X$, $Y' = Y_+ = Y_-$ and $\varphi' = \varphi'_+ = \varphi'_-$.

We consider the case $i \in I''$.
Since $\deg g < 2 a_1$, we may write $g = x_1 (\alpha + x_0 g') + g''$, $h = x_1^2 + x_1 h' + h''$, where $\alpha \in \mbC [x_2,x_3]$ and $g',g'',h',h'' \in \mbC [x_0,x_2,x_3]$.
Without loss of generality we may assume $h' = 0$ by replacing $x_1 \mapsto x_1 - h'/2$.
Recall that the defining equation of $X'$ is $w^3 x_0^2 + w^2 x_0 f + w g + h = 0$.
We set $f_- := f - \alpha g'$ and $g_- = x_1 (-\alpha + x_0 g') + g''$.
 $\alpha_- := -\alpha + x_0 g'$ and $g''_- := g'' - \alpha h'$, and define
\[
X'_- :=
(w^3 x_0^2 + w^2 x_0 f_- + w g_- + h = 0) \subset \mbP (a_0,\dots,a_3,b).
\]
We see that $X'_-$ is a member of $\mcG'_i$ with a standard defining equation and there is an isomorphism $j' \colon X' \to X'_-$ defined by ${j'}^*x_1 = x_1 + w \alpha$, ${j'}^*x_i = x_i$ for $i = 0,2,3$ and ${j'}^*w = w$.
After identifying $X'$ and $X'_-$ via $j'$, there is a Sarkisov link
\[
\xymatrix{
Y'_- \ar@{-->}[rr]^{\tau_-} \ar[d]_{\varphi'_-} \ar[rd]^{\psi'_-} &  & Y_- \ar[d]^{\varphi_-} \ar[ld]_{\psi_-} \\
X'_- & Z_- & X_-,}
\]
whose construction is the same as that of $X'_+$.
We see that
\[
X_- = (v x_0 + u^2 + u f_- + g_- = v u - h = 0) \subset \mbP (a_0,\dots,a_5).
\]
We have an isomorphism $j \colon X = X_+ \to X_-$ defined by $j^*x_1 = - x_1 - u g'$, $j^*v = v + 2 x_1 g' + u {g'}^2$, $j^*x_i = x_i$ for $i = 0,2,3$ and $j^*u = u$.
We identify $X$ and $X_-$ via $j$ and denote by $\sigma_- \colon X' \ratmap X$ the Sarkisov link $\varphi_- \circ \tau_- \circ {\varphi'}^{-1} \circ j' \colon X' \cong X'_- \ratmap X$.
Note that in the case where the $cAx/2$ or $cAx/4$ point of $X'$ is of non-square type, which is equivalent to saying $\alpha = 0$ as a polynomial, we have $f_- = f$, $\alpha_- = \alpha$ and $\beta_- = \beta$, hence $X_+ = X_-$ and two links $\sigma_{\pm}$ define the same link. 
In this case we set $X := X_+ (= X_-)$, $\sigma := \sigma_+ (= \sigma_-) \colon X' \ratmap X$  and $\varphi' := \varphi'_+ (= \varphi'_-)$.
Therefore, we have the following result.

\begin{Prop}
Let $X'$ be a member of the family $\mcG'_i$ with $i \in I_{F,cAx/2} \cup I_{F,cAx/4}$, $X$ the birational counterpart of $X'$ which is a member of $\mcG_i$ and $\msp'$ the $cAx/2$ or $cAx/4$ singular point of $X'$.
\begin{enumerate}
\item If $\msp'$ is of square type, then there are two divisorial extractions $\varphi'_{\pm} \colon Y'_{\pm} \to X'$ centered at $\msp'$ and they lead to Sarkisov links $\sigma_{\pm} \colon X' \ratmap X$.
\item If $\msp'$ is of non-square type, then there is a unique divisorial extraction $\varphi' \colon Y' \to X'$ centered at $\msp'$ and it leads to the Sarkisov link $\sigma \colon X' \ratmap X$.
\end{enumerate}
\end{Prop}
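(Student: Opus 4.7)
The plan is to combine Hayakawa's classification of divisorial extractions over a $cAx/2$ or $cAx/4$ point with the explicit construction of the Sarkisov diagrams that was carried out in the discussion preceding the statement.

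First I would invoke Theorems~\ref{thm:divext/2} and~\ref{thm:divext/4} together with Lemma~\ref{lem:singularity}(3). The latter identifies the lowest weight part of the analytic equation at $\msp'$ as $-f^2/4$ (for $i \in I'$) or $-{g'}^2/4$ with $g' = (\prt g/\prt x_1)(0,0,x_2,x_3)$ (for $i \in I''$), and these are squares precisely when $\msp'$ is of square type. Hayakawa's classification then gives exactly one divisorial extraction in the non-square case and exactly two divisorial extractions $\varphi'_{\pm}$ in the square case. A short check, after determining the parity of the relevant weight $k$, confirms that the weights $\tfrac{1}{b}(a_4,a_1,a_2,a_3)$ attached to $\varphi'_+$ in the preceding construction match those prescribed by Theorem~\ref{thm:divext/2} or~\ref{thm:divext/4}.

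For the $+$-case the Sarkisov link itself has already been assembled in \cite[Section~4.2]{Okada}: the contraction $\psi'_+$ onto the auxiliary hypersurface $Z_+ \subset \mbP(a_0,\dots,a_4)$, the flop $\tau_+$, the contraction $\psi_+$, and the Kawamata blowup $\varphi_+$ of the quotient singularity $\msp_5 \in X$ (Theorem~\ref{thm:cyclicquot}) fit into a Sarkisov link whose target is the $\mbQ$-Fano threefold $X$. I would cite that reference rather than repeat the verification that $\psi'_{\pm}$ and $\psi_{\pm}$ are small extremal contractions, that the flop exists, and that the composition is a link between $\mbQ$-Fano threefolds.

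For the $-$-case, which only arises when $\msp'$ is of square type, my plan is to reduce to the $+$-case via the explicit coordinate changes exhibited just before the statement. One first verifies that $j' \colon X' \to X'_-$ is an isomorphism onto another member in standard form; under $j'$ the extraction $\varphi'_-$ corresponds to the $+$-type extraction of $X'_-$, so that $\sigma_-$ is produced by the same recipe and its target $X_-$ is the weighted complete intersection whose defining equations are written down above. The main obstacle, and the only genuinely new calculation, is to exhibit the isomorphism $\iota \colon X_+ \to X_-$ (for $i \in I'$ the swap $u \leftrightarrow v$ combined with $x_0 \mapsto x_0 + f$; for $i \in I''$ the substitution $x_1 \mapsto -x_1 - u g'$, $v \mapsto v + 2 x_1 g' + u {g'}^2$, fixing the remaining variables) and to check directly that it carries one pair of defining equations onto the other. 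Once $X_+$ is identified with $X_-$ via $\iota$, both links $\sigma_{\pm}$ have the same target $X$, which finishes the proof.
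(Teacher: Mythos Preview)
Your proposal is correct and follows essentially the same approach as the paper: the proposition is stated as a summary of the explicit construction in Section~\ref{sec:SLWHWCI}, and your outline recapitulates exactly that construction---invoking Hayakawa's classification for the count of extractions, citing \cite[Section~4.2]{Okada} for the $+$-link, and reducing the $-$-link to the $+$-case via the explicit isomorphisms $j'$ and $\iota$ written out just before the statement. The only point you might make more explicit is that the extraction obtained by pulling back the $+$-extraction of $X'_-$ along $j'$ really is the \emph{other} Hayakawa extraction $\varphi'_-$ (and not $\varphi'_+$ again); this follows from Remark~\ref{rem:divext}, since the coordinate change $j'$ implements precisely the sign choice $x_0 \mapsto x_0 + f$ (resp.\ $x_1 \mapsto x_1 + w\alpha$) that distinguishes the two weighted blowups there.
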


\subsection{Birational involutions} 
\label{sec:birinv}

In this subsection we show that there is a birational involution of a member $X'$ of the family $\mcG'_i$, $i \in I_{cAx/2} \cup I_{F,cAx/4}$, which is a Sarkisov link centered at a terminal quotient singular point $\msp$ marked Q.I. or E.I. in the third column of the table.
We construct the anticanonical model $Z$ of $Y'$ and observe that the anticanonical map $Y' \ratmap Z$ is a morphism and that $Z$ is a double cover of a suitable weighted projective $3$-space.
This can be done by the same argument as that of \cite[Section 4]{CPR}.
The following lemma implies that this construction either excludes $\msp$ as a maximal center or leads to a Sarkisov link centered at $\msp$.

\begin{Lem} \label{lem:birinvSlink}
Let $X$ be a $\mbQ$-Fano variety with Picard number one and $\varphi \colon Y \to X$ an extremal divisorial extraction.
Assume that $-K_Y$ is nef and big but not ample, and there is a double cover $\pi \colon Z \to V$ from the anticanonical model $Z$ of $Y$ to a normal projective $\mbQ$-factorial variety $V$ with Picard number one.
Let $\iota_Z \colon Z \to Z$ be the biregular involution which interchanges the fibers of $\pi$ and $\iota_Y = \psi^{-1} \circ \iota_Z \circ \psi \colon Y \ratmap Y$ the birational involution induced by $\iota_Z$, where $\psi \colon Y \to Z$ is the anticanonical morphism.   
Then one of the following holds.
\begin{enumerate}
\item $\psi$ is divisorial.
In this case $\varphi$ is not a maximal singularity.
\item $\psi$ is small.
In this case $\iota_Y$ is the flop of $\psi$ and the diagram
\[
\xymatrix{
Y \ar[d]_{\varphi} \ar[rd]^{\psi} \ar@{-->}[rrr]^{\iota_Y} & & & Y \ar[d]^{\varphi} \ar[ld]_{\psi} \\
X & Z \ar[r]^{\iota_{Z}} & Z & X}
\]
gives a Sarkisov link $\iota := \varphi \circ \iota_Y \circ \varphi^{-1} \colon X \ratmap X$ starting with $\varphi$.
\end{enumerate}
\end{Lem}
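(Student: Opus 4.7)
My plan is a case analysis on whether the anticanonical morphism $\psi$ is divisorial or small. Since $X$ has Picard number one and $\varphi$ is an extremal divisorial extraction, $Y$ has Picard number two, so $\bNE(Y) = R_1 + R_2$ with $\varphi$ contracting $R_1$. The hypothesis that $-K_Y$ is nef and big but not ample forces $\psi \colon Y \to Z$ to be the contraction of the remaining extremal ray $R_2$, which is $K_Y$-trivial.

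For Case (1), where $\psi$ is divisorial with exceptional divisor $D$, I would apply Lemma \ref{crispinfc}. Two observations are needed. First, $D \ne E$: in the relevant setting where $\varphi$ is centered at a point, every curve contained in $E$ lies in $R_1$, whereas $D$ is covered by curves lying in $R_2$. Second, $(E \cdot R_2) > 0$: indeed, if $(E \cdot R_2) = 0$, then by the contraction theorem $E$ would be a pullback $\psi^* E_Z$ from $Z$, and the Picard-number-one hypothesis on $Z$ would force $E \sim_{\mbQ} c(-K_Y)$ for some $c$, contradicting the linear independence of $E$ and $-K_Y$ in $\Pic(Y) \otimes \mbQ$. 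Since $\psi$ is divisorial, there is a one-parameter family of irreducible reduced curves contracted by $\psi$: the fibers of $\psi|_D$ over $\psi(D)$ if $\dim \psi(D) = 1$, or a covering family of curves inside $D$ if $\psi(D)$ is a point. Each such curve $C_\lambda$ lies in $R_2$ and hence satisfies $(-K_Y \cdot C_\lambda) = 0$ and $(E \cdot C_\lambda) > 0$, so Lemma \ref{crispinfc} concludes that $\varphi$ is not a maximal singularity.

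For Case (2), where $\psi$ is small, $\psi$ is a flopping contraction, being a small $K_Y$-trivial morphism. The composition $\iota_Z \circ \psi \colon Y \to Z$ is another small morphism with the same anticanonical divisor class; by the uniqueness of small modifications of $Z$ in the Picard-number-two setting, it is either $\psi$ itself (up to a biregular automorphism of $Y$) or the flop $\psi^+ \colon Y^+ \to Z$. Under the hypothesis that $\pi$ is a non-trivial double cover, $\iota_Z$ interchanges the two small resolutions of $Z$, whence $\iota_Y = \psi^{-1} \circ \iota_Z \circ \psi$ is the flop map of $\psi$. The commutative diagram and the assertion that $\iota = \varphi \circ \iota_Y \circ \varphi^{-1}$ is a Sarkisov link starting with $\varphi$ are then immediate from the definitions of the 2-ray game and of Sarkisov link.

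The main obstacle is Case (1): standard excluding methods such as Corollary \ref{tsmethod} do not apply, since for every nef divisor $M$ on $Y$ one may write $M = s\varphi^*(-K_X) + t(-K_Y)$ with $s,t \ge 0$ and compute $(M \cdot (-K_Y)^2) = s(-K_X)^3 + t(-K_Y)^3 > 0$, so Lemma \ref{lem:intNE} provides no obstruction. The correct tool is Lemma \ref{crispinfc}, and the delicate step is the numerical verification that $(E \cdot R_2) > 0$, which uses the Picard-number-one hypothesis on $Z$ in an essential way.
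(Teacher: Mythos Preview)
Your overall architecture matches the paper's, but there are issues of differing seriousness in the two cases.

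\textbf{Case (1).} Your use of Lemma~\ref{crispinfc} is exactly what the paper does, but your verification of $(E\cdot R_2)>0$ is more circuitous than necessary and has two small gaps. First, you only rule out $(E\cdot R_2)=0$, not $(E\cdot R_2)<0$; you should note that the latter is impossible since together with $(E\cdot R_1)<0$ it would make $-E$ ample. Second, your argument for $D\ne E$ restricts to the case where $\varphi$ is centered at a point, which the lemma does not assume. The paper bypasses both issues with a one-line direct computation: for any curve $C$ contracted by $\psi$ one has $(K_Y\cdot C)=0$, hence $C$ is not $\varphi$-exceptional, and then
\[
a_E(K_X)\,(E\cdot C)=\bigl(-\varphi^*K_X\cdot C\bigr)+\bigl(K_Y\cdot C\bigr)=\bigl(-K_X\cdot\varphi_*C\bigr)>0.
\]
No Picard-number argument on $Z$ and no discussion of $D\ne E$ is needed.

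\textbf{Case (2).} Here there is a genuine gap. You correctly observe that $\iota_Y$ is either biregular or the flop, but your assertion that ``under the hypothesis that $\pi$ is a non-trivial double cover, $\iota_Z$ interchanges the two small resolutions'' is exactly the content to be proved, not a consequence of the double-cover hypothesis alone. The paper's argument is a concrete divisor-class computation: since $E_Z+\iota_Z^*E_Z$ is $\iota_Z$-invariant it is $\pi^*D$ for some nonzero effective divisor $D$ on $V$; because $\rho(V)=1$ and $\psi$ is the anticanonical morphism, $\psi^*\pi^*D\sim_{\mbQ}-lK_Y$ for some $l>0$; hence
\[
\iota_Y^*E\;\sim_{\mbQ}\;\psi^*\iota_Z^*E_Z\;\sim_{\mbQ}\;-lK_Y-E,
\]
which is not $\mbQ$-linearly equivalent to $E$. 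By Lemma~\ref{lem:isomrk2} this forces $\iota_Y$ to be non-biregular, hence the flop. This is where the hypotheses that $\pi$ is a double cover and that $\rho(V)=1$ are actually used; your sketch invokes them only nominally.
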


\begin{proof}
In the above statement, we implicitly use the fact that $-K_Y$ is semiample.
This holds true by the base point free theorem (cf.\ \cite[Theorem 3.3]{KM}) since $-K_Y$ is nef and big. 
For this reason, we have the anticanonical morphism $\psi \colon Y \to Z$.

We see that $\psi$ is not an isomorphism and is either divisorial or small.
Assume that $\psi$ is divisorial.
Let $E$ be the exceptional divisor of $\varphi$ and $C$ an irreducible curve on $Y$ which is contracted by $\psi$.
Then we have $(-K_Y \cdot C) = 0$.
In particular, $C$ is not contracted by $\varphi$ since $\varphi$ is a $K_Y$-negative contraction.
It follows that 
\[
a_E (K_X) (E \cdot C) = (-\varphi^* K_X \cdot C) + (K_Y \cdot C)
= (-K_X \cdot \varphi_* C) > 0.
\]
Thus $(E \cdot C) > 0$.
This shows that there are infinitely many curves on $Y$ which intersect $-K_Y$ non-positively and $E$ positively.
By Lemma \ref{crispnegdef}, $\msp$ is not a maximal singularity.

Assume that $\psi$ is small.
We need to show that $\iota_Y$ is indeed the flop of $\psi$, which is equivalent to showing that $\iota_Y$ is not biregular.
Note that the Picard group of $Y$ is generated by $-K_Y$ and $E$.
Since $\iota_Y$ is an isomorphism in codimension one, we have $\iota_Y^* (-K_Y) = -K_Y$.
Hence, by Lemma \ref{lem:isomrk2}, $\iota_Y$ is an isomorphism if and only if $\iota_Y^* (E) = E$.
Set $E_Z = \psi (E) \subset Z$.
We have $E_Z + \iota_Z^* E_Z \sim_{\mbQ} \pi^* D$ for some non-zero effective divisor $D$ on $V$.
It follows that $\psi^* \pi^* D = - l K_Y$ for some $l > 0$ since $\rho (V) = \rho (Z) = 1$ and $\psi$ is defined by $- m K_Y$ for some $m > 0$.
Thus 
\[
\iota_Y^* (E) = \psi^* (\iota_Z^* (E_Z)) \sim_{\mbQ} \psi^* \pi^*D - \psi^*E_Z
= - l K_Y - E.
\]
This shows that $\iota_Y$ cannot be biregular and thus $\iota_Y$ is the flop.
\end{proof}

\begin{Thm} \label{birinv}
Let $X'$ be a member of the family $\mcG'_i$ and $\msp$ a terminal quotient singular point labeled Q.I. in the third column.
For the family $\mcG'_{30}$, we assume that the monomial $y^2 z$ appears in the defining equation of $X'$.
Then there exists a birational involution $\iota'_{\msp}$ of $X'$ which is a Sarkisov link centered at $\msp$.
\end{Thm}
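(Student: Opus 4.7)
The plan is to apply Lemma \ref{lem:birinvSlink} to the unique extremal divisorial extraction $\varphi_{\msp} \colon Y \to X'$ at $\msp$, which by Theorem \ref{thm:cyclicquot} is the Kawamata blowup of the terminal quotient singularity $\msp$. Writing $B = -K_Y$ and $E$ for the exceptional divisor, I would first identify, for each Q.I.\ center listed in the table, an integer $m$ such that the linear system $|mB|$ is base-point-free and gives the anticanonical morphism $\psi \colon Y \to Z$. The sections realizing this come from $\varphi_{\msp}$-pullbacks of weighted monomials vanishing to the appropriate order at $\msp$; the standard defining equations in Section \ref{sec:stdefeq} show that, after choosing a distinguished coordinate $x_k$ in which the defining polynomial of $X'$ is quadratic near $\msp$, the morphism $\psi$ factors through a double cover $\pi \colon Z \to V$ onto a weighted projective $3$-space $V = \mbP(a_{i_0},a_{i_1},a_{i_2},a_{i_3})$, which has Picard number one and is $\mbQ$-factorial.

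Next, I would verify that $B$ is nef and big. Bigness comes from a direct computation of $(B^3)$ from the Kawamata blowup discrepancy formula. For nefness, since $Y$ has Picard number two, it suffices to check $(B \cdot C) \ge 0$ on both extremal rays of $\overline{\operatorname{NE}}(Y)$: one ray is generated by a $\varphi_{\msp}$-contracted curve, on which $B$ vanishes (showing $B$ is not ample), while the other should be spanned by a general fiber of $\psi$, on which nefness follows by inspecting how $|mB|$ separates points on the complement of $E$. This exactly places us in the hypotheses of Lemma \ref{lem:birinvSlink}, so it remains to exclude its case (1).

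The main obstacle is showing, case by case for each pair $(i,\msp)$ with $\msp$ marked Q.I., that $\psi$ is small rather than divisorial. A contracted divisor for $\psi$ would correspond either to a component contained in the ramification of $\pi$ or to a component of the inverse image of a suitable hyperplane $(x_k = 0)_{X'}$ that is crushed by the projection. Ruling this out is the analytic heart of the argument and must be verified through the particular shapes of $g$ and $h$ in the standard equation for each family: one confirms that $(x_k = 0)_{X'}$ is irreducible and reduced away from $\msp$, and that its strict transform is not contracted. The ad hoc hypothesis $y^2 z \in $ (defining equation of $X' \in \mcG'_{30}$) is imposed precisely because, without this monomial, the projection from the relevant Q.I.\ center would acquire a contracted component, and the smallness check would fail.

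Once smallness is established, Lemma \ref{lem:birinvSlink}(2) produces the flop $\iota_Y = \psi^{-1} \circ \iota_Z \circ \psi$, where $\iota_Z$ is the deck involution of $\pi$, and therefore the Sarkisov link $\iota'_{\msp} := \varphi_{\msp} \circ \iota_Y \circ \varphi_{\msp}^{-1} \colon X' \ratmap X'$ starting with $\varphi_{\msp}$. Because $\iota_Z^2 = \mathrm{id}_Z$ and $\varphi_{\msp}$ is fixed in the construction, $\iota'_{\msp}$ is a birational involution of $X'$ centered at $\msp$, which is the desired conclusion.
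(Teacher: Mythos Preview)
Your overall strategy is correct and matches the paper's: apply Lemma \ref{lem:birinvSlink} to the Kawamata blowup, realize the anticanonical model as a double cover of a weighted projective $3$-space, and show that the anticanonical morphism $\psi$ is small. However, two points need correction.

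First, your nefness discussion has the two extremal rays swapped. On a curve $C$ contracted by $\varphi_{\msp}$ one has $(B \cdot C) = (-K_Y \cdot C) > 0$, since $\varphi_{\msp}$ is a $K_Y$-negative contraction; so $B$ does \emph{not} vanish there. It is on the curves contracted by $\psi$ (the anticanonical morphism) that $B$ is trivial, and this is what shows $B$ is not ample. In practice the paper does not argue nefness abstractly at all: after normalizing coordinates so that the defining polynomial reads $x_4^2 x_3 + x_4 f + g$ with $f,g \in \mbC[x_0,x_1,x_2,x_3]$, one checks directly that $x_0,x_1,x_2,x_3$ lift to anticanonical sections of $Y'$ and that $y := x_4 x_3$ lifts to a plurianticanonical section, giving an explicit morphism $\psi \colon Y' \to Z = (y^2 + y f + g = 0) \subset \mbP(a_0,a_1,a_2,a_3,a_3+a_4)$. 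This already shows $-K_{Y'}$ is semiample, hence nef and big, with $Z$ a double cover of $\mbP(a_0,a_1,a_2,a_3)$; the $y^2 z$ hypothesis for $\mcG'_{30}$ is what allows the equation to be put in this quadratic form in the first place, rather than being a smallness condition.

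Second, the paper's argument for smallness is uniform and much cleaner than a case-by-case check. The morphism $\psi$ contracts the proper transform of $(x_3 = f = g = 0)_{X'}$. If this contained a divisor, there would be a nonconstant $h \in \mbC[x_0,x_1,x_2]$ dividing both $f(x_0,x_1,x_2,0)$ and $g(x_0,x_1,x_2,0)$, and then the defining polynomial of $X'$ would lie in the ideal $(x_3,h)$, i.e.\ $X'$ would contain the non-$\mbQ$-Cartier divisor $(x_3 = h = 0)$, contradicting $\mbQ$-factoriality of $X'$. This single observation replaces all of your proposed family-by-family irreducibility checks.
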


\begin{proof}
Let $\mbP (a_0,\dots,a_4)$ be the ambient space of $X'$ with homogeneous coordinates $x_0,\dots,x_4$.
After replacing coordinates, we can assume $\msp = \msp_4$ and the defining polynomial of $X'$ is of the form $x_4^2 x_3 + x_4 f + g = 0$ for some $f, g \in \mbC [x_0,x_1,x_2,x_3]$.
Moreover we see that $x_0,x_1,x_2,x_3$ lift to anticanonical sections of $Y'$, where $Y' \to X'$ is the Kawamata blowup of $X'$ at $\msp$.
Then we have the anticanonical morphism $\psi \colon Y' \to Z$, where 
\[
Z = (y^2 + y f + g = 0) \subset \mbP (a_0,a_1,a_2,a_3,b),
\]
where $b = a_3 + a_4$.
Here $y$ is the coordinate of degree $b$ and $\psi$ is given by the identification $y = x_4 x_3$.
Note that $Z$ is a double cover of $\mbP (a_0,a_1,a_2,a_3)$.
We refer the readers to \cite[Section 4.9]{CPR} for the above construction.

We shall show that $\psi$ is small.
We see that $\psi$ contracts the proper transform of $(x_3 = f = g = 0) \subset X$.
Assume that $\psi$ contracts a divisor.
Then there is a homogeneous polynomial $h \in \mbC [x_0,x_1,x_2]$ which divides both $f (x_0,x_1,x_2,0)$ and $g (x_0,x_1,x_2,0)$.
Then the defining polynomial of $X'$ can be written as $x_3 F + h G$ for some $F, G \in \mbC [x_0,\dots,x_4]$.
This is a contradiction since $X'$ is $\mbQ$-factorial.
Therefore $\psi$ is small and there is a birational involution of $X'$ which is a Sarkisov link centered at $\msp$ by Lemma \ref{lem:birinvSlink}.
\end{proof}

\begin{Rem}
In the case where $y^2 z$ does not appear in the defining polynomial of $X' \in \mcG'_{30}$, the corresponding point is excluded as maximal center in Proposition \ref{prop:G'30-3sp}. 
\end{Rem}

Let $X' = X'_8 \subset \mbP (1,1,2,3,2)$ be a member of $\mcG_{19}$ and $\msp$ a point of type $\frac{1}{2} (1,1,1)$.
This is the unique singular point marked E.I. in the table.
The defining polynomial of $X'$ is of the form $F = w^2 y (y + f_2) + w g_6 + h_8$, where $f_2, g_6, h_8 \in \mbC [x_0,x_1,y,z]$.
First, we suppose that there is no WCI curve on $X'$ of type $(1,1,2)$ passing through $\msp$, which is equivalent to $y z^2 \in h_8$.
In this case, we shall construct a birational involution centered at $\msp$.
By a suitable change of coordinates $y,z,w$, we can assume $\msp = \msp_2$ and write
\[
F = y z^2 + a_5 z - w y^3 - b_4 y^2 - c_6 y + d_8,
\]
where $a_5, b_4, c_6, d_8 \in \mbC [x_0,x_1,w]$. 
Let $\varphi \colon Y' \to X'$ be the Kawamata blowup of $X'$ at $\msp$.
Then, by \cite[Theorem 4.13]{CPR}, the sections
\[
u := z^2 - w y^2 - b_4 y - c_6 \text{ and } 
v := u z + a_5 w y + a_5 b_4 
\]
lift to plurianticanonical sections of $Y'$.
Moreover, the anticanonical model $Z'$ of $Y'$ is the weighted hypersurface defined by the equation
\[
G := -v^2 + a_6 b_4 v + u^3 + u^2 c_6 - (b_4 d_8 + a_5^2 w)v + (-a_5^2 c_6 + d_8^2) w = 0
\]
in the weighted projective space $\mbP (1,1,2,6,9)$ with coordinates $x_0$, $x_1$, $w$, $u$, $v$, and the corresponding map $\psi \colon Y' \ratmap Z'$ is a morphism.
$Z'$ is a double cover of $\mbP (1,1,2,6)$.
Thus we can apply Lemma \ref{lem:birinvSlink} and obtain the following.

\begin{Thm} \label{thm:19EI}
Let $X'$ be a member of $\mcG'_{19}$ and $\msp$ a singular point of type $\frac{1}{2} (1,1,1)$.
Assume that there is no WCI curve of type $(1,1,2)$ on $X'$ passing through $\msp$.
Then either $\msp$ is not a maximal center or there is a birational involution $\iota \colon X' \ratmap X'$ centered at $\msp$ which is a Sarkisov link.
\end{Thm}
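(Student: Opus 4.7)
The plan is to apply Lemma \ref{lem:birinvSlink} directly, taking $\varphi \colon Y' \to X'$ to be the Kawamata blowup at $\msp$ (the unique extremal divisorial extraction centered at the quotient singularity $\msp$, by Theorem \ref{thm:cyclicquot}), and $\psi \colon Y' \to Z'$ to be the anticanonical morphism described in the paragraph preceding the theorem. The degree-$8$ polynomial $G$ in $\mbP(1,1,2,6,9)$ is quadratic in $v$, so the projection $\pi \colon Z' \to \mbP(1,1,2,6)$ forgetting $v$ is a double cover onto a weighted projective space, with biregular involution $\iota_{Z'}$ swapping the two roots of the $v$-equation.

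First I would verify the hypotheses of Lemma \ref{lem:birinvSlink}. The construction in the paragraph before the theorem (justified via \cite[Theorem 4.13]{CPR}) produces the lifts $u,v$ as pluri-anticanonical sections of $Y'$ and identifies the anticanonical model as $Z'$; in particular $-K_{Y'}$ is semiample via $\psi$, hence nef and big. Since $Z'$ is a double cover of the Picard rank one variety $\mbP(1,1,2,6)$, it has Picard number one after passing to $\mbQ$-factorial structure, so the hypotheses of Lemma \ref{lem:birinvSlink} are met provided $-K_{Y'}$ is not ample. The latter holds because the polynomial expressions for $u$ and $v$ show that $\psi$ contracts the proper transform of curves in $X'$ on which $u$ and $v$ vanish simultaneously (for instance, curves lying in $(z^2 - wy^2 - b_4 y - c_6 = 0)_{X'}$); concretely the base locus of the relevant linear system forces $\psi$ to have positive-dimensional fibers.

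Next I would invoke the dichotomy in Lemma \ref{lem:birinvSlink}. If $\psi$ is divisorial, then case (1) of that lemma asserts $\varphi$ is not a maximal singularity, hence $\msp$ is not a maximal center, giving the first alternative of the theorem. If $\psi$ is small, then by case (2) the composition $\iota'_Y := \psi^{-1} \circ \iota_{Z'} \circ \psi$ is the flop of $\psi$, and the induced map $\iota := \varphi \circ \iota'_Y \circ \varphi^{-1} \colon X' \ratmap X'$ is a Sarkisov link centered at $\msp$. Since $\iota_{Z'}$ is an involution and $\varphi$, $\psi$ are fixed, $\iota$ is an involution as well.

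The main substantive input is the assertion that $u$ and $v$ lift to pluri-anticanonical sections on $Y'$ and that the resulting morphism has image cut out by the equation $G=0$; the hypothesis $y z^2 \in h_8$ (equivalently, the absence of a WCI curve of type $(1,1,2)$ through $\msp$) is precisely what allows one to normalize the defining polynomial of $X'$ into the form used above and to derive the polynomial $G$ whose $v$-degree is exactly two. This verification is exactly the one carried out in \cite[Theorem 4.13]{CPR}, so the hard computational step is imported rather than re-done; what remains above is the formal application of our general Lemma \ref{lem:birinvSlink}.
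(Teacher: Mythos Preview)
Your proposal is correct and follows essentially the same route as the paper: the paper's proof is the single sentence ``This follows from Lemma \ref{lem:birinvSlink} and the preceding argument,'' and you have simply unpacked that preceding argument and the application of the lemma. One small inaccuracy: you do not need (and cannot easily deduce) that $Z'$ itself has Picard number one; Lemma \ref{lem:birinvSlink} only requires the base $V=\mbP(1,1,2,6)$ of the double cover to have Picard number one, which is immediate, and non-ampleness of $-K_{Y'}$ then follows because $\psi$ factors through $V$ while $\rho(Y')=2$.
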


\begin{proof}
This follows from Lemma \ref{lem:birinvSlink} and the preceding argument.
\end{proof}

\begin{Rem}
The case where there is a WCI curve of type $(1,1,2)$ on $X'$ passing through $\msp$ will be treated in Section \ref{sec:spfam19}.
\end{Rem}

\section{Excluding curves} 
\label{sec:curve}

In this section we exclude curves as maximal center.

\subsection{Most of the curves}

We can exclude most of the curves by Lemma \ref{exclcurvelowdeg}.

\begin{Prop} \label{exclmostC}
Let $X'$ be a member of the family $\mcG'_i$ for $i \in I_{F,cAx/2} \cup I_{F,cAx/4}$ and $\Gamma$ an irreducible and reduced curve on $X'$.
Then $\Gamma$ is not a maximal center except possibly for the following cases.
\begin{itemize}
\item $X' \in \mcG'_{17}$ or $\mcG'_{19}$, and $\deg \Gamma = \frac{1}{2}$.
\item $X' \in \mcG'_{23}$ and $\deg \Gamma = \frac{1}{4}$.
\end{itemize}
Here, in the above exceptions, $\Gamma$ passes through the $cAx/2$ or $cAx/4$ point of $X'$ and does not pass through any terminal quotient singular point of $X'$.
\end{Prop}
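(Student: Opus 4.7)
My plan is to apply Lemma \ref{exclcurvelowdeg} directly: an irreducible reduced curve $\Gamma$ on $X'$ can fail to be excluded only when $(A \cdot \Gamma) < (A^3)$, where $A = -K_{X'}$. The strategy is to compute $(A^3)$ family by family from the weighted data in Section \ref{sec:table2}, lower-bound $(A \cdot \Gamma)$ according to which singular points $\Gamma$ passes through, and compare. Since $X'$ is anticanonically embedded as a weighted hypersurface of degree $d$ in $\mbP(a_0,a_1,a_2,a_3,b)$ with $\sum a_j + b = d+1$, one has $(A^3) = d/(a_0 a_1 a_2 a_3 b)$, a fraction strictly less than $1$ for every family listed in $I_{F,cAx/2} \cup I_{F,cAx/4}$.

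For the lower bound on $(A \cdot \Gamma)$ I will use the standard fact that $(A \cdot \Gamma) \in \frac{1}{r_1 \cdots r_k}\mbZ_{>0}$ is determined locally: if $\Gamma$ passes through a terminal quotient point of type $\frac{1}{r}(1,a,r-a)$, its local contribution to $(A \cdot \Gamma)$ is a positive multiple of $1/r$, and similarly the contribution from the $cAx/2$ (resp.\ $cAx/4$) point $\msp'$ is a positive multiple of $1/2$ (resp.\ $1/4$). In particular, if $\Gamma$ is contained in the nonsingular locus, then $(A \cdot \Gamma)$ is a positive integer, hence $(A \cdot \Gamma) \ge 1 > (A^3)$ and $\Gamma$ is excluded. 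Thus only curves passing through at least one singular point can possibly survive.

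I will then enumerate the surviving cases family by family. For $\Gamma$ passing through a singular point $\msp$ of order $r$, I will show $(A \cdot \Gamma) \ge 1/r$, and I will combine this with the defining equation of $X'$ to constrain how $\Gamma$ can lie in $X'$. In almost all families, even $1/r$ (with $r$ the maximum order of a terminal quotient singularity) already equals or exceeds $(A^3)$, so $\Gamma$ is excluded. The remaining candidates are curves through the $cAx/2$ or $cAx/4$ point $\msp'$ with $(A \cdot \Gamma)$ equal to $1/b$; these survive precisely when $1/b < (A^3)$, i.e.\ exactly in the three families $\mcG'_{17}$, $\mcG'_{19}$ (where $(A^3)$ exceeds $1/2$) and $\mcG'_{23}$ (where $(A^3)$ exceeds $1/4$). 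When $\Gamma$ passes through a terminal quotient singular point in addition to $\msp'$, I will check that the combined constraint forces $(A \cdot \Gamma)$ back up above $(A^3)$; this rules out the ``passing through quotient singularity'' scenario in the exceptional list.

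The main obstacle, rather than the arithmetic of $(A^3)$ itself, is verifying the lower bound $(A \cdot \Gamma) \ge 1/b$ for curves through $\msp'$ and ruling out the combined case $\Gamma \ni \msp'$ and $\Gamma \ni \msp_{\mathrm{quot}}$: this requires understanding the geometry of the defining standard equation \eqref{eq:stdX'} restricted to coordinate subspaces meeting both singular points, and excluding the existence of such a curve by quasismoothness and $\mbQ$-factoriality. I expect this to require a short family-by-family check for the three exceptional families, handled either by looking at which coordinate lines lie on $X'$ or by direct inspection of the monomials appearing in the standard equation.
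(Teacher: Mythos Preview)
Your approach starts correctly with Lemma \ref{exclcurvelowdeg}, but there is a genuine gap in how you handle curves through terminal quotient singular points. You propose to exclude these by showing $(A \cdot \Gamma) \ge 1/r$ and comparing to $(A^3)$, and for curves through both $\msp'$ and a quotient point you hope that ``the combined constraint forces $(A \cdot \Gamma)$ back up above $(A^3)$''. This does not work. First, the degree bound $(A \cdot \Gamma) \ge 1/r$ alone already fails in several families: for instance in $\mcG'_{19}$ one has $(A^3)=2/3$ and a $\frac{1}{3}(1,1,2)$ point, so $1/3 < 2/3$; in $\mcG'_{23}$ one has $(A^3)=5/12$ and a $\frac{1}{3}$ point, so $1/3 < 5/12$; in $\mcG'_{30}$ one has $(A^3)=5/12$ and a $\frac{1}{4}$ point, so $1/4 < 5/12$. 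Second, when $\Gamma$ passes through two singular points of indices $r$ and $b$, the natural constraint is $(A\cdot\Gamma)\in\frac{1}{\lcm(r,b)}\mbZ_{>0}$, which gives a \emph{smaller} lower bound, not a larger one; e.g.\ in $\mcG'_{19}$ a curve through the $cAx/2$ point and the $\frac{1}{3}$ point could in principle have degree $1/6$. So neither the single-point nor the combined-point degree argument suffices, and your fallback plan of excluding such curves geometrically via quasismoothness and $\mbQ$-factoriality would be a substantial and unnecessary detour.

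The paper avoids all of this with a single structural observation you are missing: by Theorem \ref{thm:cyclicquot} (Kawamata), the only extremal divisorial extraction whose center contains a terminal quotient singular point is the Kawamata blowup of that point itself. Consequently there is \emph{no} extremal divisorial extraction centered along a curve through a terminal quotient singular point, so such a curve can never be a maximal center, regardless of its degree. Once this is in place, any potential maximal-center curve must pass through $\msp'$ and avoid all quotient singularities, hence $\deg\Gamma\in\frac{1}{b}\mbZ_{>0}$ with $b=2$ or $4$, and the inequality $\frac{1}{b}\le\deg\Gamma<(A^3)$ immediately isolates the three exceptional families. Incorporating this one sentence replaces your entire family-by-family analysis.
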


\begin{proof}
Assume that $\Gamma$ is a maximal center.
By Lemma \ref{exclcurvelowdeg}, $\deg \Gamma < (A^3)$.
If $\Gamma$ is contained in the nonsingular locus of $X'$, then $\deg \Gamma \ge 1$ and this implies $1 < (A^3)$.
This cannot happen since $(A^3) \le 1$ for any $X' \in \mcG_i$ with $i \in I_{F,cAx/2} \cup I_{F,cAx/4}$ satisfies $(A^3) \le 1$. 
Thus $\Gamma$ passes through a singular point.
We see that $\Gamma$ does not pass through a terminal quotient singular point since there is no extremal divisorial contraction centered along a curve through a terminal quotient singular point.
It follows that $\Gamma$ passes through the $cAx/2$ (resp.\ $cAx/4$) point but does not pass through a terminal quotient singular point, and this implies that $\deg \Gamma \in \frac{1}{2} \mbZ$  (resp.\ $\deg \Gamma \in \frac{1}{4} \mbZ$).
As a summary, we have $\frac{1}{2} \le \deg \Gamma < (A^3)$ (resp.\ $\frac{1}{4} \le \deg \Gamma < (A^3)$) if $i \in I_{F,cAx/2}$ (resp.\ $i \in I_{F,cAx/4}$).
This happens precisely when $\Gamma$ is one of the exceptional curves in the statement.
\end{proof}

\subsection{The remaining curves}

Let $X'$ be a member of $\mcG'_{19}$ (resp.\ $\mcG'_{23}$), and $\Gamma \subset X'$ a curve of degree $1/2$ (resp.\ $1/4$)  passing through the $cAx/2$ (resp.\ $cAx/4$) point.
We shall show that $\Gamma$ is not a maximal center.

The ambient weighted projective space of $X'$ is $\mbP (1,1,2,3,b)$, with  coordinates $x_0,x_1,y,z$ and $w$, where $b = 2$ (resp.\ $b = 4$) if $X' \in \mcG'_{19}$ (resp.\ $X' \in \mcG'_{23}$).
The curve $\Gamma$ is contracted to a point by the projection $\mbP (1,1,2,3,b) \ratmap \mbP (1,1,2,3)$ to the first four coordinates, so that it is contained in $(y = 0) \cup (y + f_2 = 0)$ (resp.\ $(x_0 = 0) \cup (x_0 + f_1=0)$).
Replacing $y \mapsto y - f_2$ (resp.\ $x_0 \mapsto x_0 - f_1$) and other coordinates $x_1,y,z$ if necessary, we may assume $\Gamma = (x_0 = y = z = 0)$.

We first consider a member
\[
X' = (w^2 y (y+f_2) + w g_6 + h_8 = 0) \subset \mbP (1,1,2,3,2)
\]
of $\mcG'_{19}$, where $f_2 = f_2 (x_0,x_1)$, $g_6 = g_6 (x_0,x_1,y,z)$ and $h_8 = h_8 (x_0,x_1,y,z)$.
We put $b_3 := (\prt g_6/\prt z) (x_0,x_1,0,0)$ and $c_6 := g_6 (x_0,x_1,0,0)$.
Since $X'$ contains $\Gamma = (x_0 = y = z = 0)$,  $h_8 (0,x_1,0,0) = 0$ and $c_6$ is divisible by $x_0$. 
We write $c_6 = x_0 c_5$.

\begin{Lem}
At least one of $f_2$, $b_3$ and $c_5$ is not divisible by $x_0$.
\end{Lem}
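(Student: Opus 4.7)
The plan is to argue by contradiction: suppose $f_2$, $b_3$ and $c_5$ are all divisible by $x_0$ and deduce a contradiction with the quasismoothness of $X'$ away from the $cAx/2$ point $\msp' = (0\!:\!0\!:\!0\!:\!0\!:\!1)$. The set-up step uses the containment $\Gamma \subset X'$ to get $g_6(0,x_1,0,0) = 0$ (so $c_6 = x_0 c_5$) and $h_8(0,x_1,0,0) = 0$ (so $h_8(x_0,x_1,0,0) = x_0 d_7$ for some polynomial $d_7$ of degree $7$). I will also introduce $a := (\partial g_6/\partial y)(\cdot,\cdot,0,0)$, $e_6 := (\partial h_8/\partial y)(\cdot,\cdot,0,0)$ and $f_5 := (\partial h_8/\partial z)(\cdot,\cdot,0,0)$, polynomials in $(x_0,x_1)$ of degrees $4$, $6$, $5$ respectively.

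Computing the gradient of $F = w^2 y(y + f_2) + w g_6 + h_8$ along $\Gamma$, one finds $(\partial F/\partial x_1)|_\Gamma = (\partial F/\partial w)|_\Gamma = 0$ automatically, while under the hypothesis the three remaining partials at a general point $(0,1,0,0,w_0) \in \Gamma$ simplify to $d_7(0,1)$, $w_0\, a(0,1) + e_6(0,1)$ and $f_5(0,1)$. For $X'$ to be quasismooth along $\Gamma \setminus \{\msp'\}$ this triple must not vanish for any $w_0 \in \mbC$, which, since the first and third are constant in $w_0$ and the middle is linear, leads to three subcases: (i) $d_7(0,1) \ne 0$; (ii) $d_7(0,1) = 0$ and $f_5(0,1) \ne 0$; or (iii) $d_7(0,1) = f_5(0,1) = 0$, $a(0,1) = 0$ and $e_6(0,1) \ne 0$.

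The next step is to exclude each of these subcases. In subcases (ii) and (iii) the vanishing $d_7(0,1) = 0$ forces $d_7 \in (x_0)$, and combined with the hypothesis this yields $F \in (x_0^2, y, z)$ as an ideal in $\mbC[x_0, x_1, y, z, w]$; I plan to use the Weierstrass preparation argument from the proof of Lemma \ref{lem:singularity} to deduce that the local model of $X'$ at $\msp'$ then fails to have the required $cAx/2$ form, contradicting $X' \in \mcG'_{19}$. Subcase (i) should be ruled out by a parallel analysis at the coordinate stratum $\{y = z = 0\}$, where under the hypothesis $F|_{y=z=0} = x_0(w c_5 + d_7)$, and the Fletcher-type monomial criterion applied to this stratum together with (i) produces a configuration incompatible with $X'$ being quasismooth at some point of the residual curve $Z(w c_5 + d_7) \cap \{y = z = 0\}$ different from $\msp'$.

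The principal obstacle I anticipate is the case-by-case analysis in the last step, especially the Weierstrass computation verifying that the singularity at $\msp'$ degenerates once $d_7 \in (x_0)$ holds together with the hypothesis. The interplay between the weight structure at $\msp'$ coming from Theorem \ref{thm:divext/2} and the explicit monomial content of $F$ must be tracked carefully to identify the lowest-weight term of the singularity data and confirm that it is no longer of the prescribed $cAx/2$ shape.
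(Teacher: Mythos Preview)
Your approach diverges fundamentally from the paper's, and I believe it has a genuine gap.

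The paper does \emph{not} argue on $X'$ at all. It passes to the birational counterpart $X = X_{6,8} \subset \mbP(1,1,2,3,4,4)$, which is quasismooth by the very definition of $\mcG'_{19}$. With defining polynomials $F_1 = s_1 y + s_0(y+f_2) + g_6$ and $F_2 = s_1 s_0 - h_8$, the curve $C = (x_0 = y = z = s_0 = 0)$ lies on $X$. If $f_2, b_3, c_5$ are all divisible by $x_0$, then every partial of $F_1$ except $\partial F_1/\partial y$ vanishes identically along $C$, so $X$ is not quasismooth at the single point of $C$ where $\partial F_1/\partial y = s_1 + (\partial g_6/\partial y)(0,x_1,0,0)$ vanishes. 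That is the whole proof.

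Your plan uses only the properties of $X'$ supplied by Lemma~\ref{lem:singularity}: quasismoothness away from $\msp'$ and the $cAx/2$ type at $\msp'$. But these are \emph{consequences} of $X' \in \mcG'_{19}$, not a characterisation; the defining datum is quasismoothness of $X$. There is no reason to expect the weaker package to force the conclusion, and indeed your case analysis does not close:
\begin{itemize}
\item In subcase (i) you have already shown that $X'$ is quasismooth along $\Gamma \setminus \{\msp'\}$, so no contradiction arises there. The appeal to a ``Fletcher-type criterion'' on the residual curve in $(y=z=0)$ is not an argument: nothing in the hypothesis forces non-quasismoothness of $X'$ along that curve. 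For instance, with $f_2 = 0$, $g_6 = z^2 + y^3$ and $h_8$ containing $x_0 x_1^7 + x_1^6 y$ plus generic terms, one has $f_2,b_3,c_5 \in (x_0)$ and $d_7(0,1)\ne 0$, yet $X'$ is quasismooth along $\Gamma$ and the $cAx/2$ analysis goes through (the Weierstrass square $-(a+e_6)^2/4$ contributes a pure $x_1$ term to $\varphi$). The corresponding $X$, however, fails quasismoothness at $\msp_1$ --- which is exactly the point the paper's argument detects.
\item In subcases (ii)/(iii) the inference ``$F \in (x_0^2,y,z)$ forces the germ at $\msp'$ out of $cAx/2$'' is not correct as stated. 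The Weierstrass preparation in $y$ introduces the term $-(f_2+a+e_6)^2/4$ into $\varphi(x_0,x_1)$; with $e_6(0,1)\ne 0$ (your case (iii)) this already gives $\varphi(0,x_1)\ne 0$, so $\varphi$ need not lie in $(x_0)$ and the singularity can perfectly well remain $cAx/2$.
\end{itemize}

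The remedy is to use what $\mcG'_{19}$ actually gives you: the quasismooth counterpart $X$. The argument then becomes two lines.
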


\begin{proof}
Let $X = X_{6,8} \subset \mbP (1,1,2,3,4,4)$ be the member of $\mcG_{19}$ which is the birational counterpart of $X'$.
Defining polynomials of $X$ are written as $F_1 = s_1 y + s_0 (y+f_2) + g_6$ and $F_2 = s_1 s_0 - h_8$, where $s_0,s_1$ are coordinates of degree $4$.
We see that $X$ contains the curve $C := (x_0 = y = z = s_0 = 0)$.
If all of $f_2$, $b_3$ and $c_5$ are divisible by $x_0$, then the restriction of $\prt F_1/ \prt x_0$, $\prt F_1 / \prt x_1$, $\prt F_1/\prt z$, $\prt F_1/ \prt s_0$ and $\prt F_1 / \prt s_1$ to $C$ are identically zero.
It follows that $X$ is not quasismooth at the point $C \cap (\prt F_1/\prt y = 0)$.
This is a contradiction since $X$ is quasismooth.
\end{proof}

Let $\mcM \subset \left| \mcI_{\Gamma} (3A) \right|$ be the linear system on $X'$ generated by the sections $x_0^3$, $x_0^2 x_1$, $x_0 x_1^2$, $y x_0$, $y x_1$, $z$, and let $S$ be a general member of $\mcM$.
The base locus of $\mcM$ is the union of $\Gamma$ and the set of points of type $\frac{1}{2} (1,1,1)$.
We denote by $\msp = \msp_4$ the $cAx/2$ point of $X'$.

\begin{Lem}
The surface $S$ is nonsingular along $\Gamma \setminus \{\msp\}$.
\end{Lem}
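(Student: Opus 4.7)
The plan is to verify the smoothness of a general $S \in \mcM$ at each $p \in \Gamma \setminus \{\msp\}$ by showing that the hypersurface $s = 0$ in $\mbP$ defining $S$ meets $X'$ transversally at $p$ inside the smooth locus of the ambient space. By Lemma \ref{lem:singularity}, $X'$ is quasismooth outside $\msp = \msp_4$; since $\Gamma$ passes through no other vertex of $\mbP$ aside from the smooth vertex $\msp_1$, every $p \in \Gamma \setminus \{\msp\}$ is a smooth point of $X'$. Hence smoothness of $S$ at $p$ is equivalent to linear independence of $dF$ and $ds$ at $p$ inside the ambient smooth locus.

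I would then work in the affine chart $\{x_1 = 1\}$, which covers $\Gamma \setminus \{\msp_4\}$, and parameterize points of $\Gamma$ there by $w_0 \in \mbA^1$, writing $p_{w_0} = (0, 0, 0, w_0)$. For $s = \alpha_1 x_0^3 + \alpha_2 x_0^2 x_1 + \alpha_3 x_0 x_1^2 + \alpha_4 y x_0 + \alpha_5 y x_1 + \alpha_6 z$, a direct dehomogenization yields $ds|_{p_{w_0}} = \alpha_3\, dx_0 + \alpha_5\, dy + \alpha_6\, dz$. Using the relations $c_6 = x_0 c_5$ and $h_8(0, x_1, 0, 0) = 0$, which express $\Gamma \subset X'$, one finds that the $dw$-component of $dF|_{p_{w_0}}$ vanishes and
\[
dF|_{p_{w_0}} = A(w_0)\, dx_0 + B(w_0)\, dy + C(w_0)\, dz,
\]
where $A(w_0) = w_0 c_5(0, 1) + h_x$, $B(w_0) = w_0^2 f_2(0, 1) + w_0 g_y + h_y$, $C(w_0) = w_0 b_3(0, 1) + h_z$ are polynomials in $w_0$ with coefficients determined by $f_2, b_3, c_5$ and by certain partial derivatives of $g_6, h_8$ evaluated at $(0, 1, 0, 0)$.

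The key step is to observe that quasismoothness of $X'$ at $p_{w_0}$ (valid for every $w_0 \in \mbC$) amounts to $(A(w_0), B(w_0), C(w_0)) \ne (0, 0, 0)$, since the gradient of the affine-cone equation at the lift $(0, 1, 0, 0, w_0) \in \mbA^5$ reduces to $(A(w_0), 0, B(w_0), C(w_0), 0)$. Given this, transversality at $p_{w_0}$ fails exactly when $(\alpha_3, \alpha_5, \alpha_6)$ is proportional to $(A(w_0), B(w_0), C(w_0))$. The hard part --- ensuring that a single general choice of $(\alpha_3, \alpha_5, \alpha_6)$ works for all $w_0$ simultaneously --- is handled by the observation that as $w_0$ ranges over $\mbA^1$, the union of bad lines $\mbC \cdot (A(w_0), B(w_0), C(w_0))$ is an algebraic subset of $\mbC^3$ of dimension at most two and hence is avoided by a general choice of coefficients.
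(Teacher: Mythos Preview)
Your proof is correct and follows essentially the same approach as the paper: both reduce the question to showing that the $2 \times 5$ Jacobian of $(F,s)$ on the affine cone has rank $2$ along $\Gamma \setminus \{\msp\}$, using that the $x_1$- and $w$-partials of $F$ vanish there so that quasismoothness forces $(A,B,C)\neq 0$. The only difference is cosmetic: the paper finishes by an explicit case analysis on $2\times 2$ minors in the parameters $(\lambda,\mu,\nu)$, whereas you finish with the cleaner observation that the cone over the image of $w_0\mapsto [A(w_0):B(w_0):C(w_0)]$ in $\mbC^3$ has dimension $\le 2$ and is therefore avoided by a general $(\alpha_3,\alpha_5,\alpha_6)$ (one should say ``contained in'' rather than ``is'' an algebraic subset, but this is harmless).
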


\begin{proof}
We write 
\[
f_2 = \alpha x_0 x_1 + \beta x_1^2 + f_2', 
g_6 = \gamma x_0 x_1^5 + \delta y x_0^4 + \varepsilon z x_1^3 + g'_6,
h_8 = \eta x_0 x_1^7 + \zeta y x_1^6 + \theta z x_1^5 + h'_8,
\]
where $\alpha,\beta,\dots,\theta \in \mbC$ and $f_2', g'_6, h'_8$ are contained in the ideal $(x_0,y,z)^2$.
We see that $S$ is cut out on $X'$ by the section of the form $\lambda x_0 x_1^2 + \mu y x_1 + \nu z + (\text{other terms})$.
Then the restriction of Jacobian matrix of the affine cone of $S$ to $\Gamma$ can be computed as
\[
J_{C_S}|_{\Gamma} =
\begin{pmatrix}
\gamma w x_1^5 + \eta x_1^7 & 0 & \beta w^2 x_1^2 + \delta w x_1^4 + \zeta x_1^6 & \varepsilon w x_1^3 + \theta x_1^5 & 0 \\
\lambda x_1^2 & 0 & \mu x_1 & \nu & 0
\end{pmatrix}
\]
It is enough to show that $J_{C_S}$ is of rank $2$ at every point of $\Sigma := \Gamma \setminus \{\msp\}$.

Suppose that $\varepsilon = \theta = 0$.
Then the 1st row of $J_{C_S} (\msq)$ is of rank $1$ for any $\msq \in \Sigma$ since $X'$ is quasismooth at $\msq$.
Thus $\rank J_{C_S} (\msq) = 2$  for any $\msq \in \Sigma$ and we are done.

We continue the proof assuming that $(\varepsilon,\theta) \ne (0,0)$. 
We set
\[
M_1 =
\begin{pmatrix}
\gamma w x_1^5 + \eta x_1^7 & \varepsilon w x_1^3 + \theta x_1^5 \\
\lambda x_1^2 & \nu
\end{pmatrix},
M_2 =
\begin{pmatrix}
\beta w^2 x_1^2 + \delta w x_1^4 + \zeta x_1^6 & \varepsilon w x_1^3 + \theta x_1^5 \\
\mu x_1 & \nu
\end{pmatrix}
\]
Then, it is enough to show that the set $(\det M_1 = \det M_2 = 0) \cap \Sigma$ is empty.
We have
\[
\begin{split}
\det M_1 &= x_1^5 ((\gamma \nu - \varepsilon \lambda) w - (\theta \lambda - \eta \nu) x_1^2), \\
\det M_2 &= x_1^2 (\beta \nu w^2 + (\delta \nu - \varepsilon \mu) w x_1^2 + (\zeta \nu - \theta \mu) x_1^4).
\end{split}
\]
Note that $x_1 \ne 0$ on $\Sigma$.
If $(\gamma,\varepsilon) = (0,0)$, then the set $(\det M_1 = 0) \cap \Sigma$ is already empty.
Hence we may assume $(\gamma,\varepsilon) \ne (0,0)$.
In this case, we may assume $\gamma \nu - \varepsilon \lambda \ne 0$ since $\lambda,\nu$ are general.
We set $w_0 = (\theta \lambda - \eta \nu)/(\gamma \nu - \varepsilon \lambda) \in \mbC$ and $\msq = (0\!:\!1\!:\!0\!:\!0\!:\!w_0)$.
Then we see that $(\det M_1 = 0) \cap \Sigma = \{\msq\}$.
We have
\[
(\det M_2) (1,w_0) = \mu (\varepsilon w_0 + \theta) - \nu (\beta w_0^2 + \delta w_0 + \zeta).
\] 
Since $w_0$ does not involve $\mu$, we see that $(\det M_2) (1,w_0) \ne 0$ for a general $\mu$.
This shows $(\det M_1 = \det M_2 = 0) \cap \Sigma = \emptyset$ and the proof is completed.
\end{proof}

\begin{Lem} \label{selfint19}
We have $(\Gamma^2) \le - 3/2$.
\end{Lem}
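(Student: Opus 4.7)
The plan is to compute $(\Gamma^2)_S$ as a difference of a global and a residual intersection. Take a second general member $S'\in\mcM$. Since $\mcM$ has generic multiplicity one along $\Gamma$ (for instance, $z\in\mcM$ vanishes to order one on $\Gamma$), the intersection cycle $S\cap S'$ decomposes as $\Gamma+R$ with $R$ an effective residual 1-cycle on $X'$. Intersecting with $\Gamma$ on $S$ and using $S'\sim_{\mbQ}3A$ together with $\deg\Gamma=1/2$ yields
\[
(\Gamma^2)_S+(R\cdot\Gamma)_S=(3A\cdot\Gamma)_{X'}=\tfrac{3}{2},
\]
so the lemma reduces to the bound $(R\cdot\Gamma)_S\ge 3$.

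For this bound I observe that $\Gamma$ meets the singular locus of $X'$ only at the $cAx/2$ point $\msp=\msp_4$ (the other singular points $\msp_2,\msp_3$ lie outside $\Gamma$), and at every smooth point of $\Gamma$ the linear parts modulo $\mfm_\Gamma^2$ of the six generators $x_0^3,x_0^2x_1,x_0x_1^2,yx_0,yx_1,z$ of $\mcM$ already span the whole conormal sheaf of $\Gamma$ in $X'$; hence $R$ does not meet $\Gamma$ away from $\msp$, and all of $(R\cdot\Gamma)_S$ is supported at $\msp$. To estimate the local contribution I pass to the divisorial extraction $\varphi'\colon Y'\to X'$ at $\msp$ provided by Theorem \ref{thm:divext/2}, with exceptional divisor $E'$ and discrepancy $1/2$. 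Writing $\varphi'^{*}S=\tilde S+e_SE'$ and $\varphi'^{*}\Gamma=\tilde\Gamma+e_\Gamma C'$ for a 1-cycle $C'\subset E'$, I compute the weighted multiplicities $e_S,e_\Gamma$ directly from the leading weighted terms of the generators of $\mcM$ and of the defining ideal $(x_0,y,z)$ of $\Gamma$, and combine them with the $\mbZ_2$-doubling effect on intersection numbers coming from the cover of $\msp$ to derive the desired lower bound $(R\cdot\Gamma)_{S,\msp}\ge 3$.

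The main obstacle is the careful local analysis at $\msp$: by Lemma \ref{lem:singularity}(3) the $cAx/2$ point is of non-square type exactly when $f_2=0$, and the weighted blowup weights in Theorem \ref{thm:divext/2} are different in the two types, so the weighted multiplicities $e_S$ and $e_\Gamma$ must be computed case by case. The uniform lower bound is obtained by using the constraint proved in the preceding lemma, namely that at least one of $f_2$, $b_3$, $c_5$ is not divisible by $x_0$, which controls the leading terms of $F$ near $\msp$ irrespective of square or non-square type and guarantees $(R\cdot\Gamma)_S\ge 3$ for every quasismooth member of $\mcG'_{19}$.
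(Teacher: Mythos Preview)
Your global reduction is correct: writing $S'|_S = \Gamma + R$ with $S' \sim 3A$ gives $(\Gamma^2)_S = \tfrac{3}{2} - (R \cdot \Gamma)_S$, so the lemma is equivalent to $(R \cdot \Gamma)_S \ge 3$. The claim that $R$ meets $\Gamma$ only at $\msp$ is also right: on $\Gamma \setminus \{\msp\}$, where $x_1 \ne 0$, the sections $x_0 x_1^2$, $y x_1$, $z$ already generate the ideal of $\Gamma$, so the mobile part of $\mcM|_S$ has no base points along $\Gamma$ there.

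The gap is that the local computation at $\msp$ is only announced, not carried out. You write $\varphi'^{*}\Gamma = \tilde\Gamma + e_\Gamma C'$, but $\Gamma$ is a curve on a threefold and this pullback is not well-defined as written; the correct setting is to restrict the weighted blowup to the surface $S$, obtaining $\varphi \colon T \to S$, and work with $\varphi^*\Gamma$ as a divisor on $T$. Even granting that reformulation, you never actually compute $e_S$, $e_\Gamma$, or explain how they yield the bound $\ge 3$; ``combine them with the $\mbZ_2$-doubling effect'' is not a computation. The square/non-square case split is also a red herring: the blowup used is the single weighted blowup with $\wt(x_0,x_1,y,z) = \tfrac{1}{2}(1,1,4,3)$, uniformly, and no separate analysis of the two types is needed.

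For comparison, the paper's argument uses the same blowup but in a cleaner way. Restricting to $S$ gives $\varphi \colon T \to S$; adjunction from the extremal extraction yields $K_T = \varphi^*K_S - E$, and adjunction on $T$ (where $\tilde\Gamma$ is a smooth rational curve in the smooth locus) gives $(\tilde\Gamma^2) = -(K_T \cdot \tilde\Gamma) - 2 = -(K_S \cdot \Gamma) + (E \cdot \tilde\Gamma) - 2 = -1 + 1 - 2 = -2$. Then $(\Gamma^2) = (\tilde\Gamma^2) + r$, where $r \le \tfrac{1}{2}$ is the coefficient of the component $E_1 \subset E$ meeting $\tilde\Gamma$ in $\varphi^*\Gamma$, bounded via the vanishing order of $x_0$ along $E$. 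This gives $(\Gamma^2) \le -\tfrac{3}{2}$ directly, without ever identifying $R$ or estimating $(R \cdot \Gamma)$. The adjunction route is shorter and sidesteps the ill-posed pullback in your sketch; if you want to make your approach work, you should reframe everything on $T$ and then actually perform the multiplicity computation, at which point you will essentially recover the paper's argument.
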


\begin{proof}
We work on the open subset on which $w \ne 0$.
Let $\Phi$ be the weighted blowup of $\mbP (1,1,2,3,2)$ at $\msp$ with $\wt (x_0,x_1,y,z) = \frac{1}{2} (1,1,4,3)$ and $\varphi \colon T \to S$ the birational morphism induced by $\Phi$.
Let $\tilde{\Gamma}$ be the proper transform of $\Gamma$ by $\varphi$.
We denote by $E$ the restriction of the exceptional divisor of $\Phi$ to $T$.

We claim that $E = E_1 + E_2$, where $E_1$ is an irreducible and reduced curve, $E_2$ does not contain $E_1$ as a component, $(\tilde{\Gamma} \cdot E_1) = 1$ and $\tilde{\Gamma}$ is disjoint from $E_2$.
Let $s = s (x_0,x_1,y,z)$ be the section which cuts out $S$.
We can write $s = z + x_0 q + (\text{terms divisible by } y)$, where $q = q (x_0,x_1)$ is a quadric.
We have the isomorphisms
\[
\begin{split}
E &\cong (y f_2 + z^2 + z b_3 + x_0 c_5 = z + x_0 q = 0) \subset \mbP (1,1,4,3), \\
&\cong (y f_2 + x_0^2 q^2 - x_0 q b_3 + x_0 c_5 = 0) \subset \mbP (1,1,4).
\end{split}
\]
If $f_2$ is not divisible by $x_0$, then $E = E_1$ is an irreducible and reduced curve and it intersects $\tilde{\Gamma}$ transversally at a nonsingular point.
Assume that $f_2$ is divisible by $x_0$ and write $f_2 = x_0 f_1$.
In this case we have $E = E_1 + E_2$, where $E_1 = (x_0 = 0)$ and $E_2 = (y f_1 + x_0 q^2 - q b_3 + c_5 = 0)$.
Note that $E$ intersects $\tilde{\Gamma}$ transversally at a nonsingular point.
Since one of $b_3$ and $c_5$ is not divisible by $x_0$, the support of $E_2$ does not contain $E_1$ as a component and $E_2$ is disjoint from $\tilde{\Gamma}$.
It follows that $(E_1 \cdot \tilde{\Gamma}) = 1$.
This proves the claim.

We write $\varphi^*\Gamma = \tilde{\Gamma} + r E_1 + F$ for some rational number $r$ and an effective $\mbQ$-divisor $F$ whose support is contained in $\operatorname{Supp} E_2$.
We claim that $r \le 1/2$.
Indeed, we have $(x_0 = 0)_S = \Gamma + \Delta$ for some curve $\Delta$ and $x_0$ vanishes along $E$ to order $1/2$.
It follows that $\varphi^* (x_0 = 0)_S = \varphi^*\Gamma + \varphi^*\Delta$ and the coefficient of $E_1$ in $\varphi^*(x_0 = 0)_S$ is $1/2$, which shows $r \le 1/2$.
The induced birational morphism $\Phi|_{Y'} \colon Y' \to X'$, where $Y'$ is the proper transform of $X'$ via $\Phi$, is an extremal divisorial extraction centered at $\msp$ (see Section \ref{sec:SLWHWCI}).
In particular $K_{Y'} = (\Phi|_{Y'})^*K_{X'} + \frac{1}{2} G$, where $G$ is the exceptional divisor of $\Phi|_{Y'}$.
We have $(\Phi||_{Y'})^* S = T - \frac{3}{2} G$ and $G|_T = E$.
By adjunction, we conclude $K_T = \varphi^*K_S - E$.
We have 
\[
(\Gamma^2) = (\tilde{\Gamma}^2) + (r E_1 + F \cdot \tilde{\Gamma}) = (\tilde{\Gamma}^2) + r
\]
and, by adjunction,
\[
(\tilde{\Gamma}^2) = - (K_T \cdot \tilde{\Gamma}) -2 = - (K_S \cdot \Gamma) + (E \cdot \tilde{\Gamma}) - 2.
\]
Combining these with $(K_S \cdot \Gamma) = 2 \deg \Gamma = 1$, we get $(\Gamma^2) = -2 + r \le - 3/2$.
\end{proof}

We consider a member 
\[
X' = (w^2 x_0 (x_0 + f_1) + w g_6 + h_{10} = 0) \subset \mbP (1,1,2,3,4)
\]
of $\mcG'_{23}$, where $f_1 = f_1 (x_1)$ and $g_6, h_{10} \in \mbC [x_0,x_1,y,z]$.
We write $g_6 (0,x_1,y,z) = z^2 + z b_3 + b_6$, where $b_j = b_j (x_1,y)$.
Since $X'$ contains $\Gamma = (x_0 = y = z = 0)$, $h_{10} (0,x_1,0,0) = b_6 (x_1,0) = 0$ and we write $b_6 = y b_4$.

\begin{Lem} \label{divy23}
If $f_1 = 0$, then at least one of $b_3$ and $b_4$ is not divisible by $y$.
\end{Lem}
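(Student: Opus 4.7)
The plan is to mirror the proof of the immediately preceding lemma and derive a contradiction from the quasismoothness of the birational counterpart of $X'$. The counterpart is the member $X = X_{6,10} \subset \mbP(1,1,2,3,5,5)$ of $\mcG_{23}$, whose defining polynomials in standard form are $F_1 = v x_0 + u(x_0 + f_1) + g_6$ and $F_2 = v u - h_{10}$, with $u,v$ the degree-$5$ coordinates. Using $f_1 = 0$ collapses $F_1$ to $(u+v) x_0 + g_6$, and one easily checks that both curves $C_u = (x_0 = y = z = u = 0)$ and $C_v = (x_0 = y = z = v = 0)$ sit inside $X$, thanks to $g_6(0,x_1,0,0) = b_6(x_1,0) = 0$ and $h_{10}(0,x_1,0,0) = 0$.

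Assume for contradiction that $f_1 = 0$ and both $b_3, b_4$ are divisible by $y$, so $b_3(x_1,0) = b_4(x_1,0) = 0$. The plan is to restrict the partials of $F_1$ to the curve $C_v$ (which has free parameters $x_1, u$) and show they all vanish identically except possibly $\prt F_1/\prt x_0$. Indeed, $\prt F_1/\prt y|_{C_v} = b_4(x_1,0) = 0$ and $\prt F_1/\prt z|_{C_v} = b_3(x_1,0) = 0$ by hypothesis; $\prt F_1/\prt x_1|_{C_v} = 0$ because $g_6(0,x_1,0,0) \equiv 0$; and $\prt F_1/\prt u = x_0 + f_1$ and $\prt F_1/\prt v = x_0$ vanish because $x_0 = f_1 = 0$ on $C_v$. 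The only potentially nonzero partial is $\prt F_1/\prt x_0|_{C_v} = u + a(x_1)$, where $a(x_1) := (\prt g_6/\prt x_0)(0,x_1,0,0)$.

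To conclude, I would specialize to the point $P = (0,1,0,0,-a(1),0)$ of $C_v$. Then the first row of the Jacobian of $(F_1, F_2)$ at $P$ vanishes completely, while in the second row $\prt F_2/\prt u = v = 0$ along $C_v$, and only $\prt F_2/\prt v = u = -a(1)$ together with the $x_0, y, z$-partials of $h_{10}$ survive; hence the Jacobian has rank at most one at the nonzero cone point over $P$, contradicting the quasismoothness of $X \in \mcG_{23}$. No serious obstacle arises; the delicate point is simply the bookkeeping of which partials vanish on $C_v$, which is controlled by the explicit form $g_6(0,x_1,y,z) = z^2 + z b_3 + y b_4$ combined with the assumption $b_3, b_4 \in (y)$.
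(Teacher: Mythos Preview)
Your proof is correct and follows essentially the same approach as the paper's: both pass to the birational counterpart $X \in \mcG_{23}$ and show that under the assumption $f_1 = 0$ and $b_3, b_4 \in (y)$, all partials of $F_1$ vanish along a suitable curve except $\prt F_1/\prt x_0$, whose vanishing locus on that curve is a single point where quasismoothness fails. The only cosmetic difference is that the paper works on the curve $C_u = (x_0 = y = z = u = 0)$ (denoted $C = (x_0 = y = z = s_0 = 0)$ there) rather than your $C_v$, which is immaterial since $f_1 = 0$ makes the roles of $u$ and $v$ symmetric in $F_1$; your extra computation of the second Jacobian row is unnecessary but harmless.
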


\begin{proof}
Let $X \subset \mbP (1,1,2,3,5,5)$ be a member of $\mcG_{23}$ which is the birational counterpart of $X'$.
Defining polynomials of $X$ are written as $F_1 = s_1 x_0 + s_0 (x_0 + f_1) + g_6$ and $F_2 = s_1 s_0 - h_{10}$.
We see that $X$ contains the curve $C := (x_0 = y = z = s_0 = 0)$.
Assume that $f_1 = 0$ and both $b_3$ and $b_4$ are divisible by $y$.
We see that $(\prt F_1/\prt x_0 = 0) \cap C$ consists of a single point at which $X$ is not quasismooth.
This is a contradiction since $X$ is quasismooth.
\end{proof}

Let $\mcM$ be the linear system $\left| \mcI_{\Gamma} (3 A) \right|$ on $X'$.
The base locus of $\mcM$ is the union of $\Gamma$ and finitely many closed points as a set.
It is straightforward to see that $S$ is nonsingular along $\Gamma \setminus \{\msp\}$, where $\msp = \msp_4$ is the $cAx/4$ point of $X'$.

\begin{Lem} \label{selfint23}
We have $(\Gamma^2) \le - 1$.
\end{Lem}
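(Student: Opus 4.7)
The plan is to mirror the proof of Lemma~\ref{selfint19}, replacing the $cAx/2$ data by the $cAx/4$ data coming from Theorem~\ref{thm:divext/4} and Section~\ref{sec:SLWHWCI}. Concretely, I work on the affine open set $w\ne0$ and let $\Phi$ be the weighted blowup of $\mbP(1,1,2,3,4)$ at $\msp=\msp_4$ with the weights corresponding to the extremal extraction of the $cAx/4$ point of $X'$, namely $\wt(x_0,x_1,y,z)=\tfrac14(5,1,2,3)$; the restriction $\Phi|_{Y'}\colon Y'\to X'$ is the divisorial extraction used in Section~\ref{sec:SLWHWCI}, with discrepancy $1/4$. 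Let $\varphi\colon T\to S$ be the induced birational morphism, where $T$ is the proper transform of $S$, let $G$ be the exceptional divisor of $\Phi|_{Y'}$, and set $E:=G|_T$; let $\tilde\Gamma$ denote the proper transform of $\Gamma$.

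The first step is to describe $E$ and the intersection $\tilde\Gamma\cdot E$. Since $S$ is cut by a general section of $|\mcI_\Gamma(3A)|$, it contains a summand proportional to $z$ (of smallest relevant weight under the above weights), and the equation of $E$ inside the weighted $\mbP^3$ modeling the exceptional divisor of $\Phi$ can be written down explicitly by pulling back the defining equation of $X'$ and the section cutting out $S$. Just as in the proof of Lemma~\ref{selfint19}, I expect a decomposition $E=E_1+E_2$ in which $E_1$ is irreducible and reduced with $(\tilde\Gamma\cdot E_1)=1$, and $\tilde\Gamma$ is disjoint from $E_2$; the residual component only appears when $f_1$ is divisible by $x_0$ (so effectively when $f_1=0$, since $f_1=f_1(x_1)$), and in that situation Lemma~\ref{divy23} guarantees that one of $b_3,b_4$ is non-zero, which is what is needed to separate the support of $E_2$ from $\tilde\Gamma$. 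I expect this bookkeeping to be the main obstacle, since the weights $\tfrac14(5,1,2,3)$ are less symmetric than in the $cAx/2$ case and one has to check several sub-cases (in particular $f_1\ne0$ versus $f_1=0$) to confirm the claimed transverse intersection.

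Once the decomposition is established, I write $\varphi^*\Gamma=\tilde\Gamma+rE_1+F$ with $F$ supported on $E_2$. Using $(x_0=0)_S=\Gamma+\Delta$ and the fact that $x_0$ vanishes along $G$ to order $5/4$ while $\Delta$ pulls back to an effective divisor having $E_1$-coefficient at most $5/4-1=1/4$ (the slack being the part of $(x_0=0)_S$ away from $\Gamma$), I conclude $r\le1/4$. By adjunction on $T$, using $K_T=\varphi^*K_S-E$ (which follows from $(\Phi|_{Y'})^*S=T+mG$ for the appropriate $m$ together with the discrepancy formula $K_{Y'}=(\Phi|_{Y'})^*K_{X'}+\tfrac14G$), I get
\[
(\tilde\Gamma^2)=-(K_T\cdot\tilde\Gamma)-2=-(K_S\cdot\Gamma)+(E\cdot\tilde\Gamma)-2.
\]

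Finally, $K_S\sim_\mbQ(K_{X'}+S)|_S\sim_\mbQ 2A|_S$ gives $(K_S\cdot\Gamma)=2\deg\Gamma=1/2$, and $(E\cdot\tilde\Gamma)=(E_1\cdot\tilde\Gamma)=1$, so $(\tilde\Gamma^2)=-3/2$. Hence
\[
(\Gamma^2)=(\tilde\Gamma^2)+r\le-\tfrac32+\tfrac14=-\tfrac54\le-1,
\]
which is the claimed bound. The only place that requires genuine care is the explicit identification of $E_1$ and the verification $(\tilde\Gamma\cdot E_1)=1$ across the relevant sub-cases; everything else is a mechanical translation of the $cAx/2$ argument to the $cAx/4$ weights.
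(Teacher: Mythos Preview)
Your overall strategy matches the paper's, but two of the numerical inputs are wrong, and the argument as written does not go through.

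First, the adjunction formula $K_T=\varphi^*K_S-E$ is incorrect. The section $s$ cutting out $S$ has the form $s=z+\alpha y x_1+(\text{terms divisible by }x_0)$, so its lowest weight under $\wt(x_0,x_1,y,z)=\tfrac14(5,1,2,3)$ is the weight of $z$, namely $3/4$. Hence $(\Phi|_{Y'})^*S=T+\tfrac34 G$, and combining with $K_{Y'}=(\Phi|_{Y'})^*K_{X'}+\tfrac14 G$ gives
\[
K_T=\varphi^*K_S-\tfrac12 E,
\]
not $\varphi^*K_S-E$. With this correction, $(\tilde\Gamma^2)=-(K_S\cdot\Gamma)+\tfrac12(E\cdot\tilde\Gamma)-2=-\tfrac12+\tfrac12-2=-2$, not $-3/2$.

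Second, your bound $r\le 1/4$ via $x_0$ is not justified. The section $x_0$ vanishes along $G$ to order $5/4$, so the argument $(x_0=0)_S=\Gamma+\Delta$ only yields $r\le 5/4$, which is useless; your sentence about the $E_1$-coefficient of $\varphi^*\Delta$ being ``at most $5/4-1$'' does not make sense (and in any case would bound $r$ from below, not above). The paper instead uses the section $y$, which has weight $2/4=1/2$: since $(y=0)_S$ contains $\Gamma$ as a component and $y$ vanishes along $E_1$ to order $1/2$, one gets $r\le 1/2$. With the corrected values this gives $(\Gamma^2)=-2+r\le -3/2\le -1$, which is the claimed bound.
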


\begin{proof}
We work on the open set on which $w \ne 0$.
Let $\Phi$ be the weighted blowup of $\mbP (1,1,2,3,4)$ at $\msp$ with $\wt (x_0,x_1,y,z) = \frac{1}{4} (5,1,2,3)$ and $\varphi \colon T \to S$ the birational morphism induced by $\Phi$.
Let $\tilde{\Gamma}$ be the proper transform of $\Gamma$ by $\varphi$.
We denote by $E$ the restriction of the exceptional divisor of $\Phi$ to $T$.

We shall show that $E = E_1 + E_2$, where $E_1$ is an irreducible and reduced curve, $E_2$ does not contain $E_1$ as a component, $(\tilde{\Gamma} \cdot E_1) = 1$ and $\tilde{\Gamma}$ is disjoint from $E_2$.
The section $s$ which cut out $S$ can be written as $s = z + \alpha y x_1 + (\text{terms divisible by } x_0)$, where $\alpha \in \mbC$.
We have the following isomorphisms:
\[
\begin{split}
E &\cong (x_0 f_1 + z^2 + z b_3 + y b_4 = z + \alpha y x_1 = 0) \subset \mbP (5,1,2,3) \\
&\cong (x_0 f_1 + \alpha^2 y^2 x_1^2 - \alpha y x_1  b_3 + y b_4 = 0) \subset \mbP (5,1,2).
\end{split}
\]
Assume first that $f_1 \ne 0$.
In this case we may assume $f_1 = x_1$ after re-scaling $x_1$.
If $b_4$ is not divisible by $x_1$, then $E \subset \mbP (5,1,2)$ is irreducible and reduced.
In this case, we put $E = E_1$ and $E_2 = 0$.
If $b_4$ is divisible by $x_1$, then we can write $b_4 = x_1^2 b_2$ and we have $E = E_1 + E'$, where $E_1 = (x_0 + \alpha^2 y^2 x_1 - \alpha y b_3 + y x_1 b_2 = 0)$ and $E_2 = (x_1 = 0)$.
Assume next that $f_1 = 0$.
In this case we have $E = E_1 + E_2$, where $E_1 = (y = 0)$ and $E_2 = (\alpha^2 y x_1^2 - \alpha x_1 b_3 + b_4 = 0)$.
Since at least one of $b_3$ and $b_4$ is not divisible by $y$ by Lemma \ref{divy23}, $E'$ does not contain $E_1$ as a component and $E_2$ is disjoint from $\Delta$.
In any of the above cases, $\tilde{\Gamma}$ is disjoint from $E_2$ and it intersects with $E_1$ transversally at one point which is a nonsingular point of $T$.
It follows that $(\tilde{\Gamma} \cdot E_1) = 1$.

We write $\varphi^*\Gamma = \tilde{\Gamma} + r E_1 + F$ for some rational number $r$ and an effective $\mbQ$-divisor $F$ whose support is contained in $\operatorname{Supp} E_2$.
Note that $(\tilde{\Gamma} \cdot F) = 0$ by the above argument.
We have $r \le 1/2$ since the section $y$ on $S$ cut out $\Gamma$ and some other curve, and $y$ vanishes along $E_1$ to order $2/4 = 1/2$.
The morphism $\Phi|_{Y'} \colon Y' \to X'$, where $Y'$ is the proper transform of $X'$ via $\Phi$, is an extremal divisorial extraction centered at $\msp$ and we have $K_{Y'} = (\Phi|_{Y'})^* K_{X'} + \frac{1}{4} G$, where $G$ is the exceptional divisor of $\Phi|_{Y'}$.
Since $(\Phi|_{Y'})^*S = T - \frac{1}{2} G$, we have $K_T = \varphi^*K_S - \frac{1}{2} E$ by adjunction.
We have 
\[
\begin{split}
(\Gamma^2) &= (\varphi^*\Gamma)^2 = (\tilde{\Gamma} \cdot \varphi^*\Gamma) = (\tilde{\Gamma}^2) + (r E_1 + F \cdot \tilde{\Gamma}) = (\tilde{\Gamma}^2) + r (E_1 \cdot \tilde{\Gamma}), \\
(\tilde{\Gamma}^2) &= - (K_T \cdot \tilde{\Gamma}) - 2 = - (K_S \cdot \Gamma) + \frac{1}{2} (E \cdot \tilde{\Gamma}) - 2,
\end{split}
\]
and $(K_S \cdot \Gamma) = 2 \deg \Gamma = 1/2$.
Thus, we have $(\Gamma^2) = -3/2 + r \le - 1$.
\end{proof}

\begin{Prop} \label{exclC1923}
Let $X'$ be a member of $\mcG'_{19}$ $($resp.\ $\mcG'_{23}$$)$ and $\Gamma$ a curve of degree $1/2$ $($resp.\ $1/4$$)$ passing through the $cAx/2$ $($resp.\ $cAx/4$$)$ point.
Then $\Gamma$ is not a maximal center.
\end{Prop}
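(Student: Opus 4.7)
The plan is to assume $\Gamma$ is a maximal center of a movable linear system $\mcH \sim_\mbQ nA$ with $m := \mult_\Gamma \mcH > n$, restrict the problem to a general member $S$ of the linear system $\mcM$ constructed above (so that $S$ is a normal surface with $S \sim_\mbQ 3A$ containing $\Gamma$ and smooth along $\Gamma \setminus \{\msp\}$), and extract a contradiction from an intersection-theoretic inequality on $S$ combined with the self-intersection bounds already proved.

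The first key step is to argue that the family of effective divisors $\{H|_S - m\Gamma : H \in \mcH\}$ on $S$ has no fixed curve. A fixed curve $C \ne \Gamma$ would have to be a base curve of $\mcH$ on $X'$ contained in $S$, but for general $S \in \mcM$ no curve of $X'$ lies in $S$ as a component other than $\Gamma$ itself, since $\Gamma$ is the unique irreducible base curve of $\mcM$ by the construction; the case $C = \Gamma$ is ruled out because a general $H \in \mcH$ meets $S$ transversally along $\Gamma$ with exact multiplicity $m$. Consequently the moving part of the family has class $P = nA|_S - m\Gamma$ on $S$, and the standard fact that movable classes on surfaces have non-negative self-intersection (applied after passing to the minimal resolution of $S$) gives $P^2 \ge 0$.

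The second step is the numerical computation. Using $(A|_S)^2 = (A^2 \cdot S)_{X'} = 3(A^3)$ and $(A|_S \cdot \Gamma)_S = (A \cdot \Gamma)_{X'} = \deg \Gamma$, one expands
\[
P^2 = 3 n^2 (A^3) - 2 m n \deg \Gamma + m^2 \Gamma^2,
\]
and combines this with $\Gamma^2 \le -3/2$ (for $\mcG'_{19}$, Lemma \ref{selfint19}) or $\Gamma^2 \le -1$ (for $\mcG'_{23}$, Lemma \ref{selfint23}), producing an inequality of the form $3n^2(A^3) - 2mn\deg\Gamma - m^2 c \ge 0$ with $c = 3/2$ or $c = 1$. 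Plugging in the numerical data $(A^3) = 2/3$, $\deg \Gamma = 1/2$ for family $19$ and $(A^3) = 5/12$, $\deg \Gamma = 1/4$ for family $23$, the left-hand side becomes a quadratic in $m$ that is strictly decreasing for $m > 0$ and takes a strictly negative value at $m = n$ in each case; hence the inequality fails for every $m \ge n$, contradicting $m > n$.

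The step I expect to be the main obstacle is the rigorous justification that the restricted family has no fixed curve other than $\Gamma$, since movability of $\mcH$ on $X'$ does not by itself exclude base curves on $X'$; the argument leans on the explicit base-locus control of $\mcM$ from the construction, namely that $\Bs \mcM$ contains only $\Gamma$ as an irreducible curve, together with the genericity of $S \in \mcM$ used to avoid any other potential base curves of $\mcH$. Once this is in hand, the rest of the proof is the numerical check above using the self-intersection bounds of Lemmas \ref{selfint19} and \ref{selfint23}.
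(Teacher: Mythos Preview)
Your argument is essentially the same as the paper's: both restrict to a general $S \in \mcM$, decompose $\mcH|_S$ as a movable part plus a multiple of $\Gamma$, and use nonnegativity of the self-intersection of the movable part together with the bounds from Lemmas~\ref{selfint19} and~\ref{selfint23} to force the coefficient of $\Gamma$ to be at most $n$. Your concern about other fixed components is handled in the paper simply by the genericity of $S$ (since $\Gamma$ is the only base curve of $\mcM$), and note that you only need $\mult_\Gamma(\mcH|_S) \ge m > n$, not equality, so the transversality step is unnecessary.
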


\begin{proof}
Let $\mcH \subset |n A|$ be a movable linear system on $X'$.
We can choose a general $S \in \mcM$ so that we have
\[
A|_S \sim \frac{1}{n} \mcH |_S = \frac{1}{n} \mcL + \gamma \Gamma,
\]
where $\mcL$ is a movable linear system on $S$ and $\gamma \ge 0$.
We shall prove $\gamma \le 1$.
We may assume that $\gamma > 0$ because otherwise there is nothing to prove.
We have $(\mcL^2) \ge 0$ since $\mcL$ is nef.
We get
\[
(\mcL^2) = (A|_S - \gamma \Gamma)^2 = 3 (A^3) - 2 (\deg \Gamma) \gamma + (\Gamma^2) \gamma^2.
\]
To show that $\gamma \le 1$, it is enough to show that
\[
3 (A^3) - 2 (\deg \Gamma) + (\Gamma^2) \le 0
\]
since $(\Gamma^2) < 0$, $\deg \Gamma > 0$ and $(\mcL^2) \ge 0$.
By Lemmas \ref{selfint19} and \ref{selfint23}, the left-hand side of the last displayed equation can be computed as 
\[
3 (A^3) - 2 (\deg \Gamma) + (\Gamma^2) \le
\begin{cases}
-1/2, & \text{if } X' \in \mcG'_{19}, \\
-1/4, & \text{if } X' \in \mcG'_{23}.
\end{cases}
\]
This shows that $\gamma \le 1$ and thus $\Gamma$ is not a maximal center.
\end{proof}

In the rest of this subsection, we consider a member $X' = X'_8 \subset \mbP (1,1,1,4,2)$ of $\mcG'_{17}$ and let $\Gamma \subset X$ be a curve of degree $1/2$ passing through the $cAx/2$ point $\msp = \msp_4$.
Let $S$ and $S'$ be general members of the pencil $\left| \mcI_{\Gamma} (1) \right|$.
We see that $X'$ is defined by the equation
\[
w^3 x_0^2 + w^2 x_0 f_3 + w (y b_2 + b_6) + y^2 + y c_4 + c_8 = 0
\]
in $\mbP (1^3,4,2)$, where $f_i, b_i, c_i \in \mbC [x_0,x_1,x_2]$.
By choice of coordinates, we may assume $\Gamma = (x_0 = x_1 = y = 0) \subset X$, which is equivalent to the condition that $b_6, c_8 \in (x_0,x_1)$.

\begin{Lem} \label{lem:intspcurve17}
\begin{enumerate}
\item If either $b_2 \notin (x_0,x_1)$ or $c_4 \notin (x_0,x_1)$, then the scheme-theoretic intersection $S \cap S'$ consists of the union of two distinct irreducible and reduced curves $\Gamma$ and $\Delta$, both of which are smooth rational curves of degree $1/2$ passing through $\msp_4$.
\item If $b_2, c_4 \in (x_0,x_1)$, then $S' |_S = 2 \Gamma$ as a divisor on $S$.
\end{enumerate}
\end{Lem}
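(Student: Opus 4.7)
The plan is to choose a convenient basis of the pencil $|\mcI_\Gamma(A)|$ and then read off the decomposition of $S \cap S'$ directly from the defining equation of $X'$ restricted to a codimension-two linear subspace of the ambient weighted projective space.

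First, I would observe that, since $\deg y = 4$, the only degree-$1$ sections of $\mcO_{X'}(1)$ vanishing on $\Gamma = (x_0 = x_1 = y = 0)$ are the $\mbC$-linear combinations of $x_0$ and $x_1$. Thus $|\mcI_\Gamma(A)|$ is a pencil generated by $x_0, x_1$, and after a linear change of basis in $\mbC\langle x_0, x_1\rangle$ (which leaves the ideal $(x_0, x_1) \subset \mbC[x_0,x_1,x_2]$ and hence the membership conditions on $b_2, b_6, c_4, c_8$ invariant), one may assume $S = (x_0 = 0)_{X'}$ and $S' = (x_1 = 0)_{X'}$.

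Next, I would restrict the defining polynomial of $X'$ to $V(x_0, x_1) \cong \mbP(1, 4, 2)$ with coordinates $(x_2, y, w)$. The terms $w^3 x_0^2$ and $w^2 x_0 f_3$ vanish identically, and the inclusion $\Gamma \subset X'$ forces $b_6, c_8 \in (x_0, x_1)$, so $w b_6$ and $c_8$ also vanish on $V(x_0, x_1)$. Writing $b_2 \equiv \lambda x_2^2$ and $c_4 \equiv \mu x_2^4 \pmod{(x_0, x_1)}$ for scalars $\lambda, \mu \in \mbC$, the restriction collapses to
\[
y\bigl(y + \lambda w x_2^2 + \mu x_2^4\bigr) = 0,
\]
which manifestly factors into $\Gamma = (y = 0)$ and a residual component cut out by the second factor.

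The two cases now separate cleanly. In case (1) the hypothesis yields $(\lambda, \mu) \neq (0, 0)$, so the second factor is a nonzero polynomial linear in $y$; it defines an irreducible and reduced curve $\Delta$ distinct from $\Gamma$, and the parametrization $y = -\lambda w x_2^2 - \mu x_2^4$ identifies $\Delta$ with $\mbP(1, 2) \cong \mbP^1$, passing through $\msp_4 = (0\!:\!0\!:\!0\!:\!0\!:\!1)$ and of degree $\tfrac{1\cdot 1\cdot 4}{1\cdot 1\cdot 1\cdot 4\cdot 2} = \tfrac{1}{2}$ by the weighted B\'ezout formula. In case (2) both $\lambda$ and $\mu$ vanish, so the equation reduces to $y^2 = 0$ and hence $S'|_S = 2\Gamma$ as a divisor on $S$. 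No real obstacle arises beyond confirming the initial pencil reduction, which is harmless because the ideal $(x_0, x_1)$ is stable under $\mathrm{GL}_2$-changes of the two variables $x_0, x_1$.
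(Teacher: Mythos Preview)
Your argument is correct and follows essentially the same route as the paper: both compute the base locus $(x_0 = x_1 = 0)_{X'}$ of the pencil directly, obtain the factored equation $y(w\bar{b}_2 + y + \bar{c}_4) = 0$ in $\mbP(1,4,2)$, and read off the two cases according to whether $(\bar{b}_2,\bar{c}_4)$ vanishes. The only cosmetic difference is that your ``change of basis'' step is unnecessary, since for any two distinct members $S, S'$ of a pencil generated by $x_0, x_1$ the scheme-theoretic intersection is automatically $(x_0 = x_1 = 0)_{X'}$; the paper simply uses this fact without comment.
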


\begin{proof}
The intersection $S \cap S'$ is the scheme $(x_0 = x_1 = 0)_X$, which is isomorphic to
\[
(w y \bar{b}_2 + y^2 + y \bar{c}_4 = 0) \subset \mbP (1,4,2),
\]
where $\bar{b}_2 = b_2 (0,0,x_2)$ and $\bar{c}_4 = c_4 (0,0,x_2)$.
It follows that $S \cap S'$ is the union of $\Gamma$ and $\Delta = (x_0 = x_1 = w \bar{b}_2 + y + \bar{c}_4 = 0)$.
We see that $\Gamma = \Delta$ if and only if $\bar{b}_2 = \bar{c}_4 = 0$ as a polynomial in $x_2$.
The assertions follow immediately from this observation. 
\end{proof}

\begin{Lem} \label{selfint17}
If either $b_2 \notin (x_0,x_1)$ or $c_4 \notin (x_0,x_1)$, then we have $(\Gamma \cdot \Delta) \ge \deg \Delta$.
\end{Lem}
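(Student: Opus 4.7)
The plan is to reduce the inequality to a self-intersection bound on $S$, and then verify the bound via the weighted blowup at the $cAx/2$ point $\msp = \msp_4$, in the same spirit as Lemma \ref{selfint19} and Lemma \ref{selfint23}. First, using Lemma \ref{lem:intspcurve17}(1) I will write $S' \cdot S = \Gamma + \Delta$ as divisors on $S$. Since $S' \sim A$ and $(A \cdot \Gamma)_{X'} = \deg \Gamma = 1/2$, this yields
\[
(\Gamma \cdot \Delta)_S \;=\; (A \cdot \Gamma)_{X'} - (\Gamma^2)_S \;=\; \tfrac{1}{2} - (\Gamma^2)_S,
\]
reducing the lemma to $(\Gamma^2)_S \le 0$.

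To compute $(\Gamma^2)_S$ I will use the weighted blowup $\varphi'_+ \colon Y' \to X'$ of Section \ref{sec:SLWHWCI} centred at $\msp$, with weights $\frac{1}{2}(3,1,1,4)$ on $(x_0, x_1, x_2, y)$ and exceptional divisor $G$ of discrepancy $1/2$. For a general member $S$ of the pencil $|\mcI_\Gamma(1)|$ the defining function vanishes along $G$ to order $1/2$, so $(\varphi'_+)^* S = \tilde S + \frac{1}{2} G$; combined with $K_{Y'} = (\varphi'_+)^* K_{X'} + \frac{1}{2} G$, adjunction gives $K_{\tilde S} = \varphi^* K_S$ for the restriction $\varphi \colon \tilde S \to S$, so $\varphi$ is crepant. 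Since $K_S \sim_\mbQ 0$ and $\tilde \Gamma \cong \mbP^1$, adjunction on $\tilde S$ yields $\tilde \Gamma^2 = -2$. Writing $\varphi^* \Gamma = \tilde \Gamma + \Theta$ with $\Theta$ an effective exceptional $\mbQ$-divisor and putting $E := G|_{\tilde S}$, the projection formula gives $(\Gamma^2)_S = -2 + (\tilde \Gamma \cdot \Theta)$, so it suffices to show $(\tilde \Gamma \cdot \Theta) \le 2$.

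The key input for bounding $\Theta$ will be the global section $y$: since $\deg y = 4$ and $y$ has weight $4/2 = 2$ along $G$, one has $(\varphi'_+)^*(y=0)_{X'} = \widetilde{(y=0)}_{X'} + 2G$. The assumption $b_6, c_8 \in (x_0, x_1)$ coming from $\Gamma \subset X'$ lets me decompose $(y = 0)_S = \Gamma + D$ for an effective divisor $D$ on $S$ (the residual factor coming from $x_1 \cdot (w\beta + \gamma)$ in the $x_0 = y = 0$ slice of the defining equation); restricting the pullback identity to $\tilde S$ and using effectivity of the exceptional part of $\varphi^* D$ will force $\Theta \le 2 E$, so $(\tilde \Gamma \cdot \Theta) \le 2(\tilde \Gamma \cdot E)$. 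It therefore remains to verify $(\tilde \Gamma \cdot E) \le 1$.

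I expect the main technical work to lie in this last local intersection computation. In the $x_2$-chart of the weighted blowup (with $x_0 = s^3 u_0$, $x_1 = s u_1$, $x_2 = s$, $y = s^4 v$) one divides the defining equation of $X'$ by $s^6$ to obtain the equation of $Y'$, and $E$ appears as the intersection of $\{s = 0\}$ with $\tilde S$; after identification, $E$ is cut out at $u_1 = s = 0$ by an equation of the form $u_0^2 + F u_0 + A v = 0$, where $A$ is the coefficient of $x_2^2$ in $b_2$ and $F$ that of $x_2^3$ in $f_3$. The hypothesis $b_2 \notin (x_0,x_1)$ or $c_4 \notin (x_0,x_1)$ is precisely what prevents $E$ from acquiring a non-reduced component along the direction tangent to $\tilde \Gamma$ at the unique point lying over $\msp$; the hard part of the argument will be to check, case by case, that $\tilde S$ is smooth at that point and that $\tilde \Gamma$ meets $E$ transversely there, which then gives $(\tilde \Gamma \cdot E) = 1$ and hence $(\Gamma^2)_S \le 0$, as required.
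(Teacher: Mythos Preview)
Your overall strategy---reduce to $(\Gamma^2)_S\le 0$, pull back to the divisorial extraction $\varphi'_+$, use $K_{\tilde S}=\varphi^*K_S$ and adjunction to get $(\tilde\Gamma^2)=-2$, and bound the exceptional coefficient via a global section---is exactly what the paper does in the case $\bar b_2:=b_2(0,0,x_2)=0$; the paper's weighted blowup of $S$ with $\wt(x_0,x_2,y)=\tfrac12(3,1,4)$ is just the restriction of your $\varphi'_+$ to $S=(x_1=0)$. However, your endgame has a genuine gap.

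The weight-$6$ part of the defining equation of $S$ is $x_0^2+x_0\bar f_3+y\bar b_2$ (the constant term $b_6(0,0,x_2)$ vanishes since $\Gamma\subset X'$ forces $b_6\in(x_0,x_1)$), so
\[
E\cong(x_0^2+x_0\bar f_3+y\bar b_2=0)\subset\mbP(3,1,4).
\]
Note that $c_4$ does \emph{not} appear here: $yc_4$ has weight $\ge 8$. Hence the hypothesis ``$c_4\notin(x_0,x_1)$'' contributes nothing to the shape of $E$, contrary to your assertion. In the case $\bar b_2=0$ and $\bar f_3=0$ (which is allowed by the hypothesis whenever $c_4\notin(x_0,x_1)$), $E$ is the doubled line $2E_1$ with $E_1=(x_0=0)$, and $(\tilde\Gamma\cdot E)=2$, not $\le 1$. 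Your bound $\Theta\le 2E=4E_1$ coming from $y$ then yields only $(\Gamma^2)\le 2$, which is useless.

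The paper closes this case with two extra ingredients you are missing. First, it uses the section $x_0$ rather than $y$: since $(x_0=0)_S=\Gamma+\Delta$ and $x_0$ vanishes along $E_1$ to order $3$ (respectively $\tfrac32$ when $\bar f_3\ne 0$), writing $\varphi^*\Gamma=\tilde\Gamma+l_1E_1$ and $\varphi^*\Delta=\tilde\Delta+r_1E_1$ gives $l_1+r_1=3$ (respectively $\tfrac32$). Second, it invokes the symmetry $(\Gamma^2)=(\Delta^2)$, which follows from $A|_S\sim\Gamma+\Delta$ and $\deg\Gamma=\deg\Delta=\tfrac12$, together with $(\tilde\Gamma^2)=(\tilde\Delta^2)=-2$, to deduce $l_1=r_1$ and hence $(\Gamma^2)=-\tfrac12$ (respectively $-\tfrac54$). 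For the complementary case $\bar b_2\ne 0$ the paper bypasses the blowup entirely: then $\Gamma$ and $\Delta$ meet at a point of $S\setminus\{\msp\}$, which is nonsingular, so $(\Gamma\cdot\Delta)\ge 1>\deg\Delta$ directly.

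In short, your section $y$ is too crude in the degenerate subcase, and the fix requires either the symmetry argument via $x_0$ (when $\bar b_2=0$) or the elementary smooth-point observation (when $\bar b_2\ne 0$).
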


\begin{proof}
After replacing homogeneous coordinates, we may assume that $S$ is cut out on $X$ by the section $x_1$.
Thus,
\[
S = (w^3 x_0^2 + w^2 x_0 f'_3 + w (y b'_2 + b'_6) + y^2 + y c'_4 + c'_8 = 0) \subset \mbP (1,1,4,2),
\]
where $f'_3 = f_3 (x_0,0,x_2)$, $b'_i = b_i (x_0,0,x_2)$ and $c'_i = c_i (x_0,0,x_2)$.
The section $x_0$ cuts out on $S$ two curves $\Gamma$ and $\Delta = (x_0 = x_1 = w b_2 + y + c_4 = 0)$.
We set $\bar{f}_3 = f_3 (0,0,x_2)$ and $\bar{b}_2 = b_2 (0,0,x_2)$.
If $\bar{b}_2 \ne 0$, then $\Gamma$ intersects $\Delta$ at a nonsingular point and thus $(\Gamma \cdot \Delta) \ge 1 > \deg \Gamma$.
In the following we assume $\bar{b}_2 = 0$.
The surface $S$ is nonsingular outside $\msp$.
The singularity of $S$ at $\msp$ is analytically equivalent to $(x_0^2 + x_1^m + y^2 = 0)/\mbZ_2 (1,1,0)$, where $m = 6$ or $8$ depending on whether $\bar{f}_3 \ne 0$ or $\bar{f}_3 = 0$, and it is a du Val singularity of type $D_m$. 

Let $\varphi \colon T \to S$ be the weighted blowup of $S$ at $\msp$ with weight $\wt (x_0,x_2,y) = \frac{1}{2} (3,1,4)$ and $E$ its exceptional divisor.
We see that $K_T = \varphi^*K_S$ and there is an isomorphism
\[
E \cong (x_0^2 + x_0 \bar{f}_3 = 0) \subset \mbP (3,1,4).
\] 
Let $\msq$ be the point $(0 \!:\! 0 \!:\! 1) \in E \cong \mbP (3,1,4)$.
We have $E = E_1 + E_2$, where $E_1 = (x_0 = 0)$ and $E_2 = (x_0 + \bar{f}_3 = 0)$.
Note that if $\bar{f}_3 = 0$, then $E_1 = E_2$.
We see that $T$ is smooth outside the point $\msq$.
The proper transforms $\tilde{\Gamma}$ and $\tilde{\Delta}$ of $\Gamma$ and $\Delta$, respectively, avoid the point $\msq$, are disjoint from $E_2$ and intersect $E_1$ transversally at one point.

We compute the intersection number $(\Gamma \cdot \Delta)$.
Since $A|_S \sim \Gamma + \Delta$, we have $1/2 = \deg \Gamma = (\Gamma^2) + (\Gamma \cdot \Delta)$ and $1/2 = \deg \Delta = (\Gamma \cdot \Delta) + (\Delta^2)$.
In particular, $(\Gamma^2) = (\Delta^2)$.
Since $\tilde{\Gamma}$ and $\tilde{\Delta}$ are nonsingular rational curve contained in the smooth locus of $T$ and $K_T = \varphi^* K_S \sim 0$, we have $(\tilde{\Gamma}^2) = (\tilde{\Delta}^2) = -2$.
Let $l_1,l_2,r_1,r_2$ be rational numbers such that $\varphi^*\Gamma = \tilde{\Gamma} + l_1 E_1 + l_2 E_2$ and $\varphi^*\Delta = \tilde{\Delta} + r_1 E_1 + r_2 E_2$.
We have 
\[
(\Gamma^2) = (\tilde{\Gamma}^2) + l_1 (E_1 \cdot \tilde{\Gamma}) + l_2 (E_2 \cdot \tilde{\Gamma}) = -2 + l_1.
\] 
Similarly, we have $(\Delta^2) = -2 + r_1$.
Since $(x_0 = 0)_S = \Gamma + \Delta$ and $x_0$ vanishes along $E_1$ with order $3/2$ or $3$, depending on $\bar{f}_3 \ne 0$ or $\bar{f}_3 = 0$, we have $l_1 + r_1 = 3/2$ or $3$.
Hence $l_1 = r_1 = 3/4$ or $3/2$, depending on whether $\bar{f}_3 \ne 0$ or $\bar{f}_3 = 0$.
Thus $(\Gamma^2) = -2 + l_1 = -5/4$ or $-1/2$.
It follows that $(\Gamma \cdot \Delta) = 1/2 - (\Gamma^2) = 7/4$ or $1$ and we obtain $(\Gamma \cdot \Delta) \ge \deg \Gamma = 1/2$.
\end{proof}

\begin{Prop} \label{exclC17}
Let $X'$ be a member of $\mcG'_{17}$ and $\Gamma$ a curve of degree $1/2$ passing through the point $\msp_4$.
Then $\Gamma$ is not a maximal center.
\end{Prop}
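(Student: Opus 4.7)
The plan is to adapt Proposition~\ref{exclC1923} by working with the pencil $\mcM := |\mcI_\Gamma(A)|$ spanned by $x_0$ and $x_1$, using the dichotomy furnished by Lemma~\ref{lem:intspcurve17}.

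First assume $b_2 \notin (x_0,x_1)$ or $c_4 \notin (x_0,x_1)$. By Lemma~\ref{lem:intspcurve17}(1), two general members of $\mcM$ meet in the reduced union $\Gamma + \Delta$, where $\Delta$ is another smooth rational curve of degree $1/2$. I would then apply the two-curve exclusion lemma (the second lemma following Lemma~\ref{exclcurvelowdeg}) with $S \in |A|$ containing $\Gamma$ (so $m=1$), $\mcM$ as above, and the single non-$\Gamma$ base curve $\Gamma_1 = \Delta$ equipped with the test $1$-cycle $\Delta_1 = \Delta$. The set-up hypotheses are immediate from Lemma~\ref{lem:intspcurve17}(1), and the only nontrivial inequality $(\Gamma \cdot \Delta) \ge (A \cdot \Delta) = \deg \Delta = 1/2$ is precisely Lemma~\ref{selfint17}. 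This yields the conclusion in this case.

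Now suppose $b_2, c_4 \in (x_0,x_1)$. Then Lemma~\ref{lem:intspcurve17}(2) gives $S'|_S = 2\Gamma$, so the two-curve lemma does not apply, and a direct intersection computation shows $(\Gamma^2)_T = (A|_T)^2/4 = 1/4 > 0$ on any $T \in \mcM$, ruling out the single-surface argument of Proposition~\ref{exclC1923} on a member of the pencil. Here I would replace $\mcM$ by $|\mcI_\Gamma(2A)|$, which contains additional sections such as $x_0 x_2$ and $x_1 x_2$ lying outside the symmetric square of $\mcM$, and work with a general surface $S \in |\mcI_\Gamma(2A)|$ through $\Gamma$. Following the style of Lemmas~\ref{selfint19} and~\ref{selfint23}, I would compute $(\Gamma^2)_S$ via adjunction on the weighted blowup of $\msp_4$: the hypothesis $b_2, c_4 \in (x_0,x_1)$ permits rewriting the defining equation as $F = y^2 + x_0 P + x_1 Q$, and combined with quasismoothness this should force $(\Gamma^2)_S \le -1$. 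The resulting estimate $2(A^3) - 2 \deg \Gamma + (\Gamma^2)_S \le 0$ then forces $\gamma \le 1$ via the $(\mcL^2) \ge 0$ argument applied to the downward parabola $2 - \gamma + \gamma^2 (\Gamma^2)_S$, contradicting $\gamma > 1$ for a maximal center.

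The main obstacle is this second case. The surface $S$ inherits a $\frac{1}{2}(1,1)$ quotient singularity at $\msp_4$, so $(\Gamma^2)_S$ must be computed on a weighted blowup, and one must extract sufficient information from quasismoothness (which imposes nontrivial constraints on $P$ and $Q$ along $\Gamma$) to guarantee the needed negativity; the flexibility in choosing the auxiliary sections $x_0 x_2, x_1 x_2$ in $|\mcI_\Gamma(2A)|$ will be crucial in ensuring that $S$ is normal and that its proper transform on the blowup supports the adjunction computation.
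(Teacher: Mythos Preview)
Your treatment of the first case ($b_2 \notin (x_0,x_1)$ or $c_4 \notin (x_0,x_1)$) matches the paper's exactly: both invoke the two-curve exclusion lemma with $S \in |A|$, the pencil $\mcM = |\mcI_\Gamma(A)|$, and the inequality $(\Gamma \cdot \Delta) \ge \deg \Delta$ from Lemma~\ref{selfint17}. This part is fine.

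The second case is where your proposal has a genuine gap. You correctly observe that $(\Gamma^2) = 1/4 > 0$ on any member of the pencil, so the single-surface argument of Proposition~\ref{exclC1923} cannot be run there. Your proposed remedy---passing to a general $S \in |\mcI_\Gamma(2A)|$ and redoing a blowup/adjunction computation of $(\Gamma^2)_S$---is only sketched: you assert that quasismoothness ``should force $(\Gamma^2)_S \le -1$'' without carrying out the calculation, and explicitly flag this as the main obstacle. So as written the argument is incomplete.

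The paper takes a completely different, much shorter route that avoids any self-intersection computation. Staying with the pencil $|\mcI_\Gamma(A)|$, the relation $S'|_S = 2\Gamma$ gives $A|_S \sim_{\mbQ} 2\Gamma$. Writing $\frac{1}{n}\mcH|_S = \frac{1}{n}\mcL + m\gamma\,\Gamma$ with $\mcL$ movable, the paper uses that restriction to $S$ at least doubles the multiplicity of $\mcH$ along $\Gamma$ (the assertion ``$m \ge 2$''): the members of the pencil are pairwise tangent along $\Gamma$, so the normal direction to $\Gamma$ inside $S$ is the one along which every section of $|A|$ vanishing on $\Gamma$ already vanishes to order~$2$. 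Then $\mcL \sim_{\mbQ} n(2 - m\gamma)\Gamma$ with $(\Gamma^2) > 0$ forces $2 - m\gamma \ge 0$, hence $\gamma \le 2/m \le 1$, contradicting $\gamma > 1$. The whole argument is three lines and uses nothing beyond Lemma~\ref{lem:intspcurve17}(2); your detour through $|\mcI_\Gamma(2A)|$ is unnecessary.
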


\begin{proof}
We consider the case where either $b_2 \notin (x_0,x_1)$ or $c_4 \notin (x_0,x_1)$.
In this case, in view of Lemmas \ref{lem:intspcurve17} and \ref{selfint17}, we can apply Lemma \ref{exclcurvelowdeg} for the divisor $S \sim_{\mbQ} A$ and the movable linear system $\mcM = \left| \mcI_{\Gamma} (1) \right|$.

We consider the case where $b_2, c_4 \in (x_0,x_1)$.
We assume to the contrary that $\Gamma$ is a maximal center.
Then there is a movable linear system $\mcH \subset \left| n A \right|$ which has a maximal singularity along $\Gamma$.
Let $\gamma$ be the multiplicity of $\mcH$ along $\Gamma$.
Then, since $\mult_{\Gamma} (\mcH|_S) = m \mult_{\Gamma} \mcH = m \gamma$ for some $m \ge 2$, we have
\[
A|_S \sim_{\mbQ} \frac{1}{n} \mcH|_S = \frac{1}{n} \mcL + m \gamma \Gamma,
\]
where $\mcL$ is a movable linear system on $S$.
On the other hand, we have $A|_S \sim_{\mbQ} S'|_S = 2 \Gamma$.
It follows that $\mcL \sim_{\mbQ} n (2-m \gamma) \Gamma$.
Since $\mcL$ is nef, we have $\gamma \le 2/m \le 1$, which is a contradiction.
This completes the proof.
\end{proof}

Combining Propositions \ref{exclmostC}, \ref{exclC1923} and \ref{exclC17}, we get the following. 

\begin{Thm}
Let $X'$ be a member of $\mcG'_i$ with $i \in I_{F,cAx/2} \cup I_{F,cAx/4}$.
Then no curve on $X'$ is a maximal center.
\end{Thm}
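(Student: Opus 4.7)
The plan is to reduce, via the low-degree criterion of Lemma \ref{exclcurvelowdeg}, the problem to a short list of exceptional curves and then exclude each of them by a tailored restriction-to-a-surface argument; the theorem will then follow by combining Propositions \ref{exclmostC}, \ref{exclC1923}, and \ref{exclC17}. For any irreducible reduced curve $\Gamma \subset X'$ that is a maximal center, Lemma \ref{exclcurvelowdeg} forces $(A \cdot \Gamma) < (A^3) \le 1$; by Theorem \ref{thm:cyclicquot}, such a $\Gamma$ cannot pass through a terminal quotient singular point (there is no divisorial extraction centered along a curve through such a point), and if $\Gamma$ avoided singularities entirely then $\deg \Gamma$ would be a positive integer, contradicting $\deg \Gamma < 1$. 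Hence $\Gamma$ must pass through the unique $cAx/2$ or $cAx/4$ point $\msp'$ of $X'$, so $\deg \Gamma \in \tfrac{1}{2}\mbZ$ or $\tfrac{1}{4}\mbZ$, and the inequality isolates exactly three exceptional cases: degree-$\tfrac12$ curves on $X' \in \mcG'_{17} \cup \mcG'_{19}$ and degree-$\tfrac14$ curves on $X' \in \mcG'_{23}$.

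For the $\mcG'_{19}$ and $\mcG'_{23}$ cases, I would first normalise the coordinates so that $\Gamma = (x_0 = y = z = 0)$ and choose an auxiliary linear system $\mcM \subset |\mcI_{\Gamma}(3A)|$ whose base locus contains $\Gamma$. Taking a general member $S \in \mcM$ and writing $A|_S \sim_{\mbQ} \tfrac{1}{n} \mcL + \gamma \Gamma$ for the restriction of a hypothetical maximal movable system $\mcH \sim_{\mbQ} n A$, the nefness of $\mcL$ gives $(\mcL^2) \ge 0$, which expands into the quadratic
\[
3(A^3) - 2 (\deg \Gamma) \gamma + (\Gamma^2) \gamma^2 \ge 0.
\]
The key input is an upper bound $(\Gamma^2) \le -\tfrac{3}{2}$ (resp.\ $\le -1$), which I would obtain by pulling back to the resolution $\varphi \colon T \to S$ induced by the same weighted blowup at $\msp'$ that appears in the Sarkisov link of Section \ref{sec:SLWHWCI}, writing $\varphi^* \Gamma = \tilde{\Gamma} + r E_1 + F$, using adjunction to compute $(\tilde{\Gamma}^2)$, and bounding $r$ from above by the order of vanishing of a coordinate section along $E_1$. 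A supporting lemma, read off from quasismoothness of the birational counterpart $X \in \mcG_i$, ensures that the relevant coefficient polynomials are not all simultaneously divisible by the chosen coordinate, which is needed to identify the components of $E$ correctly.

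For the $\mcG'_{17}$ case I would instead use the pencil $|\mcI_{\Gamma}(1)|$, since $A$ is already integral on generic curves. Two general members $S, S'$ cut out scheme-theoretically $\Gamma \cup \Delta$, where either $\Delta$ is a second smooth degree-$\tfrac12$ curve through $\msp'$ (when $b_2$ or $c_4$ lies outside $(x_0, x_1)$) or $S'|_S = 2 \Gamma$. In the non-degenerate sub-case, a direct computation on the weighted resolution of $S$ at $\msp'$ yields $(\Gamma \cdot \Delta) \ge \deg \Gamma$, enabling the test-divisor variant of Lemma \ref{exclcurvelowdeg}. In the degenerate sub-case, the equality $S'|_S = 2 \Gamma$ together with $\mult_{\Gamma}(\mcH|_S) \ge 2 \gamma$ and nefness of the residual movable system on $S$ forces $\gamma \le 1$ immediately.

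The main obstacle is the explicit self-intersection computation $(\Gamma^2)$ on $S$ at the $cAx/2$ or $cAx/4$ point. It requires a weighted blowup, a careful decomposition $E = E_1 + E_2$ of the exceptional divisor, the correct adjunction formula accounting for the half-integral discrepancy, and a case analysis distinguishing when specific coefficient polynomials of the defining equation vanish or not on $\Gamma$, with the non-vanishing inputs supplied by quasismoothness of the birational counterpart $X \in \mcG_i$. Once those bounds on $(\Gamma^2)$ are established, the exclusion collapses to a routine quadratic inequality and the theorem assembles from the three propositions.
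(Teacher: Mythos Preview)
Your proposal is correct and follows essentially the same route as the paper: the theorem is assembled from Propositions \ref{exclmostC}, \ref{exclC1923}, and \ref{exclC17}, with the first handling the generic reduction and the latter two disposing of the exceptional curves on $\mcG'_{19}$, $\mcG'_{23}$, and $\mcG'_{17}$ exactly as you outline. Your identification of the key technical input---the bounds $(\Gamma^2) \le -\tfrac{3}{2}$ and $(\Gamma^2) \le -1$ obtained via the weighted blowup at $\msp'$, adjunction, and the quasismoothness of the counterpart $X$---matches Lemmas \ref{selfint19}, \ref{selfint23}, and \ref{selfint17}, and your two-subcase treatment of $\mcG'_{17}$ agrees with Lemma \ref{lem:intspcurve17} and Proposition \ref{exclC17}.
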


\section{Excluding nonsingular points}  \label{sec:nspts}

In this section we exclude nonsingular points of $X'$ as maximal center.
For a member $X' \subset \mbP (a_0,\dots,a_4)$ of $\mcG'_i$ and $m = 0,\dots,4$, we denote by $\pi_m$ the restriction of the projection from the point $\msp_m$ to $X'$ and by $\Exc (\pi_m) \subset X'$ the locus contracted by $\pi_m$.

\begin{Prop} \label{isolnspt}
Let $\msp = (\xi_0 \!:\! \dots \!:\! \xi_4)$ be a nonsingular point of $X'$.
The following assertions hold.
\begin{enumerate}
\item If $\xi_j \ne 0$, then 
\[
\left( \max_{0 \le k \le 4, k \ne j} \lcm \{ a_j, a_k\} \right) A
\] 
isolates $\msp$.
\item If $\xi_j \ne 0$ and $\msp \notin \Exc (\pi_m)$ for some $m \ne j$, then 
\[
\left( \max_{0 \le k \le 4, k \ne j,m} \lcm \{ a_j,a_k\} \right) A
\] 
isolates $\msp$.
\end{enumerate}
\end{Prop}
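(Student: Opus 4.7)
The plan is to verify Definition~\ref{def:isol} directly with $s=1$ by exhibiting explicit polynomials of the prescribed degree vanishing at $\msp$ whose common zero locus on $X'$ is zero-dimensional at $\msp$. I would fix a representative $(\xi_0,\dots,\xi_4)$ of $\msp$ with $\xi_j\ne 0$ and, for each $k\ne j$, set $l_{jk}:=\lcm\{a_j,a_k\}$, $r_{jk}:=l_{jk}/a_j$ and $s_{jk}:=l_{jk}/a_k$; both $r_{jk}$ and $s_{jk}$ are positive integers satisfying $s_{jk}a_k=r_{jk}a_j=l_{jk}$. The basic building block is the weighted homogeneous polynomial
$$h_k:=\xi_j^{r_{jk}}x_k^{s_{jk}}-\xi_k^{s_{jk}}x_j^{r_{jk}},$$
which has degree $l_{jk}$ and vanishes at $\msp$.

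For (1), set $l:=\max_{k\ne j}l_{jk}$. Since $a_j\mid l_{jk}$ for every $k$ we have $a_j\mid l$, and $f_k:=x_j^{(l-l_{jk})/a_j}h_k$ is a homogeneous polynomial of degree $l$ vanishing at $\msp$, for each $k\ne j$. Next I would pass to the smooth cover $\mbA^4\to U_j:=(x_j\ne 0)\subset\mbP$ with affine coordinates $u_k:=x_k/x_j^{a_k/a_j}$ on which $\mbZ_{a_j}$ acts by $u_k\mapsto \zeta^{a_k}u_k$. The identity $s_{jk}a_k=r_{jk}a_j$ gives $f_k/x_j^{l/a_j}=\xi_j^{r_{jk}}u_k^{s_{jk}}-\xi_k^{s_{jk}}$, and these four single-variable equations cut out a finite set in $\mbA^4$ whose image in $U_j$ is finite and contains $\msp$. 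Hence $\msp$ is isolated in $\Bs|\mcI_\msp(lA)|\cap X'$, which verifies that $lA$ isolates $\msp$.

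For (2), I would run the same construction but omit the index $k=m$: with $l':=\max_{k\ne j,m}l_{jk}$ (again divisible by $a_j$), the three sections $f_k:=x_j^{(l'-l_{jk})/a_j}h_k$ for $k\ne j,m$ belong to the linear system $|\mcI_\msp(l'A)|$ on $X'$. On the smooth cover of $U_j$ their common zero locus is the one-dimensional subvariety $\{u_k^{s_{jk}}=\xi_k^{s_{jk}}/\xi_j^{r_{jk}}:k\ne j,m\}$, which descends to a union of fibres of $\pi_m$ through the image of $\msp$. The hypothesis $\msp\notin\Exc(\pi_m)$ is exactly what ensures that this one-dimensional set meets $X'$ in a zero-dimensional locus at $\msp$, so $l'A$ isolates $\msp$.

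The argument is essentially bookkeeping in the weighted/toric setting; the one place where some care is needed is in checking that each $f_k/x_j^{l/a_j}$ genuinely descends to a regular function on the chart $U_j$, which is guaranteed by the divisibility identities recorded above. The only geometric input, relevant only to~(2), is the quasi-finiteness of $\pi_m|_{X'}$ at $\msp$ supplied by the hypothesis $\msp\notin\Exc(\pi_m)$.
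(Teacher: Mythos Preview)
Your argument is correct and is precisely the standard construction: the paper does not reproduce the proof but simply refers to \cite[Theorem 5.6.2]{CPR} and \cite[Proposition 7.4]{Okada}, where exactly these monomial sections $h_k=\xi_j^{r_{jk}}x_k^{s_{jk}}-\xi_k^{s_{jk}}x_j^{r_{jk}}$ (padded by powers of $x_j$) are used. Your verification on the orbifold chart $U_j\cong\mbA^4/\mbZ_{a_j}$ and the reduction of part~(2) to quasi-finiteness of $\pi_m|_{X'}$ at $\msp$ match the cited arguments, so there is nothing to add.
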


\begin{proof}
See \cite[Theorem 5.6.2]{CPR} or \cite[Proposition 7.4]{Okada}.
\end{proof}

\begin{Prop} \label{exclnsptmost}
Let $X'$ be a member of $\mcG'_i$ with $i \in I_{F,cAx/2} \cup I_{F,cAx/4} \setminus \{30\}$.
Then no nonsingular point of $X'$ is a maximal center.
\end{Prop}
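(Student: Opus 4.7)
The plan is to apply Lemma \ref{exclmostnspt}: for each nonsingular point $\msp \in X'$, I will exhibit a positive integer $l$ with $l \le 4/(A^3)$ such that $l A$ isolates $\msp$, and the isolation will come from the explicit estimates in Proposition \ref{isolnspt}. First, for each of the thirteen families $\mcG'_i$ with $i \in I_{F,cAx/2} \cup I_{F,cAx/4} \setminus \{30\}$, I read off the ambient weighted projective space $\mbP (a_0,\dots,a_4)$ and the anticanonical degree $(A^3)$ from the standard defining equation of Section \ref{sec:stdefeq}, which determines the numerical bound $4/(A^3)$ that $l$ must meet.

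Given a nonsingular point $\msp = (\xi_0 \!:\! \cdots \!:\! \xi_4)$, I pick an index $j$ with $\xi_j \ne 0$, typically the smallest such index so that $a_j$ is small; by Proposition \ref{isolnspt}(1), the divisor $l_j A$ with
\[
l_j := \max_{k \ne j} \lcm\{a_j, a_k\}
\]
isolates $\msp$, and in the great majority of points the inequality $l_j \le 4/(A^3)$ is a direct numerical check. Since a maximal center on $X'$ cannot be a terminal quotient singular point nor the $cAx/2$ or $cAx/4$ point for the purpose of this proposition, only nonsingular $\msp$ have to be treated, which restricts the possible vanishing patterns of $(\xi_0,\dots,\xi_4)$ considerably and makes the numerical inequality sharp in practice.

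For the residual points where the naive bound $l_j$ exceeds $4/(A^3)$, the trouble concentrates near the vertices of the ambient weighted projective space, that is, the points $\msp$ with all but very few coordinates vanishing. In this situation I invoke Proposition \ref{isolnspt}(2): I identify a vertex $\msp_m$ with $\msp \notin \Exc (\pi_m)$, equivalently, such that the projection $\pi_m$ from $\msp_m$ is regular at $\msp$. Removing $a_m$ from the maximum lowers $l_j$ to a refined value
\[
l'_j := \max_{k \ne j, k \ne m} \lcm\{a_j,a_k\},
\]
which, in every case, satisfies $l'_j \le 4/(A^3)$. The verification that $\msp \notin \Exc (\pi_m)$ reduces to exhibiting a monomial of the form $x_m^r \, x_j^s$ (possibly with additional factors) in the defining polynomial of $X'$, whose presence is forced by quasismoothness of $X'$ at the relevant vertex or along the relevant coordinate line, as already exploited in the parallel hypersurface argument \cite[Theorem 5.6.2]{CPR}.

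The main obstacle will be that we do not assume genericity: the statement must be verified for every quasismooth member, so one must rule out every stratum of vanishing coordinates left open by quasismoothness. Families such as $\mcG'_{17}$, $\mcG'_{19}$ and $\mcG'_{23}$, whose ambient weights include several $1$'s, host positive-dimensional loci of nonsingular points with multiple vanishing coordinates; for these I isolate points stratum by stratum, invoking in each case the specific monomial provided by quasismoothness to apply part (2) of Proposition \ref{isolnspt}. The family $\mcG'_{30}$ has to be excluded from the present proposition precisely because it contains a distinguished locus of nonsingular points for which no admissible choice of $m$ brings $l'_j$ below $4/(A^3)$; those exceptional points are handled by an ad hoc argument in the proposition treating the special centers of $\mcG'_{30}$.
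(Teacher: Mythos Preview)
Your plan is correct and uses the same two tools as the paper (Lemma \ref{exclmostnspt} and Proposition \ref{isolnspt}), but the paper organises the argument more efficiently and avoids the stratum-by-stratum case analysis you anticipate. Rather than trying part (1) first and falling back on part (2) for residual strata, the paper notes that for each family except $\mcG'_{23}$ there is a single coordinate $x_m$ occurring in $F$ as a pure power $x_m^r$: namely $x_3^2 \in F$ for $i \in \{17,29,41,49,55,69,74,77,82\}$, $x_2^3 \in F$ for $i = 42$, and, after a coordinate change, $y^4 \in F$ for $i = 19$ and $y^7 \in F$ for $i = 50$. A pure power forces $\Exc(\pi_m) = \emptyset$ globally, so part (2) of Proposition \ref{isolnspt} applies to \emph{every} nonsingular point at once with this fixed $m$, and the single numerical check $\max_{j,k \ne m} \lcm(a_j,a_k) \le 4/(A^3)$ disposes of the whole family in one line. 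Thus your anticipated difficulties with $\mcG'_{17}$ and $\mcG'_{19}$ do not arise; the only family needing individual attention is $\mcG'_{23}$, and there the paper does not use part (2) at all but writes down an explicit isolating set $\{x_0, x_1, \upsilon^3 z^2 - \zeta^2 y^3, \upsilon^2 w - \omega y^2\}$ for the points with $\xi_0 = \xi_1 = 0$. Your pointwise criterion ``$x_m^r x_j^s \in F$ with $\xi_j \ne 0$'' for $\msp \notin \Exc(\pi_m)$ is also slightly imprecise, since other degree-$r$ terms in $x_m$ could cancel it; the pure-power condition sidesteps this.
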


\begin{proof}
We shall show that $m A$ isolates a nonsingular point of $X'$ for some $m \le 4/(A^3)$, which will completes the proof by Lemma \ref{exclmostnspt}.
Let $X'$ be a member of $\mcG'_i$ with defining polynomial $F$, where
\[
i \in \{17,29,41,49,55,69,74,77,82\}.
\]
Then we have $x_3^2 \in F$.
It follows that $\Exc (\pi_3) = \emptyset$.
Explicit calculation in each instance shows that
\[
m_1 := \max_{j,k \ne 3} \{\lcm \{a_j,a_k\} \} \le \frac{4}{(A^3)}.
\]
Now let $\msp = (\xi_0 \!:\! \cdots \!:\! \xi_4)$ be a nonsingular point of $X'$.
Then $\xi_j \ne 0$ for some $j \ne 3$.
It follows from Proposition \ref{isolnspt} that $m_1 A$ isolates $\msp$ and $m_1 \le 4/(A^3)$.

Let $X'$ be a member of $\mcG'_{42}$.
Then we have $x_2^3 \in F$.
It follows that $\Exc (\pi_2) = \emptyset$ and we have
\[
m_2 := \max_{j,k \ne 2} \{\lcm (a_j,a_k)\} = 10 < \frac{4}{(A^3)} = \frac{40}{3}.
\]
Thus $m_2 A$ isolates nonsingular points and $m_2 \le 4/(A^3)$.

It remains to consider a member $X'$ of $\mcG'_i$ with $i \in \{19,23,50\}$.
Let $X'$ be a member of $\mcG'_{19}$.
After replacing $w \mapsto w + \alpha y$ for some $\alpha \in \mbC$ if necessary, we may assume that $y^4 \in F$.
Then $\Exc (\pi_2) = \emptyset$ and we compute
\[
\max_{j,k \ne 2} \{\lcm (a_j,a_k)\} = 6 = \frac{4}{(A^3)}.
\]
Thus $6 A$ isolates any nonsingular point and $6 \le 4/(A^3)$.

Let $X'$ be a member of $\mcG'_{50}$.
After replacing $w$ if necessary, we may assume that $y^7 \in F$.
Then we have $\Exc (\pi_1) = \emptyset$ and we compute
\[
\max_{j,k \ne 1} \{\lcm (a_j,a_k)\} = 20 < \frac{4}{(A^3)} = \frac{240}{7}.
\]
Thus $20 A$ isolates any nonsingular point and $20 \le 4/(A^3)$.

Let $X' = X'_{10} \subset \mbP (1,1,2,3,4)$ be a member of $\mcG'_{23}$ and $\msp = (\xi_0 \!:\! \xi_1 \!:\! \upsilon \!:\! \zeta \!:\! \omega)$ a nonsingular point of $X'$.
If $\xi_0 \ne 0$ or $\xi_1 \ne 0$ then $4A$ isolates $\msp$ by Proposition \ref{isolnspt}.
We assume that $\xi_0 = \xi_1 = 0$.
In this case, we have $\upsilon \ne 0$ and $\zeta \ne 0$ because otherwise $\msp$ would be a singular point.
Then the set
\[
\{ x_0, x_1, \upsilon^3 z^2 - \zeta^2 y^3, \upsilon^2 w - \omega y^2 \}
\]
isolates $\msp$.
Hence $6 A$ isolates $\msp$ (see \cite[Lemma 5.6.4]{CPR}) and we have $6 < 48/5 = 4/(A^3)$. 
This completes the proof.
\end{proof}

\begin{Prop} \label{exclnsptno30}
Let $X'$ be a member of $\mcG'_{30}$.
Then, no nonsingular point of $X'$ is a maximal center.
\end{Prop}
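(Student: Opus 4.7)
The plan is to follow the same template as Proposition \ref{exclnsptmost}: for every nonsingular point $\msp = (\xi_0 \!:\! \cdots \!:\! \xi_4)$ of $X' \in \mcG'_{30}$, I will produce a positive integer $m \le 4/(A^3)$ such that $m A$ isolates $\msp$, and then invoke Lemma \ref{exclmostnspt}. The reason $\mcG'_{30}$ needed to be peeled off from Proposition \ref{exclnsptmost} is that, after allowing coordinate changes of the form $w \mapsto w + (\text{lower-weight terms})$, the defining polynomial of a general member does not contain a pure power $x_j^{d/a_j}$ for any single index $j$ with small enough $a_j$, so the naive bound of Proposition \ref{isolnspt}(1) is too large.

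First I would record, for each vertex $\msp_j$ of the ambient weighted projective space, the locus $\Exc(\pi_j) \subset X'$. The nonsingular locus of $X'$ decomposes into a generic open part and finitely many closed strata cut out by the vanishing of coordinates. For the generic points, some index $j$ satisfies both $\xi_j \neq 0$ and $\msp \notin \Exc(\pi_j)$, and the refined bound of Proposition \ref{isolnspt}(2) beats $4/(A^3)$; this handles them immediately.

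Next I would treat the remaining points stratum by stratum. For a nonsingular point $\msp$ with $\xi_{k_1}, \xi_{k_2}, \dots$ the only nonzero coordinates, I would write down an explicit generating set for an ideal of finite colength at $\msp$ whose elements are sections of $m A$ with $m \le 4/(A^3)$. The building blocks are (i) the coordinates $x_l$ with $\xi_l = 0$, (ii) the ``crossed'' sections $\xi_{k_1}^a x_{k_2}^b - \xi_{k_2}^b x_{k_1}^a$ of weight $a \, a_{k_1} + b \, a_{k_2}$, and (iii) the defining equation of $X'$, which trades a high-weight monomial for a combination of lower-weight ones along $\msp$. For each stratum one then verifies both that $\msp$ is an isolated component of the base locus (with the generic-point multiplicity requirement empty, since we are at a point) and that the maximum of the weights used is $\le 4/(A^3)$.

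The main obstacle will be the one- or two-dimensional strata where the nonvanishing coordinates have relatively prime weights whose lcm already exceeds $4/(A^3)$. There the crossed sections are too heavy, and one must use the defining equation to eliminate one high-weight variable in favor of lower-weight ones; the precise elimination available depends on which monomials appear in the general defining polynomial, so a case analysis parallel to the $y^2 z$ dichotomy behind Theorem \ref{birinv} is likely forced. I expect that, after this case analysis, each stratum yields an isolating class of weight comfortably below $4/(A^3)$, and Lemma \ref{exclmostnspt} finishes the argument.
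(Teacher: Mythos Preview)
Your proposal works for most strata, but there is a genuine gap in the last step. Consider the case $y^2 z \notin h_{10}$, so that $X'$ contains the curve $\Gamma = (x_0 = x_1 = w = 0)$, and let $\msp = (0 \!:\! 0 \!:\! \upsilon \!:\! \zeta \!:\! 0)$ be a nonsingular point on $\Gamma$ with $\upsilon, \zeta \ne 0$. The sections $x_0, x_1, w$ (of weights $1,1,2$) already cut out $\Gamma$; to isolate $\msp$ you need one further section whose restriction to $\Gamma \cong \mbP(3,4)$ vanishes at $(\upsilon \!:\! \zeta)$ but not identically. The minimal degree of a nonzero homogeneous polynomial in $y,z$ (weights $3,4$) vanishing at a general point of $\mbP(3,4)$ is $\operatorname{lcm}(3,4) = 12$, which exceeds $4/(A^3) = 48/5$. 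The defining equation cannot help: on $\Gamma$ we have $x_0 = x_1 = w = 0$, so $F$ reduces to $h_{10}(0,0,y,z)$, which is identically zero precisely because $\Gamma \subset X'$. Thus no isolating class of weight $\le 9$ exists for these points, and Lemma~\ref{exclmostnspt} is unavailable.

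The paper handles this stratum by an entirely different method. It takes a general surface $S \in |\mcI_{\Gamma}(1)|$, shows $S$ is smooth along $\Gamma \cup \Delta$ away from the quotient singular points (where $\Delta = (x_0 = x_1 = w y + z^2 = 0)$ is the other base curve), and computes the intersection matrix of $\Gamma, \Delta$ on $S$ explicitly: $(\Gamma^2) = -7/12$, $(\Gamma \cdot \Delta) = 2/3$, $(\Delta^2) = -1/3$. Writing $\frac{1}{n} \mcH|_S = \frac{1}{n} \mcL + \gamma \Gamma + \delta \Delta$ for a putative maximal linear system $\mcH$, one first shows $\gamma \le 1$ by a nef-cone argument, and then applies Corti's inequality (Theorem~\ref{multineq2}) at $\msp \in \Gamma$ to obtain $\frac{1}{n^2}(\mcL^2) > 4(1-\gamma)$; a direct quadratic-form computation shows this is incompatible with the intersection numbers. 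You will need to carry out this surface argument (or something equivalent) for the stratum on $\Gamma$.
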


\begin{proof}
The defining polynomial of $X' = X'_{10} \subset \mbP (1,1,3,4,2)$ can be written as $w^2 y (y + f_3) + w g_8 + h_{10}$, where $f_3 = f_3 (x_0,x_1)$ and $g_8, h_{10} \in \mbC [x_0,x_1,y,z]$.
If $\msp \in (x_0 \ne 0) \cup (x_1 \ne 0)$ (resp.\ $\msp \in (w \ne 0)$), then $4 A$ (resp.\ $6 A$) isolates $\msp$.
By the inequality $4 < 6 < 4/(A^3) = 48/5$, $\msp$ is not a maximal center. 

Let $\msp$ be a nonsingular point of $X$ contained in $(x_0 = x_1 = w = 0)$.
If $y^2 z \in h_{10}$, then $(x_0 = x_1 = w = 0)_X$ consists of singular points of $X'$ and there is nothing to prove.
We assume that $y^2 z \notin h_{10}$.
In this case, $X'$ contains the curve $\Gamma = (x_0 = x_1 = w = 0)$.
Let $S$ be a general member of the pencil $|\mcI_{\Gamma} (1)|$.
The base locus of the linear system is $(x_0 = x_1 = 0)_X = \Gamma \cup \Delta$, where $\Delta = (x_0 = x_1 = w y + z^2 = 0)$ is a curve.
We claim that $S$ is nonsingular along $\Bs |\mcI_{\Gamma} (1)| \setminus \{\msp_2, \msp_3, \msp_4\}$.
Indeed, the defining polynomial of $X'$ can be written as $w^2 y^2 + w (z^2 + y z a_1) + y^3 b_1 + c_{10}$, where $a_1, b_1 \in \mbC [x_0,x_1]$ is of degree $1$ and $c_{10} = c_{10} (x_0,x_1,y,z,w) \in (x_0,x_1)^2$.
The surface $S$ is cut out on $X$ by $\alpha_0 x_0 + \alpha_1 x_1$, where $\alpha_0,\alpha_1 \in \mbC$.
The restriction of Jacobian matrix $J_{C_S}$ of the affine cone $C_S$ of $S$ to $(x_0 = x_1 = 0)$ can be computed as
\[
\begin{pmatrix}
w y z \dfrac{\prt a_1}{\prt x_0} +y^3 \dfrac{\prt b_1}{\prt x_0} & w y z \dfrac{\prt a_1}{\prt x_1} +y^3 \dfrac{\prt b_1}{\prt x_1} & 2 w^2 y & 2 w z & 2 w y^2 + z^2 \\[3mm]
\alpha_0 & \alpha_1 & 0 & 0 & 0
\end{pmatrix}.
\]
The intersection of the zero loci of $2 w^2 y$, $2 w z$ and $2 w y^2 + z^2$ is the set $\{\msp_2, \msp_4\}$.
It follows that $S$ ia quasismooth along $(x_0 = x_1 = 0) \setminus \{ \msp_2, \msp_4 \}$.
Since $\alpha_0, \alpha_1$ are general, we see that $S$ is quasismooth at both $\msp_2$ and $\msp_4$.
Moreover, the singularity of $S$ at $\msp_2$ and $\msp_3$ are of type respectively $\frac{1}{3} (1,2)$ and $\frac{1}{4} (1,3)$.

Since $\Gamma$ is a nonsingular rational curve passing through two singular points $\msp_2, \msp_3$ of $S$, we can compute 
\[
(\Gamma^2) = -2 + \frac{2}{3} + \frac{3}{4} = - \frac{7}{12},
\] 
where the terms $2/3$ and $3/4$ come from the different.
Let $S'$ be another general member of $|\mcI_{\Gamma} (1)|$.
We have $A|_S \sim S'|_S = \Gamma + \Delta$.
From this relation, we have $(\Gamma \cdot A|_S) = (\Gamma^2) + (\Gamma \cdot \Delta)$ and $(\Delta \cdot A|_S) = (\Gamma \cdot \Delta) + (\Delta^2)$.
Since $(\Gamma \cdot A|_S) = \deg \Gamma = 1/12$ and $(\Delta \cdot A|_S) = \deg \Delta = 1/3$, we get $(\Gamma \cdot \Delta) = 2/3$ and $(\Delta^2) = -1/3$.

Now we assume that $\msp$ is a maximal center of a movable linear system $\mcH \subset |n A|$.
We can write
\[
A|_S \sim \frac{1}{n} \mcH |_S = \frac{1}{n} \mcL + \gamma \Gamma + \delta \Delta,
\]
where $\mcL$ is a movable linear system on $S$ and $\gamma, \delta \ge 0$.
We shall show that $\gamma \le 1$.
Assume to the contrary that $\gamma > 1$.
By the linear equivalences $A|_S \sim \Gamma + \Delta$ and $A|_S \sim (1/n)\mcL + \gamma \Gamma + \delta \Delta$, we have $(1-\gamma)A|_S \sim L + (\delta-\gamma) \Delta$.
By taking intersection with $\Delta$, we have $(1-\gamma) (A|_S \cdot \Delta) \ge  (\delta - \gamma) (\Delta^2)$ and thus $(1-\gamma)/3 \ge - (\delta - \gamma)/3$.
It then follows that $\delta \ge 2 \gamma -1 > 1$.
On the other hand,
\[
\frac{1}{n} \mcL \sim_{\mbQ} A|_S -\gamma \Gamma - \delta \Delta \sim (1-\gamma) \Gamma + (1-\delta) \Delta.
\]
This is a contradiction since $\gamma, \delta > 1$ and $\mcL$ is nef.
Hence we have $\gamma \le 1$.

Since $\msp \notin \Delta$ and $\gamma \le 1$, we have $(1/n^2)(\mcL^2) > 4 (1-\gamma)$ by (1) of Theorem \ref{multineq2}.
On the other hand, we have
\[
\begin{split}
4 (1-\gamma) - \frac{1}{n^2} (\mcL^2) 
&= 4 (1-\gamma) - (A|_S - \gamma \Gamma - \delta \Delta)^2 \\
&= 4 (1-\gamma) - \left(\frac{5}{12} - \frac{1}{6} \gamma - \frac{2}{3} \delta + \frac{4}{3} \gamma \delta - \frac{7}{12} \gamma^2 - \frac{1}{3} \delta^2 \right) \\
&= \frac{1}{3} \left( \delta - (2 \gamma-1) \right)^2 - \frac{1}{4} (\gamma -1)(3 \gamma + 13) \\
&\ge - \frac{1}{4} (\gamma -1)(3 \gamma + 13) \ge 0,
\end{split}
\]
where the last inequality follows from $0 \le \gamma \le 1$.
This is a contradiction and $\msp$ is not a maximal center.
\end{proof}

\begin{Thm}
Let $X'$ be a member of $\mcG'_i$ with $i \in I_{F,cAx/2} \cup I_{F,cAx/4}$.
Then no nonsingular point of $X'$ is a maximal center.
\end{Thm}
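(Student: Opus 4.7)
The plan is essentially bookkeeping: the statement is obtained by combining the two propositions immediately preceding it, which together exhaust the index set $I_{F,cAx/2}\cup I_{F,cAx/4}$. Specifically, Proposition \ref{exclnsptmost} handles every index $i\in I_{F,cAx/2}\cup I_{F,cAx/4}\setminus\{30\}$, and Proposition \ref{exclnsptno30} handles the single remaining index $i=30$. So the theorem is proved by splitting into these two cases and invoking the appropriate result in each.

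Thus the entire argument reads: fix $i\in I_{F,cAx/2}\cup I_{F,cAx/4}$ and let $\msp\in X'$ be a nonsingular point. If $i\ne 30$, then by Proposition \ref{exclnsptmost} the point $\msp$ is not a maximal center, while if $i=30$ then by Proposition \ref{exclnsptno30} the same conclusion holds. Either way $\msp$ is excluded, which is exactly the assertion of the theorem.

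There is no real obstacle here, as all the work has already been done in the two propositions. The only reason the case $i=30$ is separated is technical: for the other families one can directly find a small integer $m\le 4/(A^3)$ such that $mA$ isolates every nonsingular point, so Lemma \ref{exclmostnspt} applies uniformly; for $\mcG'_{30}$ this clean isolation approach fails at nonsingular points on the curve $\Gamma=(x_0=x_1=w=0)$, and one must instead use the test surface $S\in|\mcI_\Gamma(1)|$, compute $(\Gamma^2)$, $(\Gamma\cdot\Delta)$, $(\Delta^2)$ on $S$, and apply the log canonical threshold inequality of Theorem \ref{multineq2}(1). But since both of those analyses are already carried out in the two cited propositions, nothing further is needed in the proof of the theorem itself.

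\begin{proof}
This is an immediate consequence of Propositions \ref{exclnsptmost} and \ref{exclnsptno30}: the former excludes every nonsingular point of any $X'\in\mcG'_i$ with $i\in I_{F,cAx/2}\cup I_{F,cAx/4}\setminus\{30\}$ as a maximal center, while the latter handles the remaining case $i=30$.
\end{proof}
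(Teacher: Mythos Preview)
Your proof is correct and matches the paper's own proof exactly: the paper simply writes ``This follows from Propositions \ref{exclnsptmost} and \ref{exclnsptno30}.'' Your additional commentary about why the case $i=30$ requires a separate argument is accurate and helpful, but the formal proof itself is identical to the paper's.
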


\begin{proof}
This follows from Propositions \ref{exclnsptmost} and \ref{exclnsptno30}.
\end{proof}

\section{Excluding terminal quotient singular points} \label{sec:singpts}

Throughout this section, let $\msp$ be a terminal quotient singular point of a member $X'$ of $\mcG'_i$, which is not a center of a birational involution constructed in Section \ref{sec:birinv}.
The aim of this section is to exclude such a point $\msp$ as maximal center.
We denote by $\varphi' \colon Y' \to X'$ the Kawamata blowup at $\msp$ with exceptional divisor $E$ and put $B = -K_{Y'}$ as before.
Let $S \in |B|$ be a general member which is the proper transform of a general member of $|A|$.

We use the following fact frequently: for the exceptional divisor of the Kawamata blowup $\varphi \colon W \to V$ at a terminal quotient singular point $\msp \in V$ of type $\frac{1}{r} (1,a,r-a)$, we have $(E^3) = r^2/a (r-a)$ (see \cite[Proposition 3.4.6]{CPR} for a detail).

\subsection{The case $(B^3) \le 0$} \label{sec:singptsnonpos}

In this subsection we treat terminal quotient singular points with $(B^3) \le 0$ and exclude them as maximal center.
We introduce the following assumption.

\begin{Assumption} \label{assumpsing}
\begin{enumerate}
\item If $X'$ is a member of $\mcG'_{23}$ and $\msp$ is the point of type $\frac{1}{2} (1,1,1)$, then there is no WCI curve of type $(1,1,4)$ on $X'$ passing through $\msp$.
\item If $X'$ is a member of $\mcG'_{50}$ and $\msp$ is the point of type $\frac{1}{2} (1,1,1)$ (resp.\ $\frac{1}{3} (1,1,2)$), then there is no WCI curve of type $(1,3,4)$ on $X'$ through $\msp$ (resp.\ $z^3 t$ appears in the defining polynomial of $X'$ with non-zero coefficient).
\end{enumerate}
\end{Assumption}

\begin{Prop} \label{singptsurf}
Let $X'$ be a member of $\mcG'_i$ and $\msp$ a terminal quotient singular point of $X'$ with $(B^3) \leq 0$ satisfying \emph{Assumption \ref{assumpsing}}.
Then there exists a surface $T \in |B|$ on $Y'$ with the following properties.
\begin{enumerate}
\item The scheme theoretic intersection $\Gamma := S \cap T$ is an irreducible and reduced curve on $Y$.
\item $(T \cdot \Gamma) \leq 0$.
\end{enumerate}
In particular $\msp$ is not a maximal center.
\end{Prop}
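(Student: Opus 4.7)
The plan is to apply Lemma~\ref{excltestsurf} with both surfaces chosen from $|B|$, so that $S \sim T \sim B$, i.e.\ $a = b = 1$ and $d = e = 0$. With this choice, the numerical conditions in (1) of Lemma~\ref{excltestsurf} hold trivially since $a_E(K_{X'}) = 1/r > 0$ for the Kawamata blowup. Moreover, provided that the scheme-theoretic intersection $\Gamma = S \cap T$ is a single irreducible reduced curve on $Y'$, condition (3) of the lemma is automatic, because
\[
(T \cdot \Gamma) = (T \cdot S \cdot T) = (T^2 \cdot S) = (B^3) \le 0
\]
by hypothesis. Thus everything reduces to constructing a $T \in |B|$ whose intersection with $S$ is a single irreducible reduced curve, after which Lemma~\ref{excltestsurf} excludes $\varphi'$ as a maximal singularity; since $\varphi'$ is the unique extremal divisorial extraction centered at the terminal quotient point $\msp$ by Theorem~\ref{thm:cyclicquot}, this shows that $\msp$ is not a maximal center.

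The construction of $T$ proceeds case by case through the list of pairs $(\mcG'_i, \msp)$ with $(B^3) \le 0$. For each such pair, I would take $T$ to be the proper transform on $Y'$ of a specific member of $|A|$ vanishing to appropriate weighted order at $\msp$---typically the divisor cut out on $X'$ by a low-weight coordinate $x_j$ vanishing at $\msp$, possibly corrected by an effective multiple of $E$ so that $T \in |B|$. The scheme-theoretic intersection $S \cap T$ then maps under $\varphi'$ to the WCI curve on $X'$ cut out by the chosen coordinate hyperplane together with a general member of $|A|$.

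The heart of the argument is then to verify that the resulting $\Gamma$ is irreducible and reduced. This is done by writing out the defining equations of $X'$ in standard form (cf.\ Section~\ref{sec:stdefeq}) and inspecting the pair of equations cutting out the candidate curve: quasismoothness of $X'$ rules out most potential decompositions of $\Gamma$, and the remaining pathological configurations are exactly the ones listed in Assumption~\ref{assumpsing}. Concretely, without the assumption the curve $\Gamma$ could split off either a WCI curve of type $(1,1,4)$ through the $\frac{1}{2}(1,1,1)$ point of a member of $\mcG'_{23}$, or a WCI curve of type $(1,3,4)$ through the $\frac{1}{2}(1,1,1)$ point of a member of $\mcG'_{50}$, and similarly the vanishing of the coefficient of $z^3 t$ in the defining polynomial of a member of $\mcG'_{50}$ would cause $\Gamma$ to become reducible at the $\frac{1}{3}(1,1,2)$ point. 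Assumption~\ref{assumpsing} is tailored precisely to rule these configurations out.

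The main obstacle is the case-by-case irreducibility verification: although each individual check is short, the number of singular points to be treated is substantial, and one must keep track of exactly which monomials of the defining equation are forced to appear by quasismoothness versus those controlled by Assumption~\ref{assumpsing}. Once $\Gamma$ is shown to be irreducible and reduced in each instance, the numerical conclusion $(T \cdot \Gamma) = (B^3) \le 0$ is automatic, so Lemma~\ref{excltestsurf} applies and finishes the proof.
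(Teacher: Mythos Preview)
Your proposal is essentially the paper's own argument: take $S$ and $T$ to be the proper transforms of the two lowest-weight coordinate hyperplanes, verify case by case that $\Gamma = S \cap T$ is an irreducible reduced curve (the paper does this by tabulating the polynomial $G := F(0,0,x_2,x_3,w)$ that cuts out $\Gamma$ inside $\mbP(a_2,a_3,b)$), and then apply Lemma~\ref{excltestsurf}. One concrete correction is worth flagging. For $i \in \{50,74,82\}$ the second coordinate $x_1$ has degree $a_1 = 2$, so $|A|$ contains a single divisor and you cannot choose $T \in |B|$ distinct from $S$; the paper instead takes $T$ to be the proper transform of $(x_1 = 0)_{X'}$, which lies in $|a_1 B|$, and the intersection number reads $(T \cdot \Gamma) = a_1^2 (B^3) \le 0$. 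Lemma~\ref{excltestsurf} still applies with $b = a_1$, $e = 0$. (The proposition as stated in the paper also writes $T \in |B|$, but its proof actually uses $T \in |a_1 B|$.) Finally, your suggestion of ``correcting by an effective multiple of $E$'' is unnecessary: the coordinates $x_0$ and $x_1$ already lift to sections of $B$ and $a_1 B$ on $Y'$ without any adjustment.
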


\begin{table}[b]
\begin{center}
\caption{Defining polynomials of $\Gamma$}
\label{table:gammaqsm}
\begin{tabular}{ccc}
\hline
No. & $G$ & Condition \\
\hline
23 & $w (y^3 + z^2) + \alpha y^2 z^2$ & $\alpha \ne 0$ \\
29 & $w^3 y^2 + \beta w^2 y^3 + \gamma w y^4 + \delta y^5 + z^2$ & \\
42 & $w^3 y^2 + w z^2 + y^3$ & \\
49 & $w y^5 + \beta y^7 + t^2$ & \\
50 & $w^2 z^2 + w t^2 + \alpha z^3 t$ & $\alpha \ne 0$ \\
55 & $w^3 y^2 + \beta w y^3 + z^2$ & \\
74 & $w^3 z^2 + \beta z^6 + \gamma z^3 t + t^2$ & \\
77 & $w^3 y^2 + y^3 + z^2$ & \\
82 & $w^3 z^2 + t^2$ & 
\end{tabular}
\end{center} 
\end{table}

\begin{proof}
Let $\mbP (a_0,\dots,a_3,b)$, $a_0 \le \cdots \le a_3$, be the ambient weighted projective space of $X'$ with homogeneous coordinates $x_0,\dots,x_3,w$ and $F$ the defining polynomial of $X'$.
We see that both $x_0$ and $x_1$ vanish at $\msp$ and lift to plurianticanonical sections on $Y$. 
We set $T := (x_1 = 0)$. 
Then $\Gamma = S \cap T$ is the proper transform of $(x_0 = x_1 = 0)_{X'}$ which is isomorphic to a weighted complete intersection defined by the equation $G := F (0,0,x_2,x_3,w) = 0$ in $\mbP (a_2,a_3,b)$.
We see that the polynomial $G$ is of the form listed in Table \ref{table:gammaqsm} for some complex numbers $\alpha,\beta, \gamma, \delta$. 
The condition $\alpha \ne 0$ follows from Assumption \ref{assumpsing} (see Examples \ref{ex:singpt23} and \ref{ex:singpt50} below).
This implies that $\Gamma$ is irreducible and reduced.
Finally, since $S \in \left| B \right|$ and $T \in \left| a_1 B \right|$, we have 
\[
(T \cdot \Gamma) = (T \cdot S \cdot T) = a_1^2 (B^3) \le 0.
\]
By Lemma \ref{excltestsurf}, $\msp$ is not a maximal center.
\end{proof}

In the following examples, let $F$ be the defining polynomial of $X'$ and $S$, $T$ be the proper transforms on $Y'$ of the divisors $(x_0 = 0)_{X'}$, $(x_1 = 0)_{X'}$, respectively.
We explain that the polynomial $G := F (0,0,x_2,x_3,w)$ is of the form given in Table \ref{table:gammaqsm}.

\begin{Ex} \label{ex:singpt23}
Let $X' = X'_{10} \subset \mbP (1,1,2,3,4)$ be a member of $\mcG'_{23}$ with $F = w^2 x_0 (x_0 + f_1) + w g_6 + h_{10}$ and $\msp$ a point of type $\frac{1}{2} (1,1,1)$.
If $y^3 \notin g_6$, then there is no point of type $\frac{1}{2} (1,1,1)$.
Hence we may assume that the coefficient of $y^3$ in $g_6$ is one.
After replacing $y$, we may assume that $\msp = \msp_2$.
It follows that $y^5 \notin h_{10}$.
Then we have $G = w (z^2 + y^3) + \alpha z^2 y^2$ for some $\alpha \in \mbC$.
We have $\alpha \ne 0$ by Assumption \ref{assumpsing}.
\end{Ex}

\begin{Ex} \label{ex:singpt42}
Let $X' = X'_{12} \subset \mbP (1,1,4,5,2)$ be a member of $\mcG'_{42}$ with $F = w^2 y (y + f_4) + w g_{10} + h_{12}$ and $\msp$ a point of type $\frac{1}{2} (1,1,1)$.
If $y^3 \notin h_{12}$, then there is no point of type $\frac{1}{2} (1,1,1)$.
Hence we may assume that the coefficient of $y^3$ in $h_{12}$ is one.
Then we have $G = w^2 y^2 + w z^2 + y^3$.
\end{Ex}

\begin{Ex} \label{ex:singpt50}
Let $X' = X'_{14} \subset \mbP (1,2,3,5,4)$ be a member of $\mcG'_{50}$ with $F = w^2 z (z + f_3) + w g_{10} + h_{14}$ and $\msp = \msp_2$ the point of type $\frac{1}{3} (1,1,2)$.
Note that $T \in |2 B|$ is the proper transform of $(y=0)_{X'}$.
We have $G = w^2 z^2 + w t^2 + \alpha t z^3$ for some $\alpha \in \mbC$. 
We have $\alpha \ne 0$ by Assumption \ref{assumpsing}.
\end{Ex}

The remaining singular points with $(B^3) < 0$ will be excluded by constructing a suitable nef divisor on the Kawamata blowup of $X'$ at the point.

\begin{Lem} \label{lem:critc}
Let $V$ be a $\mbQ$-Fano variety, $\msp \in V$ a point and $\varphi \colon W \to V$ an extremal divisorial extraction centered at $\msp$ with exceptional divisor $E$.
Assume that there are prime Weil divisors $D_1, \dots,D_k$ on $V$ with the following properties.
\begin{enumerate}
\item The intersection $D_1 \cap \cdots \cap D_k$ does not contain a curve passing through $\msp$.
\item For $i = 1,2,\dots,k$, the proper transform $\tilde{D}_i$ of $D_i$ on $W$ is $\mbQ$-linearly equivalent to $- b_i K_W + e_i E$ for some $b_i > 0$ and $e_i \ge 0$.
\item We have $c \le a_E (K_V)$, where $c := \max \{e_i/b_i\}$ and $a_E (K_V)$ is the discrepancy of $K_V$ along $E$.
\end{enumerate}
Then, the divisor $M = - K_W + c E$ is nef.
\end{Lem}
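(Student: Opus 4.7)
The plan is to verify that $M = -K_W + c E$ intersects every irreducible curve $C \subset W$ non-negatively, by case analysis on the behaviour of $C$ under $\varphi$. Throughout I use the discrepancy relation $K_W = \varphi^* K_V + a_E(K_V)\, E$, which rewrites $M$ as $-\varphi^* K_V + (c - a_E(K_V))\, E$, a form well-suited to handling the two factors separately.

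First, if $C$ is contracted by $\varphi$, then since $\varphi$ is centered at the point $\msp$ we have $\varphi(E) = \msp$ and so $C \subset E$. This forces $\varphi^* K_V \cdot C = 0$, while extremality of $\varphi$ gives $E \cdot C < 0$. Combined with $c - a_E(K_V) \le 0$ from hypothesis (3), we obtain $M \cdot C = (c - a_E(K_V))(E \cdot C) \ge 0$. Next, if $C$ is not contracted but $E \cdot C = 0$, then $\varphi(C)$ is a curve disjoint from $\msp$, and $M \cdot C = -\varphi^* K_V \cdot C = (-K_V \cdot \varphi(C)) > 0$ by ampleness of $-K_V$.

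The delicate case is that of a non-contracted curve $C$ with $E \cdot C > 0$, equivalently $\msp \in \varphi(C)$. Here hypothesis (1) is decisive: $\varphi(C)$ is a curve through $\msp$, so it cannot be contained in $D_1 \cap \cdots \cap D_k$, hence there exists an index $i$ with $\varphi(C) \not\subset D_i$. Then $C$ is not contained in the proper transform $\tilde D_i$, which is an effective Weil divisor, so $\tilde D_i \cdot C \ge 0$. Substituting $\tilde D_i \sim_{\mbQ} -b_i K_W + e_i E$ with $b_i > 0$ and using $E \cdot C > 0$ together with $e_i/b_i \le c$ yields
\[
-K_W \cdot C \;\ge\; -\frac{e_i}{b_i}(E \cdot C) \;\ge\; -c\,(E \cdot C),
\]
which rearranges to $M \cdot C \ge 0$, as desired.

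The main obstacle is the bookkeeping in the last case: one must exploit hypothesis (1) to extract, for every curve $C$ with $\varphi(C) \ni \msp$, at least one $\tilde D_i$ that does not contain $C$, and then observe that passing to the maximum $c = \max\{e_i/b_i\}$ preserves the inequality regardless of which index $i$ was available. The remainder is a direct application of the projection formula, the discrepancy relation, and the negativity of $E$ along contracted curves.
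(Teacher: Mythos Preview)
Your proof is correct and follows essentially the same strategy as the paper's: both arguments split into the three cases $C \subset E$, $C \cap E = \emptyset$, and $C$ meeting $E$ properly, and in the last case both exploit hypothesis (1) to find some $\tilde{D}_i$ not containing $C$. The only cosmetic difference is that the paper phrases the third case via the base locus of $|mM|$ (which contains suitable multiples of $\tilde{D}_i + l_i E$), whereas you compute $\tilde{D}_i \cdot C \ge 0$ directly and unwind the inequality; the content is identical.
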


\begin{proof}
Set $A := -K_V$.
We have $K_W = \varphi^*K_V + a_E (K_V) E$, thus $M = -K_W + c E \sim_{\mbQ} \varphi^*A  + d E$, where $d = c- a_E (K_V)$.
Let $m$ be a sufficiently large and divisible positive integer.
The complete linear system $|m M|$ contains a suitable positive multiple of $\tilde{D}_i + l_i E_i$ for some $l_i \ge 0$.
It follows from the assumption (1) that the base locus of $|m M|$ does not contain a curve which intersects $E$ but is not contained in $E$.

Let $C$ be an irreducible and reduced curve on $W$.
If $C$ is disjoint from $E$, then $(M \cdot C) = (A \cdot \varphi_*C) > 0$ since $A$ is ample.
If $C$ intersects $E$ but is not contained in $E$, then $(M \cdot C) \ge 0$ since $C$ is not contained in the base locus of $|m M|$.
Assume that $C$ is contained in $E$.
Note that we have $(E \cdot C) < 0$ since $-E$ is $\varphi$-ample. 
Then we have
\[
(M \cdot C) = (\varphi^*A + d E \cdot C)
= d (E \cdot C) \ge 0
\]
since $d = c - a_E (K_V) \le 0$ by the assumption (3).
This shows that $M$ is nef.
\end{proof}

Let $X'$ be a member of $\mcG'_i$ and $\msp \in X'$ a point.
We say that a set of homogeneous polynomials $\{g_1,\dots,g_m\}$ {\it isolates} $\msp$ if the intersection of divisors $(g_1 = 0)_{X'}, \dots, (g_m = 0)_{X'}$ does not contain a curve passing through $\msp$.

\begin{Prop} \label{prop:singBnegtc}
Let $X'$ be a member of $\mcG_i$ for $i = 50,74,82$ and $\msp$ a terminal quotient singular point of type $\frac{1}{2} (1,1,1)$ satisfying \emph{Assumption \ref{assumpsing}}.
Then the divisor $M := b B + e E$ in the second column of the table is a nef divisor such that $(M \cdot B^2) \le 0$.
In particular, $\msp$ is not a maximal center.
\end{Prop}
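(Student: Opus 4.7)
The plan is to establish the two assertions of the proposition---that $M = b B + e E$ is nef and that $(M \cdot B^2) \le 0$---and then invoke Corollary \ref{tsmethod} to conclude that $\varphi'$ is not a maximal singularity, whence $\msp$ is not a maximal center.

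For the nefness of $M$, I would apply Lemma \ref{lem:critc}. Concretely, for each of the three families and the relevant point $\msp$, I would exhibit a finite set of homogeneous polynomials $g_1,\dots,g_m$ on the ambient weighted projective space of $X'$ such that the subscheme $(g_1 = \cdots = g_m = 0)_{X'}$ contains no curve through $\msp$, and such that the proper transform on $Y'$ of each divisor $(g_i = 0)_{X'}$ is $\mbQ$-linearly equivalent to $b_i B + e_i E$ with $b_i > 0$, $e_i \ge 0$ and $\max_i\{e_i/b_i\} = e/b$. Since $e/b$ must not exceed the discrepancy $a_E(K_{X'}) = 1/2$, the verification reduces to choosing polynomials with controlled vanishing orders along $E$. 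The natural candidates are the homogeneous coordinates that vanish at $\msp$, together with auxiliary combinations involving the defining polynomial of $X'$ when higher ratios $e_i/b_i$ are needed; the vanishing orders $e_i$ are read off from the weights $\tfrac{1}{2}(1,1,1)$ of the Kawamata blowup at $\msp$.

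For the intersection inequality, the calculation is elementary. Since $B = {\varphi'}^*A - \tfrac{1}{2}E$ and $({\varphi'}^*A)^2 \cdot E = ({\varphi'}^*A) \cdot E^2 = 0$ because $\varphi'$ contracts $E$ to a point, one has $(E \cdot B^2) = \tfrac{1}{4}(E^3) = 1$ and $(B^3) = (A^3) - \tfrac{1}{2}$. Thus $(M \cdot B^2) = b(B^3) + e$, and the inequality $b(B^3) + e \le 0$ becomes an elementary numerical check against the entries of the table using the explicit value of $(A^3)$ for each of the three families (for instance $(A^3) = 7/60$ for $\mcG'_{50}$, giving $(B^3) = -23/60$).

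The principal obstacle is the verification that the proposed polynomials actually isolate $\msp$. Generically, the common vanishing locus of the $g_i$ on $X'$ contains $\msp$ together with a possibly non-empty union of residual curves, and one must rule out that any such residual curve passes through $\msp$. For family $50$ this is precisely where Assumption \ref{assumpsing}(2) enters: without the hypothesis ruling out a WCI curve of type $(1,3,4)$ through $\msp$, such a curve would appear as an unwanted component and the isolation would fail. For families $74$ and $82$ the analogous verification is a more direct case analysis, exploiting the explicit shape of the defining polynomial of $X'$ near $\msp$ in the spirit of Examples \ref{ex:singpt23}--\ref{ex:singpt50}.
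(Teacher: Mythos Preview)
Your proposal is correct and follows essentially the same approach as the paper. The paper uses exactly Lemma \ref{lem:critc} with the coordinate functions $\{x,z,w\}$ as the isolating set in all three cases (no auxiliary combinations are needed); the vanishing orders along $E$ give proper transforms $B$, $3B+E$, $4B+E$ (and $5B+2E$ for $z$ in family $82$), and the isolation for families $74$ and $82$ comes from $t^2 \in h$ by quasismoothness at $\msp_3$, while for family $50$ it is precisely Assumption \ref{assumpsing}(2) that rules out the residual $(1,3,4)$ curve, just as you anticipated.
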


\begin{proof}
Let $X' = {X'}_{14} \subset \mbP (1,2,3,5,4)$ be a member of $\mcG'_{50}$ with the defining polynomial $w^2 z (z+f_3) + w g_{10} + h_{14}$.
We may assume that $y^5 \in g_{10}$ because otherwise $X'$ does not contain a singular point of type $\frac{1}{2} (1,1,1)$.
After replacing $w$, we may assume that $\msp = \msp_1$ or equivalently $y^7 \notin h_{14}$.
Then either $\{x,z,w\}$ isolates $\msp$ or $X'$ contains the WCI curve $(x = z = w = 0)$ of type $(1,3,4)$.
The latter cannot happen by Assumption \ref{assumpsing}.
It follows that $\{x,z,w\}$ isolates $\msp$.
We see that $x,z,w$ vanish along $E$ to order respectively $1/2,1/2,2/2$, so that they lift to sections of $B, 3 B + E, 4 B + E$, respectively (see Remark \ref{rem:compvan} below).
It follows from Lemma \ref{lem:critc} that $M = 3 B + E$ is a nef divisor on $Y'$ and we compute
\[
(M \cdot B^2) = 3 (A^3) - \frac{1}{2^3} (E^3) = 3 \times \frac{7}{60} - \frac{1}{2} < 0.
\]

Let $X' = X'_{18} \subset \mbP (1,2,3,9,4)$ be a member of $\mcG'_{74}$ with defining polynomial $w^3 z^2 + w^2 z f_7 + w g_{14} + h _{18}$.
We may assume that $y^7 \in g_{14}$ and that $\msp = \msp_1$.
We see that $(x = z = w = 0)_{X'} = \{\msp\}$ since $t^2 \in h_{14}$ by quasismoothness of $X'$ at $\msp_3$.
This shows that $\{x,z,w\}$ isolates $\msp$ and the birational transform of $(x = 0)_{X'}$, $(z = 0)_{X'}$ and $(w = 0)_{X'}$ on $Y$ are divisors $B$, $3 B + E$ and $4 B + E$, respectively.
It follows from Lemma \ref{lem:critc} that $3 B + E$ is a nef divisor and we compute 
\[
(M \cdot B^2) = 3 (A^3) - \frac{1}{2^3} (E^3) = 3 \times \frac{1}{12} - \frac{1}{2} < 0.
\]

Finally, let $X' = X'_{22} \subset \mbP (1,2,5,11,4)$ be a member of $\mcG'_{82}$ with defining polynomial $w^3 z^2 + w^2 z f_9 + w g_{18} + h_{22}$.
We may assume that $y^9 \in g_{18}$ and may assume that $\msp = \msp_1$ after replacing $w$.
We see that $\{x,z,w\}$ isolates $\msp$ since $t^2 \in h_{22}$ by quasismoothness of $X'$ at $\msp_3$.
We also see that the birational transform of $(x = 0)_{X'}$, $(z = 0)_{X'}$ and $(w = 0)_{X'}$ on $Y$ are divisors $B$, $5B + 2E$ and $4B+E$, respectively.
It follows from Lemma \ref{lem:critc} that $M := 5B + 2E$ is a nef divisor and we compute
\[
(M \cdot B^2) = 5 (A^3) - \frac{1}{2^3} (E^3) = 5 \times \frac{1}{20} - \frac{1}{2} < 0.
\]
The last statement follows from Corollary \ref{tsmethod}.
\end{proof}

\begin{Rem} \label{rem:compvan}
We briefly explain the computation of vanishing order of sections along the exceptional divisor of Kawamata blowup.
For more details, we refer the readers to \cite[Examples 5.7.2-5.7.5]{CPR} and \cite[Example 4.5]{Okada}.

Let $V$ be a weighted hypersurface in $\mbP (a_0,\dots,a_4)$ defined by a homogeneous polynomial $F$.
Assume that $V$ is quasismooth at $\msp = \msp_0 \in V$ and it has terminal singular at $\msp$.
Then, after replacing coordinates, we can write $F = x_0^k x_4 + G$, where $G = G (x_0,\dots,x_4)$ does not contain $x_0^k x_4$.
In this case, the Kawamata blowup $\varphi \colon W \to V$ is the blowup with $\wt (x_1,x_2,x_3) = \frac{1}{r} (1,a,r-a)$, where we assume $r = a_0$, $a_1 \equiv 1, a_2 \equiv a, a_3 \equiv r-a \pmod{r}$.
In this case, the sections $x_1,x_2,x_3$ vanish along the exceptional divisor $E$ to order respectively $1/r, a/r, (r-a)/r$, and $x_0$ does not vanish along $E$.
We compute the vanishing order of the remaining coordinate $x_4$.
Filtering off terms divisible by $x_4$ in the defining equation, we have $u x_4 = - G (x_0,x_1,x_2,x_3,0)$, where $u = x_0^k + (\text{other terms})$.
Now we compute the vanishing order of each monomial in $G(x_0,x_1,x_2,x_3,0)$ and let $b/r$ be the minimum of them.
Then, since $u$ does not vanish at $\msp$, we see that the vanishing order of $x_4$ along $E$ is $b/r$.
\end{Rem}

In the rest of this subsection we exclude terminal quotient singular points with $(B^3) \le 0$ which do not satisfy Assumption \ref{assumpsing}.

Let $X' = X'_{10} \subset \mbP (1,1,2,3,4)$ be a member of $\mcG'_{23}$ and $\msp$ the point of type $\frac{1}{2} (1,1,1)$.
We treat the case where $X'$ contains a WCI curve of type $(1,1,4)$ passing through $\msp$. 
The defining polynomial of $X'$ is of the form $w^2 x_0 (x_0 + f_1) + w g_6 + h_{10}$, where $f_1, g_6, h_{10} \in \mbC [x_0,x_1,y,z]$.
By replacing $y$ and $w$ suitably, we may assume that $\msp = \msp_2$, $g_6 = y^3 + y^2 a_2 + y a_4 + a_6$ and $h_{10} = y^2 b_6 + y b_8 + b_{10}$, where $a_i, b_i \in \mbC [x_0,x_1,z]$.
We see that $X'$ contains a WCI curve of type $(1,1,4)$ passing through $\msp$ if and only if $z^2 \notin b_6$.
In this case the $(1,1,4)$ curve is $(x_0 = x_1 = w = 0)$ and we denote by $\Gamma$ its proper transform on $Y'$.
We see that at least one of the coefficients of $z^3 x_0$ and $z^3 x_1$ in $b_{10}$ is non-zero because otherwise $X'$ has a singular point at $(0 \!:\! 0 \!:\! -1 \!:\! 1 \!:\! 0)$ and this is a contradiction.

\begin{Prop} \label{exclspptNo23}
Let $X'$ be a member of the family $\mcG'_{23}$ and $\msp$ a singular point of type $\frac{1}{2} (1,1,1)$.
Assume that $X'$ contains a WCI curve of type $(1,1,4)$ passing through $\msp$.
Then $\msp$ is not a maximal center.
\end{Prop}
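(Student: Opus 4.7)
The plan is to apply Lemma \ref{crispnegdef} on the Kawamata blowup $\varphi' \colon Y' \to X'$ at $\msp$, with $B = -K_{Y'}$: I take the effective divisor $S \in |B|$ to be the proper transform of $(x_0 = 0)_{X'}$, and the normal surface $T \sim B$ to be the proper transform of a general member of $|A|$ on $X'$. Although the scheme-theoretic intersection $S \cap T$ decomposes into two components, I will show that the corresponding intersection form on $T$ is negative-definite.

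First I analyse $(x_0 = x_1 = 0)_{X'}$. After the normalisation $\msp = \msp_2$, the hypothesis that $\Gamma = (x_0 = x_1 = w = 0)_{X'}$ is contained in $X'$ is equivalent to $z^2 \notin b_6$, and substitution into the defining polynomial gives
\[
F(0, 0, y, z, w) = w(y^3 + \alpha z^2),
\]
where $\alpha \in \mbC$ is the coefficient of $z^2$ in $a_6$. Thus $(x_0 = x_1 = 0)_{X'} = \Gamma + \Delta$, where $\Delta$ is cut out by $y^3 + \alpha z^2 = 0$ on $(x_0 = x_1 = 0)$ when $\alpha \ne 0$ and equals $3(x_0 = x_1 = y = 0)_{X'}$ when $\alpha = 0$; in either case $\msp \notin \Delta$, so the proper transform $\tilde\Delta$ on $Y'$ is disjoint from the exceptional divisor $E$. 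Using $(A^3) = 5/12$, the WCI degree formula, and the fact that $\Gamma$ meets $\msp$ tangentially along the weight-one coordinate $z$ in the $\frac{1}{2}(1,1,1)$-blowup, one obtains $(A \cdot \Gamma) = 1/6$, $(A \cdot \Delta) = 1/4$, and $(E \cdot \tilde\Gamma) = 1$, so that $(B \cdot \tilde\Gamma) = -1/3$ and $(B \cdot \tilde\Delta) = 1/4$. A cycle-theoretic matching of $(S \cdot T \cdot \varphi'^*A) = 5/12$ and $(S \cdot T \cdot E) = 1$ then confirms that $S \cap T = \tilde\Gamma + \tilde\Delta$ on $Y'$, with no component on $E$.

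For $T$ generic in $|B|$, $T$ is normal with at most a du Val $A_1$ singularity at the $cAx/4$ point $\msp_4$ of $X'$, arising from a transverse hyperplane section of the $cAx/4$ normal form; the seemingly natural alternative $T = \tilde{(x_1 = 0)_{X'}}$ must be avoided, since its germ at $\msp_4$ is reducible (coming from the $x^2 + y^2 = 0$ leading term). Since $T \sim -K_{Y'}$, adjunction yields $K_T \equiv 0$ numerically. Because $\tilde\Gamma$ is a smooth rational curve avoiding $\msp_4$, adjunction on the smooth locus of $T$ gives $(\tilde\Gamma^2)_T = -2$, and combining this with $(\tilde\Gamma + \tilde\Delta)^2_T = (B^3) = -1/12$ and $(\tilde\Gamma \cdot (\tilde\Gamma + \tilde\Delta))_T = -1/3$ determines $(\tilde\Gamma \cdot \tilde\Delta)_T = 5/3$ and $(\tilde\Delta^2)_T = -17/12$. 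The resulting intersection matrix
\[
\begin{pmatrix} -2 & 5/3 \\ 5/3 & -17/12 \end{pmatrix}
\]
has determinant $1/18 > 0$ and negative trace, hence is negative-definite; Lemma \ref{crispnegdef} then rules out $\msp$ as a maximal center. The main obstacle is the careful local analysis at $\msp_4$ required to guarantee normality of $T$ and to justify $K_T \equiv 0$ there; the case $\alpha = 0$ is handled by the same procedure with $\tilde\Delta$ replaced by the reduced $(1,1,2)$-WCI curve and intersection numbers rescaled by the multiplicity $3$.
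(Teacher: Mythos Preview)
Your approach via Lemma~\ref{crispnegdef} contains a genuine error that makes the argument fail. You claim that a general $T \in |B|$ has ``at most a du Val $A_1$ singularity at the $cAx/4$ point $\msp_4$'', and then use adjunction on the smooth locus to get $(\tilde\Gamma^2)_T = -2$. But you have overlooked the point $\msp_3$, the $\frac{1}{3}(1,1,2)$ singular point of $X'$. The curve $\Gamma = (x_0 = x_1 = w = 0)$ passes through $\msp_3 = (0\!:\!0\!:\!0\!:\!1\!:\!0)$, and since every member of $|A|$ vanishes at $\msp_3$, the surface $T$ has a $\frac{1}{3}(1,2)$ (i.e.\ $A_2$) singularity there, with $\tilde\Gamma$ passing through it. The different contributes $2/3$ at this point, so the correct value is
\[
(\tilde\Gamma^2)_T = -2 + \tfrac{2}{3} = -\tfrac{4}{3}.
\]
One can confirm this directly: $\Gamma$ and $\Delta$ meet transversally at a single smooth point of $T$, so $(\tilde\Gamma \cdot \tilde\Delta)_T = 1$, and then $(\tilde\Gamma^2)_T = (B \cdot \tilde\Gamma) - (\tilde\Gamma \cdot \tilde\Delta)_T = -\tfrac{1}{3} - 1 = -\tfrac{4}{3}$. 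With this correction the intersection matrix becomes
\[
\begin{pmatrix} -4/3 & 1 \\ 1 & -3/4 \end{pmatrix},
\]
whose determinant is $0$. The form is only negative \emph{semi}-definite, so Lemma~\ref{crispnegdef} does not apply. (Your identification of the singularity at $\msp_4$ as $A_1$ is also incorrect---the different of $\tilde\Delta$ there is $5/4$, not $1/2$---but this is secondary.) Adding $E|_T$ to the support does not help: the $2\times 2$ minor on $\tilde\Gamma,\tilde\Delta$ remains zero.

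The paper takes an entirely different route, using Lemma~\ref{crispinfc} rather than Lemma~\ref{crispnegdef}. It considers the two-parameter family of surfaces $S_\lambda = \tilde{(x_1 - \lambda x_0 = 0)}$ and $T_\mu = \tilde{(w - \mu x_0^4 = 0)} \sim 4B$, writes $S_\lambda \cap T_\mu = \tilde\Gamma + C_{\lambda,\mu}$, and shows that each $C_{\lambda,\mu}$ contains an irreducible component $C^\circ_{\lambda,\mu}$ with $(B \cdot C^\circ_{\lambda,\mu}) \le 0$ and $(E \cdot C^\circ_{\lambda,\mu}) > 0$. This produces infinitely many such curves, and Lemma~\ref{crispinfc} excludes $\msp$.
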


\begin{proof}
We keep notation on defining polynomial of $X'$ as in the preceding argument.
For general $\lambda, \mu \in \mbC$, let $S_{\lambda} \sim_{\mbQ} B$ and $T_{\mu} \sim_{\mbQ} 4 B$ be the birational transform on $Y'$ of the divisors $(x_1 - \lambda x_0 = 0)_{X'}$ and  $(w - \mu x_0^4 = 0)_{X'}$, respectively.
We see that $S_{\lambda} \cap T_{\mu}$ is the proper transform of the curve on $X'$ defined by equations $x_1 - \lambda x_0 = w - \mu x_0^4 = x_0 G_{\lambda,\mu} = 0$, where
\[
G_{\lambda,\mu} =
\mu^2 x_0^8 (x_0 + f_1 (x_0,\lambda x_0)) + \mu x_0^3 g_6 (x_0,\lambda x_0,y,z) + \frac{h_{10} (x_0,\lambda x_0,y,z)}{x_0}.
\]  
Since at least one of $z^3 x_0$ and $z^3 x_1$ appears in $h_{10}$ and $\lambda$ is general, $G_{\lambda,\mu}$ is not divisible by $x_0$.
It follows that $S_{\lambda} \cap T_{\mu} = \Gamma + C_{\lambda,\mu}$, where $C_{\lambda,\mu}$ is the proper transform on $Y'$ of the curve $(x_1 - \lambda x_0 = w - \mu x_0^4 = G_{\lambda,\mu} = 0)$.
We see that $C_{\lambda,\mu}$ does not contain $\Gamma$ as a component and $C_{\lambda,\mu} \cap E \ne \emptyset$.
We shall show that there is a component $C_{\lambda,\mu}^{\circ}$ of $C_{\lambda,\mu}$ such that $(B \cdot C_{\lambda,\mu}^{\circ}) \le 0$ and $(E \cdot C_{\lambda,\mu}^{\circ}) > 0$.

Since $\Gamma$ intersects $E$ transversally at a nonsingular point of $Y'$ at which both $\Gamma$ and $E$ are nonsingular, we have $(E \cdot \Gamma) = 1$.
It follows that 
\[
(B \cdot \Gamma) = (A \cdot \varphi_* \Gamma) - \frac{1}{2} (E \cdot \Gamma) = \frac{1}{6} - \frac{1}{2} = - \frac{1}{3}.
\]
This, together with 
\[
(B \cdot \Gamma + C_{\lambda,\mu}) = (B \cdot S_{\lambda} \cdot T_{\mu}) = 4 (B^3) = -\frac{1}{3}, 
\]
shows that $(B \cdot C_{\lambda,\mu}) = 0$.
We write $C_{\lambda,\mu} = C'_{\lambda,\mu} + C''_{\lambda,\mu}$ where each component of $C'_{\lambda,\mu}$ (resp.\ $C''_{\lambda,\mu}$) intersects $E$ (resp.\ is disjoint from $E$).
Note that $C'_{\lambda,\mu} \ne 0$ since $C_{\lambda,\mu} \cap E \ne \emptyset$ and that $(B \cdot C''_{\lambda,\mu}) = (A \cdot {\varphi'}_* C''_{\lambda,\mu}) \ge 0$.
It follows that 
\[
(B \cdot C'_{\lambda,\mu}) = (B \cdot C_{\lambda,\mu}) - (B \cdot C''_{\lambda,\mu}) \le 0.
\]
Therefore there is at least one component, say $C^{\circ}_{\lambda, \mu}$, of $C'_{\lambda,\mu}$ such that $(B \cdot C^{\circ}_{\lambda,\mu}) \le 0$.
By our choice, we have $(C^{\circ}_{\lambda,\mu} \cdot E) > 0$.
It follows that there infinitely many curves on $Y$ which intersect $B = -K_{Y'}$ non-positively and $E$ positively.
By Lemma \ref{crispinfc}, $\msp$ is not a maximal center.
\end{proof}

We exclude singular points of type $\frac{1}{2} (1,1,1)$ and $\frac{1}{3} (1,1,2)$ on a special member $X' = X'_{14} \subset \mbP (1,2,3,5,4)$ of the family $\mcG'_{50}$ as maximal center. 

\begin{Prop} \label{exclsppt2No50}
Let $X'$ be a member of the family $\mcG'_{50}$ and $\msp$ a point of type $\frac{1}{2} (1,1,1)$.
Assume that there is a WCI curve of type $(1,3,4)$ passing through $\msp$.
Then $\msp$ is not a maximal center.
\end{Prop}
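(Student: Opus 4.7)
The plan is to mimic the strategy of Proposition \ref{exclspptNo23}: produce infinitely many curves $C^{\circ}_{\lambda, \mu}$ on the Kawamata blowup $\varphi \colon Y' \to X'$ at $\msp$ satisfying $(B \cdot C^{\circ}_{\lambda, \mu}) \le 0$ and $(E \cdot C^{\circ}_{\lambda, \mu}) > 0$, and then invoke Lemma \ref{crispinfc}. In coordinates $x, y, z, t, w$ of degrees $1, 2, 3, 5, 4$, after a suitable replacement of $y$ and $w$ I may assume that the defining polynomial is $F = w^2 z(z + f_3) + w g_{10} + h_{14}$, that $\msp = \msp_1 = (0 \!:\! 1 \!:\! 0 \!:\! 0 \!:\! 0)$, and that the WCI curve of type $(1,3,4)$ on $X'$ through $\msp$ is $\Gamma = (x = z = w = 0)$; these conditions force $y^5 \in g_{10}$, $y^7 \notin h_{14}$, and $y^2 t^2 \notin h_{14}$.

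I then choose two families of divisors through $\Gamma$ and $\msp$, for instance $S_{\lambda} = (z + \lambda_1 xy + \lambda_2 x^3 = 0)_{X'} \sim 3A$ and $T_{\mu} = (w + \mu_1 xz + \mu_2 x^2 y + \mu_3 x^4 = 0)_{X'} \sim 4A$. Substituting the defining relations of $S_{\lambda}$ and $T_{\mu}$ into $F$, the resulting polynomial in $x, y, t$ is divisible by $x$ (since $h_{14}(0, y, 0, t, 0) = 0$), and for generic $(\lambda, \mu)$ the quotient is not divisible by $x$; hence the scheme-theoretic intersection splits as $S_{\lambda} \cap T_{\mu} = \Gamma + C_{\lambda, \mu}$ with $C_{\lambda, \mu}$ a residual curve varying in an infinite family.

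Next I compute on $Y'$. The proper transform $\tilde{\Gamma}$ meets the exceptional divisor $E \cong \mbP^2$ transversally at a single nonsingular point of $Y'$ corresponding to the $t$-direction, so $(E \cdot \tilde{\Gamma}) = 1$ and $(B \cdot \tilde{\Gamma}) = (A \cdot \Gamma) - \frac{1}{2}(E \cdot \tilde{\Gamma}) = \frac{1}{10} - \frac{1}{2} = -\frac{2}{5}$. Using $(E^3) = 4$ one obtains $(B^3) = -\frac{23}{60}$, $(B^2 \cdot E) = 1$, and $(B \cdot E^2) = -2$. The classes of $\tilde{S}_{\lambda}$ and $\tilde{T}_{\mu}$ on $Y'$ are determined by the Kawamata weights of the defining sections along $E$; combining these with the decomposition $\tilde{S}_{\lambda} \cdot \tilde{T}_{\mu} = \tilde{\Gamma} + \tilde{C}_{\lambda, \mu}$ as $1$-cycles gives $(B \cdot \tilde{C}_{\lambda, \mu})$. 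Splitting $\tilde{C}_{\lambda, \mu} = C'_{\lambda, \mu} + C''_{\lambda, \mu}$ into parts meeting $E$ and disjoint from $E$, the projection formula yields $(B \cdot C''_{\lambda, \mu}) \ge 0$, so $(B \cdot C'_{\lambda, \mu}) \le (B \cdot \tilde{C}_{\lambda, \mu})$. I will then select an irreducible component $C^{\circ}_{\lambda, \mu}$ of $C'_{\lambda, \mu}$ with $(B \cdot C^{\circ}_{\lambda, \mu}) \le 0$ and $(E \cdot C^{\circ}_{\lambda, \mu}) > 0$ and apply Lemma \ref{crispinfc}.

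The hard part will be arranging that $(B \cdot \tilde{S}_{\lambda} \cdot \tilde{T}_{\mu}) \le -\frac{2}{5}$. This hinges on the order of vanishing of the defining section of $T_{\mu}$ along $E$, which equals the Kawamata weight of $w$ in the local analytic expression $w = w(x, z, t)$ obtained by solving $F = 0$ near $\msp$. This Kawamata weight depends on which of the monomials $x^2 y^6, xy^5 z, y^4 z^2, xy^4 t, y^3 zt$ appear in $h_{14}$ after normalization. When none of them appear the weight equals $2$ and the direct analog of the computation in Proposition \ref{exclspptNo23} goes through; when some of them appear the weight drops to $1$, and I anticipate one must either renormalize further via coordinate changes $w \mapsto w + d y x^2 + f xz$ that eliminate the remaining problematic monomials, or else replace the degree-$4$ sections defining $T_{\mu}$ by higher-degree sections (for instance $wy + \gamma z^2 + \cdots$ of degree $6$) tailored so that their leading Kawamata-weight-one parts absorb the leading contribution of $w$ along $E$, producing proper transforms with smaller $E$-coefficient and restoring the required inequality.
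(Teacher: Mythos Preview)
Your approach is genuinely different from the paper's, and as written it has a real gap.

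The paper does not use the ``infinitely many curves'' criterion (Lemma~\ref{crispinfc}) here at all. Instead it applies the negative-definite criterion of Lemma~\ref{crispnegdef}: take $S \in |B|$ and let $T \in |3B+E|$ be the proper transform of a general member of the linear system generated by $z,\,yx,\,x^3$. Then $S|_T = \Gamma + C$, where $\Gamma$ is the proper transform of $(x=z=w=0)$ and $C$ is the proper transform of $(x=z=y^5+t^2=0)$ (note $t^2 \in g_{10}$ by quasismoothness at $\msp_3$, and $C$ is disjoint from $E$). One computes directly $(\Gamma+C\cdot\Gamma)=(B\cdot\Gamma)=\tfrac14$ and $(\Gamma+C)^2=(B^2\cdot(3B+E))=-\tfrac{3}{20}$; since $\Gamma$ and $C$ meet at a smooth point of $T$ one has $m:=(\Gamma\cdot C)\ge 1$, and the resulting $2\times 2$ intersection matrix
\[
\begin{pmatrix} \tfrac14 - m & m \\ m & -\tfrac25 - m \end{pmatrix}
\]
is negative-definite. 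This is a two-line computation once the curves are identified, with no case analysis.

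The gap in your plan is the following. With your choice $T_\mu = (w + \mu_1 xz + \mu_2 x^2 y + \mu_3 x^4 = 0)$, the terms $xz$ and $x^2 y$ already vanish along $E$ to order exactly $1$, so for general $\mu$ the section has vanishing order $1$ regardless of the order of $w$, and $\tilde T_\mu \sim 4B+E$. Combined with $\tilde S_\lambda \sim 3B+E$ this gives $(B\cdot\tilde S_\lambda\cdot\tilde T_\mu)=12(B^3)+7(B^2\cdot E)+(B\cdot E^2)=\tfrac25$, hence $(B\cdot\tilde C_{\lambda,\mu})=\tfrac25-(-\tfrac25)=\tfrac45>0$, which is useless for Lemma~\ref{crispinfc}. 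Dropping the $xz$ and $x^2 y$ terms does not help in the generic situation: even after the substitutions $w\mapsto w+\alpha x^2 y+\beta xz$ you can kill the coefficients of $x^2 y^6$ and $xy^5 z$ in $h_{14}$, but the weight-$1$ monomials $y^4 z^2$, $y^4 x t$, $y^3 z t$ cannot be eliminated this way, so $w$ still vanishes only to order $1$ and $\tilde T_\mu\sim 4B+E$ persists. Your second suggested remedy (passing to degree-$6$ sections such as $wy+\gamma z^2+\cdots$) might be salvageable, but you would have to exhibit a section whose weight-$1$ part genuinely cancels that of $w$ for \emph{all} members of the family, verify that the residual curve still varies, and redo the intersection numbers; none of this is carried out. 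Given that Lemma~\ref{crispnegdef} disposes of the point in a few lines with no case split, that is the argument you want here.
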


\begin{proof}
The defining polynomial of $X'$ is of the form $w^2 z (z+f_3) + w g_{10} + h_{14}$, where $f_3 = f_3 (x,y)$ and $g_{10}, h_{14} \in \mbC [x,y,z,t]$.
We see that $y^5 \in g_{10}$ because otherwise $X'$ does not contain a point of type $\frac{1}{2} (1,1,1)$.
After replacing $w$, we may assume $\msp = \msp_1$.
$X'$ contains a WCI curve of type $(1,3,4)$ passing through $\msp$ if and only if $y^2 t^2 \notin h_{14}$.
In this case the WCI curve is $(x = z = w = 0)$.
Let $T$ be the proper transform of a general member of the linear system on $X'$ generated by $z, yx, x^3$.
We have $S|_T = \Gamma + C$, where $\Gamma$ and $C$ are the proper transforms of curves $(x = z = w = 0)$ and $(x = z = y^5 + t^2 = 0)$, respectively.
We have $S \in |B|$ and $T \in |3B+E|$.
Note that $C$ is disjoint from the exceptional divisor of $\varphi'$.
We compute 
\[
(\Gamma + C \cdot \Gamma) = (S|_T \cdot \Gamma) = (B \cdot \Gamma) = (A \cdot \varphi'_* C) = \frac{1}{4}
\]
and 
\[
(\Gamma + C)^2 = (S|_T)^2 = (B^2 \cdot 3B+E) = -\frac{3}{20}.
\] 
Since $\Gamma$ and $C$ have a intersection point at which $T$ is smooth, we have $m := (\Gamma \cdot C) \ge 1$.
It follows that the matrix
\[
\begin{pmatrix}
(\Gamma^2) & (\Gamma \cdot C) \\
(\Gamma \cdot C) & (C^2)
\end{pmatrix}
=
\begin{pmatrix}
1/4 - m & m \\
m & -2/5 -m
\end{pmatrix}
\]
is negative-definite.
Thus $\msp$ is not a maximal center by Lemma \ref{crispnegdef}.
\end{proof}

\begin{Prop} \label{exclsppt3No50}
Let $X'$ be a member of the family $\mcG'_{50}$ and $\msp$ the point of type $\frac{1}{3} (1,1,2)$.
Assume that $z^3 t$ does not appear in the defining polynomial of $X'$.
Then $\msp$ is not a maximal center.
\end{Prop}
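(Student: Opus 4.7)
The plan is to apply Lemma \ref{crispnegdef}. After coordinate normalization place $\msp = \msp_2$ and write the defining polynomial of $X'$ as $F = w^2 z (z + f_3(x, y)) + w g_{10} + h_{14}$, as in Example \ref{ex:singpt50}. By hypothesis the coefficient $\alpha$ of $z^3 t$ in $h_{14}$ vanishes; quasismoothness of $X'$ at $\msp_3$ forces the coefficient of $wt^2$ in $g_{10}$ to be nonzero, which we normalize to $1$, and quasismoothness at $\msp_2$ then forces the coefficient $\mu$ of $yz^4$ in $h_{14}$ to be nonzero. The key observation is the factorization $F(0, 0, z, t, w) = w^2 z^2 + wt^2 = w(wz^2 + t^2)$, so that the $1$-cycle $(x = y = 0)_{X'}$ splits as $\Gamma_1 + \Gamma_2$, where $\Gamma_1 := (x = y = w = 0)_{X'}$ and $\Gamma_2 := (x = y = wz^2 + t^2 = 0)_{X'}$ are distinct irreducible reduced curves, both passing through $\msp_2$ with common tangent direction $t$.

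Let $\varphi' \colon Y' \to X'$ be the Kawamata blowup at $\msp_2$ with exceptional divisor $E \cong \mbP(1, 1, 2)$, and set $B = -K_{Y'}$. I will apply the lemma with $S = \tilde{(x = 0)} \in |B|$ and $T \in |2B|$ the proper transform on $Y'$ of a general member of the linear subsystem of $|2A|$ spanned by $y$ and $x^2$. From $(A^3) = 7/60$ and $(E^3) = 9/2$ I compute $(B^3) = -1/20$. The weighted complete intersection degree formula gives $(A \cdot \Gamma_1) = 1/15$ and $(A \cdot \Gamma_2) = 1/6$, while both $\tilde{\Gamma}_i$ meet $E$ at its $\frac{1}{2}(1,1)$-singular point with intersection number $1/2$, yielding $(B \cdot \tilde{\Gamma}_1) = -1/10$ and $(B \cdot \tilde{\Gamma}_2) = 0$. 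The cycle $S|_T$ equals $\tilde{\Gamma}_1 + \tilde{\Gamma}_2$, and $(S|_T)^2 = 2(B^3) = -1/10$. Setting $c_{12} := (\tilde{\Gamma}_1 \cdot \tilde{\Gamma}_2)_T$, the identities $(S|_T \cdot \tilde{\Gamma}_i)_T = (B \cdot \tilde{\Gamma}_i)$ yield $(\tilde{\Gamma}_1^2)_T = -c_{12} - 1/10$ and $(\tilde{\Gamma}_2^2)_T = -c_{12}$, so the determinant of the intersection matrix of $\{\tilde{\Gamma}_1, \tilde{\Gamma}_2\}$ on $T$ equals $c_{12}/10$.

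The main obstacle is the local analysis at $\msp_2$ needed to verify that $T$ is normal and $c_{12} > 0$. After using $\mu \ne 0$ to eliminate $y$ by the implicit function theorem, the defining equation of $T$ near $\msp_2$ has leading quadratic form in $(x, w)$ whose discriminant is nonzero for generic $c$; absorbing the next-order cross terms by a further analytic change of coordinates brings the equation to the form $u'v' + (\mathrm{const}) t^4 = 0$ with nonzero constant, which is an $A_3$-singularity and hence normal. A parallel local computation in the $t$-chart of the Kawamata blowup, where $Y'$ has a $\frac{1}{2}(1,1,1)$-singularity at the common point $\msq = \tilde{\Gamma}_1 \cap E = \tilde{\Gamma}_2 \cap E$, shows that the two proper transforms intersect properly at $\msq$ on $T$, so $c_{12} > 0$. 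The intersection form on $\{\tilde{\Gamma}_1, \tilde{\Gamma}_2\}$ is then negative-definite, and Lemma \ref{crispnegdef} concludes that $\msp_2$ is not a maximal center.
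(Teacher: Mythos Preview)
Your proposal follows essentially the same route as the paper's proof. Both arguments apply Lemma~\ref{crispnegdef} with $S \in |B|$ the proper transform of $(x=0)_{X'}$ and $T \in |2B|$ the proper transform of a general member of $|2A|$, and both identify $S|_T$ as the sum of the proper transforms of $(x=y=w=0)$ and $(x=y=wz^2+t^2=0)$; the intersection numbers you compute agree with the paper's (your $2(B^3)=-1/10$ is in fact correct). The only difference is in bounding $c_{12}$: the paper observes directly that the two curves meet at the $\frac{1}{2}(1,1,1)$ point of $Y'$ lying on $E$, giving $m \ge 1/2$, whereas you invoke a local chart computation to get the weaker but sufficient $c_{12}>0$. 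One small imprecision: your description of the singularity of $T$ at $\msp_2$ as an $A_3$ point overlooks the $\mbZ_3$-orbifold structure coming from the ambient $\frac{1}{3}(1,1,2)$ chart, though the isolated-singularity (hence normality) conclusion is unaffected.
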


\begin{proof}
We see that $\msp = \msp_2$ and the defining polynomial of $X'$ is of the form $w^2 z (z + f_3) + w g_{10} + h_{14}$, where $f_3,g_{10},h_{14} \in \mbC [x,y,z,t]$.
After replacing $z$, we may assume that $h_{14} = z^4 y + z^3 b_5 + z^2 b_8 + z b_{11} + b_{14}$, where $b_i \in \mbC [x,y,t]$.
Let $T \in |2B|$ be the proper transform of a general member of $|2A|$ on $Y'$.
The surface $T$ is normal and $S|_T = \Gamma + C$, where $\Gamma$ and $C$ are the proper transforms of curves $(x = y = w = 0)$ and $(x = y = w z^2 + t^2 = 0)$, respectively.
We see that $x,y,t,w$ vanish along $E$ to order respectively $1/3,2/3,2/3,1/3$ and $\varphi'$ can be identified with the embedded weighted blowup of $X' \subset \mbP := \mbP (1,2,3,5,4)$ at $\msp$ with $\wt (x,y,t,w) = \frac{1}{3} (1,2,2,1)$.
Let $A_{\mbP}$ be a Weil divisor on $\mbP$ such that $\mcO_{\mbP} (A_{\mbP}) \cong \mcO_{\mbP} (1)$ and $F$ the exceptional divisor of the weighted blowup of $\mbP$.
Note that $A_{\mbP}|_{Y'} = A$ and $F|_{Y'} = E$.
Then, since $\Gamma$ is the intersection of proper transforms of divisors $(x = 0)$, $(y=0)$ and $(w = 0)$ which are $\mbQ$-linearly equivalent to $A_{\mbP} - \frac{1}{3}$, $2 A_{\mbP} - \frac{2}{3} F$ and $4 A_{\mbP} - \frac{1}{3} F$, we have
\[
\begin{split}
(B \cdot \Gamma) &= (A_{\mbP} - \frac{1}{3} F \cdot A_{\mbP} - \frac{1}{3} \cdot 2 A_{\mbP} - \frac{2}{3} F \cdot 4 A_{\mbP} - \frac{1}{3} F) \\
&= \frac{2 \cdot 4}{1 \cdot 2 \cdot 3 \cdot 5 \cdot 4} - \frac{2}{3^4} \times \frac{3^3}{1 \cdot 2 \cdot 2 \cdot 1} = - \frac{1}{10}.
\end{split}
\]
Hence, $(\Gamma + C \cdot \Gamma) = (S|_T \cdot \Gamma) = (B \cdot \Gamma)  = - 1/10$.
We have $(\Gamma + C)^2 = (S|_T^2) = 2 (B^3) = -1/12$.
Set $m := (\Gamma \cdot C)$.
We have $m \ge 1/2$ since $\Gamma$ intersects $C$ at the $\frac{1}{2} (1,1,1)$ point lying on $E$.
Thus, the matrix
\[
\begin{pmatrix}
(\Gamma^2) & (\Gamma \cdot C) \\
(\Gamma \cdot C) & (C^2)
\end{pmatrix}
=
\begin{pmatrix}
-1/10 - m & m \\
m & 1/60 - m
\end{pmatrix}
\]
is negative-definite.
Therefore $\msp$ is not a maximal center by Lemma \ref{crispnegdef}
\end{proof}

\begin{Thm} \label{exclcqvolnp}
Let $X'$ be a member of $\mcG'_i$ with $i \in I_{F,cAx/2} \cup I_{F,cAx/4}$.
Then no terminal quotient singular point $\msp$ with $(B^3) \le 0$ is a maximal center.
\end{Thm}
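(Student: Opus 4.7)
The plan is to prove the theorem by a straightforward case analysis based on whether the terminal quotient singular point $\msp$ satisfies Assumption \ref{assumpsing}, and then in each case to invoke one of the propositions already established in this subsection.

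Suppose first that $\msp$ satisfies Assumption \ref{assumpsing}. Here I would distinguish two subcases. In the generic subcase, covered by the entries of Table \ref{table:gammaqsm}, the test surface $T = (x_1 = 0)$ together with the polynomial $G = F(0,0,x_2,x_3,w)$ yields an irreducible reduced intersection curve $\Gamma = S \cap T$ with $(T \cdot \Gamma) = a_1^2 (B^3) \le 0$, so Proposition \ref{singptsurf} (via Lemma \ref{excltestsurf}) excludes $\msp$ as a maximal center. For the remaining subcase, namely the $\frac{1}{2}(1,1,1)$ points on members of $\mcG'_{50}, \mcG'_{74}, \mcG'_{82}$, Proposition \ref{prop:singBnegtc} constructs an explicit nef divisor $M = bB + eE$ satisfying $(M \cdot B^2) < 0$, and Corollary \ref{tsmethod} excludes $\msp$.

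Next, I would handle the remaining situations in which Assumption \ref{assumpsing} fails. By the very formulation of the assumption, these are exactly three possibilities: the $\frac{1}{2}(1,1,1)$ point on a member of $\mcG'_{23}$ lying on a WCI curve of type $(1,1,4)$, the $\frac{1}{2}(1,1,1)$ point on a member of $\mcG'_{50}$ lying on a WCI curve of type $(1,3,4)$, and the $\frac{1}{3}(1,1,2)$ point on a member of $\mcG'_{50}$ when $z^3 t$ is absent from the defining polynomial. Propositions \ref{exclspptNo23}, \ref{exclsppt2No50}, and \ref{exclsppt3No50} exclude these three cases respectively, which completes the proof.

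The argument is essentially bookkeeping, and the main (already discharged) obstacle was to verify that for every family $\mcG'_i$ with $i \in I_{F,cAx/2} \cup I_{F,cAx/4}$, the list of terminal quotient singular points with $(B^3) \le 0$ is completely covered by the two collections of arguments above, with the failure of the non-degeneracy conditions in Table \ref{table:gammaqsm} (such as $\alpha \ne 0$ in families $23$ and $50$) being matched precisely by the exceptional cases built into Assumption \ref{assumpsing}.
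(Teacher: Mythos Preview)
Your proposal is correct and follows exactly the paper's approach: the paper's proof consists of a single sentence citing Propositions \ref{singptsurf}, \ref{prop:singBnegtc}, \ref{exclspptNo23}, \ref{exclsppt2No50} and \ref{exclsppt3No50}, and your write-up simply spells out the case split (Assumption \ref{assumpsing} satisfied versus not, with the former further split between the Table \ref{table:gammaqsm} cases and the $\frac{1}{2}(1,1,1)$ points on $\mcG'_{50},\mcG'_{74},\mcG'_{82}$) that organizes how these five propositions cover everything.
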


\begin{proof}
This follows from Propositions \ref{singptsurf}, \ref{prop:singBnegtc}, \ref{exclspptNo23}, \ref{exclsppt2No50} and \ref{exclsppt3No50}.
\end{proof}

\subsection{The Case $(B^3) > 0$} \label{sec:singptspos}

We treat the case where $(B^3) > 0$.
There are only three instances: $X'$ is a member of one of the families $\mcG'_{30}$, $\mcG'_{55}$ and $\mcG'_{69}$, and $\msp = \msp_2$.
Note that, in the case where $X' \in \mcG'_{30}$, there is a birational involution centered at $\msp = \msp_2$ if $y^2 z$ appears in the defining polynomial with non-zero coefficient (see Section \ref{sec:birinv}), so we treat the case where $y^2 z$ does not appear in the defining polynomial.
For a curve $C$ on $X'$, we denote by $\tilde{C}$ the proper transform of $C$ by the Kawamata blowup $\varphi' \colon Y' \to X'$.

\begin{Prop} \label{prop:G'55-4}
Let $X'$ be a member of $\mcG'_{55}$ and $\msp = \msp_2$ the singular point of type $\frac{1}{4} (1,1,3)$.
Then, $\msp$ is not a maximal center.
\end{Prop}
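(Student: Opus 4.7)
The plan is to apply Lemma \ref{excltestsurf} with suitable test surfaces. The explicit setup is $X' \subset \mbP(1,1,4,7,2)$ with coordinates $x_0, x_1, y, z, w$ of weights $1, 1, 4, 7, 2$ and standard defining polynomial $F = w^3 y^2 + w^2 y f + w g + h$ for $f, g, h \in \mbC[x_0, x_1, y, z]$ of degrees $6, 12, 14$; quasismoothness at $\msp_2$ forces the coefficient $\beta$ of $y^3$ in $g$ to be nonzero. The basic invariants are $(A^3) = 14/56 = 1/4$ and $(E^3) = 16/3$, whence $(B^3) = 1/4 - 1/12 = 1/6$, $(B^2 \cdot E) = 1/3$, and $(B \cdot E^2) = -4/3$. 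Since $(B^3) > 0$ the strategy of Proposition \ref{singptsurf} breaks down and we must construct test surfaces with nonzero $E$-component.

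The crucial construction is a degree-$2$ prime divisor $D$ on $X'$ of multiplicity $3/2$ at $\msp_2$, so that $\widetilde{D} \sim_{\mbQ} 2B - E$. Solving $F = 0$ for $w$ locally at $\msp_2$ in coordinates $(x_0, x_1, z)$ of $\mbZ_4$-weights $(1, 1, 3)$ yields $w \equiv -h(x_0, x_1, 1, z)/\beta$ modulo weight-$6$ terms. Writing the weight-$2$ part of $h(x_0, x_1, 1, z)$ as $\alpha_0 x_0^2 + \alpha_1 x_0 x_1 + \alpha_2 x_1^2$, where $\alpha_i$ is the coefficient of $x_0^{2-i} x_1^{i} y^3$ in $h$, I would set
\[
D := (\beta w + \alpha_0 x_0^2 + \alpha_1 x_0 x_1 + \alpha_2 x_1^2 = 0)_{X'}.
\]
By design, the weight-$2$ part of the local equation of $D$ at $\msp_2$ cancels, so the lowest-weight term has weight at least $6$, giving multiplicity $3/2$ and hence $\widetilde{D} \sim_{\mbQ} 2 (\varphi')^* A - (3/2) E = 2B - E$.

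Setting $S := \widetilde{D}$ and $T := \widetilde{(x_0 = 0)_{X'}} \sim_{\mbQ} B$, the quadruple $(a, b, d, e) = (2, 1, -1, 0)$ satisfies the numerical hypotheses $a, b > 0$, $0 \le e < a_E(K_{X'}) b = 1/4$, and $ae - bd = 1 \ge 0$ of Lemma \ref{excltestsurf}, and a direct computation gives
\[
(T \cdot \Gamma) = (S \cdot T^2) = (2B - E) \cdot B^2 = 2 (B^3) - (B^2 \cdot E) = 0,
\]
verifying condition (3). The remaining intersection numbers are $(\Gamma \cdot B) = 0$ and $(\Gamma \cdot E) = 2 (B^2 \cdot E) - (B \cdot E^2) = 2$, placing $\Gamma$ in a single numerical ray of $N_1(Y')$.

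The principal obstacle is verifying condition (2) of Lemma \ref{excltestsurf}: that $\Gamma = S \cap T$ is an effective $1$-cycle whose irreducible components are numerically proportional. I would show that $D \cap (x_0 = 0)_{X'}$ is, for every quasismooth $X' \in \mcG'_{55}$, an irreducible reduced curve $C$ of degree $(A \cdot C) = 1/2$, and that its proper transform $\widetilde{C}$ realises $S \cap T$ scheme-theoretically on $Y'$. Simultaneously, one must confirm that $D$ is itself a prime divisor; both irreducibility statements reduce to checking that the substituted polynomial $F(x_0, x_1, y, z, -(\alpha_0 x_0^2 + \alpha_1 x_0 x_1 + \alpha_2 x_1^2)/\beta)$, as well as its restriction to $x_0 = 0$, are irreducible in the respective weighted projective spaces. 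This is the delicate step and would require a case analysis ruling out factorizations arising from special coefficient configurations of $f$, $g$, and $h$.
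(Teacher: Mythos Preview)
Your construction of the divisor $D$ with proper transform $\tilde D\sim_{\mbQ}2B-E$ is correct and is exactly what the paper does, phrased differently: the paper replaces $w\mapsto w-(\alpha_0x_0^2+\alpha_1x_0x_1+\alpha_2x_1^2)/\beta$ so that $h_{14}$ contains no monomial divisible by $y^3$, and then the section $w$ itself vanishes to order $6/4$ along $E$. After a further replacement of $z$ the paper normalises $h_{14}=z^2+y^2a_6+ya_{10}+a_{14}$ with $a_i\in\mbC[x_0,x_1]$.

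The approaches diverge in which exclusion criterion is invoked. You aim at Lemma~\ref{excltestsurf} with a single pair $(S,T)=(\tilde D,\tilde{(x_0=0)})$; the paper instead uses Lemma~\ref{crispinfc} with the one-parameter family $C_\lambda=(w=x_1-\lambda x_0=0)_{X'}$, i.e.\ it intersects the same $\tilde D$ with a \emph{moving} member of $|B|$ and shows that every component of $\tilde C_\lambda$ satisfies $(B\cdot{-})=0$ and $(E\cdot{-})>0$.

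Your stated plan for condition~(2) of Lemma~\ref{excltestsurf} has a genuine gap: you propose to prove that $D\cap(x_0=0)_{X'}$ is irreducible, but this fails for some quasismooth members (e.g.\ whenever $-\bigl(y^2a_6(0,x_1)+ya_{10}(0,x_1)+a_{14}(0,x_1)\bigr)$ is a perfect square in $\mbC[x_1,y]$). The missing observation, which the paper supplies, is that after the normalisation above $h_{14}$ is monic quadratic in $z$ with no linear term, so any reducible $C_\lambda$ splits as
\[
C_\lambda^1+C_\lambda^2=(w=x_1-\lambda x_0=z+b_7=0)+(w=x_1-\lambda x_0=z-b_7=0),
\]
and the two pieces are numerically \emph{equivalent} on $Y'$ (same degree, and $(E\cdot\tilde C_\lambda^i)=1$ for each). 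With this in hand your route via Lemma~\ref{excltestsurf} would also close; you do not need $D$ itself to be prime, since Lemma~\ref{excltestsurf} only requires $S$ to be an effective surface in the stated class. The paper's route via Lemma~\ref{crispinfc} is a little cleaner because varying $\lambda$ produces infinitely many such curves directly, sidestepping the need to isolate a single test pair.
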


\begin{proof}
The defining polynomial of $X'= X'_{14} \subset \mbP (1,1,4,7,2)$ is of the form $w^3 y^2 + w^2 y f_6 + w g_{12} + h_{14}$, where $f_6, g_{12}, h_{14} \in \mbC [x_0,x_1,y,z]$.
By quasismoothness of $X'$ at $\msp$, we may assume that the coefficients of $y^3$ in $g_{12}$ and $z^2$ in $h_{14}$ are both $1$ after re-scaling $y$ and $z$.
After replacing $w$, we may assume that there is no monomial divisible by $y^3$ in $h_{14}$.
Thus, after replacing $z$, we may assume that $h_{14} = y^2 a_6 + y a_{10} + z^2 + a_{16}$ for some $a_i \in \mbC [x_0,x_1]$.

For a complex number $\lambda$, we set $C_{\lambda} := (w = x_1 - \lambda x_0 = 0)_{X'}$, which is isomorphic to the weighted hypersurface in $\mbP (1,4,7)$ defined by the equation
\[
h_{14} (x_0,\lambda x_0,y,z) = z^2 + y^2 a_6 (x_0,\lambda x_0) + y a_{10} (x_0, \lambda x_0) + a_{14} (x_0,\lambda x_0) = 0.
\]
We claim that $C_{\lambda}$ is reduced for a general $\lambda \in \mbC$.
If $C_{\lambda}$ is non-reduced for a general $\lambda$, then $a_6 = a_{10} = a_{14} = 0$ as polynomials.
But then $X'$ is not quasismooth along the curve $(w = z = g_{12} = 0)$ and this a contradiction.
It follows that, for a general $\lambda \in \mbC$, either $C_{\lambda}$ is irreducible and reduced or $C_{\lambda}$ splits into the sum of two irreducible and reduced curves $C_{\lambda} = C^1_{\lambda} + C^2_{\lambda}$ such that the proper transforms $\tilde{C}^1_{\lambda}$ and $\tilde{C}^2_{\lambda}$ of $C^1_{\lambda}$ and $C^2_{\lambda}$, respectively, on $Y'$ are numerically equivalent.  
Indeed, if $C_{\lambda}$ is reducible, then $C_{\lambda} = C^1_{\lambda} + C^2_{\lambda}$, where $C^1_{\lambda} = (w = x_1 - \lambda x_0 = z + b_7 = 0)$ and $C^2_{\lambda} = (w = x_1 - \lambda x_0 = z - b_7 = 0)$ for some $b_7 \in \mbC [x_0,y]$.
Since they have the same degree and $(E \cdot \tilde{C}^i_{\lambda}) = 1$ for $i = 1,2$, their proper transforms on $Y'$ are numerically equivalent. 

Let $\tilde{C}_{\lambda}$ be the proper transform of $C_{\lambda}$ on $Y'$.
We compute
\[
(B \cdot \tilde{C}_{\lambda}) = ((A - \frac{1}{4} E)^2 \cdot 2 A - \frac{6}{4} E) = 2 \times \frac{1}{4} - \frac{6}{4^3} \times \frac{4^2}{3} = 0.
\] 
and $(E \cdot \tilde{C}_{\lambda}) > 0$ since $C_{\lambda}$ passes through $\msp$.
In the case where $C_{\lambda}$ is reducible, we have $(B \cdot \tilde{C}^i_{\lambda}) = 0$ and $(E \cdot \tilde{C}^i_{\lambda}) > 0$ since $\tilde{C}^1_{\lambda}$ and $\tilde{C}^2_{\lambda}$ are numerically equivalent.
Thus, there are infinitely many irreducible and reduced curve which intersect $B$ non-positively and $E$ positively.
By Lemma \ref{crispinfc}, $\msp$ is not a maximal center.
\end{proof}

\begin{Prop} \label{prop:G'69-5}
Let $X'$ be a member of the familiy $\mcG'_{69}$ and $\msp = \msp_2$ the singular point of type $\frac{1}{5} (1,2,3)$.
Then $\msp$ is not a maximal center.
\end{Prop}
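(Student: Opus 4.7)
The strategy is completely parallel to that of Proposition \ref{prop:G'55-4}: construct an infinite family of irreducible reduced curves $C_\lambda$ through $\msp$ whose proper transforms satisfy $(B \cdot \tilde{C}_\lambda) \le 0$ and $(E \cdot \tilde{C}_\lambda) > 0$, and then invoke Lemma \ref{crispinfc}.

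First, I would write down the standard defining equation of $X' \in \mcG'_{69}$ in its ambient $\mbP (a_0,a_1,5,a_3,2)$ using the standard form $w^2 y (y+f) + w g + h = 0$ from Section \ref{sec:stdefeq}, with $\msp = \msp_2$. Using quasismoothness of $X'$ at $\msp$ together with admissible rescalings and coordinate changes involving $w$, $y$ and $z$, I would normalize the defining polynomial so that: (i) the appropriate pure powers of $y$ and $z$ enforced by quasismoothness have coefficient $1$, (ii) all monomials divisible by a suitable high power of $y$ in $h$ are eliminated by a change of $w$, and (iii) linear terms in the remaining coordinate are removed from $h$. After this normalization, $h(x_0,x_1,y,z)$ takes a form in which the coefficients become polynomials purely in $x_0,x_1$.

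Next, for $\lambda \in \mbC$, I would introduce the curve $C_\lambda := (w = x_1 - \lambda x_0 = 0)_{X'}$, which is cut out in $\mbP (a_0,5,a_3)$ by $h(x_0,\lambda x_0, y, z) = 0$. I would verify that $C_\lambda$ passes through $\msp$ and that for general $\lambda$ the scheme $C_\lambda$ is either irreducible and reduced or splits as $C_\lambda = C^1_\lambda + C^2_\lambda$ where the defining equation factors (thanks to the specific shape, with leading $z^2$ or an analogous square, forced by quasismoothness). In the reducible case, the two components will have the same degree and each meet $E$ in one point counted with the same weight, so $\tilde{C}^1_\lambda$ and $\tilde{C}^2_\lambda$ are numerically equivalent on $Y'$. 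The non-reducedness case is ruled out exactly as in Proposition \ref{prop:G'55-4}, by observing that non-reducedness for all $\lambda$ would force the vanishing of enough coefficients of $h$ to contradict quasismoothness of $X'$ along some curve.

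Finally, using $\tilde{C}_\lambda \equiv (A - \tfrac{1}{5}E)(a_1 A - \tfrac{a_1}{5}E)\cdot(\deg h \cdot A - \tfrac{e}{5} E)$ with the Kawamata weight data $\frac{1}{5}(1,2,3)$, I would compute $(B \cdot \tilde{C}_\lambda)$ directly from $(A^3)$ and $(E^3) = 25/6$, and the computation will give $(B \cdot \tilde{C}_\lambda) = 0$, together with $(E \cdot \tilde{C}_\lambda) > 0$ since $\msp \in C_\lambda$. In the reducible case, numerical equivalence of $\tilde{C}^1_\lambda$ and $\tilde{C}^2_\lambda$ yields $(B \cdot \tilde{C}^i_\lambda) = 0$ and $(E \cdot \tilde{C}^i_\lambda) > 0$ for each component. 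Either way, we produce infinitely many irreducible reduced curves on $Y'$ satisfying the hypotheses of Lemma \ref{crispinfc}, so $\msp$ is not a maximal center. The main potential obstacle is step (iii) in the normalization and the companion verification that the generic $C_\lambda$ is reduced: both depend on having explicit enough control on $h$, but quasismoothness of $X'$ forces the required monomials to be present, just as in the family $\mcG'_{55}$ case handled above.
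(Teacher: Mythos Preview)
Your plan has a genuine gap: the family of curves $C_\lambda := (w = x_1 - \lambda x_0 = 0)_{X'}$ does \emph{not} give $(B\cdot\tilde C_\lambda)\le 0$ here. First, $69\in I''$, so the standard equation is $w^3 y^2 + w^2 y f_7 + w g_{14} + h_{16}$, not $w^2 y(y+f)+wg+h$; the ambient space is $\mbP(1,1,5,8,2)$ with $\deg y=5$, $\deg z=8$, $\deg w=2$. At $\msp=\msp_2$ quasismoothness forces $y^3 x_0$ or $y^3 x_1$ into $h_{16}$ (note $y^3\notin g_{14}$ since $3\cdot 5\ne 14$), so after a change of $x_0,x_1$ the coordinate $x_1$ is the one eliminated near $\msp$, not $w$. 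Hence the Kawamata weights on the surviving local coordinates $x_0,z,w$ are $\tfrac15(1,3,2)$; in particular $w$ vanishes along $E$ only to order $2/5$, while $x_1$ vanishes to order $6/5$. With your choice, for $\lambda\ne 0$ the proper transforms of $(w=0)$ and $(x_1-\lambda x_0=0)$ lie in $|2A-\tfrac25 E|=|2B|$ and $|A-\tfrac15 E|=|B|$, giving
\[
(B\cdot\tilde C_\lambda) = 2(B^3) = 2\Bigl((A^3)-\tfrac{1}{5^3}(E^3)\Bigr) = 2\Bigl(\tfrac15-\tfrac{1}{125}\cdot\tfrac{25}{6}\Bigr) = \tfrac13 > 0,
\]
so Lemma~\ref{crispinfc} does not apply. (Your displayed triple product is also dimensionally off: $\tilde C_\lambda$ is a $1$-cycle, the intersection of \emph{two} divisor classes on the $3$-fold $Y'$, not three.)

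The paper's fix is precisely to swap the roles: it takes $C_\lambda := (x_1 = w-\lambda x_0^2 = 0)_{X'}$, so that one defining divisor has proper transform $\sim A-\tfrac65 E$ (using the high vanishing order of $x_1$) and the other $\sim 2A-\tfrac25 E$, yielding $(B\cdot\tilde C_\lambda)=0$. This is exactly the point where the argument for $\mcG'_{69}$ diverges from $\mcG'_{55}$: in the $\mcG'_{55}$ case $y^3\in g_{12}$, so $w$ is eliminated and vanishes to order $6/4$, which is why $(w=0)$ works there. You must identify the eliminated coordinate \emph{before} choosing the pencil.
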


\begin{proof}
Let $X' = X'_{16} \subset \mbP (1,1,5,8,2)$ be a member of $\mcG'_{69}$ and $\msp = \msp_2$.
The defining polynomial of $X$ can be written as $F = w^3 y^2 +w^2 y f_7 + w g_{14} + h_{16}$, where $f_7,g_{14}, h_{16} \in \mbC [x_0,x_1,y,z]$.
By quasismoothness of $X$ at $\msp$, at least one of $y^3 x_0$ and $y^3 x_1$ appears in $h_{16}$.
After replacing $x_0$ and $x_1$, we may assume that the coefficient of $y^3 x_1$ in $h_{16}$ is $1$ and there is no other monomial divisible by $y^3$ in $F$.

For a complex number $\lambda$, we set $C_{\lambda} = (x_1 = w - \lambda x_0^2 = 0)_{X'}$ which is isomorphic to the weighted hypersurface in $\mbP (1,5,8)$ defined by the equation $G_{\lambda} := F (x_0,0,y,z,\lambda x_0^2) = 0$.
Note that $z^2 \in G_{\lambda}$ for any $\lambda$.
We shall show that the proper transform on $Y'$ of every irreducible and reduced component of $C_{\lambda}$ intersects $B = -K_{Y'}$ trivially and $E$ positively.
Assume that $C_{\lambda}$ is irreducible.
Then,
\[
(B \cdot \tilde{C}_{\lambda}) 
= 2 ((A - \frac{1}{5}E)^2 \cdot A - \frac{6}{5}E)
= 2 \left( \frac{1}{5} - \frac{6}{5^3} \times \frac{5^2}{2 \times 3} \right) = 0
\]
and clearly $(E \cdot \tilde{C}_{\lambda}) > 0$ since $C_{\lambda}$ passes through $\msp$.
Assume that $C_{\lambda}$ is either reducible or non-reduced.
Then any irreducible and reduced component $C'_{\lambda}$ of $C_{\lambda}$ is defined by $x_1 = w - \lambda x_0^2 = z + a_8 = 0$ for some $a_8 \in \mbC [x_1,y]$.
We have $\deg C'_{\lambda} = 1/5$ and $(E \cdot \tilde{C}'_{\lambda}) = 1$.
Hence $(B \cdot \tilde{C}'_{\lambda}) = \deg C'_{\lambda} - 1/5 (E \cdot \tilde{C}'_{\lambda}) = 0$.
By Lemma \ref{crispinfc}, $\msp$ is not a maximal center.
\end{proof}

\begin{Prop} \label{prop:G'30-3sp}
Let $X'$ be a member of the family $\mcG'_{30}$ and $\msp$ the singular point of type $\frac{1}{3} (1,1,2)$.
Assume that $y^2 z$ does not appear in the defining polynomial of $X'$ with non-zero coefficient.
Then $\msp$ is not a maximal center.
\end{Prop}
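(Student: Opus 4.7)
The plan is to extend the strategy of Propositions \ref{prop:G'55-4} and \ref{prop:G'69-5}: I will construct a pencil of curves on $X'$ through $\msp = \msp_2$ whose proper transforms on the Kawamata blowup $\varphi' \colon Y' \to X'$ form an infinite family of irreducible reduced curves satisfying $(B \cdot \tilde{C}) = 0$ and $(E \cdot \tilde{C}) > 0$, and then conclude via Lemma \ref{crispinfc}. The hypothesis that $y^2 z$ is absent from the defining polynomial is what forces the residual curves in the pencil to pick up multiplicity $2$ at $\msp$ in the Kawamata blowup.

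Writing the defining polynomial as $F = w^2 y(y + f_3) + w g_8 + h_{10}$ with $f_3 \in \mbC[x_0, x_1]$ and $g_8, h_{10} \in \mbC[x_0, x_1, y, z]$, a direct computation of the partial derivatives of $F$ at $\msp_2 = (0\!:\!0\!:\!1\!:\!0\!:\!0)$ shows that quasismoothness of $X'$ at $\msp_2$ forces at least one of the monomials $y^2 z$, $y^3 x_0$, $y^3 x_1$ to appear in $h_{10}$ with nonzero coefficient. Since $y^2 z \notin h_{10}$ by hypothesis, after a linear substitution among $\{x_0, x_1\}$ (which preserves $y^2 z \notin h_{10}$) I may assume that $y^3 x_0 \notin h_{10}$ while $y^3 x_1 \in h_{10}$ with nonzero coefficient. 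An enumeration of monomials $x_0^a y^k z^c$ with $a + 3k + 4c = 10$ then shows that every monomial of $h_{10}(x_0, 0, y, z)$ has $x_0$-degree at least $2$.

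I would then take the pencil $C_\lambda = (x_1 = w - \lambda x_0^2 = 0)_{X'}$ for $\lambda \in \mbC$, where each $C_\lambda$ is a curve of degree $5/6$ on $X'$ passing through $\msp_2$. Substituting into $F$ yields a degree-$10$ polynomial in $(x_0, y, z)$ on $\mbP(1, 3, 4)$ that factors as $x_0^2 \cdot \hat{F}_\lambda$ with $\deg \hat{F}_\lambda = 8$, thanks to the divisibility established above. As a 1-cycle $C_\lambda = 2 \Gamma + D_\lambda$, where $\Gamma = (x_0 = x_1 = w = 0)_{X'}$ has degree $1/12$ and $D_\lambda$ is a residual curve of degree $2/3$ that still passes through $\msp_2$. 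Analyzing $\hat{F}_\lambda$ locally at $\msp_2$, in the coordinates $(x_0, z)$ on the surface $\{x_1 = 0, w = \lambda x_0^2\}$, its leading term is a binary quadratic form $Q_\lambda(x_0, z)$ whose coefficients are non-trivial polynomials in $\lambda$ built from the relevant monomials in $g_8$ and $h_{10}$. For all but finitely many $\lambda$, $Q_\lambda$ is nondegenerate and splits as two distinct linear forms, so $D_\lambda$ has an ordinary node at $\msp_2$ with two distinct tangent directions.

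Each tangent direction corresponds to a point on the line $\{w = 0\} \subset E = \mbP(1, 1, 2)$, which lies in the smooth locus of $E$, so each branch of the proper transform meets $E$ transversally at a smooth point with local intersection $1$. This gives $(E \cdot \tilde{D}_\lambda) = 2$, and hence $(B \cdot \tilde{D}_\lambda) = \deg D_\lambda - \tfrac{1}{3}(E \cdot \tilde{D}_\lambda) = 2/3 - 2/3 = 0$. Since $\hat{F}_\lambda$ is irreducible for generic $\lambda$, the proper transforms $\tilde{D}_\lambda$ form an infinite family of irreducible reduced curves satisfying the hypotheses of Lemma \ref{crispinfc}, which gives the desired conclusion. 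The main technical obstacle I anticipate is verifying that the discriminant of $Q_\lambda$ is not identically zero as a polynomial in $\lambda$, and confirming each branch's transverse intersection with $E$ at a smooth point with local multiplicity exactly $1$; both steps require a careful bookkeeping of which monomials of weighted order $4/3$ contribute, subject to the constraints imposed by quasismoothness.
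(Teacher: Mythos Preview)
Your approach is essentially the paper's (up to swapping $x_0$ and $x_1$): take the pencil cut by $x_1=0$ and $w-\lambda x_0^2=0$, peel off the fixed part to get a residual degree-$8$ curve $D_\lambda$ in $\mbP(1,3,4)$, and show each such curve gives $(B\cdot\tilde D_\lambda)=0$, $(E\cdot\tilde D_\lambda)>0$. However, your anticipated obstacles are misdirected, and you have skipped the one genuine issue.

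The discriminant of $Q_\lambda$ and the transversality of the branches are irrelevant: $(E\cdot\tilde D_\lambda)$ is an intersection \emph{number}, and whether the two local branches coincide or are tangent to $E$ does not change it. The real gap is your unproved assertion that $\hat F_\lambda$ is irreducible for generic $\lambda$. This can genuinely fail (e.g.\ if $\hat F_\lambda$ is a difference of squares in $z$ and $x_0y$ for every $\lambda$), and you give no argument ruling it out. The paper does not attempt to prove generic irreducibility at all; instead it observes that since $z^2$ appears in $g_8$ (by quasismoothness at $\msp_3$), the coefficient of $z^2$ in $\hat F_\lambda$ is nonzero for general $\lambda$, so if $\hat F_\lambda$ is reducible it must split as a product of two degree-$4$ factors, each of the form $z+c_4(x_0,y)$. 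Each such factor cuts out an irreducible curve of degree $1/3$ meeting $E$ with multiplicity $1$, hence again $(B\cdot\tilde C')=1/3-1/3=0$ and $(E\cdot\tilde C')=1>0$. This two-line case split replaces your entire local analysis of $Q_\lambda$ and simultaneously handles the irreducible case (degree $2/3$, $(E\cdot\tilde D_\lambda)=2$) without any nondegeneracy hypothesis.
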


\begin{proof}
Let $X' = X'_{10} \subset \mbP (1,1,3,4,2)$ be a member of $\mcG'_{30}$ and $\msp$ the point of type $\frac{1}{3} (1,1,2)$.
The defining polynomial of $X'$ is of the form $w^2 y (y + f_3) + w g_8 + h_{10}$, where $f_3 \in \mbC [x_0,x_1,z]$ and $g_8,h_{10} \in \mbC [x_0,x_1,y,z]$.
By the assumption, we have $y^2 z \notin h_{10}$.
By quasismoothness of $X'$ at $\msp$, at least one of $y^3 x_0$ and $y^3 x_1$ appears in $h_{10}$ with non-zero coefficient.
After replacing $x_0$ and $x_1$, we may assume that $h_{10} = y^3 x_0 + y^2 b_4 + y b_7 + b_{10}$, where $b_i = \mbC [x_0,x_1,z]$. 

For a complex number $\lambda$, we consider the curve $(x_0 = w - \lambda x_1^2 = 0)_{X'}$, which is isomorphic to the weighted hypersurface in $\mbP (1,3,4)$ defined by the equation $x_1^2 G_{\lambda} = 0$, where
\[
G_{\lambda} = \lambda^2 x_1^2 y (y + f_3 (0,x_1,z)) + \lambda g_8 (0,x_1,y,z) + \frac{h_{10} (0,x_1,y,z)}{x_1^2}.
\]
Since $z^2 \in g_8$ by quasismoothness of $X'$ at $\msp_3$, $G_{\lambda}$ is not divisible by $x_1$ for a general choice of $\lambda$.
We define 
\[
C_{\lambda} := (x_0 = w - \lambda x_1^2 = G_{\lambda} = 0) \subset X'.
\]

Let $\lambda$ be a general complex number so that the coefficient of $z^2$ in $G_{\lambda}$ is non-zero.
Assume that $C_{\lambda}$ is irreducible and reduced.
Then $\deg C_{\lambda} = 2/3$ and $(E \cdot \tilde{C}_{\lambda}) = 2$, where $\tilde{C}_{\lambda}$ is the proper transform of $C_{\lambda}$ on $Y'$.
Hence $(B \cdot \tilde{C}_{\lambda}) = \deg C_{\lambda} - 1/3 (E \cdot \tilde{C}_{\lambda}) = 0$.
Assume that $C_{\lambda}$ is either reduced or non-reduced.
Then any irreducible and reduced component $C'_{\lambda}$ of $C_{\lambda}$ is defined by the equation $x_0 = w - \lambda x^2_1 = z + c_4 = 0$ for some $c_4 \in \mbC [x_0,x_1,y]$.
We have $\deg C'_{\lambda} = 1/3$ and $(E \cdot \tilde{C}_{\lambda}) = 1$.
Hence $(B \cdot \tilde{C}'_{\lambda}) = \deg C'_{\lambda} - \frac{1}{3} (E \cdot \tilde{C}'_{\lambda}) = 0$. 

Therefore, there are infinitely many irreducible and reduced curves on $Y'$ which intersect $B = -K_{Y'}$ non-positively and $E$ positively.
By Lemma \ref{crispinfc}, $\msp$ is not a maximal center. 
\end{proof}

\begin{Thm} \label{thm:exclsingBpos}
Let $X'$ be a member of one of the families $\mcG'_{30}$, $\mcG'_{55}$ and $\mcG'_{69}$.
We assume that the monomial $y^2 z$ does not appear in the defining polynomial of $X' \in \mcG'_{30}$.
Then the singular point $\msp_2$ is not a maximal center.
\end{Thm}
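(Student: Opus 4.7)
The plan is to dispose of this theorem by a straightforward case analysis on which of the three families $X'$ belongs to, since the three cases share a common mechanism but require family-specific constructions of infinite pencils of curves through $\msp_2$.

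For each of the three families $\mcG'_{30}$, $\mcG'_{55}$, $\mcG'_{69}$, the point $\msp_2$ is the unique terminal quotient singularity with $(B^3) > 0$, so none of the excluding methods based on $(T \cdot \Gamma) \le 0$ (from Section \ref{sec:singptsnonpos}) will apply. Instead, the uniform strategy will be to produce a one-parameter family $\{C_\lambda\}_{\lambda \in \mbC}$ of irreducible curves on $X'$ passing through $\msp_2$ whose proper transforms $\tilde C_\lambda$ on the Kawamata blowup $Y'$ satisfy
\[
(B \cdot \tilde C_\lambda) \le 0, \qquad (E \cdot \tilde C_\lambda) > 0,
\]
and then conclude by Lemma \ref{crispinfc}. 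The care needed in each case is to verify that for general $\lambda$ the curve $C_\lambda$ is either irreducible or splits into components that are all numerically proportional, so that the inequality on $B$ descends to each irreducible component.

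Concretely, the plan is: for $X' \in \mcG'_{55}$, take the pencil $C_\lambda = (w = x_1 - \lambda x_0 = 0)_{X'}$, which is a weighted hypersurface in $\mbP(1,4,7)$ of degree $14$ containing a $z^2$ term; quasismoothness forces the components to be numerically proportional when $C_\lambda$ splits, and a direct intersection computation gives $(B \cdot \tilde C_\lambda) = 0$. For $X' \in \mcG'_{69}$, take the pencil $C_\lambda = (x_1 = w - \lambda x_0^2 = 0)_{X'}$ and perform the same computation after a preliminary coordinate change using the monomial $y^3 x_1 \in h_{16}$. For $X' \in \mcG'_{30}$, the argument is more delicate because of the assumption that $y^2 z \notin h_{10}$: one must first normalize the defining equation to ensure $y^3 x_0 \in h_{10}$, then consider $C_\lambda = (x_0 = w - \lambda x_1^2 = 0)_{X'}$, and use $z^2 \in g_8$ (from quasismoothness at $\msp_3$) to guarantee that after factoring out $x_1^2$ the residual equation $G_\lambda$ remains non-divisible by $x_1$ for generic $\lambda$, so the construction behaves as in the other two cases.

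Since the three propositions \ref{prop:G'30-3sp}, \ref{prop:G'55-4}, \ref{prop:G'69-5} already execute precisely these three arguments, the proof of the theorem reduces to citing them. The only genuine obstacle — which lies in the individual propositions rather than in the theorem itself — is handling the reducible/non-reduced degenerations of $C_\lambda$; this is why each proof must verify that every irreducible component of a splitting $C_\lambda$ still has the required intersection numerics, and why the specific choice of normal form for the defining polynomial is essential in the $\mcG'_{30}$ case.

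\begin{proof}
The assertion follows directly by combining Propositions \ref{prop:G'30-3sp}, \ref{prop:G'55-4} and \ref{prop:G'69-5}, which handle the three families $\mcG'_{30}$, $\mcG'_{55}$ and $\mcG'_{69}$, respectively.
\end{proof}
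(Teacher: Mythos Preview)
Your proof is correct and matches the paper's own proof exactly: the theorem is deduced immediately from Propositions \ref{prop:G'30-3sp}, \ref{prop:G'55-4} and \ref{prop:G'69-5}. Your accompanying summary of the strategy in each proposition is also accurate.
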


\begin{proof}
This follows from Propositions \ref{prop:G'30-3sp}, \ref{prop:G'55-4} and \ref{prop:G'69-5}.
\end{proof}

\section{Invisible birational involutions for special members} \label{sec:spinv}

The aim of this section is to construct a birational involution of special members of $\mcG'_{19}$ and $\mcG_{19}$.

\subsection{Construction of invisible involutions in a general setting}

In this subsection we explain a construction of suitable birational involutions in a general setting following the argument of \cite{CP}.
The construction is quite complicated and is less explicit compared to the Q.I. and E.I. cases.

Let $\mbP := \mbP (1,1,a,b_1,\dots,b_n)$ be a weighted projective space with homogeneous coordinates $x_0, x_1, y, z_1,\dots,z_n$ of degree respectively $1,1,a,b_1,\dots,b_n$, and $X$ an anticanonically embedded $\mbQ$-Fano $3$-fold weighted complete intersection in $\mbP$.
Let $\msp \in X$ be a point contained in $(x_0 = x_1 = y = 0)$.
We set $\Gamma := (x_0 = x_1 = y = 0)_X$ and let $\mcH \subset \left| - a K_X \right|$ be the linear system on $X$ generated by $x_0^a, x_0^{a-1} x_1, \dots, x_1^a$ and $y$.
For complex numbers $\lambda, \mu$, we define $S_{\lambda} := (x_1 - \lambda x_0 = 0)_X$ and $T_{\mu} := (y - \mu x_0^a = 0)_X$.
We assume that $\Gamma$ is an irreducible and reduced curve and that $S_{\lambda}$ is normal for a general $\lambda \in \mbC$.
We define $C_{\lambda,\mu}$ to be the component of $S_{\lambda} \cap T_{\mu}$ other than $\Gamma$ so that $T_{\mu} |_{S_{\lambda}} = \gamma \Gamma + C_{\lambda,\mu}$ for some $\gamma > 0$.
Let $\pi \colon X \ratmap \mbP (1,1,a)$ be the projection to the coordinates $x_0,x_1,y$ and $\pi_{\lambda} = \pi |_{S_{\lambda}} \colon S_{\lambda} \ratmap \mbP (1,a) \cong \mbP^1$ its restriction to $S_{\lambda}$.
The restriction $\mcH|_{S_{\lambda}}$ has a base component $\Gamma$ and let $\mcL_{\lambda}$ be the movable part of $\mcH|_{S_{\lambda}}$.
We see that $\mcL_{\lambda}$ is a pencil of curves $C_{\lambda,\mu}$ and it defines $\pi_{\lambda}$.
We shall construct a birational involution of $X$ which is a Sarkisov link centered at $\msp$ under the following condition.

\begin{Cond} \label{condinvbir}
\begin{enumerate}
\item $\Gamma$ is an irreducible and reduced curve and $S_{\lambda}$ is normal for a general $\lambda \in \mbC$.
\item For a general $\lambda \in \mbC$, $C_{\lambda,\mu}$ is irreducible and reduced for every $\mu \in \mbC$.
\item For a general $\lambda \in \mbC$, the indeterminacy locus of $\pi_{\lambda}$ consists of two distinct points $\msp$ and $\msq$ which do not depend on $\lambda$.
\item There are an extremal divisorial extraction $\varphi \colon Y \to X$ cetered at $\msp$ with exceptional divisor $E$ and a birational morphism $\psi \colon W \to Y$ from a normal projective $\mbQ$-factorial variety $W$ with the following properties:
\begin{enumerate}
\item $\psi$ is an isomorphism over $Y \setminus \{\msq_Y\}$, where $\msq_Y = \varphi^{-1} (\msq)$.
\item $(-K_W \cdot \hat{\Gamma}) < 0$ and $(-K_W \cdot \hat{C}_{\lambda,\mu}) = 0$, where $\hat{\Gamma}$ and $\hat{C}_{\lambda,\mu}$ are the proper transforms of $\Gamma$ and $C_{\lambda,\mu}$ on $W$, respectively.
\item We have $(\hat{E} \cdot \hat{C}_{\lambda,\mu}) = m$ for some positive integer $m$, where $\hat{E}$ is the proper transform of $E$ on $W$.
\item There is a prime $\psi$-exceptional divisor $F$ such that $(F \cdot \hat{C}_{\lambda,\mu}) = 1$, and the other $\psi$-exceptional divisors (if exist) are disjoint from $\hat{C}_{\lambda,\mu}$.
\item The proper transform $\mcH_W$ of $\mcH$ on $W$ is a sub linear system of $\left| - a K_W \right|$, $\Bs \mcH_W = \hat{\Gamma}$ and the pair $(W, \frac{1}{a} \mcH_W)$ is terminal.
\item Let $\hat{S}_{\lambda}$ be the proper transform of $S_{\lambda}$ on $W$.
Then, for a general $\lambda \in \mbC$, the composite of $(\varphi \circ \psi)|_{\hat{S}_{\lambda}} \colon \hat{S}_{\lambda} \to S_{\lambda}$ and $\pi_{\lambda}$ is a morphism, which we denote by $\hat{\pi}_{\lambda} \colon \hat{S}_{\lambda} \to \mbP^1$, and the intersections $\hat{E}_{\lambda} := \hat{S}_{\lambda} \cap \hat{E}$ and $\hat{F}_{\lambda} := \hat{S}_{\lambda} \cap F$ are both irreducible.
\end{enumerate}
\item For a general $\lambda \in \mbC$, the support of the divisor $(x_0 = 0)_{S_{\lambda}} \subset S_{\lambda}$ consists of curves whose intersection form is nondegenerate. 
\end{enumerate}
\end{Cond}

\begin{Thm} \label{thm:constinvbir}
Suppose that \emph{Condition \ref{condinvbir}} is satisfied and that $\msp$ is a maximal center.
Then there is a birational involution $\tau \colon X \ratmap X$ which is a Sarkisov link centered at $\msp$.
\end{Thm}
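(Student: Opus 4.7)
The strategy is to construct $\tau$ by running the $2$-ray game starting with $\varphi \colon Y \to X$, using the auxiliary modification $\psi \colon W \to Y$ of Condition \ref{condinvbir}(4) to realize the $K_{W}$-trivial family $\{\hat{C}_{\lambda,\mu}\}$ as the indeterminate ray of a flopping contraction. The output of this game is \emph{a priori} a Mori fiber space $X^{*}$; the substance of the theorem is to identify $X^{*}$ with $X$ by exploiting the symmetry between the two base points $\msp$ and $\msq$ of $\pi_{\lambda}$, and then to show that the resulting birational self-map is an involution.

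First, I would analyze $\overline{\operatorname{NE}}(W)$. By Condition (4)(b) we have $(-K_{W} \cdot \hat{C}_{\lambda,\mu}) = 0$, and by (4)(f) the morphisms $\hat{\pi}_{\lambda} \colon \hat{S}_{\lambda} \to \mbP^{1}$ realize these curves as fibers of a divisorial family. Using (4)(e) (the pair $(W, \frac{1}{a}\mcH_{W})$ is terminal), together with a test $\mbQ$-divisor of the form $-K_{W} + \varepsilon \hat{E}$ and the base-point-free theorem (the positivity $(\hat{E} \cdot \hat{C}_{\lambda,\mu}) = m > 0$ from (4)(c) is crucial here), one verifies the existence of a small $K_{W}$-trivial extremal contraction $W \to Z$ collapsing exactly the family $\{\hat{C}_{\lambda,\mu}\}$, and hence of a flop $\chi \colon W \ratmap W^{+}$ onto a new $\mbQ$-factorial terminal model.

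Next, I would check that $W^{+}$ is the correct starting point for contracting back to a Mori fiber space. Since the flop reverses the sign of the intersection of $\hat{C}_{\lambda,\mu}$ against divisors transverse to the family, on $W^{+}$ the roles of $\hat{E}$ and $F$ become interchanged up to multiplicity (this uses both (4)(c) and (4)(d)). One then contracts, in reverse order to $\psi$ followed by $\varphi$, first the proper transforms of the $\psi$-exceptional divisors to obtain a $\mbQ$-factorial terminal model $Y^{+}$, and then the proper transform of $\hat{E}$ on $Y^{+}$ to arrive at a Mori fiber space $X^{*}$. Condition (5), the nondegeneracy of the intersection form of $(x_{0} = 0)_{S_{\lambda}}$, is used to rule out the possibility that $X^{*}$ is of positive relative dimension over a base, so that $X^{*}$ is a $\mbQ$-Fano $3$-fold.

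Finally, the explicit symmetric form of the pencil $\mcH$ generated by $x_{0}^{a}, x_{0}^{a-1}x_{1}, \dots, x_{1}^{a}, y$, combined with the normal form of the fibrations $\hat{\pi}_{\lambda}$ implicit in (4)(f), should yield a biregular isomorphism $\iota \colon X^{*} \cong X$ interchanging the data at $\msp$ and at $\msq$. The induced birational self-map $\tau := \iota \circ \sigma \colon X \ratmap X$, where $\sigma \colon X \ratmap X^{*}$ is the Sarkisov link just constructed, will then be shown to be an involution by symmetry: the same construction applied to $\iota(\msq) \in X$ produces the inverse of $\tau$, and because $\tau$ preserves both $-K_{X}$ and the pencil $\mcH$, Lemma \ref{lem:isomrk2} applied to a suitable resolution of Picard rank $2$ forces $\tau^{2}$ to be biregular, hence the identity. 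I expect the main obstacle to be the identification $X^{*} \cong X$: one must track the intermediate birational models produced by the $2$-ray game and verify that no unexpected flip intervenes between the flop $\chi$ and the final sequence of contractions. Condition (4)(e) is crucial here, since any maximal singularity of $\mcH_{W}$ away from the flopping locus would generate an extra flip and destroy the proposed termination; together with (5), it pins the output down to $X$ itself and ensures the canonical data on $X^{*}$ match that of $X$.
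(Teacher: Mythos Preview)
Your proposal contains a fundamental misidentification of the key birational step. You propose a ``small $K_{W}$-trivial extremal contraction $W \to Z$ collapsing exactly the family $\{\hat{C}_{\lambda,\mu}\}$, and hence a flop''. But the curves $\hat{C}_{\lambda,\mu}$ are parametrized by $(\lambda,\mu) \in \mbC^{2}$ and sweep out an open dense subset of the threefold $W$; a contraction along their numerical class is therefore a \emph{fibration} onto a surface, not a small map, and no flop exists. The paper instead performs a log \emph{flip} $\chi \colon W \ratmap U$ along the single curve $\hat{\Gamma}$, which by Condition~(4)(b) and (4)(e) is the unique $(-K_{W})$-negative curve and is disjoint from the $\hat{C}_{\lambda,\mu}$. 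After this flip $-K_{U}$ becomes nef, and its Iitaka morphism $\eta \colon U \to \Sigma$ is an \emph{elliptic fibration} whose general fibers are precisely the $\check{C}_{\lambda,\mu}$. The involution $\tau$ is then produced concretely as the fiberwise reflection of $\eta$ with respect to the section $\check{F}$ (this is where Condition~(4)(d), namely $(F \cdot \hat{C}_{\lambda,\mu}) = 1$, enters), and one shows $\tau_{Y}$ is small and invokes Lemma~\ref{lem:untwistslink}.

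Your use of the remaining hypotheses is also off. There is no symmetry between $\msp$ and $\msq$ to exploit: $\hat{E}$ becomes an $m$-section and $F$ becomes a section of $\eta$, and they play entirely different roles. Condition~(5) is not used to determine the fiber type of the output; it is used at the very end to show that the involution $\tau$ is \emph{not biregular}. Indeed, if $\tau$ were biregular, then on each $S_{\lambda}$ the section $\bar{E}_{\lambda}$ would be $\bar{\tau}_{\lambda}$-invariant, forcing $\bar{E}_{\lambda} - m \bar{F}_{\lambda}$ to be numerically a combination of $\bar{\pi}_{\lambda}$-vertical divisors; pushing this relation down to $S_{\lambda}$ yields a nontrivial numerical relation among the components of $(x_{0}=0)_{S_{\lambda}}$, contradicting the nondegeneracy in~(5). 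Your proposed use of Lemma~\ref{lem:isomrk2} to deduce $\tau^{2} = \mathrm{id}$ is unnecessary: $\tau$ is an involution by construction, being induced by a fiberwise reflection.
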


\begin{proof}
We shall first construct a birational involution $\tau$ and then show that $\tau$ is indeed a Sarkisov link.
We see that $\hat{\Gamma}$ is the unique irreducible curve on $W$ which intersects $-K_W$ negatively since $\Bs \mcH_W = \hat{\Gamma}$ and $\mcH_W \sim_{\mbQ} - a K_W$ by (4-e) of Condition \ref{condinvbir}.
By (4-e), we can pick a sufficiently small $\varepsilon > 0$ such that the pair $(W, (\frac{1}{a} + \varepsilon) \mcH_W)$ is terminal.
Since $K_W + (\frac{1}{a} + \varepsilon) \mcH_W \sim_{\mbQ} - a \varepsilon K_W$, there is a log flip $\chi \colon W \ratmap U$ along $\hat{\Gamma}$.
Let $\mcH_U$ be the proper transform of $\mcH_W$ on $U$.
An irreducible curve which intersects $K_U + (\frac{1}{a} + \varepsilon) \mcH_U \sim_{\mbQ} \varepsilon \mcH_U$ negatively must be contained in $\Bs \mcH_U$ and $\Bs \mcH_U$ is contained in the flipped curve which intersects $-K_U$ positively .
Thus $K_U + (\frac{1}{a} + \varepsilon) \mcH_U \sim_{\mbQ} - a \varepsilon K_U$ is nef.
By the log abundance (cf.\ \cite{KMM}), $-K_U$ is semiample.
Let $\eta \colon U \to \Sigma$ be the morphism defined by $\left| - l K_U \right|$ for a sufficiently divisible $l > 0$.
The curve $\hat{C}_{\lambda,\mu}$ is disjoint from $\hat{\Gamma}$ since $(-K_W \cdot \hat{C}_{\lambda,\mu}) = 0$ by (4-b), $\Bs \mcH_W = \hat{\Gamma}$ and $\mcH_W \sim_{\mbQ} - a K_W$. 
Thus $(-K_U \cdot \check{C}_{\lambda,\mu}) = 0$, where $\check{C}_{\lambda,\mu}$ is the proper transform of $\hat{C}_{\lambda,\mu}$ on $U$, and $\eta$ contracts $\check{C}_{\lambda,\mu}$.
It follows that $\eta$ is an elliptic fibration and we have a birational map $\theta \colon \Sigma \ratmap \mbP (1,1,a)$ such that the diagram
\[
\xymatrix{
W \ar[d]_{\psi} \ar@{-->}[rr]^{\chi} & & U \ar[dd]^{\eta} \\
Y \ar[d]_{\varphi} & & \\
X \ar@{-->}[r]^{\pi} & \mbP (1,1,a) & \Sigma \ar@{-->}[l]_{\hspace{8mm} \theta}}
\]
commutes.

Let $\check{E}$ and $\check{F}$ be the proper transforms of $E$ and $F$, respectively, on $U$.
Since $\check{C}_{\lambda,\mu}$ is a fiber of $\eta$ and $\hat{C}_{\lambda,\mu}$ is disjoint from the flipping curve $\hat{\Gamma}$, assumptions that $(\hat{E} \cdot \hat{C}_{\lambda,\mu}) = m$ and $(F \cdot \hat{C}_{\lambda,\mu}) = 1$ imply that $\check{E}$ is an $m$-ple section of $\eta$ and $\check{F}$ is a section of $\eta$.

Let $\tau_U$ be the birational involution of $U$ which is the reflection of the general fiber of $\eta$ with respect to $\check{F}$.
Let $\tau_W$, $\tau_Y$ and $\tau$ be the birational involutions of $W$, $Y$ and $X$, respectively, induced by $\tau_U$.
We see that $U$ has only terminal singularities since the pair $(U,(\frac{1}{a} + \varepsilon) \mcH_U)$ is terminal.
It follows that $\tau_U$ is small since $K_U$ is $\eta$-nef.
Then $\tau_W$ is small since $\chi$ is small.
Since $F$ is the unique $\psi$-exceptional divisor which intersects $\hat{C}_{\lambda,\mu}$, the proper transforms on $U$ of the other $\psi$-exceptional divisors lie on the fibers of $\eta$ and thus they are $\tau_U$-invariant. 
It follows that $\tau_Y$ is small.

Now that we have constructed the birational involution $\tau$ of $X$.
We shall show that $\tau$ is not biregular.
In the following we assume that $\tau$ is biregular and derive a contradiction.

We fix a general $\lambda \in \mbC$ such that $C_{\lambda,\mu}$ is irreducible and reduced for every $\mu \in \mbC$ and consider the surface $S_{\lambda} = (y - \lambda x = 0)_X$ which is clearly $\tau$-invariant.
Hence $\tau$ induces a biregular involution $\tau_{\lambda}$ of $S_{\lambda}$.
Let $\bar{S}_{\lambda} \to \hat{S}_{\lambda}$ be a composite of blowups so that the birational involution $\bar{\tau}_{\lambda}$ of $\bar{S}_{\lambda}$ induced by $\tau_{\lambda}$ is biregular.
We define $\bar{\pi}_{\lambda} \colon \bar{S}_{\lambda} \to \mbP^1$ (resp.\ $\sigma \colon \bar{S}_{\lambda} \to S_{\lambda}$) to be the composite of $\bar{S}_{\lambda} \to \hat{S}_{\lambda}$ and $\pi_{\lambda}$ (resp.\ $\hat{S}_{\lambda} \to S_{\lambda}$).
By (4-f) of Condition \ref{condinvbir}, $\bar{\tau}_{\lambda}$ is indeed everywhere defined and there exist exactly two $\sigma$-exceptional prime divisors which are not contracted by $\bar{\pi}_{\lambda}$.
We denote them by $\bar{E}_{\lambda}$ and $\bar{F}_{\lambda}$.
Note that $\bar{E}_{\lambda}$ and $\bar{F}_{\lambda}$ are the proper transform of $\hat{E}_{\lambda}$ and $\hat{F}_{\lambda}$, respectively.
We denote by $G_1, \dots, G_r$ the other $\sigma$-exceptional prime divisors. 

Let $C_{\lambda}$ be a general fiber of $\pi_{\lambda}$ and $\bar{C}_{\lambda}$ its proper transform on $\bar{S}_{\lambda}$, which is clearly $\bar{\tau}_{\lambda}$-invariant.
We see that $\bar{\tau}_{\lambda} |_{\bar{C}_{\lambda}}$ is the reflection with respect to the point $\bar{F}_{\lambda} \cap \bar{C}_{\lambda}$ and that $\bar{E}_{\lambda}$ is $\bar{\tau}_{\lambda}$-invariant since $\tau_{\lambda}$ is biregular.
It follows that $(\bar{E}_{\lambda} - m \bar{F}_{\lambda})|_{\bar{C}_{\lambda}} \in \Pic^0 (\bar{C}_{\lambda})$ is a $2$-torsion.
In particular, $\bar{E}_{\lambda} - m \bar{F}_{\lambda}$ is numerically equivalent to a linear combination of $\bar{\pi}_{\lambda}$-vertical divisors.

Let $\Gamma, \Delta_1, \dots, \Delta_k$ be the irreducible and reduced component of the support of $(x_0 = 0)_{S_{\lambda}}$.
We see that $C_{\lambda,\mu}$'s and $(x_0 = 0)_{S_{\lambda}}$ exhaust the fibers of $\pi_{\lambda}$.
Since $C_{\lambda,\mu}$ is irreducible and reduced for every $\mu \in \mbC$ and all the fibers of $\bar{\pi}_{\lambda}$ are numerically equivalent to each other, there are rational numbers $\gamma, \delta_1,\dots,\delta_k, c_1,\dots,c_r$ such that
\[
\bar{E}_{\lambda} - m \bar{F}_{\lambda} \sim_{\mbQ} \gamma \bar{\Gamma} + \sum_{i = 1}^k \delta_i \bar{\Delta}_i + \sum_{i = 1}^r c_i G_i,
\]
where $\bar{\Gamma}$ and $\bar{\Delta}_i$ are the proper transforms of $\Gamma$ and $\Delta_i$, respectively.
We have $(\gamma,\delta_1,\dots,\delta_k) \ne (0,0,\dots,0)$ since the curves $\bar{E}_{\lambda}, \bar{F}_{\lambda}, G_1,\dots,G_r$ are $\sigma$-exceptional and their intersection form is negative-definite.
This implies that the intersection form of $\Gamma, \Delta_1,\dots,\Delta_k$ is degenerate since $\gamma \Gamma + \delta_1 \Delta_1 + \cdots \delta_k \Delta_k \sim_{\mbQ} 0$ and $(\gamma, \delta_1, \dots, \delta_k) \ne (0,0,\dots,0)$.
This contradicts (5) of Condition \ref{condinvbir} and thus the birational involution $\tau$ of $X$ is not biregular.
Therefore $\tau$ is a Sarkisov link starting with the extremal divisorial extraction $\varphi$ by Lemma \ref{untwistbirmap}.
\end{proof}

\subsection{The family $\mcG'_{19}$}

Let $X' = X'_8 \subset \mbP (1,1,2,3,2)$ be a member of $\mcG'_{19}$ and $\msp$ a point of type $\frac{1}{2} (1,1,1)$.
If there is no WCI curve of type $(1,1,2)$ passing through $\msp$, then it is proved in Theorem \ref{thm:19EI} that there is an elliptic involution centered at $\msp$.
In this subsection we assume that there is a WCI curve on $X'$ of type $(1,1,2)$ passing through $\msp$ and we shall construct a birational involution $\tau$ of $X'$ which untwists the maximal center $\msp$.

\begin{Lem}
We can choose homogeneous coordinates $x_0,x_1,y,z$ and $w$ with the following properties.
\begin{enumerate}
\item $\msp = \msp_2$.
\item Defining polynomial of $X'$ can be written as
\[
F = w^2 y (y + f_2) + w g_6 + h_8,
\]
where 
\[
\begin{split}
g_6 &= y^3 + y^2 a_2 + y (z a_1 + a_4) + z^2 + a_6, \\  
h_8 &= y^2 (z x_0 + b_4) + y (z b_3 + b_6) + z^2 b_2 + z b_5 + b_8,
\end{split}
\]
for some $f_2, a_i, b_i \in \mbC [x_0,x_1]$.
\end{enumerate}
\end{Lem}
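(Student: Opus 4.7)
The plan is to apply a sequence of admissible coordinate changes preserving the standard form $F = w^2 y(y + f_2) + w g_6 + h_8$ inherited from Section~\ref{sec:stdefeq}, using quasismoothness of $X'$ and the existence of the WCI curve through $\msp$ to force the desired vanishings.

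First I will locate and relocate $\msp$. Since $\msp$ is a terminal quotient singular point of type $\frac{1}{2}(1,1,1)$, it must lie on the singular locus $(x_0 = x_1 = z = 0)$ of the ambient weighted projective space, and its intersection with $X'$ is a finite subscheme of the weighted line $\mbP(2,2) \cong \mbP^1_{y,w}$ containing $\msp_4$. Any other point of this intersection has the form $(0\!:\!0\!:\!1\!:\!0\!:\!w_0)$, and the substitution $w \mapsto w - w_0 y$, which preserves the standard form because $y$ and $w$ have the same degree, moves $\msp$ to $\msp_2$. The incidence $\msp_2 \in X'$ then forces the coefficient of $y^4$ in $h_8$ to vanish, and quasismoothness of $X'$ at $\msp_2$ (via $\partial F/\partial w|_{\msp_2}$, which equals the coefficient of $y^3$ in $g_6$) forces the latter coefficient to be non-zero; a combined rescaling of $y$, $z$ and $F$ compatible with the standard form then normalizes both the $y^3$ and $z^2$ coefficients in $g_6$ to $1$.

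Next I will exploit the WCI curve $C$ of type $(1,1,2)$ through $\msp_2$. Its two degree-$1$ generators span $\mbC\langle x_0,x_1\rangle$, which after a $GL_2$-action on $(x_0, x_1)$ may be taken to be $x_0, x_1$ themselves. The degree-$2$ generator, restricted to $(x_0 = x_1 = 0)$, is of the form $\alpha y + \beta w$; since $C \ni \msp_2$ we have $\alpha = 0$, and $\beta \ne 0$, so after rescaling and a substitution $w \mapsto w - q'(x_0, x_1)$ with $q' \in \mbC[x_0, x_1]_2$ we may assume $C = (x_0 = x_1 = w = 0)$. The inclusion $C \subset X'$ then reads $h_8(0,0,y,z) \equiv 0$, whose only monomials in $y, z$ alone are $y^4$ and $y z^2$, giving the additional vanishing $y z^2 \notin h_8$. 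The remaining monomial eliminations are obtained in sequence: a further substitution $w \mapsto w + q(x_0, x_1)$ with $q \in \mbC[x_0,x_1]_2$ modifies $h_8$ by $q^2 y(y+f_2) + q g_6$, and since the coefficient of $y^3$ in $g_6$ is $1$, this allows us to eliminate the three-dimensional space of $y^3$-coefficients in $h_8$ while preserving $C$ (because $q$ vanishes there); a substitution $z \mapsto z + c_3(x_0, x_1)$ with $c_3 \in \mbC[x_0, x_1]_3$ shifts the coefficient of $z$ (with no $y$) in $g_6$ by $2 c_3$, allowing us to eliminate the four-dimensional $z \cdot \mbC[x_0,x_1]_3$ part of $g_6$, and a direct inspection of the resulting explicit changes in $g_6$ and $h_8$ confirms that none of the earlier normalizations is undone.

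It remains to normalize the coefficient of $y^2 z$ in $h_8$, which at this stage is a linear form $\gamma x_0 + \delta x_1 \in \mbC[x_0, x_1]_1$. A direct computation of $\nabla F$ at a generic point $\msp_\ast = (0\!:\!0\!:\!y_0\!:\!z_0\!:\!0) \in C$ with $y_0^3 + z_0^2 = 0$ and $y_0, z_0 \ne 0$ shows that the only possibly non-vanishing components are $\partial F/\partial x_0|_{\msp_\ast} = \gamma y_0^2 z_0$ and $\partial F/\partial x_1|_{\msp_\ast} = \delta y_0^2 z_0$, so quasismoothness of $X'$ along $C$ forces $(\gamma, \delta) \ne (0, 0)$. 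A residual $GL_2$-action on $(x_0, x_1)$, which preserves both $C$ and the monomial-type vanishings already established, then sends $\gamma x_0 + \delta x_1$ to $x_0$, giving the asserted form of $h_8$. The main technical burden will be the careful bookkeeping confirming that each coordinate change preserves all previously achieved normalizations; this reduces to tracking the explicit formulas for the induced changes in $g_6$ and $h_8$, which is straightforward but requires care to carry out in the correct order.
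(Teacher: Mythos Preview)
Your proof is correct and follows essentially the same approach as the paper's: relocate $\msp$ to $\msp_2$ via $w \mapsto w - w_0 y$, use quasismoothness at $\msp_2$ to get $y^3 \in g_6$, kill the $y^3$-part of $h_8$ by a further $w$-shift, read off $yz^2 \notin h_8$ from the WCI curve hypothesis, and finally use quasismoothness at the point $(0\!:\!0\!:\!-1\!:\!1\!:\!0)$ to conclude $b_1 \ne 0$ and normalize it to $x_0$. Two small remarks: the point $\msp_\ast$ with $y_0^3 + z_0^2 = 0$ is a \emph{single specific} point of $C$, not a generic one (drop the word ``generic''); and your explicit $z \mapsto z + c_3$ step to remove the $z a_3$ term from $g_6$ is in fact a detail the paper's own proof leaves implicit, so you are being more careful there than the original.
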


\begin{proof}
The defining polynomial of $X'$ is of the form $w^2 y (y+f_2) + w g_6 + h_8$, where $f_2 \in \mbC [x_0,x_1]$ and $g_6,h_8 \in \mbC [x_0,x_1,y,z]$.
By replacing $w \mapsto w - \gamma y$ for a suitable $\gamma \in \mbC$, we may assume $\msp = \msp_2$.
By quasismoothness of $X'$ at $\msp$, $y^3 \in g_6$ and thus we may assume that the coefficient of $y^3$ in $g_6$ is $1$.
If $h_8 = y^3 h_2 + (\text{other terms})$ for some $h_2 \in \mbC [x_0,x_1]$, then we may assume that $h_2 = 0$ by replacing $w$ with $w - h_2$.
Thus can write 
\[
g_6 =  y^3 + y^2 a_2 + y (z a_1 + a_4) + z^2 + z a_3 + a_6
\]
and 
\[
h_8 = y^2 (z b_1 + b_4) + y (\alpha z^2 + z b_3 + b_6) + z^2 b_2 + z b_5 + b_8
\] 
for some $\alpha \in \mbC$ and $a_i,b_i \in \mbC [x_0,x_1]$.
We see that $X'$ contains a WCI curve of type $(1,1,2)$ passing through $\msp$ if and only $\alpha = 0$.
Finally, we have $b_1 \ne 0$ because otherwise $X'$ has a singular point at $(0 \!:\! 0 \!:\! -1 \!:\! 1 \!:\! 0)$.
Thus we may assume that $b_1 = x_0$ after replacing $x_0$ and $x_1$.
This completes the proof.
\end{proof}

In the following, we fix homogeneous coordinates of $\mbP (1,1,2,3,2)$ as in the above lemma.
Let $\varphi' \colon Y' \to X'$ be the Kawamata blowup at $\msp$ with exceptional divisor $E$.
Let $\msq \in Y'$ be the singular point of type $\frac{1}{3} (1,1,2)$ which is the inverse image of the point $\msp_3$ by $\varphi'$.
Let $Z \to Y'$ be the Kawamata blowup at $\msq$ with exceptional divisor $F \cong \mbP (1,1,2)$ and $W \to Z$ the Kawamata blowup at the $\frac{1}{2} (1,1,1)$ point contained in $F$ with exceptional divisor $G$.
We denote by $\psi \colon W \to Y'$ the composite of the above Kawamata blowups.
For complex numbers $\lambda,\mu$, we define $S_{\lambda} = (x_1 - \lambda x_0 = 0)_{X'}$ and $T_{\mu} = (w - \mu x_0^2 = 0)_{X'}$.
We see that $S_{\lambda}$ is normal for a general $\lambda \in \mbC$.
We set $\bar{F} = F (x_0,\lambda x_1,y,z,\mu x_0^4) \in \mbC [x_0,y,z]$.
Since $F$ is contained in the ideal $(x_0,x_1,w)$ and $y^2 z x_0 \in F$, $\bar{F}$ is divisible by $x_0$ but not by $x_0^2$. 
We have $T_{\mu} |_{S_{\lambda}} = \Gamma + C_{\lambda,\mu}$, where $\Gamma = (x_0 = x_1 = w = 0)$ and 
\[
C_{\lambda,\mu} = (x_1 - \lambda x_0 = w - \mu x_0^2 = \bar{F}/x_0 = 0).
\]
For a curve or a divisor $\Delta$ on $X'$, we denote by $\tilde{\Delta}$ the proper transform of $\Delta$ on $Y'$, and for a curve or a divisor $\Delta$ on $X'$, $Y'$ or $Z$, we denote by $\hat{\Delta}$ the proper transform of $\Delta$ on $W$.

\begin{Lem} \label{lem:G19spintenum}
We have
\[
(-K_{Y'} \cdot \tilde{C}_{\lambda,\mu}) = \frac{2}{3}
\]
and
\[
(\hat{E} \cdot \tilde{C}_{\lambda,\mu}) = (G \cdot \hat{C}_{\lambda,\mu}) = 1, (-K_W \cdot \hat{\Gamma}) = -1 \text{ and } (-K_W \cdot \hat{C}_{\lambda,\mu}) = 0.
\]
Moreover $\hat{F}$ is disjoint from $\hat{C}_{\lambda,\mu}$.
\end{Lem}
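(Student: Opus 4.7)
The plan is to compute the stated intersection numbers by descending through the two Kawamata blowups. First, on $X'$, the curve $\Gamma$ coincides with $\mbP(2,3) \subset \mbP(1,1,2,3,2)$, giving $(A \cdot \Gamma) = 1/6$; combined with $(A \cdot T_\mu \cdot S_\lambda) = 2(A^3) = 4/3$ and the cycle identity $T_\mu|_{S_\lambda} = \Gamma + C_{\lambda,\mu}$ this yields $(A \cdot C_{\lambda,\mu}) = 7/6$. A direct check in local coordinates at $\msp$ shows that both $\Gamma$ and $C_{\lambda,\mu}$ are (orbifold-)smooth there, with respective tangent directions $(0, 0, 1)$ and $(1, \lambda, -\mu)$ in the $(x_0, x_1, z)$-chart; applying the standard formula $(E \cdot \tilde C) = 1$ for a smooth curve through a $\frac{1}{2}(1,1,1)$-point yields $(-K_{Y'} \cdot \tilde C_{\lambda,\mu}) = 7/6 - 1/2 = 2/3$ and $(-K_{Y'} \cdot \tilde\Gamma) = -1/3$.

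Next I analyze the local picture at $\msp_3$. Writing $\bar F := F(x_0, \lambda x_0, y, z, \mu x_0^2)$ and restricting to the chart $z = 1$, the expansion $\bar F/x_0 = (\mu + \beta_2) x_0 + y^2 + O(\text{orbifold weight} > 4/3)$, with $\beta_2 := b_2(1, \lambda)$ and weights $(1,1,2)/3$, allows one to parametrize $C_{\lambda,\mu}$ near $\msp_3$ as $s \mapsto (-s^2/(\mu+\beta_2), -\lambda s^2/(\mu+\beta_2), s) + O(s^3)$ for generic $\mu$. Hence $C_{\lambda,\mu}$ is orbifold-smooth at $\msp_3$ with tangent along the weight-$2$ axis, the same as $\Gamma = (x_0 = x_1 = w = 0)$, so both $\tilde\Gamma^{(1)}$ and $\tilde C_{\lambda,\mu}^{(1)}$ meet $F \cong \mbP(1,1,2)$ at the vertex $[0:0:1]$, which is the $\frac{1}{2}(1,1,1)$-point of $Z$ blown up by $\psi_1$. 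A projection-formula computation via the $\mbZ_2$-cover of a neighborhood of this point, using that $F$ pulls back to its preimage with multiplicity $1$ and that a smooth orbifold curve has $\mbZ_2$-covered degree $2$, yields $(F \cdot \tilde\Gamma^{(1)}) = (F \cdot \tilde C_{\lambda,\mu}^{(1)}) = 1/2$.

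Tracking the parametrization through the weighted-blowup chart $(v_1, v_2, \bar s)/\mbZ_2(1,1,1)$ at this point, $\tilde C_{\lambda,\mu}^{(1)}$ has tangent direction $(-1/(\mu+\beta_2), -\lambda/(\mu+\beta_2), 1)$ while $\tilde\Gamma^{(1)}$ has tangent $(0,0,1)$; in both cases the same smooth-curve formula gives $(G \cdot \hat\Gamma) = (G \cdot \hat C_{\lambda,\mu}) = 1$. The identity $(\hat E \cdot \hat C_{\lambda,\mu}) = (E \cdot \tilde C_{\lambda,\mu}) = 1$ is immediate since $\psi$ is an isomorphism over $E$. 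Assembling via $-K_Z = \psi_2^*(-K_{Y'}) - \frac{1}{3} F$ and $-K_W = \psi_1^*(-K_Z) - \frac{1}{2} G$ gives $(-K_W \cdot \hat\Gamma) = -\frac{1}{3} - \frac{1}{6} - \frac{1}{2} = -1$ and $(-K_W \cdot \hat C_{\lambda,\mu}) = \frac{2}{3} - \frac{1}{6} - \frac{1}{2} = 0$. For the disjointness claim, in the chart above the tangent plane to $F$ at the blown-up point is $\bar s = 0$, whereas the tangent of $\tilde C_{\lambda,\mu}^{(1)}$ has nonzero $\bar s$-component; hence on the exceptional $G \cong \mbP^2$ the intersections $\hat F \cap G$ and $\hat C_{\lambda,\mu} \cap G$ are distinct points, and since the curves do not meet away from this point, $\hat F \cap \hat C_{\lambda,\mu} = \emptyset$.

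The principal difficulty will be the careful monomial analysis at $\msp_3$: keeping terms organized by orbifold weight so that the leading terms of $\bar F/x_0$ are correctly identified and the parametrization can be solved. The intersection $(F \cdot \tilde C^{(1)}) = 1/2$ at the $\frac{1}{2}(1,1,1)$-point on $F$ also requires care since $F$ is only $\mbQ$-Cartier there, but the $\mbZ_2$-cover together with the projection formula resolves this cleanly.
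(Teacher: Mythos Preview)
Your proof is correct and follows essentially the same strategy as the paper: compute $(-K_{Y'}\cdot\tilde C_{\lambda,\mu})$ from the cycle identity $T_\mu|_{S_\lambda}=\Gamma+C_{\lambda,\mu}$, then descend through the discrepancy formula for the two successive Kawamata blowups; the only difference is that you supply explicit local-coordinate detail at $\msp_3$ where the paper simply asserts that $\hat\Gamma,\hat C_{\lambda,\mu}$ are disjoint from $\hat F$ and meet $G$ transversally (indeed your value $(F\cdot\tilde C^{(1)})=\tfrac12$ is exactly $(\hat F\cdot\hat C)+\tfrac12(G\cdot\hat C)=0+\tfrac12$). One small slip to correct in the disjointness paragraph: $\hat F\cap G$ is a line in $G\cong\mbP^2$, not a point, so the conclusion should read that the single point $\hat C_{\lambda,\mu}\cap G$ (which has nonzero $\bar s$-coordinate) does not lie on the line $\hat F\cap G=(\bar s=0)$.
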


\begin{proof}
We have $(E \cdot \tilde{\Gamma}) = 1$ and $(-K_{X'} \cdot \Gamma) = 1/6$, which implies
\[
(-K_{Y'} \cdot \tilde{\Gamma}) = (-K_{X'} \cdot \Gamma) - \frac{1}{2} (E \cdot \tilde{\Gamma}) = \frac{1}{6} - \frac{1}{2} = - \frac{1}{3}.
\]
Since $\tilde{S}_{\lambda} \sim_{\mbQ} -K_{Y'}$ and $\tilde{T}_{\mu} \sim_{\mbQ} - 2 K_{Y'}$, we have
\[
(-K_{Y'} \cdot \tilde{\Gamma} + \tilde{C}_{\lambda,\mu})
= (-K_{Y'} \cdot \tilde{S}_{\lambda} \cdot \tilde{T}_{\lambda}) 
= 2 (-K_{Y'})^3 = \frac{1}{3}.
\]
This implies $(-K_{Y'} \cdot \tilde{C}_{\lambda,\mu}) = 2/3$.

We have 
\[
K_W = \psi^* K_{Y'} + \frac{1}{3} \hat{F} + \frac{2}{3} G.
\]
It is easy to see that both $\hat{\Gamma}$ and $\hat{C}_{\lambda,\mu}$ are disjoint from $\hat{F}$, and $(G \cdot \hat{\Gamma}) = (G \cdot \hat{C}_{\lambda,\mu}) = 1$.
Thus we get
\[
(-K_W \cdot \hat{\Gamma}) = (- K_{Y'} \cdot \tilde{\Gamma}) - \frac{1}{3} (\hat{F} \cdot \hat{\Gamma}) - \frac{2}{3} (G \cdot \hat{\Gamma}) 
= - \frac{1}{3} - \frac{2}{3} = -1.
\]
The computation $(-K_W \cdot \hat{C}_{\lambda,\mu}) = 0$ can be done similarly. 
\end{proof}

\begin{Prop} \label{prop:G'19spexcl}
If there are infinitely many pairs $(\lambda,\mu)$ of complex numbers such that $C_{\lambda,\mu}$ is reducible, then $\msp$ is not a maximal center.
\end{Prop}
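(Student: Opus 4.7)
\emph{Proof plan.} The strategy is to invoke Lemma \ref{crispinfc} by exhibiting an infinite family of irreducible and reduced curves $D$ on $Y'$ satisfying $(-K_{Y'} \cdot D) \le 0$ and $(E \cdot D) > 0$. Under the hypothesis, the set $\{(\lambda,\mu) \mid C_{\lambda,\mu} \text{ is reducible}\} \subset \mbA^2$ is an infinite constructible subset, hence contains a positive-dimensional irreducible algebraic subset $T$. Along $T$, the components of $C_{\lambda,\mu}$ vary in flat algebraic families; possibly shrinking $T$ and relabeling, we obtain a decomposition $C_{\lambda,\mu} = C^1_{\lambda,\mu} + C^2_{\lambda,\mu}$ along $T$ such that $\{C^1_{\lambda,\mu} \mid (\lambda,\mu) \in T\}$ yields infinitely many distinct irreducible and reduced curves whose numerical class on $Y'$ is constant.

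Set $d_i := (-K_{Y'} \cdot \tilde{C}^i_{\lambda,\mu})$ and $e_i := (E \cdot \tilde{C}^i_{\lambda,\mu})$. By Lemma \ref{lem:G19spintenum},
\[
d_1 + d_2 = \frac{2}{3}, \qquad e_1 + e_2 = 1,
\]
so after relabeling we may assume $e_1 > 0$. The key step is to show $d_1 \le 0$. I would fix a general $\lambda_0$ in the image of $T \to \mbA^1_\lambda$ and work on the surface $\tilde{S}_{\lambda_0}$: the curves $\tilde{C}^1_{\lambda_0,\mu}$ (as $(\lambda_0,\mu)$ varies in the fiber) form an algebraic family of numerically equivalent divisors on $\tilde{S}_{\lambda_0}$, and after stripping off any common fixed component, two distinct members intersect non-negatively, giving $(\tilde{C}^1)^2 \ge 0$ on $\tilde{S}_{\lambda_0}$. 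Combining this with the relation $\tilde{C}_{\lambda,\mu} \sim_{\mbQ} 2 B|_{\tilde{S}_\lambda} - \tilde{\Gamma}$ and the negativity of $(\tilde{\Gamma})^2$ on $\tilde{S}_\lambda$ (computed along the lines of Lemma \ref{selfint19}) then forces $d_1 \le 0$. Once $d_1 \le 0$ and $e_1 > 0$, Lemma \ref{crispinfc} applied to the infinite family $\{\tilde{C}^1_{\lambda,\mu}\}$ concludes that $\msp$ is not a maximal center.

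The main obstacle is this self-intersection argument and its degenerations: (i) if the projection $T \to \mbA^1_\lambda$ has finite fibers, one must instead fix a general $\mu_0$ in the image $T \to \mbA^1_\mu$ and work on $\tilde{T}_{\mu_0}$; (ii) if $C^1$ or $C^2$ is a fixed component across $T$, one must strip it off and argue with the varying component; (iii) if $C_{\lambda,\mu}$ splits into more than two components, the bookkeeping must be refined accordingly. The structure of the defining polynomial, especially the persistence of the monomial $y^2 z$ with unit coefficient in $\bar{F}/x_0$, may need to be used to exclude pathological decompositions and guarantee that the varying component always has the required intersection profile.
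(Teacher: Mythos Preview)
Your plan has a genuine gap at the crucial step, and it also overlooks the explicit structure that makes the argument immediate. The reducibility locus $T \subset \mbA^2_{\lambda,\mu}$ is in general a curve, so for a fixed general $\lambda_0$ there are only \emph{finitely many} $\mu$ with $C_{\lambda_0,\mu}$ reducible (and symmetrically for fixed $\mu_0$). Hence you do not obtain a moving pencil of $\tilde{C}^1_{\lambda_0,\mu}$ on a single surface $\tilde{S}_{\lambda_0}$ or $\tilde{T}_{\mu_0}$, and the inequality $(\tilde{C}^1)^2 \ge 0$ is simply unavailable. Even if it were, the sentence ``combining with $\tilde{C}_{\lambda,\mu} \sim_{\mbQ} 2B|_{\tilde{S}_\lambda} - \tilde{\Gamma}$ and negativity of $(\tilde{\Gamma})^2$ forces $d_1 \le 0$'' is not a computation one can reconstruct: nothing in that relation bounds $d_1$ individually beyond $d_1 + d_2 = 2/3$.

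The paper's proof bypasses all of this by reading off the factorization from the equation. After eliminating $x_1$ and $w$, the curve $C_{\lambda,\mu}$ sits in $\mbP(1,2,3)$ and is cut by a polynomial of degree $7$ that is \emph{quadratic in $z$}, with $z^2$-coefficient a multiple of $x_0$ and $z$-coefficient of the form $y^2 + (\text{terms in }x_0)$. If $C_{\lambda,\mu}$ is reducible, this forces a factorization $C_{\lambda,\mu} = \Delta_1 + \Delta_2$ with
\[
\Delta_1 = (x_1 - \lambda x_0 = w - \mu x_0^2 = z + c_3 = 0), \qquad
\Delta_2 = (x_1 - \lambda x_0 = w - \mu x_0^2 = \alpha z x_0 + d_4 = 0),
\]
where $c_3, d_4 \in \mbC[x_0,y]$ and necessarily $y^2 \in d_4$. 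The monomial $y^2$ in $d_4$ forces $\Delta_2$ to avoid $\msp = \msp_2$, so $\tilde{\Delta}_2$ is disjoint from $E$ and $(-K_{Y'} \cdot \tilde{\Delta}_2) = (-K_{X'} \cdot \Delta_2) = \deg \Delta_2 = 2/3$. Subtracting from $(-K_{Y'} \cdot \tilde{C}_{\lambda,\mu}) = 2/3$ (Lemma~\ref{lem:G19spintenum}) yields $(-K_{Y'} \cdot \tilde{\Delta}_1) = 0$, while $\Delta_1$ manifestly passes through $\msp$, so $(E \cdot \tilde{\Delta}_1) > 0$. As $(\lambda,\mu)$ ranges over the infinite set, the curves $\tilde{\Delta}_1$ are distinct, and Lemma~\ref{crispinfc} finishes. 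The idea you are missing is exactly this: the specific shape of $\bar{F}/x_0$ as a quadratic in $z$ pins down the two components and shows that one of them always misses $\msp$, so no self-intersection or deformation argument is needed.
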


\begin{proof}
Let $(\lambda,\mu)$ be a pair such that $C := C_{\lambda,\mu}$ is reducible.
The curve $C$ is defined by three equations $x_1 - \lambda x_0 = w - \mu x_0^2 = 0$ and
\[
\alpha z^2 x_0 + (y^2 + \beta y x^2_0 + \gamma x^2_0)z + \mu y^3 x_0 + \delta y x^5_0 + \varepsilon x^7_0 = 0,
\]
where $\alpha,\beta,\cdots,\varepsilon$ are complex numbers which are polynomials in $\lambda,\mu$.
It follows that $C = \Delta_1 + \Delta_2$, where 
\[
\Delta_1 = (x_1 - \lambda x_0 = w - \mu x_0^2 = z + c_3 = 0)
\] 
and 
\[
\Delta_2 = (x_1 - \lambda x_0 = w - \mu x_0^2 = \alpha z x_0 + d_4 = 0)
\] 
for some $c_3, d_4 \in \mbC [x_0,y]$ such that $y^2 \in d_4$.
Since $\Delta_2$ does not pass through $\msp$, we have $(-K_{Y'} \cdot \tilde{\Delta}_2) = (-K_{X'} \cdot \Delta_2) = 2/3$.
It follows that 
\[
(-K_{Y'} \cdot \tilde{\Delta}_1) = (-K_{Y'} \cdot \tilde{C}) - (-K_{Y'} \cdot \tilde{\Delta}_2) = 2/3 - 2/3 = 0
\] 
and $(E \cdot \tilde{\Delta}_1) > 0$ since $\Delta_1$ passes through $\msp$.
This shows that there are infinitely many curves on $Y'$ which intersects $-K_{Y'}$ non-positively and $E$ positively.
By Lemma \ref{crispnegdef}, $\msp$ is not a maximal center.
\end{proof}

This implies that if we assume that $\msp$ is a maximal singularity, then (2) of Condition \ref{condinvbir} is satisfied. 

Let $\pi \colon X' \ratmap \mbP (1,1,2)$ be the projection to the coordinates $x_0,x_1,w$ which is defined outside $\Gamma$.
Let $\mcH \subset |-2K_{X'}|$ be the linear system on $X'$ generated by $x_0^2, x_0 x_1, x_1^2$ and $w$, and let $\mcH_W$, $\mcH_{Y'}$ be the proper transform of $\mcH$ on $Y'$, $W$, respectively.
Note that $\mcH_{Y'} = |-2 K_{Y'}|$, $\mcH_Z = |- 2 K_Z|$ and $\mcH_W = |-2 K_W|$.
Note also that the base loci of $\mcH$, $\mcH_{Y'}$ and $\mcH_W$ are $\Gamma$, $\tilde{\Gamma}$ and $\hat{\Gamma}$, respectively.
Let $\pi_{\lambda} \colon S_{\lambda} \ratmap \mbP (1,2) \cong \mbP^1$ be the restriction of $\pi$ to $S_{\lambda}$ and $\hat{\pi}_{\lambda} \colon \hat{S}_{\lambda} \ratmap \mbP^1$ be the composite of $(\varphi' \circ \psi)|_{\hat{S}_{\lambda}} \colon \hat{S}_{\lambda} \to S_{\lambda}$ and $\pi_{\lambda}$.
We set $\hat{E}_{\lambda} = \hat{E} \cap \hat{S}_{\lambda}$ and $\hat{G}_{\lambda} = G \cap \hat{S}_{\lambda}$.
We can write $\mcH|_{S_{\lambda}} = \mcL_{\lambda} + \Gamma$, where $\mcL_{\lambda}$ is the movable part which defines $\pi_{\lambda}$.
Note that $C_{\lambda,\mu}$'s for $\mu \in \mbC$ and $2 (x_0 = 0)|_{S_{\lambda}} - \Gamma = \Gamma + 2 \Delta$ are the members of $\mcL_{\lambda}$, where $\Delta = (x_0 = x_1 = w y^2 + y^3 + z^2 = 0) \subset S_{\lambda}$.

\begin{Lem} \label{lem:G'19projindet}
Let $\lambda$ be a general complex number.
Then the following assertions hold.
\begin{enumerate}
\item The indeterminacy locus of $\pi_{\lambda}$ consists of two points $\msp$ and $\msp_3$, and $\hat{\pi}_{\lambda}$ is a morphism.
\item Both $\hat{E}_{\lambda}$ and $\hat{G}_{\lambda}$ are irreducible.
\item The pair $(W, \frac{1}{2} \mcH_W)$ is terminal.
\end{enumerate}
\end{Lem}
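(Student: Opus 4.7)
The plan for (1) is to identify the base locus of the pencil $\mcL_\lambda$ on $S_\lambda$. Under the standing assumption that $\msp$ is a maximal center, Proposition~\ref{prop:G'19spexcl} forces $C_{\lambda,\mu}$ to be irreducible and reduced for a general $\lambda$ and every $\mu \in \mbC$, so the members $\{C_{\lambda,\mu}\}_\mu$ form an infinite family of distinct irreducible curves in $\mcL_\lambda$ and hence $\Bs \mcL_\lambda$ is zero-dimensional. I will then compute $C_{\lambda,\mu} \cap \Gamma$ by substituting the normal form of $h_8$ into the defining equation $\bar F/x_0 = 0$ of $C_{\lambda,\mu}$ and setting $x_0 = 0$; the only term of $\bar F$ of order one in $x_0$ is $y^2 z x_0$, so $(\bar F/x_0)|_{x_0=0} = y^2 z$, which vanishes on $\Gamma \cong \mbP(2,3)$ precisely at $\{\msp_2, \msp_3\}$. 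Since both of these points lie on every member of $\mcL_\lambda$, they constitute the entire indeterminacy locus of $\pi_\lambda$. To promote $\hat\pi_\lambda$ to a morphism, I will compare the vanishing orders of the generators $x_0^2$ and $w$ along $\hat E$, $\hat F$ and $G$, using the weights $\frac{1}{2}(1,1,1)$, $\frac{1}{3}(1,1,2)$, $\frac{1}{2}(1,1,1)$ of the three Kawamata blowups together with the implicit expressions $w \approx -zx_0$ at $\msp_2$ and $w \approx -b_2(x_0,x_1)$ at $\msp_3$ obtained from $F = 0$; in each case the two orders agree, so the pencil is fixed-free along the exceptional divisors and $\hat\pi_\lambda$ extends to a morphism.

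For (2), both curves arise as linear sections of a $\mbP^2$. On $Y'$, $\tilde S_\lambda \cap E$ is cut out inside $E \cong \mbP(1,1,1) = \mbP^2$ by the linearisation of $x_1 - \lambda x_0$, hence is a line. For general $\lambda$ this line avoids $\msq$ (which does not lie on $E$) and the subsequent centre $\msq'$, so $\hat E_\lambda$ is isomorphic to this line and is irreducible. An analogous local computation at $\msq'$ shows that $\hat S_\lambda$ restricts to $G \cong \mbP^2$ as a line, so $\hat G_\lambda$ is also irreducible.

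For (3), I must verify that $(W, \frac{1}{2}\mcH_W)$ is terminal. Since $\mcH_W \sim_{\mbQ} -2 K_W$, we have $K_W + \frac{1}{2}\mcH_W \sim_{\mbQ} 0$, and the discrepancy of $(W, \frac{1}{2}\mcH_W)$ at a prime divisor $E'$ over $W$ equals $a_{E'}(W) - \frac{1}{2} m_{E'}(\mcH_W)$; only divisors whose centre meets $\Bs \mcH_W = \hat\Gamma$ contribute nontrivially. Along the generic point of $\hat\Gamma$ the variety $W$ is smooth and a general member of $\mcH_W$ has multiplicity one, so the pair is canonical there. The hardest part, and the main obstacle of the whole lemma, is the local analysis at the finitely many points where $\hat\Gamma$ meets $\hat E \cup \hat F \cup G$: at each such point I will choose local coordinates, express $\mcH_W$ as the pencil generated by $x_0^2$ and $w$ to read off its multiplicity there, and compare with the discrepancies $\frac{1}{2}$, $\frac{1}{3}$, $\frac{1}{2}$ of the successive Kawamata blowups to deduce the strictly positive lower bound required for terminality.
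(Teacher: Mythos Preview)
Your argument for the second half of (1) — that $\hat{\pi}_\lambda$ is a morphism — has a genuine gap. Matching the vanishing orders of $x_0^2$ and $w$ along an exceptional divisor only tells you that neither section acquires that divisor as a fixed component after pullback; it does \emph{not} rule out isolated base points of the lifted pencil on the exceptional curve. Concretely, on $\hat{E}_\lambda$ the sections $x_0^2$ and $w$ (after peeling off the common order) restrict to $x_0^2$ and $-z x_0$, which share the zero $x_0=0$. This is not a contradiction: that point is exactly $\hat{\Gamma}\cap\hat{E}_\lambda$, and the pencil $\langle x_0^2, w\rangle$ you are analysing is $\mcH|_{S_\lambda}=\Gamma+\mcL_\lambda$, not $\mcL_\lambda$ itself. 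So your order comparison is probing the wrong linear system. The paper avoids this by working directly with the members $\hat{C}_{\lambda,\mu}$ of $\hat{\mcL}_\lambda$: it writes down $E$ and $G$ explicitly (via the embedded weighted blowups with $\wt(x_0,x_1,z,w)=\tfrac{1}{2}(1,1,1,2)$ at $\msp$ and $\wt(x_0,x_1,y,w)=\tfrac{1}{3}(1,1,2,5)$ at $\msp_3$, followed by an ordinary blowup for $G$) and checks that $\hat{C}_{\lambda,\mu}$ meets $\hat{E}$ at $(1\!:\!\lambda\!:\!-\mu\!:\!\mu)$ and $G$ at $(1\!:\!\lambda\!:\!-\mu\!:\!1)$, both moving with $\mu$, so $\Bs\hat{\mcL}_\lambda=\emptyset$. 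Note also that the correct weight of $w$ at $\msp_3$ is $5/3$, not $2/3$; your approximation $w\approx -b_2$ misses the coordinate change that kills the $z^2 b_2$ term in $h_8$.

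For (3) you are making the problem harder than it is. The three Kawamata blowups at $\tfrac{1}{2}(1,1,1)$, $\tfrac{1}{3}(1,1,2)$, $\tfrac{1}{2}(1,1,1)$ leave $W$ \emph{smooth} along the whole of $\hat{\Gamma}$: the first and third produce smooth exceptional loci, and $\hat{\Gamma}$ is disjoint from $\hat{F}$ (it only touches $F$ at the $\tfrac{1}{2}$ point, which the third blowup resolves). Since $\Bs\mcH_W=\hat{\Gamma}$, $\hat{\Gamma}$ is a smooth curve in the smooth locus of $W$, and a general member of $\mcH_W$ has multiplicity $1$ along it, the pair $(W,\tfrac{1}{2}\mcH_W)$ is terminal immediately — no point-by-point discrepancy analysis at $\hat{\Gamma}\cap(\hat{E}\cup\hat{F}\cup G)$ is needed.
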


\begin{proof}
We have $C_{\lambda,\mu_1} \cap C_{\lambda,\mu_2} = \{ \msp,\msp_4\}$ for $\mu_1 \ne \mu_2$.
This shows that the indeterminacy locus of $\pi_{\lambda}$ consists of $\msp$ and $\msp_3$.
Let $\hat{\mcL}_{\lambda}$ be the proper transform of $\mcL_{\lambda}$ on $\hat{S}_{\lambda}$, which defines $\hat{\pi}_{\lambda}$.
We shall show that $\hat{\mcL}_{\lambda}$ is base point free and that both $\hat{E}_{\lambda}$ and $\hat{G}_{\lambda}$ are irreducible.
We see that $\varphi' \colon Y' \to X'$ can be identified with the embedded weighted blowup of $X'$ at $\msp$ with $\wt (x_0,x_1,z,w) = \frac{1}{2} (1,1,1,2)$ and we have
\[
\hat{E} \cong E \cong (w + z x_0 = 0) \subset \mbP (1,1,1,2),
\]
where $x_0,x_1,z,w$ are the homogeneous coordinates of $\mbP (1,1,1,2)$.
It follows that $\hat{E}_{\lambda} \cong E \cap \tilde{S}_{\lambda}$ is isomorphic to
\[
(w + z x_0 = x_1 - \lambda x_0 = 0) \subset \mbP (1,1,1,2),
\]
which is isomorphic to $\mbP^1$.
Recall that $C_{\lambda,\mu}$ is defined by equations $x_1 - \lambda x_0 = w - \mu x_0^2 = 0$ and
\[
\alpha z^2 x_0 + (y^2 + \beta y x^2_0 + \gamma x^4_0)z + \mu y^3 x_0 + \delta y x^5_0 + \varepsilon x^7_0 = 0.
\]
It follows that $\hat{C}_{\lambda,\mu}$ intersects $\hat{E}$ at $(1\!:\! \lambda \!:\! -\mu \!:\! \mu)$.
This shows that $\Bs \hat{\mcL}_{\lambda}$ is disjoint from $\hat{E}$. 
We see that $x_0,x_1,y,w$ vanish along $F$ to order respectively $1/3,1/3,2/3,5/3$ and thus $Z \to Y'$ can be identified with the embedded weighted blowup of $X'$ at $\msp_3$ with $\wt (x_0,x_1,y,w) = \frac{1}{3} (1,1,2,5)$.
Then we have
\[
F \cong (w + y^2 x_0 + y b_3 + b_5 = 0) \subset \mbP (1,1,2,5).
\]
We see that the proper transform of $C_{\lambda,\mu}$ intersects $F$ at $(0\!:\!0\!:\!1\!:\!0)$.
We introduce the orbifold chart of $Z$.
Let $X'_z$ be the open subset $(z \ne 0)$, which is isomorphic to the hyperquotient
\[
(F (x_0,x_1,y,1,w) = 0) \subset \mbA^4/\mbZ_3 (1,1,2,2).
\]
Let $\tilde{\mbA}^4$ be the affine $4$-space with affine coordinates $\tilde{x}_0, \tilde{x}_1,y'$ and $\tilde{w}$, and let $\tilde{\mbA}^4 \to \mbA^4$ be the morphism determined by
\[
x_i \mapsto \tilde{x}_i y', y \mapsto {y'}^2, w \mapsto \tilde{w} {y'}^5.
\]
We refer the readers to \cite[6.38 in Chapter 6]{KSC} and \cite[Section 4.1]{Okada} for orbifold coordinates of weighted blowups.
We see that the morphism factors through $V := \tilde{\mbA}^4/\mbZ_3 (0,0,1,0)$ and $V$ is isomorphic to the affine $4$-space with affine coordinates $\tilde{x}_0, \tilde{x}_1, \tilde{y} = {y'}^3$ and $\tilde{w}$.
Let $Z_y$ be the subscheme of $V$ defined by the equation
\[
\tilde{F} := F (\tilde{x}_0 y', \tilde{x}_1 y', {y'}^3, 1, \tilde{w} {y'}^5)/ {y'}^5 = 0.
\]
Note that $\tilde{F} \in \mbC [\tilde{x}_0,\tilde{x}_1,\tilde{y},\tilde{w}]$ under the identification $\tilde{y} = {y'}^3$ and it is of the form $\tilde{F} = \tilde{w} + \tilde{x}_0 + \tilde{y} h$ for some $h \in \mbC [\tilde{x}_0, \tilde{x}_1, \tilde{y}, \tilde{w}]$ whose constant term is zero.
Note also that $Z_y$ is isomorphic to an open subset of $Z$ and its origin corresponds to the $\frac{1}{2} (1,1,1)$ point of $Z$ lying on $F$.
Then the Kawamata blowup of $Z$ at the $\frac{1}{2} (1,1,1)$ point is the blowup of $Z_y$ at the origin.
It follows that
\[
G \cong (\tilde{w} + \tilde{x}_0 = 0) \subset \mbP^3,
\]
where $\tilde{x}_0, \tilde{x}_1,\tilde{y},\tilde{w}$ are the homogeneous coordinates of $\mbP^3$.
We see that $\hat{C}_{\lambda,\mu}$ intersects $G$ at $(1 \!:\! \lambda \!:\! -\mu \!:\! 1)$.
This shows that $\Bs \hat{\mcL}_{\lambda}$ is disjoint from $G$.
Moreover $\hat{G}_{\lambda}$ is isomorphic to 
\[
(\tilde{x}_1 - \lambda \tilde{x}_0 = \tilde{w} + \tilde{x}_0 = 0) \subset \mbP^4.
\]
Therefore (1) and (2) are verified.

We see that $\hat{\Gamma}$ is a nonsingular curve, $W$ is nonsingular along $\hat{\Gamma}$ and the multiplicity of a general member of $\mcH_W$ along $\hat{\Gamma}$ is $1$.
From this we deduce that $(W, \frac{1}{2} \mcH_W)$ is terminal.
Thus (3) is verified.
\end{proof}

\begin{Lem} \label{lem:G'19nondegint}
For a general $\lambda \in \mbC$, the intersection form of $(x_0 = 0)|_{S_{\lambda}} = \Gamma + \Delta$ is nondegenerate.
\end{Lem}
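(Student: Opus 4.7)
The plan is to compute the intersection matrix of $\Gamma$ and $\Delta$ on $S_{\lambda}$ directly and show that its determinant is nonzero. First, since $(x_0 = 0)|_{S_{\lambda}} = \Gamma + \Delta$ we have $\Gamma + \Delta \sim_{\mbQ} A|_{S_{\lambda}}$. Using $\deg \Gamma = (A \cdot \Gamma) = 1/6$ (which appears in the proof of Lemma \ref{lem:G19spintenum}) and $\deg \Delta = (A^3) - \deg \Gamma = 2/3 - 1/6 = 1/2$, intersecting $\Gamma + \Delta$ with $\Gamma$ and $\Delta$ gives the two constraints
\[
(\Gamma^2) + (\Gamma \cdot \Delta) = \frac{1}{6}
\quad \text{and} \quad
(\Gamma \cdot \Delta) + (\Delta^2) = \frac{1}{2}.
\]
Setting $x = (\Gamma \cdot \Delta)$, the determinant of the intersection matrix becomes
\[
(1/6 - x)(1/2 - x) - x^2 = \frac{1}{12} - \frac{2x}{3},
\]
so the task reduces to showing $(\Gamma \cdot \Delta) \neq 1/8$.

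Next I would pin down $\Gamma \cap \Delta$ set-theoretically. It equals $(x_0 = x_1 = w = y^3 + z^2 = 0)_{\mbP}$, and on the curve $(x_0 = x_1 = w = 0) \cong \mbP (2,3)$ the equation $y^3 + z^2 = 0$ has a unique solution, namely the point $\mathsf{r} = (0\!:\!0\!:\!1\!:\!z_0\!:\!0)$ with $z_0^2 = -1$, once one accounts for the identification $(1, z_0) \sim (1, -z_0)$ in $\mbP (2,3)$. Membership $\mathsf{r} \in X'$ is immediate from the factorization $F(0,0,y,z,w) = w (wy^2 + y^3 + z^2)$, which also confirms that $(x_0 = x_1 = 0)_{X'}$ is reduced and equal to $\Gamma + \Delta$.

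The main step, and the hard part, is to check that $\mathsf{r}$ is a smooth point of $X'$, and of $S_{\lambda}$ for a general $\lambda \in \mbC$. Working in the orbifold chart $y = 1$ with affine coordinates $x_0, x_1, z' := z - z_0, w$, a direct computation using the normalized form of $F$ fixed at the start of the subsection shows that the only nonvanishing partial of $F$ at $\mathsf{r}$ is $\partial F/\partial x_0|_{\mathsf{r}} = z_0$, contributed by the monomial $y^2 z x_0$ in $h_8$; all other partials vanish because the coefficient polynomials $a_i, b_i \in \mbC [x_0, x_1]$ are homogeneous of positive degree and hence vanish at $(0,0)$. This shows $X'$ is smooth at $\mathsf{r}$, and since $\nabla F|_{\mathsf{r}} = (z_0, 0, 0, 0, 0)$ is linearly independent from $\nabla (x_1 - \lambda x_0)|_{\mathsf{r}} = (-\lambda, 1, 0, 0, 0)$ for general $\lambda$, the surface $S_{\lambda}$ is also smooth at $\mathsf{r}$. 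This partial-derivative bookkeeping is the only delicate point of the proof and relies essentially on the specific normal form for $F$ arranged earlier.

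Once this smoothness is in hand, $\Gamma$ and $\Delta$ are distinct irreducible reduced curves on the normal surface $S_{\lambda}$ meeting properly at the single smooth point $\mathsf{r}$, so the intersection number $(\Gamma \cdot \Delta) = (\Gamma \cdot \Delta)_{\mathsf{r}}$ is a positive integer. Hence $(\Gamma \cdot \Delta) \ge 1 > 1/8$, the determinant $\tfrac{1}{12} - \tfrac{2}{3}(\Gamma \cdot \Delta)$ is nonzero, and the intersection form of the support of $(x_0 = 0)|_{S_{\lambda}} = \Gamma + \Delta$ is nondegenerate.
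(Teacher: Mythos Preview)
Your proof is correct and takes essentially the same approach as the paper: use $\Gamma + \Delta \sim_{\mbQ} A|_{S_{\lambda}}$ to get the two linear constraints, then determine $(\Gamma \cdot \Delta)$ via the geometry of $\Gamma \cap \Delta$. The paper simply asserts that $\Gamma$ meets $\Delta$ at one nonsingular point and writes $(\Gamma \cdot \Delta) = 1$, arriving at the explicit matrix $\begin{pmatrix} -5/6 & 1 \\ 1 & -1/2 \end{pmatrix}$; you instead supply the Jacobian check that $\mathsf{r}$ is smooth on $S_{\lambda}$ and conclude only that $(\Gamma \cdot \Delta)$ is a positive integer, which already forces the determinant $\tfrac{1}{12} - \tfrac{2}{3}(\Gamma \cdot \Delta) \ne 0$.
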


\begin{proof}
Since $\Gamma + \Delta \sim_{\mbQ} A|_{S_{\lambda}}$ and $\Gamma$ intersects $\Delta$ at one nonsingular point, we have $(\Gamma + \Delta \cdot \Gamma) = (A \cdot \Gamma) = 1/6$, $(\Gamma + \Delta \cdot \Delta) = (A \cdot \Delta) = 1/2$ and $(\Gamma \cdot \Delta) = 1$.
Thus, we get
\[
\begin{pmatrix}
(\Gamma^2) & (\Gamma \cdot \Delta) \\
(\Gamma \cdot \Delta) & (\Delta^2)
\end{pmatrix}
=
\begin{pmatrix}
- \frac{5}{6} & 1 \\
1 & - \frac{1}{2}
\end{pmatrix}.
\]
Therefore the intersection form of $(x_0 = 0)|_{S_{\lambda}} = \Gamma + \Delta$ is nondegenerate.
\end{proof}

\begin{Thm} \label{thm:G'19invbir}
If $\msp$ is a maximal center, then there is a birational involution $\tau \colon X' \ratmap X'$ which is a Sarkisov link centered at $\msp$.
\end{Thm}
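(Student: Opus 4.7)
The plan is to apply Theorem \ref{thm:constinvbir} to $X'$ with the following dictionary between the general framework of Condition \ref{condinvbir} and the present setup. I will take $a = 2$, identify the role of the degree-$a$ coordinate ``$y$'' in the general framework with our coordinate $w$ (so that the general-framework curve $(x_0=x_1=y=0)_X$ is our $\Gamma = (x_0=x_1=w=0)_{X'}$), take the divisorial extraction to be $\varphi = \varphi'\colon Y' \to X'$ with exceptional divisor $E$, and take the birational morphism $\psi\colon W \to Y'$ to be the composite of the two Kawamata blowups already constructed (first at $\msq$, then at the $\tfrac{1}{2}(1,1,1)$ point on $F$). Under this identification, the prime $\psi$-exceptional divisor meeting a general $\hat{C}_{\lambda,\mu}$ is $G$, the other $\psi$-exceptional divisor is $\hat{F}$, and the integer $m$ of (4-c) will be $1$.

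With the dictionary fixed, the remaining work is to walk through Condition \ref{condinvbir} and cite the preceding lemmas. Part (1), asserting that $\Gamma$ is irreducible and reduced and that $S_\lambda$ is normal for general $\lambda$, is immediate from the explicit form of $F$ (the restriction $F|_{x_0=x_1=w=0}$ vanishes identically because every $b_i$ lies in the ideal $(x_0,x_1)$) together with Bertini applied to the pencil $\{S_\lambda\}$. Part (2) is where the hypothesis that $\msp$ is a maximal center enters: by Proposition \ref{prop:G'19spexcl}, under this hypothesis the set of pairs $(\lambda,\mu)$ with $C_{\lambda,\mu}$ reducible is finite, hence for general $\lambda$ every $C_{\lambda,\mu}$ is irreducible and reduced. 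Parts (3) and (4-f) follow from Lemma \ref{lem:G'19projindet}; the intersection-theoretic requirements (4-b), (4-c), (4-d) are precisely the content of Lemma \ref{lem:G19spintenum} (with $m=1$); the assertions in (4-a) and (4-e) about $\psi$ being an isomorphism off $\msq_Y$, about $\Bs\mcH_W = \hat{\Gamma}$, and about the terminality of $(W,\tfrac{1}{2}\mcH_W)$ are either immediate from the construction of $\psi$ or are already noted in and around Lemma \ref{lem:G'19projindet}(3). Finally, part (5) is exactly Lemma \ref{lem:G'19nondegint}. Theorem \ref{thm:constinvbir} then yields the desired birational involution $\tau\colon X' \ratmap X'$, which is a Sarkisov link starting with $\varphi'$ and is therefore centered at $\msp$.

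The main delicate step is verifying (2), because this is the only place where the hypothesis that $\msp$ is a maximal center is actually used: without it, the family $\{C_{\lambda,\mu}\}$ could degenerate and Theorem \ref{thm:constinvbir} would fail to apply. The rest of the verification is mechanical, but some care is needed in translating between the two different roles that the coordinate $w$ plays (a degree-$2$ ``$y$''-coordinate in the general framework) and the specific $\mcG'_{19}$ set-up, in particular when matching $F$ and $G$ in Condition \ref{condinvbir}(4-d) with $\hat{F}$ and $G$ here.
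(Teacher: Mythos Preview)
Your proposal is correct and follows essentially the same route as the paper: apply Theorem~\ref{thm:constinvbir} and verify Condition~\ref{condinvbir} item by item, citing Lemmas~\ref{lem:G19spintenum}, \ref{lem:G'19projindet}, \ref{lem:G'19nondegint} for parts (3)--(5) and the intersection numbers in (4), and invoking Proposition~\ref{prop:G'19spexcl} together with the hypothesis that $\msp$ is a maximal center for part (2). Your explicit dictionary (identifying the framework coordinate $y$ with $w$, taking $m=1$, and matching the $\psi$-exceptional divisor $F$ of Condition~\ref{condinvbir}(4-d) with $G$ here while $\hat{F}$ is the disjoint one) is accurate and more detailed than the paper's own proof, which simply lists the four results and leaves the matching to the reader.
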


\begin{proof}
By Theorem \ref{thm:constinvbir}, it is enough to show that Condition \ref{condinvbir} is satisfied.
This follows from Lemmas \ref{lem:G19spintenum}, \ref{lem:G'19projindet}, \ref{lem:G'19nondegint} and Proposition \ref{prop:G'19spexcl}.
Here we note that Proposition \ref{prop:G'19spexcl} indeed implies (2) of Condition \ref{condinvbir} since we are assuming that $\msp$ is a maximal singularity.
\end{proof}

\subsection{The family $\mcG_{19}$} \label{sec:spfam19}

Let $X = X_{6,8} \subset \mbP := \mbP (1^2,2,3,4^2)$ be a quasismooth member of the family No.\ $19$.
In \cite{Okada}, we prove Theorem \ref{thm:prev} assuming the following generality condition: $X$ does not contain a WCI curve of type $(1,1,4,6)$.
This condition is equivalent to the condition that the scheme $(x_0 = x_1 = 0)_X$ is irreducible.
In the following, we assume that $(x_0 = x_1 = 0)_X$ is reducible.
In \cite{Okada}, this generality condition is only used to exclude singular points of type $\frac{1}{2} (1,1,1)$, hence it is enough to consider those points.

Let $\msp \in X$ be a singular point of type $\frac{1}{2} (1,1,1)$.
The aim of this subsection is to construct a birational involution of $X$ which untwists the maximal singularity centered at $\msp$ under the assumption that $\msp$ is a maximal center.
Let $x_0,x_1,y,z,s_0,s_1$ be homogeneous coordinates of $\mbP (1,1,2,3,4,4)$ of degree respectively $1,1,2,3,4,4$.

\begin{Lem}
We can choose homogeneous coordinates $x_0,x_1,y,z,s_0,s_1$ with the following properties.
\begin{enumerate}
\item $\msp = \msp_2$.
\item Defining polynomials of $X$ can be written as
\[ 
\begin{split}
F_1 &= y s_0 + z^2 + s_1 a_2 + a_6, \\ 
F_2 &= y^2 s_1 + y g_6 + h_8,
\end{split}
\]
where
\[
\begin{split} 
g_6 &= s_0 b_2 + s_1 b'_2 + z b_3 + b_6, \\ 
h_8 &= s_1^2 + s_0 s_1 + s_0 (z x_0 + c_4) + s_1 (z c_1 + c'_4) + z^2 c_2 + z c_5 + c_8,
\end{split}
\]
for some $a_i,b_i,b'_2,c_i,c'_4 \in \mbC [x_0,x_1]$.
\end{enumerate}
\end{Lem}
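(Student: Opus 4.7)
The plan is to prove the lemma in two stages: first locating the singular point $\msp$ at the vertex $\msp_2$, then normalising $F_1, F_2$ to the claimed form via a sequence of weighted coordinate substitutions.

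For the first stage, any singular point $\msp$ of $X$ of type $\frac{1}{2}(1,1,1)$ must lie in the locus $(x_0 = x_1 = z = 0)$ of odd-weight coordinates and satisfy $y(\msp) \ne 0$: otherwise the isotropy group of $\msp$ in the $\mbC^*$-action on the affine cone would contain $\mbZ/4$ (acting with weights $(1,1,2,3,0,0)$ modulo $4$), incompatible with a $\frac{1}{2}$ singularity. Hence $\msp \in \mbP(y,s_0,s_1)$ with $y(\msp) \ne 0$, and the weight-preserving substitutions $s_i \mapsto s_i + \alpha_i y^2$ with suitable $\alpha_i \in \mbC$ translate $\msp$ to $\msp_2$.

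For the second stage, writing $F_1, F_2$ as general polynomials of the specified degrees, the condition $F_j(\msp_2) = 0$ already kills the $y^3$ coefficient of $F_1$ and the $y^4$ coefficient of $F_2$. A Jacobian computation at the cone preimage of $\msp_2$ shows that quasismoothness forces the $2 \times 2$ matrix whose rows are the coefficients of $(y s_0, y s_1)$ in $F_1$ and of $(y^2 s_0, y^2 s_1)$ in $F_2$ to be invertible, so a $\mathrm{GL}(2,\mbC)$-change on the pair $(s_0, s_1)$ normalises it to the identity, making $ys_0$ and $y^2s_1$ the distinguished leading monomials. An analogous analysis along the full locus $(x_0 = x_1 = 0)_X$, combined with the standing reducibility assumption on this locus, forces the coefficients of $z^2$ in $F_1$ and of $s_0 s_1$ in $F_2$ to be nonzero, and we rescale $z$ and $s_0$ to set them equal to $1$.

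The remaining normalisation is carried out by iterative elimination exploiting these leading terms. Substitutions of the form $y \mapsto y + q_2(x_0,x_1)$, $z \mapsto z + r_3(x_0,x_1) + y\,\ell_1(x_0,x_1)$, and $s_i \mapsto s_i + P$ with $P \in \mbC[x_0,x_1,y,z]_4$, together with the ideal-level replacement $F_2 \mapsto F_2 + pF_1$ for $p \in \mbC\cdot y \oplus \mbC[x_0,x_1]_2$ and a final $\mathrm{GL}(2)$-change on $(x_0,x_1)$, successively absorb all extraneous monomials: the $s_0\cdot\mbC[x_0,x_1]_2$, $y^2\cdot\mbC[x_0,x_1]_2$, $yz\cdot\mbC[x_0,x_1]_1$, $y\cdot\mbC[x_0,x_1]_4$ and $z\cdot\mbC[x_0,x_1]_3$ strata of $F_1$, and the $y^2 s_0$, $s_0^2$, $y^3\cdot\mbC[x_0,x_1]_2$, $y^2\cdot\mbC[x_0,x_1]_4$, $yz^2$ and $s_0 z x_1$ monomials of $F_2$. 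The main obstacle will be the bookkeeping, since each substitution modifies several strata simultaneously and can reintroduce previously eliminated terms in lower-order graded pieces. This is handled by ordering the eliminations by decreasing total degree of the relevant unwanted stratum and iterating; because each correction then lies in a strictly smaller graded piece, the procedure terminates in finitely many steps and produces exactly the claimed normal form.
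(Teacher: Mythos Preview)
Your overall strategy (locate $\msp$ at $\msp_2$, then normalise by successive coordinate substitutions) matches the paper's, and the bookkeeping portion is fine. However, three of the coefficients in the target form are \emph{not} reachable by any of the substitutions you list; they are forced by geometric hypotheses, and your proposal either omits or misattributes each of them.

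\begin{enumerate}
\item \textbf{The $s_0^2$ term in $h_8$.} You list $s_0^2$ among monomials to be ``absorbed'' in the iterative stage, but none of the operations you allow --- $y\mapsto y+q_2$, $z\mapsto z+r_3+y\ell_1$, $s_i\mapsto s_i+P$ with $P\in\mbC[x_0,x_1,y,z]_4$, $F_2\mapsto F_2+pF_1$, or a $\mathrm{GL}(2)$ change on $(x_0,x_1)$ --- can alter the $s_0^2$ coefficient. In the paper this coefficient is shown to vanish \emph{because} $(x_0=x_1=0)_X$ is reducible: restricting the equations to $x_0=x_1=0$ gives $(y s_0+z^2=y^2 s_1+\beta s_0^2+\gamma s_0 s_1+\delta s_1^2=0)$, which is reducible iff $\beta=0$. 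You invoke the reducibility assumption only to get $s_0s_1\ne 0$, but that conclusion actually requires $\beta=0$ first (so that $X\ni\msp_4$) and then quasismoothness at $\msp_4$.
\item \textbf{The $s_1^2$ term in $h_8$.} You never mention this coefficient. To normalise it to $1$ you need $\delta\ne 0$, which the paper obtains by observing that once $\beta=0$ one has $X\ni\msp_4$, and if in addition $\delta=0$ then $X\ni\msp_5$; but $y s_1\notin F_1$, so $X$ would fail to be quasismooth at $\msp_5$.
\item \textbf{The $s_0 z x_0$ term.} Your $\mathrm{GL}(2)$ change on $(x_0,x_1)$ can eliminate $s_0 z x_1$ in favour of $s_0 z x_0$ only if at least one of these monomials is present. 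The paper checks this by verifying that if neither appears then $X$ is not quasismooth at $(0\!:\!0\!:\!1\!:\!1\!:\!-1\!:\!0)$. Your proposal does not supply this argument.
\end{enumerate}

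These three points are the substantive content of the lemma; the rest is indeed routine. Once you insert them (in the logical order $\beta=0\Rightarrow\gamma\ne 0$ and $\delta\ne 0\Rightarrow$ rescale, then the quasismoothness check at $(0\!:\!0\!:\!1\!:\!1\!:\!-1\!:\!0)$), your outline becomes a complete proof identical in approach to the paper's.
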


\begin{proof}
By replacing coordinates, we can assume $\msp = \msp_2$.
Then we can write defining polynomials as $F_1 = y s_0 + f_6$ and $F_2 = y^2 s_1 + y g_6 + h_8$, where $f_6,g_6,h_8 \in \mbC [x_0,x_1,z,s_0,s_1]$.
Replacing $y$ with $y - f'_2$, where $f'_2 = f'_2 (x_0,x_1)$ is a polynomial such that $f_6 = s_0 f'_2 + f'_6$ for $f'_6 = f'_6 (x_0,x_1,z,s_1)$, we may assume that $f_6$ does not involve $s_0$.
Quasismoothness of $X$ implies that $X$ does not pass through $\msp_3$, that is, $z^2 \in f_6$.
Then, replacing $z$ suitably, we may assume that $f_6 = z^2 + s_1 a_2 + a_6$ for some $a_i \in \mbC [x_0,x_1]$.
Let $\alpha \in \mbC$ be the coefficient of $z^2$ in $g_6$.
By replacing $F_2$ with $F_2 - \alpha y F_1$ and $s_1$ with $s_1 - \alpha s_0$, we may assume that $\alpha = 0$.
Thus, $g_6 = s_0 b_2 + s_1 b'_2 + z b_3 + b_6$ for some $b_i, b'_2 \in \mbC [x_0,x_1]$.
We write $h_8 = \beta s_0^2 + \gamma s_0 s_1 + \delta s_1^2 + s_0 h'_4 + s_1 h''_4 + h'_8$, where $h'_4,h''_4,h'_8 \in \mbC [x_0,x_1,z]$.
We shall show that $\beta = 0$, $\gamma \ne 0$ and $\delta \ne 0$.
We have
\[
(x_0 = x_1 = 0)_X 
= (x_0 = x_1 = y s_0 + z^2 = y^2 s_1 + \beta s_0^2 + \gamma s_0 s_1 + \delta s_1^2).
\]
By the assumption, $(x_0 = x_1 = 0)_X$ is reducible, which is equivalent to the condition $\beta = 0$.
It follows that $X$ passes through $\msp_4$.
Since $X$ is quasismooth at $\msp$, we have $\gamma \ne 0$.
If $\delta = 0$, then $X$ passes through $\msp_5$.
But then $X$ cannot be quasismooth at $\msp_5$ since $y s_1 \notin F_1$.
This shows $\delta \ne 0$.
By re-scaling coordinates, we may assume $\gamma = \delta = 1$.
It remains to show that we can write $h_4 = z x_0 + c_4$ for some $c_4 = c_4 (x_0,x_1)$ after replacing $x_0$ and $x_1$.
If neither $z x_0 \in h_4$ nor $z x_1 \in h_4$, then $X$ is not quasismooth at $(0 \!:\! 0 \!:\! 1 \!:\! 1 \!:\! -1 \!:\! 0) \in X$.
This is a contradiction and thus at least one of $z x_0$ and $z x_1$ appears in $h_4$.
Therefore, we can write $h_4 = z x_0 + c_4$ for some $c_4$ after replacing $x_0$ and $x_1$.
This completes the proof.
\end{proof}

In the following, we  fix homogeneous coordinates of $\mbP (1,1,2,3,4,4)$ as in the above lemma.
Let $\varphi \colon Y \to X$ be the Kawamata blowup of $X$ at $\msp$ with exceptional divisor $E$.
Let $\msq \in Y$ be the singular point of type $\frac{1}{4} (1,1,3)$ which is the inverse image of $\msp_4 \in X$ by $\varphi$.
Let $\psi \colon W \to Y$ be the Kawamata blowup of at $\msq$ with exceptional divisor $F \cong \mbP (1,1,3)$.
For complex numbers $\lambda$ and $\mu$, we set $S_{\lambda} := (x_1 - \lambda x_0 = 0)_X$ and $T_{\lambda} := (s_1 - \mu x_0^4 = 0)_X$.
We see that $S_{\lambda}$ is normal for a general $\lambda \in \mbC$.
For $i = 1,2$, we define $\bar{F}_i = F_1 (x_0,\lambda x_1,y,z,s_0,\mu x_0^4) \in \mbC [x_0,y,z,s_0]$.
We have $T_{\lambda}|_{S_{\lambda}} = \Gamma + C_{\lambda,\mu}$, where 
\[
\Gamma = (x_0 = s = s_1 = y s_0 + z^2 = 0)
\] 
and 
\[
C_{\lambda,\mu} = (x_1 - \lambda x_0 = s_1 - \mu x_0^4 = \bar{F}_1 = \bar{F}_2/x_0 = 0).
\]
Note that $\bar{F}_2$ is indeed divisible by $x_0$ but not by $x_0^2$ since $F_1 \in (x_0,x_1,s_1)$ and $s_0 z x_0 \in F_2$.
By eliminating $x_1$ and $w$, the curve $C_{\lambda,\mu}$ is isomorphic to the weighted complete intersection in $\mbP (1,2,3,4)$ with homogeneous coordinates $x_0,y,z,s_0$ defined by the equations of the form $y s_0 + z^2 + \alpha x_0^6 = d_3 s + e_7 = 0$ for some $d_3, e_7 \in \mbC [x_0,y,z]$ such that $z y^2 \notin e_7$, $y^3 x_0 \notin e_7$ and the coefficient of $z$ in $d_3$ is $1$. 
For a curve or a divisor $\Delta$ on $X$, we denote by $\tilde{\Delta}$ the proper transform of $\Delta$ on $Y$, and for a curve of a divisor $\Delta$ on $X$ or $Y$, we denote by $\hat{\Delta}$ the proper transform of $\Delta$ on $W$.

\begin{Lem} \label{lem:G19defeq}
Let $C$ be a curve defined in the weighted projective space $\mbP (1,2,3,4)$ with homogeneous coordinates $x,y,z,s$ by the equations
\[
y s + z^2 + \alpha x^6 = d_3 s + e_7 = 0,
\]
where $\alpha \in  \mbC$ and $d_3, e_7 \in \mbC [x,y,z]$.
Assume that the coefficient of $z$ in $d_3$ is nonzero and that neither $z y^2 \in e_7$ nor $y^3 x \in e_7$.
Then the following hold.
\begin{enumerate}
\item $C$ contains at most one component which is contracted by the projection $\mbP (1,2,3,4) \ratmap \mbP (1,2,3)$ to the coordinates $x,y,z$ and if $C$ contains such a component, then it is the curve $(y = d_3 = 0)$.
\item There is a unique component $C'$ of $C$ which passes through the point $(0 \!:\! 0 \!:\! 0 \!:\! 1)$ and if there is a component $C^{\circ}$ of $C$ other than $C'$, then $C^{\circ}$ passes through $(0\!:\!1\!:\!0\!:\!0)$.
\item If $C$ is irreducible, then it is reduced.
\end{enumerate}
\end{Lem}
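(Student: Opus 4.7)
The plan is to establish (1), (2), (3) in turn. The main tools will be the analysis of fibers of the projection $\pi \colon \mbP(1,2,3,4) \ratmap \mbP(1,2,3)$ forgetting $s$, a Jacobian computation at $\msp_3 = (0 \!:\! 0 \!:\! 0 \!:\! 1)$, and the fact that every complete curve in $\mbP(1,2,3,4)$ meets the ample divisor class containing $(x=0)$.

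For (1), an irreducible component of $C$ contracted by $\pi$ must lie in a fiber of $\pi$, so the two defining equations must vanish identically as polynomials in $s$ at some fixed $(x\!:\!y\!:\!z) \in \mbP(1,2,3)$. The coefficient of $s$ in $ys + z^2 + \alpha x^6$ is $y$ and the coefficient of $s$ in $d_3 s + e_7$ is $d_3$, so $y = 0$ and $d_3 = 0$ on any such component. Since the coefficient of $z$ in $d_3$ is nonzero, $d_3|_{y = 0} = c_1 z + c_3 x^3$ with $c_1 \ne 0$, and the subscheme $(y = d_3 = 0) \subset \mbP(1,2,3,4)$ is parametrized by $(x,s) \mapsto (x : 0 : -\tfrac{c_3}{c_1} x^3 : s)$, hence isomorphic to $\mbP(1,4) \cong \mbP^1$ and in particular irreducible; this is the unique candidate for a contracted component.

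For (2), observe first that $\msp_3 \in C$ because both defining polynomials vanish at $(0,0,0,1)$. In the affine chart $s = 1$, a direct computation on the smooth $\mbZ_4$-cover of $\mbP(1,2,3,4)$ at $\msp_3$ shows that the Jacobian of $(G_1, G_2)$ at the origin equals
\[
\begin{pmatrix}
0 & 1 & 0 \\
0 & 0 & c_1
\end{pmatrix},
\]
using that each admissible degree-$7$ monomial of $e_7$ (namely $x^7, yx^5, zx^4, y^2x^3, yzx^2, z^2 x$) has total degree at least two in the variables $x,y,z$, and that $d_3 = c_1 z + c_2 xy + c_3 x^3$ with $c_1 \ne 0$. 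The Jacobian has rank two, so the pullback of $C$ to the cover is smooth at the origin; hence a unique irreducible component $C'$ of $C$ passes through $\msp_3$. For the remaining components, restrict to $x = 0$: every monomial of $e_7$ is divisible by $x$, so $G_2|_{x = 0} = c_1 z s$, while $G_1|_{x = 0} = ys + z^2$, and their common zero locus in $\mbP(2,3,4)$ is supported on $\{\msp_1, \msp_3\}$. Because $\mbP(1,2,3,4) \setminus (x = 0) \cong \mbA^3$ is affine, every irreducible component of the complete curve $C$ must meet $(x = 0)$ and hence contains $\msp_1$ or $\msp_3$; any component distinct from $C'$ therefore passes through $\msp_1$.

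For (3), suppose $C$ is irreducible. Being a complete intersection of the expected codimension $2$ in the Cohen--Macaulay ambient $\mbP(1,2,3,4)$, $C$ is itself Cohen--Macaulay. By (2), $\msp_3 \in C$ and $C$ is smooth there, so $\mcO_{C, \msp_3}$ is a DVR, and the generic stalk $\mcO_{C, \eta}$ is its fraction field, which is a reduced ring. Since a Cohen--Macaulay scheme that is generically reduced is reduced, $C$ is reduced. The main technicality is the Jacobian verification at the orbifold point $\msp_3$, but this is handled cleanly on the smooth $\mbZ_4$-cover, from which irreducibility and reducedness of the local branch descend to the quotient.
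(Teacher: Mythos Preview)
Your proof is correct. Parts (1) and (3) parallel the paper's argument (your Cohen--Macaulay justification in (3) makes explicit what the paper leaves as ``$C$ contains at least one component along which $C$ is reduced''), but your argument for (2) takes a genuinely different and cleaner route. The paper eliminates $s$ to obtain the image curve $\overline{C} = (h = 0) \subset \mbP(1,2,3)$ with $h = y e_7 - d_3 (z^2 + \alpha x^6)$, then analyzes the possible factorizations of $h$ as a cubic in $z$ to show every irreducible factor vanishes at $(0\!:\!1\!:\!0)$, and finally lifts back to $C$. You instead intersect $C$ directly with the ample divisor $(x = 0)$: the hypothesis $z y^2 \notin e_7$ (together with the trivial observation that every other weight-$7$ monomial in $x,y,z$ is divisible by $x$) forces $e_7|_{x=0} = 0$, so $C \cap (x = 0)$ is immediately seen to be $\{\msp_1, \msp_3\}$, and the affineness of the complement finishes the argument. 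This avoids the case analysis of factorizations of $h$ entirely; as a bonus, your argument for (2) does not even need the hypothesis $y^3 x \notin e_7$.
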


\begin{proof}
The curve $C$ contains a component which is contracted by the projection $\pi \colon \mbP (1,2,3,4) \ratmap \mbP (1,2,3)$ if and only if $(y = d_3 = 0) \cap C$ is a curve.
By the assumption $z \in d_3$, the curve $(y = d_3 = 0)$ is irreducible and reduced.
This proves (1).
Since $C$ is quasismooth at $(0 \!:\!0 \!:\! 0 \!:\! 1)$, there is a unique component $C'$ of $C$ passing through the point.
Suppose that there is a component $C^{\circ}$ of $C$ other than $C'$.
Then, $C^{\circ}$ cannot be contracted by $\pi$.
The image $\overline{C} = \pi (C)$ is the curve in $\mbP (1,2,3)$ defined by $h := y e_7 - d_3 (z^2 + \alpha x^6) = 0$. 
We shall show that any component of $\overline{C}$ passes through $\msq = (0\!:\!1\!:\!0)$.
We see that $\overline{C}$ passes through $\msq$, hence, if $\overline{C}$ is irreducible, then we are done.
Assume that $\overline{C}$ is reducible.
Then, since $h$ is cubic in varible $z$, we have $h = h_1 h_2$, where $h_1 \ni z$ and $h_2 \ni z^2$ are linear and quadratic  with respect to $z$, respectively. 
The curve $\overline{C}_1 = (h_1 = 0)$ clearly passes through $\msq$.
If $y^3 \in h_2$, then $z y^3 \in h$.
This cannot happen since $z y^2 \notin e_7$.
Hence $y^3 \notin h_2$ and $\overline{C}_2 = (h_2 = 0)$ passes through $\msq$.
This shows that any component of $\overline{C}$ passes through $\msp$.
Thus $\pi (C^{\circ})$ passes through $\msq$ and this implies $(x = z = 0) \cap C^{\circ} \ne \emptyset$.
Since 
\[
\emptyset \ne (x = z = 0) \cap C^{\circ} \subset (x = z = 0) \cap C = \{ (0 \!:\!1 \!:\! 0 \!:\! 0), (0 \!:\!0 \!:\! 0 \!:\! 1) \}
\] 
and $C^{\circ}$ does not pass through $(0 \!:\!0 \!:\! 0 \!:\! 1)$, we conclude that $C^{\circ}$ passes through $(0\!:\!1\!:\! 0\!:\!0)$.
This proves (2).
Finally, since $C$ contains at least one component along which $C$ is reduced, it follows that $C$ is reduced if it is irreducible.
This proves (3).
\end{proof}

For complex numbers $\lambda, \mu$ and $\zeta$, we set
\[
C_{\lambda,\mu,\zeta} := (x_1 - \lambda x_0 = s_1 - \mu x_0^4 = y = z + \zeta x_0^3= 0) \subset \mbP (1,1,2,3,4,4).
\]
Note that $C_{\lambda,\mu,\zeta}$ does not pass through $\msp$.

\begin{Lem} \label{lem:G19Cirr}
One of the following holds.
\begin{enumerate}
\item There are infinitely many pairs $(\lambda,\mu) \in \mbC^2$ such that $C_{\lambda,\mu}$ is reducible and contains a curve $C_{\lambda,\mu,\zeta}$ as a component for some $\zeta \in \mbC$ depending on $\lambda,\mu$.
\item There are infinitely many pairs $(\lambda,\mu) \in \mbC^2$ such that $C_{\lambda,\mu}$ is reducible but does not contain a curve $C_{\lambda,\mu,\zeta}$ for any $\zeta \in \mbC$.
In this case there is a component $C^{\circ}_{\lambda,\mu}$ of $C_{\lambda,\mu}$ such that $C^{\circ}_{\lambda,\mu}$ passes through $\msp$ but does not pass through $\msp_4$.
\item For all but finite exception of pairs $(\lambda,\mu) \in \mbC^2$, the curve $C_{\lambda, \mu}$ is irreducible and reduced.
\end{enumerate}
\end{Lem}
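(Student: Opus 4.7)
The plan is to carry out a direct trichotomy on the two-parameter family $\{C_{\lambda,\mu}\}_{(\lambda,\mu) \in \mbC^2}$ by reducing the geometric content to Lemma \ref{lem:G19defeq}. First, by item (3) of that lemma, irreducibility of $C_{\lambda,\mu}$ already forces reducedness, so the negation of case (3) says precisely that the subset
\[
R := \{(\lambda,\mu) \in \mbC^2 \mid C_{\lambda,\mu} \text{ is reducible}\}
\]
is infinite. I would then split $R$ into the part
\[
R_1 := \{(\lambda,\mu) \in R \mid C_{\lambda,\mu,\zeta} \subset C_{\lambda,\mu} \text{ for some } \zeta \in \mbC\}
\]
corresponding to case (1) and its complement $R_2 := R \setminus R_1$ corresponding to case (2), and show that at least one of $R_1, R_2$ is infinite.

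The first task is to show that $R_1$ is a constructible subset of $\mbC^2$, so that either $R_1$ is infinite (giving case (1)) or $R_1$ is finite and $R_2$ is infinite. For this I would substitute $y = 0$ and $z = -\zeta x_0^3$ into the two defining equations $y s_0 + z^2 + \alpha x_0^6 = 0$ and $d_3 s_0 + e_7 = 0$ of $C_{\lambda,\mu}$ in $\mbP (1,2,3,4)$: since $\alpha, d_3, e_7$ depend polynomially on $(\lambda,\mu)$, the containment $C_{\lambda,\mu,\zeta} \subset C_{\lambda,\mu}$ becomes a system of polynomial equations in $(\lambda,\mu,\zeta) \in \mbA^3$, whose image in $\mbC^2$ is constructible by Chevalley's theorem. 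The second task is to produce, for every $(\lambda,\mu) \in R_2$, a component $C^{\circ}_{\lambda,\mu}$ passing through $\msp$ and missing $\msp_4$. I would apply item (2) of Lemma \ref{lem:G19defeq} in the chart $x_0 \ne 0$: it supplies a unique component $C'_{\lambda,\mu}$ passing through $(0 \!:\! 0 \!:\! 0 \!:\! 1) = \msp_4$, and any other component passes through $(0 \!:\! 1 \!:\! 0 \!:\! 0) = \msp$ and necessarily misses $\msp_4$. Since $(\lambda,\mu) \in R$ forces reducibility, at least one such other component exists and can serve as $C^{\circ}_{\lambda,\mu}$.

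The main obstacle I expect is purely bookkeeping: one must verify, using the explicit form $d_3 = z + (\text{lower weighted monomials in } x_0, y)$ and the fact that $e_7|_{y = 0}$ involves only the monomials $z^2 x_0$, $z x_0^4$ and $x_0^7$, that the resulting vanishing conditions along the line $(y = z + \zeta x_0^3 = 0)$ are non-trivial polynomial equations pinning $\zeta$ down in terms of $(\lambda,\mu)$ rather than identities yielding infinitely many $\zeta$'s per pair. Once this is in place, $R_1$ really is a constructible subset of $\mbC^2$ (and is either all of $\mbC^2$, in which case case (1) holds trivially, or a proper constructible subset), and the trichotomy follows at once.
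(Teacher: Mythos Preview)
Your proposal is correct and follows essentially the same route as the paper: reduce to Lemma~\ref{lem:G19defeq} via the identification of $C_{\lambda,\mu}$ with the complete intersection in $\mbP(1,2,3,4)$, note that irreducibility forces reducedness by part~(3), and for a reducible $C_{\lambda,\mu}$ use part~(2) to locate a component through $\msp$ avoiding $\msp_4$. One simplification: the constructibility of $R_1$ via Chevalley is unnecessary, since once $R$ is infinite the pigeonhole principle alone forces $R_1$ or $R_2$ to be infinite; moreover part~(1) of Lemma~\ref{lem:G19defeq} already guarantees at most one contracted component, so your worry about ``infinitely many $\zeta$'s per pair'' cannot arise.
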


\begin{proof}
Recall that, by eliminating $x_1$ and $s_1$, we see that $C_{\lambda,\mu}$ is isomorphic to the complete intersection in $\mbP (1,2,3,4)$ defined by equations of the form $y s_0 + z^2 + \alpha x_0^6 = d_3 s + e_7 = 0$, where $z \in d_3$ and $z y^2 \notin e_7$ and $y^3 x_0 \notin e_7$.
Note that the points $(0\!:\!1\!:\!0\!:\!0)$ and $(0\!:\!0\!:\!0\!:\!1)$ of $\mbP (1,2,3,4)$ corresponds to $\msp$ and $\msp_4$, respectively. 
Therefore, our claim follows from Lemma \ref{lem:G19defeq}.
\end{proof}

We compute several intersection numbers. 
We describe $\varphi$ and $\psi$ explicitly.
We see that $x_0,x_1,z$ vanish along $E$ to order $1/2$ and, by looking at the defining equations, $s_0$, $s_1$ vanish along $E$ to order respectively $2/2$ and $4/2$.
It follows that $\varphi \colon Y \to X$ can be realized as the embedded weighted blowup of the ambient weighted projective at $\msp$ with $\wt (x_0,x_1,z,s_0,s_1) = \frac{1}{2} (1,1,1,2,4)$.
Hence we have
\[
E \cong (s_0 + z^2 = s_1 + s_0 b_2 + z b_8 + s_0 z x_0 + z^2 c_2 = 0) \subset \mbP (1,1,1,2,4).
\]
Here $x_0,x_1,z,s_0,s_1$ are the coordinates of $\mbP (1,1,1,2,4)$.
We see that $x_0,x_1,z$ vanish along $F$ to order respectively $1/4,1/4,3/4$ and, by looking at the defining equations, $y$, $s_1$ vanish along $F$ to order respectively $6/4$, $4/4$.
It follows that $\psi \colon W \to Y$ can be realized as the embedded weighted blowup of the ambient weighted projective space at $\msp_4$ with $\wt (x_0,x_1,y,z,s_1) = \frac{1}{4} (1,1,6,3,4)$.
Hence we have
\[
F \cong (y + z^2 + s_1 a_2 + a_6 = s_1 + z x_0 + c_4 = 0) \subset \mbP (1,1,6,3,4).
\]
Here $x_0,x_1,y,z,s_1$ are the homogeneous coordinates of $\mbP (1,1,6,3,4)$.

\begin{Lem} \label{lem:G19intnum}
We have 
\[
(-K_Y \cdot \tilde{C}_{\lambda,\mu}) = \frac{1}{4}
\]
and
\[
(\hat{E} \cdot \hat{C}_{\lambda,\mu}) = 3, (F \cdot \tilde{C}_{\lambda,\mu}) = 1, (-K_W \cdot \hat{C}_{\lambda,\mu}) = 0, (-K_W \cdot \hat{\Gamma}) = -\frac{1}{3}.
\]
\end{Lem}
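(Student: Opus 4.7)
The plan is to mirror Lemma~\ref{lem:G19spintenum}, performing the computations in two stages (on $Y$ and on $W$) and exploiting the cycle decompositions $\tilde{S}_\lambda \cdot \tilde{T}_\mu = \tilde{\Gamma} + \tilde{C}_{\lambda,\mu}$ on $Y$ and $\hat{S}_\lambda \cdot \hat{T}_\mu = \hat{\Gamma} + \hat{C}_{\lambda,\mu}$ on $W$, valid for generic $(\lambda,\mu)$. First I would pin down the divisor classes of $\tilde{S}_\lambda, \tilde{T}_\mu, \hat{S}_\lambda, \hat{T}_\mu$. From the embedded descriptions of $\varphi$ and $\psi$ as weighted blowups of the ambient with weights $\frac{1}{2}(1,1,1,2,4)$ on $(x_0,x_1,z,s_0,s_1)$ and $\frac{1}{4}(1,1,6,3,4)$ on $(x_0,x_1,y,z,s_1)$, one reads off that $x_0, x_1$ vanish along $E$ and $F$ to orders $\frac{1}{2}$ and $\frac{1}{4}$ respectively, while $s_1$ (eliminated via $F_2$) has leading term $-zx_0$ at $\msp_4$, of weight $4$, giving vanishing order $1$ along $F$, and is of order $\ge 4$ at $\msp_2$, giving vanishing order $2$ along $E$; these match the corresponding vanishing orders of $x_0^4$, yielding $\tilde{S}_\lambda \sim_{\mbQ} -K_Y$, $\tilde{T}_\mu \sim_{\mbQ} -4K_Y$, $\hat{S}_\lambda \sim_{\mbQ} -K_W$, and $\hat{T}_\mu \sim_{\mbQ} -4K_W$.

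Next I would assemble the triple-product data. From $(-K_X)^3 = \frac{48}{96} = \frac{1}{2}$, $(E^3) = 4$ and $(F^3) = \frac{16}{3}$ (invariants of the Kawamata blowups of $\frac{1}{2}(1,1,1)$ and $\frac{1}{4}(1,1,3)$), one deduces $(-K_Y)^3 = \frac{1}{2} - \frac{1}{8} \cdot 4 = 0$ and $(-K_W)^3 = 0 - \frac{1}{64} \cdot \frac{16}{3} = -\frac{1}{12}$, while the projection formula gives $(E \cdot (-K_Y)^2) = \frac{1}{4}(E^3) = 1$ and $(F \cdot (-K_W)^2) = \frac{1}{16}(F^3) = \frac{1}{3}$. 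Two direct local computations remain. First, $\Gamma$ is cut out in $\mbP(2,3,4)$ by $ys_0 + z^2 = 0$, so $(-K_X \cdot \Gamma) = \frac{6}{2 \cdot 3 \cdot 4} = \frac{1}{4}$. Second, a pushforward argument on the $\mbZ_2$- and $\mbZ_4$-covers gives $(E \cdot \tilde{\Gamma}) = 1$ and $(F \cdot \hat{C}_{\lambda,\mu}) = 1$: $\Gamma$ at $\msp_2$ is smooth with tangent along the weight-$1$ direction $z$ of $\frac{1}{2}(1,1,1)$, and $C_{\lambda,\mu}$ at $\msp_4$ (after eliminating $y,s_1$) is smooth with tangent along the weight-$1$ direction $x_0$ of $\frac{1}{4}(1,1,3)$, the latter following from the leading term $zx_0$ of $\bar{F}_2$.

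Combining these, $(-K_Y \cdot \tilde{\Gamma}) = \frac{1}{4} - \frac{1}{2} = -\frac{1}{4}$, and intersecting the cycle identities with $-K_Y$, $E$, and $F$ produces in turn
\[
(-K_Y \cdot \tilde{C}_{\lambda,\mu}) = 4(-K_Y)^3 - (-K_Y \cdot \tilde{\Gamma}) = \frac{1}{4},
\]
\[
(E \cdot \tilde{C}_{\lambda,\mu}) = 4(E \cdot (-K_Y)^2) - (E \cdot \tilde{\Gamma}) = 3,
\]
\[
(F \cdot \hat{\Gamma}) = 4(F \cdot (-K_W)^2) - (F \cdot \hat{C}_{\lambda,\mu}) = \frac{1}{3},
\]
with $(\hat{E} \cdot \hat{C}_{\lambda,\mu}) = (E \cdot \tilde{C}_{\lambda,\mu}) = 3$ because $\psi$ is an isomorphism near $E$, and finally $(-K_W \cdot \hat{\Gamma}) = -\frac{1}{4} - \frac{1}{12} = -\frac{1}{3}$ and $(-K_W \cdot \hat{C}_{\lambda,\mu}) = \frac{1}{4} - \frac{1}{4} = 0$.

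The main technical obstacle is verifying that the cycle identities on $Y$ and $W$ carry no exceptional-curve summand over $\msp$ or $\msq$. This reduces to a genericity check: for generic $(\lambda,\mu)$ the traces $\tilde{S}_\lambda|_E \sim H$ (a line in $E \cong \mbP^2$) and $\tilde{T}_\mu|_E \sim 4H$ (a plane quartic) share no common curve, and analogously on $F \cong \mbP(1,1,3)$, so the scheme-theoretic intersections reduce to $\tilde{\Gamma} + \tilde{C}_{\lambda,\mu}$ and $\hat{\Gamma} + \hat{C}_{\lambda,\mu}$.
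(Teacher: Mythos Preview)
Your proposal is correct and follows essentially the same strategy as the paper: establish $\tilde{S}_\lambda \sim_{\mbQ} -K_Y$, $\tilde{T}_\mu \sim_{\mbQ} -4K_Y$ (and likewise on $W$), compute $(-K_Y \cdot \tilde{\Gamma}) = -\frac{1}{4}$ from $(E \cdot \tilde{\Gamma}) = 1$, and then read off the remaining numbers from the cycle decomposition and the discrepancy formula $K_W = \psi^*K_Y + \frac{1}{4}F$.

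The one noteworthy difference is how you obtain $(\hat{E} \cdot \hat{C}_{\lambda,\mu}) = 3$. The paper computes this by explicitly writing down the equations cutting out $\hat{E} \cap \hat{C}_{\lambda,\mu}$ inside $E \cong \mbP(1,1,1,2,4)$ and checking that they have three solutions; you instead deduce it numerically from $4(E \cdot (-K_Y)^2) - (E \cdot \tilde{\Gamma}) = 4 - 1 = 3$. Your route is cleaner and avoids an explicit coordinate computation, at the price of needing the genericity check (which you correctly flag) that $\tilde{S}_\lambda \cap \tilde{T}_\mu$ contains no curve in $E$. The paper's route, by contrast, yields the explicit intersection points, which it reuses in the next lemma (Lemma~\ref{lem:G19S}) to verify that $\Bs \hat{\mcL}_\lambda$ is disjoint from $\hat{E}$; so the explicit count is not wasted work there.
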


\begin{proof}
We see that $\tilde{\Gamma}$ intersects $E$ at one point $(0 \!:\! 0 \!:\! 1 \!:\! -1 \!:\! 0) \in \mbP (1,1,1,2,4)$ so that $(E \cdot \tilde{C}_{\lambda,\mu}) = 1$.
Hence we have
\[
(-K_Y \cdot \tilde{\Gamma}) = (-K_X \cdot \Gamma) - \frac{1}{2} (E \cdot \tilde{\Gamma}) = \frac{1}{4} - \frac{1}{2} = - \frac{1}{4}.
\]
Recall that $C_{\lambda,\mu}$ is defined by the equations $x_1 - \lambda x_0 = s_1 - \mu x_0^4 = y s_0 + z^2 + \alpha x_0^6 = d_3 s_0 + e_7 = 0$ for some $\alpha \in \mbC$ and $d_3,e_7 \in \mbC [x_0,y,z]$ such that the coefficient of $z$ in $d_3$ is $1$ and $z y^2, y^3 x_0 \notin e_7$.
Let $\beta$ be the coefficient of $y x_0$ in $d_3$ and let $\gamma,\delta$ be the coefficients of $z^2 x_0$, $z y x_0^2$ in $e_7$, respectively.
The intersection points of $\hat{C}_{\lambda,\mu}$ and $\hat{E}$ are the solutions of the equation
\[
x_1 - \lambda x_0 = s_1 - \mu x_0^4 = s_0 + z^2 = (z + \beta x_0) s_0 + \gamma z^2 x_0 + \delta z x_0^2 + \mu x_0^3 = 0
\]
in $\mbP (1,1,1,2,4)$, and they consist of $3$ points.
Hence $(\hat{E} \cdot \hat{C}_{\lambda,\mu}) = 3$.
Let $\varepsilon$ be the coefficient of $x_0^3$ in $d_3$.
Then the intersection points of $\hat{C}_{\lambda,\mu}$ and $F$ are the solutions of the equations
\[
x_1 - \lambda x_0 = s_1 - \mu x_0^4 = y + z^2 + \alpha x_0^6 = z + \varepsilon x_0^3 = 0
\]
in $\mbP (1,1,6,3,4)$, which consists of one point $(1 \!:\! \lambda \!:\! -\alpha - \gamma^2 \!:\! \mu)$.
Hence $(F \cdot \hat{C}_{\lambda,\mu}) = 1$.
Since $\tilde{S}_{\lambda} \sim_{\mbQ} - K_Y$, $\tilde{T}_{\mu} \sim_{\mbQ} - 4 K_Y$ and $(-K_Y^3) = 0$, we have
\[
0 = 4 (-K_Y^3) = (-K_Y \cdot \tilde{S}_{\lambda} \cdot \tilde{T}_{\mu}) = (-K_Y \cdot \tilde{\Gamma} + \tilde{C}_{\lambda,\mu}).
\]
This shows that 
\[
(-K_Y \cdot \tilde{C}_{\lambda,\mu}) = - (-K_Y \cdot \tilde{\Gamma}) = \frac{1}{4}.
\]
We have
\[
(-K_W \cdot \hat{C}_{\lambda,\mu}) = (-K_Y \cdot \tilde{C}_{\lambda,\mu}) - \frac{1}{4} (F \cdot \hat{C}_{\lambda,\mu}) = \frac{1}{4} - \frac{1}{4} = 0.
\]
Finally, since $\hat{S}_{\lambda} \sim_{\mbQ} - K_W$, $\hat{T}_{\mu} \sim_{\mbQ} - 4 K_W$ and $(-K_W^3) = (-K_Y^3) - (F^3)/4^3 = -1/12$, we have
\[
- \frac{1}{3} = 4 (-K_W^3) = (-K_W \cdot \hat{S}_{\lambda} \cdot \hat{T}_{\mu}) = (-K_W \cdot \hat{\Gamma} + \hat{C}_{\lambda,\mu}) = (-K_W \cdot \hat{\Gamma}).
\]
This completes the proof.
\end{proof}

Let $\mcH \subset |-4K_X|$ be the linear system generated by the sections $x_0^4,x_0^3 x_1,\dots,x_1^4$ and $s_1$, and let $\mcH_Y$, $\mcH_W$ be proper transform of $\mcH$ on $Y$ and $W$, respectively.
We see that $\mcH_Y = |-4 K_Y|$ and $\mcH_W = |-4 K_W|$ and the base loci of $\mcH$, $\mcH_Y$ and $\mcH_W$ are $\Gamma$, $\tilde{\Gamma}$ and $\hat{\Gamma}$, respectively.

\begin{Prop} \label{prop:G19excl}
If there are infinitely many pairs $(\lambda,\mu)$ of complex numbers such that $C_{\lambda,\mu}$ is reducible, then $\msp$ is not a maximal singularity.
\end{Prop}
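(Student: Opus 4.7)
The plan is to apply Lemma \ref{crispinfc}: I will exhibit, in each of the two subcases of Lemma \ref{lem:G19Cirr}, infinitely many pairwise distinct irreducible reduced curves on $Y$ whose $(-K_Y)$-intersection is non-positive and whose $E$-intersection is positive. Since the hypothesis rules out case (3) of Lemma \ref{lem:G19Cirr}, every reducible $C_{\lambda,\mu}$ falls into case (1) or case (2), and at least one of the two subcases must occur for infinitely many pairs.

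For case (1), for infinitely many $(\lambda,\mu)$ the curve $C_{\lambda,\mu}$ splits off a component $C_{\lambda,\mu,\zeta}$. I will first compute $(A\cdot C_{\lambda,\mu,\zeta})=1/4$ by observing that $C_{\lambda,\mu,\zeta}$ is isomorphic to $\mbP(1,4)$ under the $(x_0,s_0)$-parametrisation and meets the singular locus of $X$ only at $\msp_4$. Since $C_{\lambda,\mu,\zeta}$ avoids $\msp$, one has $(E\cdot \tilde{C}_{\lambda,\mu,\zeta})=0$ and $(F\cdot \hat{C}_{\lambda,\mu,\zeta})=1$, and writing $C_{\lambda,\mu}=C_{\lambda,\mu,\zeta}+C_{\lambda,\mu}^{\sharp}$ and subtracting from the numbers in Lemma \ref{lem:G19intnum} gives $(-K_Y\cdot \tilde{C}_{\lambda,\mu}^{\sharp})=0$ and $(E\cdot \tilde{C}_{\lambda,\mu}^{\sharp})=3$. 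The residuals $\tilde{C}_{\lambda,\mu}^{\sharp}$ are pairwise distinct as $(\lambda,\mu)$ varies, because they lie on distinct surfaces $\tilde{S}_{\lambda}$, and Lemma \ref{crispinfc} finishes this case.

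For case (2), for infinitely many $(\lambda,\mu)$ I take $\tilde{C}_{\lambda,\mu}^{\circ}$ to be a component of $\tilde{C}_{\lambda,\mu}$ through $\msp$ but not through $\msp_4$, whose existence is guaranteed by Lemma \ref{lem:G19Cirr}(2), and let $C_{\lambda,\mu}^{\sharp}$ be the residual effective $1$-cycle, which by Lemma \ref{lem:G19defeq} contains the unique component through $\msp_4$. Then $(E\cdot \tilde{C}_{\lambda,\mu}^{\circ})>0$ is automatic, so the task is to prove $(-K_Y\cdot \tilde{C}_{\lambda,\mu}^{\circ})\leq 0$, i.e.\ $(-K_Y\cdot \tilde{C}_{\lambda,\mu}^{\sharp})\geq 1/4$ by Lemma \ref{lem:G19intnum}. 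Using $(F\cdot \hat{C}_{\lambda,\mu}^{\sharp})=(F\cdot \hat{C}_{\lambda,\mu})=1$, this is equivalent to $(-K_W\cdot \hat{C}_{\lambda,\mu}^{\sharp})\geq 0$. I will establish this inequality via a local-multiplicity estimate at each singular point the cycle meets: at a terminal cyclic quotient singular point $\msp_{*}$ of index $r_{*}$ appearing in our list, the anticanonical class $A=\mcO_X(1)$ has local class of weight $1$ in the local class group $\mbZ_{r_{*}}$, so a minimal section vanishes on any analytic branch to the exact order $(E_{*}\cdot \tilde{D})_{\msp_{*}}$ in the covering chart, yielding $(A\cdot D)_{\msp_{*}}\geq (E_{*}\cdot \tilde{D})_{\msp_{*}}/r_{*}$ for every irreducible component $D$ of $\hat{C}_{\lambda,\mu}^{\sharp}$; summing these local contributions with the non-negative contributions from smooth points gives $(-K_W\cdot \hat{D})\geq 0$ componentwise, and hence for the whole residual cycle.

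The key technical obstacle is the last step, namely the uniform local bound $(A\cdot D)_{\msp_{*}}\geq (E_{*}\cdot \tilde{D})_{\msp_{*}}/r_{*}$ at the $\tfrac{1}{2}(1,1,1)$ point $\msp$, where one must rule out non-smooth or high-multiplicity analytic branches of $C^{\sharp}$ that could formally lower the left-hand side. I would handle this by passing to the $\mbZ_{r_{*}}$-cover, where the Kawamata blowup becomes the ordinary weighted blowup, comparing the vanishing order of the weight-$1$ semi-invariants $x_0, x_1$ along each analytic branch with the multiplicity picked up by the exceptional equation, and verifying equality on a case-by-case basis; a cleaner structural alternative would be a direct analysis of the Mori cone $\bNE(W)$, identifying its three extremal rays $R_{\varphi}$, $R_{\psi}$ and $R_{\Gamma}=\mbR_{\geq 0}[\hat{\Gamma}]$ and showing that $\hat{\Gamma}$ is the unique $(-K_W)$-negative curve on $W$, whence $(-K_W\cdot \hat{C}^{\sharp})\geq 0$ follows immediately since $\hat{C}^{\sharp}$ and $\hat{\Gamma}$ are distinct irreducible curves.
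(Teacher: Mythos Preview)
Your overall architecture matches the paper's: split into cases (1) and (2) of Lemma~\ref{lem:G19Cirr} and apply Lemma~\ref{crispinfc} in each. But there are two issues worth flagging.

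In case~(1) you apply Lemma~\ref{crispinfc} directly to the residual cycles $\tilde{C}_{\lambda,\mu}^{\sharp}$, treating them as if irreducible. Lemma~\ref{crispinfc} requires irreducible reduced curves, and $C_{\lambda,\mu}^{\sharp}$ need not be irreducible. The paper fixes this with a one-line selection argument: any component of $\tilde{C}_{\lambda,\mu}^{\sharp}$ disjoint from $E$ has strictly positive $(-K_Y)$-degree (it equals the $A$-degree on $X$), so since the total $(-K_Y)$-degree is $0$ and at least one component meets $E$, some component through $E$ has non-positive $(-K_Y)$-degree. You should insert this step.

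In case~(2) your primary route via a local inequality $(A\cdot D)_{\msp_{*}}\geq (E_{*}\cdot\tilde D)_{\msp_{*}}/r_{*}$ is not needed and is, as you say, delicate to justify. Your ``cleaner structural alternative'' is the right idea, but you are overcomplicating it: there is no need to determine the extremal rays of $\bNE(W)$. The paper observes (already in the paragraph before Lemma~\ref{lem:G19S}) that $\mcH_W\sim_{\mbQ}-4K_W$ and $\Bs\mcH_W=\hat\Gamma$; hence any irreducible curve $D\neq\hat\Gamma$ satisfies $(-K_W\cdot D)\geq 0$, because $-4K_W$ is represented by a linear system whose only base curve is $\hat\Gamma$. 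Since no component of $\hat C_{\lambda,\mu}$ equals $\hat\Gamma$ and $(-K_W\cdot\hat C_{\lambda,\mu})=0$, every component has $(-K_W)$-degree exactly $0$; in particular $(-K_W\cdot\hat C_{\lambda,\mu}^{\circ})=0$, and since $C_{\lambda,\mu}^{\circ}$ avoids $\msp_4$ this gives $(-K_Y\cdot\tilde C_{\lambda,\mu}^{\circ})=0$ directly. This replaces your local-multiplicity argument entirely and makes the Mori-cone analysis unnecessary.
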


\begin{proof}
We divide the proof into two cases according to cases (1) or (2) in Lemma \ref{lem:G19Cirr}.
We first assume that we are in case (1), that is, there are infinitely many $(\lambda,\mu)$ such that $C_{\lambda,\mu}$ contains a curve $C_{\lambda,\mu,\zeta}$ for some $\zeta$.
Let $(\lambda,\mu)$ be any pair among the above infinitely many pairs.
We write $C_{\lambda,\mu} = C'_{\lambda,\mu} + C_{\lambda,\mu,\zeta}$.
We see that $\tilde{C}_{\lambda,\mu,\zeta}$ is disjoint from $E$ since $C_{\lambda,\mu,\zeta}$ does not pass through $\msp$ so that we compute 
\[
(-K_Y \cdot \tilde{C}_{\lambda,\mu,\zeta}) = (-K_X \cdot C_{\lambda,\mu,\zeta}) = 1/4.
\]
By Lemma \ref{lem:G19intnum}, we have 
\[
(-K_Y \cdot \tilde{C}'_{\lambda,\mu}) = (-K_Y \cdot \tilde{C}_{\lambda,\mu}) - (-K_Y \cdot \tilde{C}_{\lambda,\mu,\zeta}) = 1/4 - 1/4 = 0.
\]
Note that a component of $\tilde{C}'_{\lambda,\mu}$ which is disjoint from $E$ intersects $-K_Y$ positively (if it exists) and there exists at least one component of $\tilde{C}'_{\lambda,\mu}$ which intersects $E$. 
It follows that there is a component $C^{\circ}_{\lambda,\mu}$ of $C'_{\lambda,\mu}$ such that $(-K_Y \cdot \tilde{C}^{\circ}_{\lambda,\mu}) \le 0$ and $(\tilde{C}^{\circ}_{\lambda,\mu} \cdot E) > 0$.
Therefore $\msp$ is not a maximal center by Lemma \ref{crispinfc}.

Assume that we are in case (2), that is, there are infinitely many $(\lambda,\mu)$ such that $C_{\lambda,\mu}$ is reducible but does not contain $C_{\lambda,\mu,\zeta}$ for any $\zeta$.
Let $(\lambda,\mu)$ be one of the above pair.
Let $C_{\lambda,\mu}^{\circ}$ be a component of $C_{\lambda,\mu}$ which passes through $\msp$ but does not pass through $\msp_4$.
It follows that $(E \cdot \tilde{C}^{\circ}_{\lambda,\mu}) > 0$ and $\tilde{C}^{\circ}_{\lambda,\mu}$ does not pass through $\msq$.
Note that $(-K_W \cdot \hat{\Gamma}) < 0$ by Lemma \ref{lem:G19intnum}, so that $\hat{\Gamma}$ is the only irreducible curve on $W$ which intersects $-K_W$ negatively since $\mcH_W \sim_{\mbQ} - 4 K_W$ and $\Bs \mcH_W = \hat{\Gamma}$.
It then follows from $(-K_W \cdot \hat{C}_{\lambda,\mu}) = 0$ that $(-K_W \cdot \hat{C}_{\lambda,\mu}^{\circ}) = 0$.
Hence we have $(-K_Y \cdot \tilde{C}_{\lambda,\mu}^{\circ}) = 0$ because $\tilde{C}_{\lambda,\mu}^{\circ}$ does not pass through the center $\msq$ of the Kawamata blowup $\psi \colon W \to Y$.
This shows that there are infinitely many curves $\tilde{C}^{\circ}_{\lambda,\mu}$ on $Y$ which intersects $-K_Y$ non-positively and $E$ positively.
Therefore, $\msp$ is not a maximal center by Lemma \ref{crispinfc}.
\end{proof}

From now on, we assume that we are in case (3) of Lemma \ref{lem:G19Cirr}.
Let $\pi \colon X \ratmap \mbP (1,1,4)$ be the projection to the coordinates $x_0,x_1,s_1$, which is defined outside $\Gamma$.
Let $\pi_{\lambda} \colon S_{\lambda} \ratmap \mbP (1,4) \cong \mbP^1$ be the restriction of $\pi$ to $S_{\lambda}$ and $\hat{\pi}_{\lambda} \colon \hat{S}_{\lambda} \ratmap \mbP^1$ be the composite of $(\varphi \circ \psi)|_{\hat{S}_{\lambda}} \colon \hat{S}_{\lambda} \to S_{\lambda}$ and $\pi_{\lambda}$.
We set $\hat{E}_{\lambda} = \hat{E} \cap \hat{S}_{\lambda}$ and $\hat{F}_{\lambda} = F \cap \hat{S}_{\lambda}$.
We can write $\mcH|_{S_{\lambda}} = \mcL_{\lambda} + \Gamma$, where $\mcL_{\lambda}$ is the movable part which defines $\pi_{\lambda}$.
Note that $C_{\lambda,\mu}$'s for $\mu \in \mbC$ and $4 (x_0 = 0)|_{S_{\lambda}} - \Gamma = 3 \Gamma + 4 \Delta$ are the members of $\mcL_{\lambda}$, where $\Delta = (x_0 = x_1 = s_0 + s_1 + y^2 = y s_0 + z^2 = 0)$.

\begin{Lem} \label{lem:G19S}
Let $\lambda$ be a general complex number.
The the following hold.
\begin{enumerate}
\item The indeterminacy locus of $\pi_{\lambda}$ consists of two points $\msp$ and $\msp_4$, and $\hat{\pi}_{\lambda}$ is a morphism.
\item Both $\hat{E}_{\lambda}$ and $\hat{F}_{\lambda}$ are irreducible.
\end{enumerate}
\end{Lem}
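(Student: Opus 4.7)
The plan is to follow the template of the proof of Lemma \ref{lem:G'19projindet} adapted to the $\mcG_{19}$ setting. The argument splits into three parts: (i) identify the indeterminacy locus of $\pi_\lambda$ on $S_\lambda$ as $\{\msp, \msp_4\}$; (ii) show that the strict transform pencil $\hat{\mcL}_\lambda$ on $\hat S_\lambda$ is base-point-free, so $\hat \pi_\lambda$ extends to a morphism; (iii) verify irreducibility of $\hat E_\lambda$ and $\hat F_\lambda$ by explicit elimination of coordinates.

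For (i), the key observation is that two generic members $C_{\lambda, \mu_1}, C_{\lambda, \mu_2}$ can meet only where $x_0 = s_1 = 0$ on $S_\lambda$, which forces the intersection to lie in $\Gamma$. Using the presentation of $C_{\lambda, \mu}$ in $\mbP (1, 2, 3, 4)$ from Lemma \ref{lem:G19defeq} as $y s_0 + z^2 + \alpha x_0^6 = d_3 s_0 + e_7 = 0$, where the coefficient of $z$ in $d_3$ is nonzero and every monomial of $e_7$ is divisible by $x_0$, restriction to $x_0 = 0$ reduces the second equation to $z s_0 = 0$; intersecting with $\Gamma = (y s_0 + z^2 = 0)$ yields exactly the two points $\msp$ and $\msp_4$.

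For (ii), I will first identify $\tilde \Gamma \cap E$ and $\tilde \Gamma \cap F$ by tracking the leading-order behaviour of $\Gamma$ in the weighted-blowup coordinates fixed in the proof of Lemma \ref{lem:G19intnum}. A local parametrization of $\Gamma$ near $\msp$ by $(0, 0, 1, z, -z^2, 0)$ gives $\tilde \Gamma \cap E = (0 \!:\! 0 \!:\! 1 \!:\! -1 \!:\! 0) \in \mbP (1,1,1,2,4)$, and near $\msp_4$, parametrization by $(0, 0, -z^2, z, 1, 0)$ gives $\tilde \Gamma \cap F = (0 \!:\! 0 \!:\! -1 \!:\! 1 \!:\! 0) \in \mbP (1,1,6,3,4)$. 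Comparing with the explicit equations for $\hat C_{\lambda, \mu} \cap \hat E$ and $\hat C_{\lambda, \mu} \cap F$ recorded in the proof of Lemma \ref{lem:G19intnum}, every such point satisfies $x_0 \ne 0$ and hence differs from both $\tilde \Gamma \cap E$ and $\tilde \Gamma \cap F$. Moreover, as $\mu$ varies, those points trace out a cubic in $z$ on $\hat E_\lambda$ whose constant term is $\mu$ and a single moving point on $\hat F_\lambda$ whose $z$-coordinate depends linearly on $\mu$, so no point is shared between distinct parameters. Together with $\hat \Gamma \cap \hat C_{\lambda, \mu} = \emptyset$ off the exceptional divisors (since $C_{\lambda, \mu} \cap \Gamma = \{\msp, \msp_4\}$ in $X$), this shows that $\hat{\mcL}_\lambda$ has no base points.

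For (iii), after imposing $x_1 = \lambda x_0$, the two defining equations of $E$ in $\mbP (1,1,1,2,4)$ allow one to solve $s_0 = -z^2$ from the first and then $s_1$ as a polynomial in $x_0, z$ from the second, identifying $\hat E_\lambda$ with $\mbP (1, 1) \cong \mbP^1$. An analogous elimination in $\mbP (1,1,6,3,4)$, solving first for $s_1$ and then for $y$ in terms of $x_0, z$, identifies $\hat F_\lambda$ with $\mbP (1, 3) \cong \mbP^1$. The main obstacle is the bookkeeping of weighted-blowup geometry, especially locating $\tilde \Gamma \cap E$ and $\tilde \Gamma \cap F$ accurately, since $\hat \Gamma \subset \hat S_\lambda$ is a potential source of base points for $\hat{\mcL}_\lambda$ and must be excluded explicitly.
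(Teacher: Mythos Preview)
Your proposal is correct and follows essentially the same route as the paper: identify the indeterminacy locus via $C_{\lambda,\mu_1}\cap C_{\lambda,\mu_2}=\{\msp,\msp_4\}$, then use the explicit descriptions of $E$ and $F$ (from the discussion preceding Lemma~\ref{lem:G19intnum}) to show that $\hat C_{\lambda,\mu}\cap\hat E$ and $\hat C_{\lambda,\mu}\cap F$ move with $\mu$, whence $\hat{\mcL}_\lambda$ is base-point-free; irreducibility of $\hat E_\lambda$ and $\hat F_\lambda$ then follows by elimination in $\mbP(1,1,1,2,4)$ and $\mbP(1,1,6,3,4)$.

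The one difference is that you add a separate verification locating $\tilde\Gamma\cap E$ and $\tilde\Gamma\cap F$ and checking these are disjoint from $\hat C_{\lambda,\mu}\cap\hat E$ and $\hat C_{\lambda,\mu}\cap F$. This is harmless but redundant: once you know the indeterminacy of $\pi_\lambda$ is $\{\msp,\msp_4\}$, the base locus of $\hat{\mcL}_\lambda$ is automatically contained in $\hat E_\lambda\cup\hat F_\lambda$, and showing that two distinct members $\hat C_{\lambda,\mu_1}$, $\hat C_{\lambda,\mu_2}$ have disjoint traces on each exceptional curve already finishes the argument. The curve $\hat\Gamma$ plays no direct role here, since $\hat{\mcL}_\lambda$ is by definition the movable part and its base locus is $\bigcap_\mu \hat C_{\lambda,\mu}$, not anything involving $\hat\Gamma$. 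The paper omits this step accordingly. (Your observation that the $z$-coordinate of $\hat C_{\lambda,\mu}\cap F$ varies linearly in $\mu$ is correct: the coefficient $\varepsilon$ of $x_0^3$ in $d_3$ picks up the contribution $\mu$ from $s_0 s_1\in h_8$.)
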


\begin{proof}
We have $C_{\lambda,\mu_1} \cap C_{\lambda,\mu_2} = \{\msp,\msp_4\}$ for $\mu_1 \ne \mu_2$.
This shows that the indeterminacy locus of $\pi_{\lambda}$ consists of $\msp$ and $\msp_4$.

Let $\hat{\mcL}_{\lambda}$ be the proper transform of $\mcL_{\lambda}$ on $\hat{S}_{\lambda}$.
We shall show that $\hat{\mcL}_{\lambda}$ is base point free and that both $\hat{E}_{\lambda}$ and $\hat{F}_{\lambda}$ are irreducible.
We use the explicit description of $\varphi$ and $\psi$ before Lemma \ref{lem:G19intnum}.
Then we see that $\hat{E}_{\lambda} \cong E \cap \tilde{S}_{\lambda}$ is isomorphic to
\[
(s_0 + z^2 = s_1 + s_0 b_2 + z b_8 + s_0 z x_0 + z^2 c_2 = x_1 - \lambda x_0 = 0) \subset \mbP (1,1,1,2,4).
\]
Hence $\hat{E}_{\lambda} \cong \mbP^1$.
Recall that the intersection points $\hat{F}_{\lambda} \cap \hat{C}_{\lambda,\mu}$ are the solutions of the equations 
\[
x_1- \lambda x_0 = s_1 - \mu x_0^4 = s_0 + z^2 = (z + \beta x_0) s_0 + \gamma z^2 x_0 + \delta z x_0^2 + \mu x_0^3 = 0
\]
in $\mbP (1,1,1,2,4)$.
This shows that $\hat{C}_{\lambda,\mu_1} \cap \hat{F}$ is disjoint from $\hat{C}_{\lambda,\mu_2} \cap \hat{F}_{\lambda}$ for $\mu_1 \ne \mu_2$, and hence $\Bs \hat{\mcL}_{\lambda}$ is disjoint from $\hat{F}_{\lambda}$.

We have
\[
\hat{F}_{\lambda} \cong (y + z^2 + s_1 a_2 + a_6 = s_1 + z x_0 + c_4 = x_1 - \lambda x_0 = 0) \subset \mbP (1,1,6,3,4).
\]
Hence $\hat{F}_{\lambda} \cong \mbP (1,3) \cong \mbP^1$.
Recall that the intersection point $\hat{C}_{\lambda,\mu} \cap \hat{F}_{\lambda}$ is the solution of the equations
\[
x_1 - \lambda x_0 = s_1 - \mu x_0^4 = y + z^2 + \alpha x_0^6 = z + \varepsilon x_0^3 = 0
\]
in $\mbP (1,1,6,3,4)$.
Thus $\hat{C}_{\lambda,\mu_1} \cap F \ne \hat{C}_{\lambda,\mu_2} \cap F$ for $\mu_1 \ne \mu_2$ and $\Bs \hat{\mcL}$ is disjoint from $F$.
Hence (1) and (2) are proved.
\end{proof} 

As we will see below, the pair $(W, \frac{1}{4} \mcH_W)$ is strictly canonical and its canonical center is the $\frac{1}{3} (1,1,2)$ point lying on $F$.
Thus we need a further blowup of $W$. 
Let $W' \to W$ be the Kawamata blowup at the $\frac{1}{3} (1,1,2)$ point lying on $F$ and $\overline{W} \to W'$ be the Kawamata blowup at the $\frac{1}{2} (1,1,1)$ point lying on the exceptional divisor of $W' \to W$.
Let $p \colon \overline{W} \to W$ be the resulting birational morphism and we set $\bar{\psi} = \psi \circ p \colon \overline{W} \to Y$.
For a curve or a divisor $\Delta$ on $X$, $Y$ or $W$, we denote by $\overline{\Delta}$ its proper transform on $\overline{W}$. 
Then $\overline{W}$ is nonsingular along $\bar{\Gamma}$, a general member of the proper transform $\mcH_{\overline{W}}$ of $\mcH_W$ has multiplicity $1$ along $\bar{\Gamma}$ and the pair $(\overline{W}, \frac{1}{4} \mcH_{\overline{W}})$ is crepant over $(W, \frac{1}{4} \mcH_W)$. 
This shows that $(\overline{W}, \frac{1}{4} \mcH_{\overline{W}})$ is terminal (and $(W, \frac{1}{4} \mcH_W)$ is canonical).

\begin{Lem} \label{lem:verifcondW}
The birational morphism $\bar{\psi} \colon \overline{W} \to Y$ satisfies \emph{(4)} of \emph{Condition \ref{condinvbir}}
\end{Lem}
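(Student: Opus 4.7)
The plan is to verify the six sub-conditions of Condition~\ref{condinvbir}(4) for the birational morphism $\bar{\psi}=\psi\circ p\colon\overline{W}\to Y$, where $p=p_2\circ p_1$ is the composite of the two Kawamata blowups just introduced. The crucial geometric input, from which everything else follows mechanically, is that the two centers of $p$---the $\frac{1}{3}(1,1,2)$ point of $W$ lying on $F$ and the $\frac{1}{2}(1,1,1)$ point of $W'$ on the exceptional divisor of $p_1$---are met by $\hat{\Gamma}$ but are avoided by the generic $\hat{C}_{\lambda,\mu}$.

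To establish this avoidance, I first eliminate $y$ and $s_1$ in the two equations defining $F\subset\mbP(1,1,6,3,4)$ displayed before Lemma~\ref{lem:G19intnum}, obtaining $F\cong\mbP(1,1,3)$ with homogeneous coordinates $x_0,x_1,z$; the $\frac{1}{3}(1,1,2)$ center of $p_1$ is then the vertex $(0\!:\!0\!:\!1)$ of this $\mbP(1,1,3)$. Parametrizing $\Gamma$ near $\msp_4$ in the affine chart $s_0=1$ as $(x_0,x_1,y,z,s_1)=(0,0,-z^2,z,0)$ and taking the weighted-projective limit under the weighted blowup with $\wt(x_0,x_1,y,z,s_1)=\frac{1}{4}(1,1,6,3,4)$, the image $\tilde{\Gamma}\cap F$ becomes $[0\!:\!0\!:\!-1\!:\!1\!:\!0]\in\mbP(1,1,6,3,4)$, which is exactly this vertex. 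On the other hand, the computation in the proof of Lemma~\ref{lem:G19intnum} shows that $\hat{C}_{\lambda,\mu}\cap F$ has first coordinate $x_0=1\ne 0$, so $\hat{C}_{\lambda,\mu}$ is disjoint from the center of $p_1$ and \emph{a fortiori} from the center of $p_2$.

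Granting this, let $\overline{E},\overline{F}$ denote the proper transforms on $\overline{W}$ of $\hat{E},F$, and let $\overline{E}_1'$ and $E_2'$ denote respectively the proper transform and the exceptional divisor of the two Kawamata blowups forming $p$. Condition (a) is immediate since both $\psi$ and $p$ contract only loci lying over $\msq\in Y$. For (c) and (d), note that $\hat{E}\cap F=\emptyset$ in $W$ (since $\msq\notin E$), so $\overline{E}=p^{\ast}\hat{E}$, and the projection formula combined with Lemma~\ref{lem:G19intnum} yields $(\overline{E}\cdot\overline{C}_{\lambda,\mu})=(\hat{E}\cdot\hat{C}_{\lambda,\mu})=3$ and $(\overline{F}\cdot\overline{C}_{\lambda,\mu})=(F\cdot\hat{C}_{\lambda,\mu})=1$; the remaining $\bar{\psi}$-exceptional divisors $\overline{E}_1'$ and $E_2'$ are disjoint from $\overline{C}_{\lambda,\mu}$ by the avoidance above. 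Condition (e) is the content of the paragraph preceding the lemma. For (b), the same avoidance yields $(-K_{\overline{W}}\cdot\overline{C}_{\lambda,\mu})=(-K_W\cdot\hat{C}_{\lambda,\mu})=0$; and writing $K_{\overline{W}}=p^{\ast}K_W+a\overline{E}_1'+bE_2'$ with $a,b>0$ coming from the standard Kawamata-blowup discrepancy formula, together with the non-negative intersections $(\overline{E}_1'\cdot\overline{\Gamma})\ge 0$ and $(E_2'\cdot\overline{\Gamma})\ge 0$, we obtain $(-K_{\overline{W}}\cdot\overline{\Gamma})\le(-K_W\cdot\hat{\Gamma})=-\frac{1}{3}<0$.

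Finally, for (f), the map $\bar{\pi}_\lambda\colon\overline{S}_\lambda\to\mbP^1$ factors as $\hat{\pi}_\lambda\circ p|_{\overline{S}_\lambda}$ and is thus a morphism by Lemma~\ref{lem:G19S}; meanwhile $\overline{E}_\lambda:=\overline{E}\cap\overline{S}_\lambda$ and $\overline{F}_\lambda:=\overline{F}\cap\overline{S}_\lambda$ are proper transforms of the irreducible curves $\hat{E}_\lambda,\hat{F}_\lambda$ of Lemma~\ref{lem:G19S} under a sequence of point blowups, and hence remain irreducible. The principal obstacle in executing this outline is the orbifold-coordinate calculation that pins down $\tilde{\Gamma}\cap F$ at the $\frac{1}{3}(1,1,2)$ vertex of $\mbP(1,1,3)$, which requires writing the embedded Kawamata blowup of $X$ at $\msp_4$ explicitly in the $z$-chart and computing the weighted-projective limit of the parametrization of $\Gamma$.
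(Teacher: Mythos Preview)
Your approach matches the paper's and the verifications of (a)--(e) are correct. Two remarks, one substantive and one cosmetic.

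The substantive point concerns (f). You assert that $\overline{F}_\lambda:=\overline{F}\cap\overline{S}_\lambda$ is ``the proper transform of the irreducible curve $\hat{F}_\lambda$ under a sequence of point blowups, and hence remains irreducible''. But the intersection of proper transforms is not automatically the proper transform of the intersection: the center $q_1$ of $p_1$ lies on \emph{both} $F$ and $\hat{S}_\lambda$, so after $p_1$ the intersection $F'\cap S'_\lambda$ on $W'$ could in principle acquire a component inside the exceptional $E'_1\cong\mbP(1,1,2)$, and similarly for $p_2$. The paper closes this gap by observing that $x_0,x_1$ lift to the two degree-$1$ coordinates on $E'_1$, so $S'_\lambda\cap E'_1=(x_1-\lambda x_0=0)$ varies with $\lambda$ while $F'\cap E'_1$ does not; hence for general $\lambda$ no common component arises. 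It then checks that $F'$ misses the center $(0\!:\!0\!:\!1)\in E'_1$ of $p_2$, so the second blowup leaves $F'\cap S'_\lambda$ untouched. Your argument for $\overline{E}_\lambda$ is fine (since $p$ is an isomorphism over $\hat{E}$), but for $\overline{F}_\lambda$ you need this extra step.

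The cosmetic point: the orbifold computation you flag as the ``principal obstacle''---locating $\hat{\Gamma}\cap F$ at the $\tfrac{1}{3}(1,1,2)$ vertex---is correct but unnecessary. The inequality $(-K_{\overline{W}}\cdot\overline{\Gamma})\le(-K_W\cdot\hat{\Gamma})$ follows purely from terminality of $W$ (positive discrepancies, non-negative intersections of exceptional divisors with a curve not contained in them), exactly as you write out afterwards; whether or not $\hat{\Gamma}$ actually meets the centers is irrelevant. The paper omits this computation for that reason.
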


\begin{proof}
We verify conditions (a-f) in (4).
(a) is clearly satisfied and (e) is already verified.
Since the curves $\hat{C}_{\lambda,\mu}$ form a base point free pencil on $\hat{S}_{\lambda}$, a general $C_{\lambda,\mu}$ does not pass through the $\frac{1}{3} (1,1,2)$ point lying on $F$.
Hence $(\overline{F} \cdot \overline{C}_{\lambda,\mu}) = (F \cdot \hat{C}_{\lambda,\mu}) = 1$ and $\overline{C}_{\lambda,\mu}$ is disjoint from the $p$-exceptional divisors, which verify (d).
Moreover, we have $(-K_{\overline{W}} \cdot \overline{C}_{\lambda,\mu}) = (-K_W \cdot \hat{C}_{\lambda,\mu}) = 0$.
Since $W$ has only terminal singularity, we have $(-K_{\overline{W}} \cdot \overline{\Gamma}) \le (-K_W \cdot \hat{\Gamma}) < 0$.
This verifies (b).
Since $p$ is an isomorphism over $\hat{E}$, we have $(\overline{E} \cdot \overline{C}_{\lambda,\mu}) = 1$ and (c) is verified.

The exceptional divisor of $W' \to W$ is isomorphic to $\mbP (1,1,2)$ and  the lifting on $W'$ of sections $x_0,x_1$ restricts to the two coordinates of degree $1$ on the exceptional divisor $\mbP (1,1,2)$.
Moreover, the proper transform of $F$ on $W'$ avoids the point $(0\!:\!0\!:\!1) \in \mbP (1,1,2)$ which is the $\frac{1}{2} (1,1,1)$ point of $W'$.
This shows that $\overline{S}_{\lambda} \cap \overline{F}$ is the proper transform of $\hat{F}_{\lambda}$ and thus it is irreducible.
Clearly $\overline{S}_{\lambda} \cap \overline{E}$ is irreducible since it is isomorphic to $\hat{E}_{\lambda}$.
Thus, (f) is verified and the proof is completed.
\end{proof}

\begin{Lem} \label{lem:G19intnondeg}
For a general $\lambda \in \mbC$, the intersection form of $(x_0 = 0)|_{S_{\lambda}} = \Gamma + \Delta$ is nondegenerate.
\end{Lem}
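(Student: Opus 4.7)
I will compute the three intersection numbers $(\Gamma^2), (\Delta^2), (\Gamma \cdot \Delta)$ on $S_{\lambda}$ and verify that the resulting Gram matrix has nonzero determinant. Both $\Gamma$ and $\Delta$ are weighted complete intersections in $\mbP (1,1,2,3,4,4)$ of type $(1,1,4,6)$, so by the standard degree formula $(A \cdot \Gamma) = (A \cdot \Delta) = 1/4$; combined with $\Gamma + \Delta \sim_{\mbQ} A|_{S_{\lambda}}$ and $(A|_{S_{\lambda}})^2 = (A^3) = 1/2$, this yields $(\Gamma^2) = (\Delta^2) = 1/4 - (\Gamma \cdot \Delta)$. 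Hence the entire problem reduces to computing $(\Gamma \cdot \Delta)$.

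To locate $\Gamma \cap \Delta$ set-theoretically, I substitute $s_1 = 0$ and $s_0 = -y^2$ into the remaining relation $y s_0 + z^2 = 0$, which reduces to $z^2 = y^3$ in $\mbP(2,3)$; this has a unique solution, yielding a single intersection point $\msq = (0\!:\!0\!:\!1\!:\!1\!:\!-1\!:\!0) \in X$, independent of $\lambda$. Inspection of the Jacobian shows $\msq$ is a smooth point of $X$, and for general $\lambda$ it is also a smooth point of $S_{\lambda}$.

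The crux of the argument is a local calculation at $\msq$. In the affine chart $y = 1$ with local coordinates $x_0, x_1, \tilde z := z-1, \tilde s_0 := s_0+1, s_1$, the equation $F_1 = 0$ has linear part $\tilde s_0 + 2\tilde z$ and $F_2 = 0$ has linear part $-x_0$; indeed, all other contributions from $g_6$ and $h_8$ are of higher order, since the coefficient polynomials $a_i, b_i, c_i \in \mbC[x_0,x_1]$ vanish at the origin. Local coordinates on $S_{\lambda}$ near $\msq$ are therefore $\tilde z$ and $s_1$, with $x_0$ vanishing to order at least two. Expanding $F_2$ modulo $F_1$ to second order, using $\tilde s_0 = -2\tilde z + O(\mfm^2)$ and tracking the quadratic contributions from $s_1^2$ and $s_0 s_1$ in $h_8$, yields
\[
x_0 \equiv s_1(s_1 - 2\tilde z) \pmod{\mfm^3}.
\]
Comparing with the global identity $(x_0 = 0)|_{S_{\lambda}} = \Gamma + \Delta$ and noting that $\Gamma \subset (s_1 = 0)$, I identify $\Gamma$ locally with $(s_1 = 0)$ and $\Delta$ locally with $(s_1 - 2\tilde z = 0)$; these two branches meet transversally at $\msq$, so $(\Gamma \cdot \Delta)_{\msq} = 1$. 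Since $\msq$ is the only intersection point, $(\Gamma \cdot \Delta) = 1$, hence $(\Gamma^2) = (\Delta^2) = -3/4$ and the Gram matrix has determinant $-7/16 \neq 0$.

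The main obstacle is the local expansion: one must carefully identify the monomials in $g_6$ and $h_8$ contributing to the quadratic part of $F_2$ after substituting $\tilde s_0 = -2\tilde z$, and verify that the resulting coefficient $-2$ of $\tilde z s_1$ in $x_0|_{S_{\lambda}}$ is nonzero. This coefficient arises universally from the monomial $s_0 s_1$ in $h_8$ together with the normal-form relation forced by $F_1$, independently of the specific member of $\mcG_{19}$ and of $\lambda$, which is why the nondegeneracy conclusion holds for every quasismooth member and every general $\lambda$.
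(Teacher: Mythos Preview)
Your proof is correct and follows essentially the same approach as the paper: use $\Gamma + \Delta \sim_{\mbQ} A|_{S_{\lambda}}$ together with $\deg \Gamma = \deg \Delta = 1/4$ and $(\Gamma \cdot \Delta) = 1$ to obtain the Gram matrix $\begin{pmatrix} -3/4 & 1 \\ 1 & -3/4 \end{pmatrix}$. The only difference is that the paper simply asserts $(\Gamma \cdot \Delta) = 1$, whereas you supply a careful local expansion at $\msq$ verifying that the two branches of $(x_0 = 0)|_{S_{\lambda}}$ meet transversally at a single smooth point; this extra detail is sound and makes the argument self-contained.
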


\begin{proof}
Since $A|_{S_{\lambda}} \sim_{\mbQ} \Gamma + \Delta$, we have $(\Gamma + \Delta \cdot \Gamma) = \deg \Gamma = 1/4$ and $(\Gamma + \Delta \cdot \Delta) = \deg \Delta = 1/4$.
We also have $(\Gamma \cdot \Delta) = 1$.
Thus we get
\[
\begin{pmatrix}
(\Gamma^2) & (\Gamma \cdot \Delta) \\
(\Gamma \cdot \Delta) & (\Delta^2) 
\end{pmatrix}
=
\begin{pmatrix}
- \frac{3}{4}  & 1 \\
1 & - \frac{3}{4}
\end{pmatrix}.
\]
Therefore the intersection form $(x_0 = 0)|_{S_{\lambda}} = \Gamma + \Delta$ is nondegenerate.
\end{proof}

\begin{Thm} \label{thm:G19invbir}
If $\msp$ is a maximal center, then there is a birational involution $\tau \colon X \ratmap X$ which is a Sarkisov link centered at $\msp$. 
\end{Thm}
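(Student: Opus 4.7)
The plan is to apply the general machinery of Theorem \ref{thm:constinvbir} to the extremal divisorial extraction $\varphi \colon Y \to X$ centered at $\msp$, together with the birational morphism $\bar{\psi} \colon \overline{W} \to Y$ constructed just before Lemma \ref{lem:verifcondW}. All of the preparatory lemmas have already been put in place, so the argument reduces to verifying the five items of Condition \ref{condinvbir} and then invoking Theorem \ref{thm:constinvbir}.

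First I would exploit the standing hypothesis that $\msp$ is a maximal center. By the contrapositive of Proposition \ref{prop:G19excl}, this hypothesis rules out the existence of infinitely many pairs $(\lambda,\mu)$ for which $C_{\lambda,\mu}$ is reducible. In view of Lemma \ref{lem:G19Cirr}, we are therefore forced into case (3): for a general $\lambda$ and for \emph{every} $\mu$, the curve $C_{\lambda,\mu}$ is irreducible and reduced. This is precisely item (2) of Condition \ref{condinvbir}, and item (1) (irreducibility of $\Gamma$ and normality of a general $S_\lambda$) was noted in the discussion following the choice of standard coordinates.

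Next I would check the remaining items one at a time. Item (3) — that the indeterminacy locus of $\pi_\lambda$ consists of two fixed points independent of $\lambda$ — is exactly Lemma \ref{lem:G19S}(1), while item (4) is the content of Lemma \ref{lem:verifcondW}: the sub-items (a)--(f) are verified in turn using the explicit description of $\varphi$ and $\psi$, together with the intersection numbers computed in Lemma \ref{lem:G19intnum} and the irreducibility statements of Lemma \ref{lem:G19S}(2). Item (5) is the nondegeneracy statement of Lemma \ref{lem:G19intnondeg}. Once all five items are in hand, Theorem \ref{thm:constinvbir} produces the desired birational involution $\tau \colon X \ratmap X$ and guarantees that it is a Sarkisov link centered at $\msp$.

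I do not expect any genuine obstacle at this stage, since every substantive estimate has already been isolated in the lemmas above; the proof is essentially an assembly argument. The one subtlety worth flagging is a bookkeeping one: the variety playing the role of ``$W$'' in Condition \ref{condinvbir} must be the further blowup $\overline{W}$, not $W$ itself, because the pair $(W,\tfrac{1}{4}\mcH_W)$ is only strictly canonical with canonical center at the $\tfrac{1}{3}(1,1,2)$ point lying on $F$. This is precisely the reason the two additional Kawamata blowups $\overline{W} \to W' \to W$ were performed, and it is Lemma \ref{lem:verifcondW} which certifies that all of the conditions of (4) remain valid after this replacement.
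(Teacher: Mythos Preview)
Your proposal is correct and follows essentially the same approach as the paper: reduce to verifying Condition \ref{condinvbir} and then invoke Theorem \ref{thm:constinvbir}, citing Lemmas \ref{lem:G19intnum}, \ref{lem:G19S}, \ref{lem:verifcondW}, \ref{lem:G19intnondeg} and Proposition \ref{prop:G19excl}. Your write-up is in fact more detailed than the paper's own proof, and your flagging of the $\overline{W}$ versus $W$ subtlety is exactly the point the paper makes just before Lemma \ref{lem:verifcondW}.
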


\begin{proof}
By Theorem \ref{thm:constinvbir}, it is enough to show that Condition \ref{condinvbir} is satisfied.
This follows from Lemmas \ref{lem:G19intnum}, \ref{lem:G19S}, \ref{lem:verifcondW}, \ref{lem:G19intnondeg} and Proposition \ref{prop:G19excl}.
\end{proof}

\section{The tables} \label{sec:tables}

\newlength{\myheight}
\setlength{\myheight}{0.65cm}

\subsection{Families $\mcG_i$} \label{sec:table1}

\newlength{\myheightt}
\setlength{\myheightt}{0.65cm}

The following table is the list of the family $\mcG_i$ with $i \in I_{F,cAx/2} \cup I_{F,cAx/4}$.

\begin{center}
\begin{tabular}{cc|cc}
\hline
\parbox[c][\myheightt][c]{0cm}{} No. & $X_{d_1,d_2} \subset \mbP (a_0,\dots,a_5)$ & No. & $X_{d_1,d_2} \subset \mbP (a_0,\dots,a_5)$ \\
\hline
\parbox[c][\myheightt][c]{0cm}{} 17 & $X_{6,8} \subset \mbP (1,1,1,3,4,5)$ & 49 & $X_{10,14} \subset \mbP (1,1,2,5,7,9)$ \\
\parbox[c][\myheightt][c]{0cm}{} 19 & $X_{6,8} \subset \mbP (1,1,2,3,4,4)$ & 50 & $X_{10,14} \subset \mbP (1,2,3,5,7,7)$  \\ 
\parbox[c][\myheightt][c]{0cm}{} 23 & $X_{6,10} \subset \mbP (1,1,2,3,5,5)$ & 55 & $X_{12,14} \subset \mbP (1,1,4,6,7,8)$ \\
\parbox[c][\myheightt][c]{0cm}{} 29 & $X_{8,10} \subset \mbP (1,1,2,4,5,6)$ & 69 & $X_{14,16} \subset \mbP (1,1,5,7,8,9)$ \\
\parbox[c][\myheightt][c]{0cm}{} 30 & $X_{8,10} \subset \mbP (1,1,3,4,5,5)$ & 74 & $X_{14,18} \subset \mbP (1,2,3,7,9,11)$ \\
\parbox[c][\myheightt][c]{0cm}{} 41 & $X_{10,12} \subset \mbP (1,1,3,5,6,7)$ & 77 & $X_{16,18} \subset \mbP (1,1,6,8,9,10)$ \\
\parbox[c][\myheightt][c]{0cm}{} 42 & $X_{10,12} \subset \mbP (1,1,4,5,6,6)$ & 82 & $X_{18,22} \subset \mbP (1,2,5,9,11,13)$
\end{tabular}
\end{center}

\subsection{Families $\mcG'_i$} \label{sec:table2}

In this subsection, we list the families $\mcG'_i$ with $i \in I_{F,cAx/2} \cup I_{F,cAx/4}$.
In each family, a standard defining equation is described.
The first column indicates the number and type of singular points of $X'$.
The second column indicates how to exclude the singular point as maximal center.
If ``$(B^3) \le 0$" and a divisor of the form ``$b B + e E$" are indicated in the second column, then the corresponding singular point is excluded in Section \ref{sec:singptsnonpos}.
If proposition number is given in the second column, then the corresponding point is excluded in the proposition.
The mark $\not\exists (m_1,m_2,m_3)$ or $\exists (m_1,m_2,m_2)$ indicates the condition on the (non)existence of a WCI curve of type $(m_1,m_2,m_3)$ passing through the corresponding point $\msp$, and the mark $(*h)$ or $(0 h)$, where $h$ is a monomial indicates the condition on the (non)existence of the monomial $h$ in the defining polynomial.

The third column indicates the existence of Sarkisov link centered at the corresponding singular point.
The mark ``Q.I." (resp.\ ``E.I.", resp.\ ``I.I.") means that there is a Sarkisov link $X' \ratmap X'$ which is a quadratic (resp.\ elliptic, resp.\ invisible) involution (see Sections \ref{sec:birinv} and \ref{sec:spinv}) and the mark ``Link to $X_{d_1,d_2} \in \mcG_i$" means that there is a Sarkisov link to $X_{d_1,d_2} \in \mcG_i$ (see Section \ref{sec:SLWHWCI}).

\begin{center}
\begin{flushleft}
No. 17: $X'_8 \subset \mbP (1,1,1,4,2)$, $A^3 = 1$ \nopagebreak \\ 
Eq: $w^3 x_0^2 + w^2 x_0 f_3 + w g_6 + h_8$
\end{flushleft} \nopagebreak
\begin{tabular}{|p{105pt}|p{135pt}|p{100pt}|}
\hline
\parbox[c][\myheight][c]{0cm}{} $\msp_4 = cAx/2$ & & Link to $X_{6,8} \in \mcG_{17}$ \\
\hline
\end{tabular}
\end{center} 

\begin{center}
\begin{flushleft}
No.~19: $X'_8 \subset \mbP (1,1,2,3,2)$, $A^3 = 2/3$ \nopagebreak \\ 
Eq: $w^2 y (y + f_2) + w g_6 + h_8$, $z^2 \in g_6$
\end{flushleft} \nopagebreak
\begin{tabular}{|p{105pt}|p{135pt}|p{100pt}|}
\hline
\parbox[c][\myheight][c]{0cm}{} $\msp_2 \msp_4 = 2 \times \frac{1}{2} (1,1,1)$ & ($\not\exists (1,1,2)$) & E.I. \\
\parbox[c][\myheight][c]{0cm}{} & ($\exists (1,1,2)$) & I.I. \\
\hline
\parbox[c][\myheight][c]{0cm}{} $\msp_3 = \frac{1}{3} (1,1,1)$ & & Q.I.\\
\hline
\parbox[c][\myheight][c]{0cm}{} $\msp_4 = cAx/2$ & & Link to $X_{6,8} \in \mcG_{19}$\\
\hline
\end{tabular}
\end{center}

\begin{center}
\begin{flushleft}
No.~23: $X'_{10} \subset \mbP (1,1,2,3,4)$, $A^3 = 5/12$ \nopagebreak \\ 
Eq: $w^2 x_0 (x_0 + f_1) + w g_6 + h_{10}$, $z^2 \in g_6$
\end{flushleft} \nopagebreak
\begin{tabular}{|p{105pt}|p{135pt}|p{100pt}|}
\hline
\parbox[c][\myheight][c]{0cm}{} $\msp_2 \msp_4 = \frac{1}{2} (1,1,1)$ & $B^3 < 0$, $T \in |B|$, ($\not\exists (1,1,4)$) & none \\
\parbox[c][\myheight][c]{0cm}{} & Proposition \ref{exclspptNo23}, ($\exists (1,1,4)$) & none \\
\hline
\parbox[c][\myheight][c]{0cm}{} $\msp_3 = \frac{1}{3} (1,1,2)$ & & Q.I.\\
\hline
\parbox[c][\myheight][c]{0cm}{} $\msp_4 = cAx/4$ & & Link to $X_{6,10} \in \mcG_{23}$ \\
\hline
\end{tabular}
\end{center}

\begin{center}
\begin{flushleft}
No.~29: $X'_{10} \subset \mbP (1,1,2,5,2)$, $A^3 = 1/2$ \nopagebreak \\ 
Eq: $w^3 y^2 + w^2 y f_4 + w g_8 + h_{10}$
\end{flushleft} \nopagebreak
\begin{tabular}{|p{105pt}|p{135pt}|p{100pt}|}
\hline
\parbox[c][\myheight][c]{0cm}{} $\msp_2 \msp_4 = 3 \times \frac{1}{2} (1,1,1)$ & $B^3 = 0$, $T \in |B|$ & none \\
\hline
\parbox[c][\myheight][c]{0cm}{} $\msp_4 = cAx/2$ & & Link to $X_{8,10} \in \mcG_{29}$ \\
\hline
\end{tabular}
\end{center}

\begin{center}
\begin{flushleft}
No.~30: $X'_{10} \subset \mbP (1,1,3,4,2)$, $A^3 = 5/12$ \nopagebreak \\ 
Eq: $w^2 y (y + f_3) + w g_8 + h_{10}$, $z^2 \in g_8$
\end{flushleft} \nopagebreak
\begin{tabular}{|p{105pt}|p{135pt}|p{100pt}|}
\hline
\parbox[c][\myheight][c]{0cm}{} $\msp_2 = \frac{1}{3} (1,1,2)$ & ($*y^2 z$) & Q.I. \\
\parbox[c][\myheight][c]{0cm}{} & Proposition \ref{prop:G'30-3sp}, ($0 y^2 z$) & none \\
\hline
\parbox[c][\myheight][c]{0cm}{} $\msp_3 = \frac{1}{4} (1,1,3)$ & & Q.I. \\
\hline
\parbox[c][\myheight][c]{0cm}{} $\msp_4 = cAx/2$ & & Link to $X_{8,10} \in \mcG_{30}$ \\
\hline
\end{tabular}
\end{center}

\begin{center}
\begin{flushleft}
No.~41: $X'_{12} \subset \mbP (1,1,3,6,2)$, $A^3 = 1/3$ \nopagebreak \\ 
Eq: $w^3 y^2 + w^2 y f_5 + w g_{10} + h_{12}$
\end{flushleft} \nopagebreak
\begin{tabular}{|p{105pt}|p{135pt}|p{100pt}|}
\hline
\parbox[c][\myheight][c]{0cm}{} $\msp_2 \msp_3 = 2 \times \frac{1}{3} (1,1,2)$ & & Q.I. \\
\hline
\parbox[c][\myheight][c]{0cm}{} $\msp_4 = cAx/2$ & & Link to $X_{10,12} \in \mcG_{41}$\\
\hline
\end{tabular}
\end{center}

\begin{center}
\begin{flushleft}
No.~42: $X'_{12} \subset \mbP (1,1,4,5,2)$, $A^3 = 3/10$ \nopagebreak \\ Eq: $w^2 y (y + f_4) + w g_{10} + h_{12}$, $z^2 \in g_{10}$
\end{flushleft} \nopagebreak
\begin{tabular}{|p{105pt}|p{135pt}|p{100pt}|}
\hline
\parbox[c][\myheight][c]{0cm}{} $\msp_2 \msp_4 = \frac{1}{2} (1,1,1)$ & $B^3 < 0$, $T \in |B|$ & none \\
\hline
\parbox[c][\myheight][c]{0cm}{} $\msp_3 = \frac{1}{5} (1,1,4)$ & & Q.I. \\
\hline
\parbox[c][\myheight][c]{0cm}{} $\msp_4 = cAx/2$ & & Link to $X_{10,12} \in \mcG_{42}$ \\
\hline
\end{tabular}
\end{center}

\begin{center}
\begin{flushleft}
No.~49: $X'_{14} \subset \mbP (1,1,2,7,4)$, $A^3 = 1/4$ \nopagebreak \\ 
Eq: $w^3 x_0^2 + w^2 x_0 f_5 + w g_{10} + h_{14}$
\end{flushleft} \nopagebreak
\begin{tabular}{|p{105pt}|p{135pt}|p{100pt}|}
\hline
\parbox[c][\myheight][c]{0cm}{} $\msp_2 \msp_4 = \frac{1}{2} (1,1,1)$ & $B^3 < 0$, $T \in |B|$ & none \\
\hline
\parbox[c][\myheight][c]{0cm}{} $\msp_4 = cAx/4$ & & Link to $X_{10,14} \in \mcG_{49}$ \\
\hline
\end{tabular}
\end{center}

\begin{center}
\begin{flushleft}
No.~50: $X'_{14} \subset \mbP (1,2,3,5,4)$, $A^3 = 7/60$ \nopagebreak \\ 
Eq: $w^2 z (z + f_3) + w g_{10} + h_{14}$, $t^2 \in g_{10}$
\end{flushleft} \nopagebreak
\begin{tabular}{|p{105pt}|p{135pt}|p{100pt}|}
\hline
\parbox[c][\myheight][c]{0cm}{} $\msp_1 \msp_4 = \frac{1}{2} (1,1,1)$ & $B^3 < 0$, $3 B + E$, ($\not\exists (1,3,4)$) & none \\
\parbox[c][\myheight][c]{0cm}{} & Proposition \ref{exclsppt2No50}, ($\exists (1,3,4)$) & none \\
\hline
\parbox[c][\myheight][c]{0cm}{} $\msp_2 = \frac{1}{3} (1,1,2)$ & $B^3 < 0$, $T \in |B|$, ($*z^3 t$) & none \\
\parbox[c][\myheight][c]{0cm}{} & Proposition \ref{exclsppt3No50}, ($0 z^3 t$) & none \\ 
\hline
\parbox[c][\myheight][c]{0cm}{} $\msp_3 = \frac{1}{5} (1,2,3)$ & & Q.I. \\
\hline
\parbox[c][\myheight][c]{0cm}{} $\msp_4 = cAx/4$ & & Link to $X_{10,14} \in \mcG_{50}$ \\
\hline
\end{tabular}
\end{center}

\begin{center}
\begin{flushleft}
No.~55: $X'_{14} \subset \mbP (1,1,4,7,2)$, $A^3 = 1/4$ \nopagebreak \\ 
Eq: $w^3 y^2 + w^2 y f_6 + w g_{12} + h_{14}$
\end{flushleft} \nopagebreak
\begin{tabular}{|p{105pt}|p{135pt}|p{100pt}|}
\hline
\parbox[c][\myheight][c]{0cm}{} $\msp_2 = \frac{1}{4} (1,1,3)$ & $B^3 > 0$, Proposition \ref{prop:G'55-4} & none \\
\hline
\parbox[c][\myheight][c]{0cm}{} $\msp_2 \msp_4 = \frac{1}{2} (1,1,1)$ & $B^3 < 0$, $T \in |B|$ & none \\
\hline
\parbox[c][\myheight][c]{0cm}{} $\msp_4 = cAx/2$ & & Link to $X_{12,14} \in \mcG_{55}$\\
\hline
\end{tabular}
\end{center}

\begin{center}
\begin{flushleft}
No.~69: $X'_{16} \subset \mbP (1,1,5,8,2)$, $A^3 = 1/5$ \nopagebreak \\ 
Eq: $w^3 y^2 + w^2 y f_7 + w g_{14} + h_{16}$
\end{flushleft} \nopagebreak
\begin{tabular}{|p{105pt}|p{135pt}|p{100pt}|}
\hline
\parbox[c][\myheight][c]{0cm}{} $\msp_2 = \frac{1}{5} (1,2,3)$ & $B^3 > 0$, Proposition \ref{prop:G'69-5} & none \\
\hline
\parbox[c][\myheight][c]{0cm}{} $\msp_4 = cAx/2$ & & Link to $X_{14,16} \in \mcG_{69}$ \\
\hline
\end{tabular}
\end{center}

\begin{center}
\begin{flushleft}
No.~74: $X'_{18} \subset \mbP (1,2,3,9,4)$, $A^3 = 1/12$ \nopagebreak \\ 
Eq: $w^3 z^2 + w^2 z f_7 + w g_{14} + h_{18}$
\end{flushleft} \nopagebreak
\begin{tabular}{|p{105pt}|p{135pt}|p{100pt}|}
\hline
\parbox[c][\myheight][c]{0cm}{} $\msp_1 \msp_4 = \frac{1}{2} (1,1,1)$ & $B^3 < 0$, $3B+E$ & none \\
\hline
\parbox[c][\myheight][c]{0cm}{} $\msp_2 \msp_3 = 2 \times \frac{1}{3} (1,1,2)$ & $B^3 < 0$, $T \in |B|$ & none \\
\hline
\parbox[c][\myheight][c]{0cm}{} $\msp_4 = cAx/4$ & & Link to $X_{14,18} \in \mcG_{74}$ \\
\hline
\end{tabular}
\end{center}

\begin{center}
\begin{flushleft}
No.~77: $X'_{18} \subset \mbP (1,1,6,9,2)$, $A^3 = 1/6$ \nopagebreak \\
Eq: $w^3 y^2 + w^2 y f_8 + w g_{16} + h_{18}$
\end{flushleft} \nopagebreak
\begin{tabular}{|p{105pt}|p{135pt}|p{100pt}|}
\hline
\parbox[c][\myheight][c]{0cm}{} $\msp_2 \msp_3 = \frac{1}{3} (1,1,2)$ & $B^3 = 0$, $T \in |B|$ & none \\
\hline
\parbox[c][\myheight][c]{0cm}{} $\msp_2 \msp_4 = \frac{1}{2} (1,1,1)$ & $B^3 < 0$, $T \in |B|$ & none \\
\hline
\parbox[c][\myheight][c]{0cm}{} $\msp_4 = cAx/2$ & & Link to $X_{16,18} \in \mcG_{77}$ \\
\hline
\end{tabular}
\end{center}

\begin{center}
\begin{flushleft}
No.~82: $X'_{22} \subset \mbP (1,2,5,11,4)$, $A^3 = 1/20$ \nopagebreak \\ 
Eq: $w^3 z^2 + w^2 z f_9 + w g_{18} + h_{22}$ \nopagebreak \\
\end{flushleft} \nopagebreak
\begin{tabular}{|p{105pt}|p{135pt}|p{100pt}|}
\hline
\parbox[c][\myheight][c]{0cm}{} $\msp_1 \msp_4 = \frac{1}{2} (1,1,1)$ & $B^3 < 0$, $5B + 2 E$ & none \\
\hline
\parbox[c][\myheight][c]{0cm}{} $\msp_2 = \frac{1}{5} (1,1,4)$ & $B^3 = 0$, $T \in |B|$ & none \\
\hline
\parbox[c][\myheight][c]{0cm}{} $\msp_4 = cAx/4$ & & Link to $X_{18,22} \in \mcG_{82}$ \\
\hline
\end{tabular}
\end{center}


\begin{thebibliography}{99}


\bibitem{AZ13}
H.~Ahmadinezhad and F.~Zucconi,
Circle of Sarkisov links on a Fano threefold,
arXiv:1304.6009, 2013, to appear in Proc. Edinburgh Math. Soc.

\bibitem{AZ14}
H.~Ahmadinezhad and F.~Zucconi,
Birational rigidity of Fano $3$-folds and Mori dream spaces,
arXiv:1407.3724, 2014.

\bibitem{BZ10}
G.~Brown and F.~Zucconi,
Graded rings of rank $2$ Sarkisov links,
Nagoya Math. J. {\bf 197} (2010), 1--44.

\bibitem{CP}
I.~Cheltsov and J.~Park,
Birationally rigid Fano threefold hypersurfaces,
arXiv:1309.0903v4, 2013, to appear in Mem. Amer. Math. Soc.

\bibitem{Co95}
A.~Corti, 
Factoring birational maps of threefolds after Sarkisov,
J. Algebraic Geom. {\bf 4} (1995), no.~2, 223--254.

\bibitem{Co00}
A.~Corti,
Singularities of linear systems and $3$-fold birational geometry, {\it Explicit birational geometry of $3$-folds},
London Math. Soc. Lecture Note Ser., {\bf 281}, Cambridge Univ. Press, Cambridge, 2000.

\bibitem{CM}
A.~Corti and M.~Mella,
Birational geometry of terminal quartic $3$-folds,
Amer. J. Math. {\bf 126} (2004), no.~4, 739--761.

\bibitem{CPR}
A.~Corti, A.~V.~Pukhlikhov and M.~Reid,
Fano $3$-fold hypersurfaces,
{\it Explicit birational geometry of $3$-folds}, London Math. Soc. Lecture Note Ser., {\bf 281}, Cambridge Univ. Press, Cambridge, 2000.

\bibitem{Ha99}
T.~Hayakawa,
Blowing ups of $3$-dimensional terminal singularities, Publ. Res. Inst. Math. Sci. {\bf 35} (1999), 515--570.

\bibitem{IF00}
A.~R.~Iano-Fletcher,
Working with weighted complete intersections,
{\it Explicit birational geometry of $3$-folds},
London Math. Soc. Lecture Note Ser., {\bf 281},
Cambridge Univ. Press, Cambridge, 2000.

\bibitem{IM71}
V.~A.~Iskovskikh and Yu.~I.~Manin,
Three-dimensional quartics and counterexamples to the L\"{u}roth problem, Math. USSR. Sb. {\bf 151} (1971), 141--166.

\bibitem{Ka05}
M.~Kawakita, 
Three-fold divisorial contractions to singularities of higher indices,
Duke Math. J. {\bf 130} (2005), no.~1, 57--126.

\bibitem{Kaw96}
Y.~Kawamata,
``Divisorial contractions to $3$-dimensional terminal quotient singularities'' in Higher-Dimensional Complex Varieties (Trento, Italy, 1994), de Gruyter, Berlin, 1996, 241--246.

\bibitem{KMM}
S.~Keel, K.~Matsuki and J.~McKernan,
Log abundance theorem for threefolds,
Duke Math. J. {\bf 75} (1994), no.~1, 99--119.

\bibitem{KM}
J.~Koll\'{a}r and S.~Mori,
{\it Birational geometry of algebraic varieties},
Cambridge Tracts in Mathematics, {\bf 134},
Cambridge University Press, Cambridge, 1998, viii+254pp.

\bibitem{KSC}
J.~Koll\'{a}, K.~E.~Smith and A.~Corti,
{\it Rational and nearly rational varoeties},
Cambridge Studies in Advanced Mathematics, {\bf 92},
Cambridge University Press, Cambridge, 2004, vi+235pp. 

\bibitem{Okada}
T.~Okada,
Birational Mori fiber structures of $\mathbb{Q}$-Fano $3$-fold weighted complete intersection,
Proc. Lond. Math. Soc. (3) {\bf 109} (2014), no.~6, 1549--1600.
\end{thebibliography}
\end{document}